\title[FQuad: Abel-Jacobi map and deformation]
{Moduli spaces of quadratic differentials: Abel-Jacobi map and deformation}
\author{Yu Qiu}
\address{Qy: Yau Mathematical Sciences Center and Department of Mathematical Sciences, Tsinghua University, 100084 Beijing, China.
    \&
Beijing Institute of Mathematical Sciences and Applications, Yanqi Lake, Beijing, China}
\email{yu.qiu@bath.edu}
\tikzset{->-/.style={decoration={  markings,  mark=at position #1 with
    {\arrow{>}}},postaction={decorate}}}
\tikzset{-<-/.style={decoration={  markings,  mark=at position #1 with
    {\arrow{<}}},postaction={decorate}}}
\newcolumntype{L}{>{$}l<{$}} % math-mode version of "l" column type
\let\Sun\relax
\let\Aries\relax
\theoremstyle{plain}
\newtheorem{theorem}{Theorem}[section]
\newtheorem{lemma}[theorem]{Lemma}
\newtheorem{corollary}[theorem]{Corollary}
\newtheorem{proposition}[theorem]{Proposition}
\newtheorem{convention}[theorem]{Convention}
\theoremstyle{definition}
\newtheorem{definition}[theorem]{Definition}
\newtheorem{example}[theorem]{Example}
\newtheorem{remark}[theorem]{Remark}
\numberwithin{equation}{section}
\newcommand{\Note}[1]{\textcolor{red}{#1}}
\newcommand\hua{\mathcal}
\newcommand\NN{\mathbb{N}}
\newcommand\ZZ{\mathbb{Z}}
\newcommand\RR{\mathbb{R}}
\newcommand\CC{\mathbb{C}}
\newcommand{\mai}{\mathbf{i}} % imaginary number i
\newcommand\<{\langle}
\renewcommand\>{\rangle}
\newcommand{\textfrac}[2]{\textstyle{\frac{#1}{#2}}}
\renewcommand{\setminus}{\smallsetminus}
\renewcommand{\emptyset}{\varnothing}
\newcommand{\isom}{\cong}
\newcommand{\rank}{\operatorname{rank}}
\newcommand\Sim{\operatorname{Sim}}
\newcommand\Hom{\operatorname{Hom}}
\newcommand\Ext{\operatorname{Ext}}
\newcommand{\Int}{\operatorname{Int}}
\newcommand\Br{\operatorname{Br}} % braid group
\newcommand\CT{\operatorname{CT}} % cluster braid group
\newcommand\Grot{\operatorname{K}} % Grothendieck group
\newcommand{\h}{\operatorname{\hua{H}}} %heart
\newcommand{\D}{\operatorname{\hua{D}}} % triangulated category
\def\Dsop{\D_3(\sop)} % triangulated category
\newcommand{\per}{\operatorname{per}} % perfect derived category
\newcommand\Aut{\operatorname{Aut}} % automorphism group
\newcommand\Autp{\Aut^\circ} % effectively acting automorphism group in principal comp
\newcommand\Stab{\operatorname{Stab}} % space of stability conditions
\newcommand\Stap{\Stab^\circ} % principal component of Stab
\newcommand{\EG}{\operatorname{EG}} % oriented exchange graph of hearts
\newcommand{\FG}{\operatorname{FG}} % oriented exchange graph of hearts
\newcommand{\FGT}{\FG^\T} % oriented exchange graph of hearts
\newcommand{\EGp}{\EG^\circ}       % principal component of EG
\newcommand{\EGT}{\EG^{\T}}
\newcommand{\uEG}{\underline{\EG}} % unoriented exchange graph of hearts
\newcommand{\CEG}{\operatorname{CEG}} % oriented cluster exchange graph
\newcommand{\uCEG}{\underline{\CEG}} % unoriented cluster exchange graph
\newcommand{\fg}{\operatorname{\hua{FG}}} % oriented exchange graph of hearts
\newcommand{\fgt}{\fg^{\T}} % oriented exchange graph of hearts
\newcommand\eg{\operatorname{\hua{EG}}}
\newcommand\ueg{\underline{\eg}}
\renewcommand{\k}{\mathbf{k}}
\newcommand{\Ho}[1]{\operatorname{\bf H}_{#1}}
\newcommand{\tilt}[3]{{#1}^{#2}_{#3}}
\newcommand{\Cone}{\operatorname{Cone}}
\newcommand\Sph{\operatorname{Sph}}
\newcommand{\numarc}{n}
\newcommand{\numtri}{w}
\newcommand{\Tri}{\Delta}
\newcommand\Diff{\operatorname{Diff}} % diffeomorphism group
\newcommand{\MCG}{\operatorname{MCG}} % mapping class group
\newcommand{\BT}{\operatorname{BT}}  %braid twist group
\newcommand{\MT}{\operatorname{MT}}  %mixed twist group
\newcommand{\FT}{\operatorname{FT}}  %flip twist group
\newcommand\Bt[1]{\operatorname{B}_{#1}}
\newcommand\M{\mathbf{M}} % marked points
\renewcommand\P{\mathbf{P}} % punctures
\newcommand{\Quad}{\operatorname{Quad}}
\newcommand{\FQuad}{\operatorname{FQuad}}
\newcommand{\conf}{\operatorname{conf}}
\newcommand{\CA}{\operatorname{CA}}
\newcommand{\SBr}{\operatorname{SBr}}
\newcommand{\UHP}{\mathbf{H}_\circ} % upper half plane
\newcommand{\uhp}{\mathbf{H}} % upper half plane
\newcommand{\skel}{\wp} % skeleton embedding
\newcommand{\cub}{\operatorname{U}} % cell
\newcommand{\cubc}{\cub^\circ} % cell
\newcommand\rs{\mathbf{X}} %compact version of S
\newcommand\surp{\rs^\circ}
\newcommand\dd{d} % exterior derivative
\newcommand\conj{\operatorname{ad}}
\newcommand{\sli}{\hua{P}}
\newcommand{\twi}{\Phi} % spherical twist
\newcommand{\on}[1]{\operatorname{#1}}
\newcommand{\wh}[1]{\widetilde{#1}}
\newcommand{\double}[2]{\overline{#1}^{#2}}
\def\punctures{\Sun}
\def\sun{\text{\Sun}}
\def\vortex{\text{\Yingyang}}
\def\dpole
\def\Vpole
\def\nn{node{$\bullet$}}
\def\ww{node[white]{$\bullet$}node[red]{$\circ$}}
\def\uk{\mathbf{k}}
\def\ZZv{ \ZZ_2^{{\oplus \vortex}} }
\newcommand\surf{\mathbf{S}}  % marked surface
\newcommand\surfp{\surf^{\sun}}  % marked surface
\newcommand\sop{\surfo^{\sun}}  % decorated marked surface
\newcommand\surfo{\surf_\Tri}  % decorated marked surface
\newcommand\RT{\mathrm{T}} % ordinary triangulation
\newcommand\T{\mathbb{T}} % decorated triangulation
\def\be{\begin{equation}}
\def\ee{\end{equation}}
\def\eveng{2\ZZ^{+}_{\le g}}
\newcommand\iv[1]{\underline{#1}}
\newcommand\Co{\operatorname{Co}}
\def\qlan{cyan!50}  %qian lan
\def\hhd{orange}  % closed loop color-hh dual
\def\LA{\on{LA}}
\def\AJ{\on{AJ}}
\def\REG{\EG_{\circledR} }
\def\FGC{\FG_{\text{\tiny{$\copyright$}}} }
\def\REGp{\REG^\circ}
\def\FGCp{\FGC^{\circ} }
\def\hh1{\REGp[\h,\h[1]]}
\def\hhT1{\REGp[\h_\T,\h_\T[1]]}
\newcommand\szb{\mathbf{S}^{\sun}_\Tri}  % decorated marked surface
\def\yue{\leftmoon}
\def\kui{ {\rightmoon} } %\sun\setminus\yue
\newcommand\MTsoy{\MTrel{\yue}}  % decorated marked surface
\newcommand\MTsot{\MTrel{\yue(\T)}}  % decorated marked surface
\newcommand{\MTrel}[1]{\MT(\sop,{#1})}
\newcommand\surfy{\surf^{\yue}}  % decorated marked surface
\def\wt{\mathbf{w}}
\def\wty{\wt_{\on{fin}}}
\def\wtw{\wt_{\on{inf}}}
\def\rk{\on{rk}}
\def\lp{\zeta}
\def\Xto{\on{X}^\sun}
\newcommand{\dbloop}[1]{\overline{#1}^{\circlearrowleft}}
\newcommand{\MTT}{\MT^\sun}  %mixed twist group T
\def\man{i}
\newcommand\sox{\surfo^\mix}  % marked surface new
\newcommand\sov{\surfo^{\vortex}}  % marked surface new
\newcommand\surfx{\surf^\mix}  % marked surface new
\newcommand\surfk{\surf^{\yue,\kui}}  % marked surface new
\newcommand\surfv{\surf^{\vortex}}  % marked surface new
\def\Dsox{\D_3(\surfx)} % triangulated category
\def\Dsov{\D_3(\surfv)} % triangulated category
\newcommand{\FQuads}{\FQuad^\pm}
\def\Qp{Q_\RT^{\sun}}
\def\Wp{W_\RT^{\sun}}
\def\Qx{Q_{\xT}^{\mix}}
\def\Wx{W_{\xT}^{\mix}}
\def\hx{\h_{\xT}^{\mix}}
\def\Gx{\Gamma_{\xT}^{\mix}}
\def\mix{{\yue,\vortex}}
\def\Qv{Q_\RT^{\vortex}}
\def\Wv{W_\RT^{\vortex}}
\def\LLk{\on{L}^2(\kui)}
\def\cok{\on{CO}(\kui)}
\def\sok{\surfo^{\yue,\kui}}
\def\MTsoyk{\MT(\soyk)}
\def\soyk{\surfo^{\yue/\kui^2}}
\newcommand\xT{\RT^\times} % ordinary triangulation
\def\DorE{\text{\Aries}}
\def\RootS{\Omega}
\def\ELam{\wh{\RootS}}
\begin{document}
%=========================================================
\begin{abstract} %Notation free version:
We study the moduli space of framed quadratic differentials with prescribed singularities parameterized by a decorated marked surface with punctures (DMSp), where simple zeros, double poles and higher order poles respectively correspond to decorations, punctures and boundary components. We show that the fundamental group of this space equals the kernel of the Abel-Jacobi (AJ) map from the surface braid group of DMSp to the first homology group of the marked surface (without decorations/punctures). Moreover, a universal cover of this space is given by the space of stability conditions on the associated 3-Calabi-Yau category.

Furthermore, when we partially compactify and orbifold this moduli space by allowing the collision of simple zeros and some of the double poles, the resulting moduli space is isomorphic to a quotient of the space of stability conditions on the deformed (with respect to those collidable double poles) 3-Calabi-Yau category.

Finally, we show that the fundamental group of this partially compactified orbifold equals the quotient group of the kernel of the AJ map by the square of any point-pushing diffeomorphism around any collidable double pole.
This construction can produce any non-exceptional spherical/Euclidean Artin braid groups.

\bigskip\noindent
\emph{Key words:}
quadratic differentials, Abel-Jacobi maps, partial compactification, stability conditions,
flip graphs, Artin braid groups

\end{abstract}
\maketitle

\tableofcontents \addtocontents{toc}{\setcounter{tocdepth}{1}}
\setlength\parindent{0pt}
\setlength{\parskip}{5pt}

%=========================================================
%=========================================================
\section{Introduction}
%=========================================================
\subsection{Topology of moduli spaces of quadratic differentials}\
%=========================================================

The moduli spaces of abelian/quadratic differentials are classical objects of study
in dynamical systems, which also naturally interact with topology, algebraic geometry
and, recently, representation theory of algebras.
For instance, the topology of moduli spaces has attracted a lot of attention.

%=========================================================
\paragraph{\textbf{Connected components}}\
Kontsevich-Zorich \cite{KZ} classified the connected components of the space of holomorphic abelian differentials;
later Lanneau \cite{L} and Boissy \cite{Boi} treated the case of meromorphic quadratic differentials;
more recently, Chen-Gendron \cite{CG} extended this the case of $k$-differentials.
In most of these cases, the components are classified by spin invariants, hyper-ellipticity and torsion conditions.

%=========================================================
\paragraph{\textbf{Fundamental groups}}\
The next interesting and challenging question is the calculation of (orbifold) fundamental groups of components of these spaces.
Until now, few cases have been known: see \cite{W,Ham,CS2} for the studies on certain quotient groups of the fundamental groups of such types, \cite{FM} and \cite{LM} for the genus 2 and 3 cases respectively.

In the previous work with King \cite{KQ2},
we calculated the connected components and fundamental groups of moduli spaces of quadratic differentials
with only simple zeros and higher order poles,
in which case, the prescribed singularities are encoded by a decorated marked surface (DMS) $\surfo$.
One of the key ingredients is the introduction of the braid twist group $\BT(\surfo)\lhd\MCG(\surfo)$ of $\surfo$ in \cite{QQ}, generated by the braid twists along any closed arcs).
Such a group is a (usually proper) normal subgroup of the surface braid group $\SBr(\surfo)$.
The motivation came from the study of the spherical twist group of the Calabi-Yau-3 category associated to $\surfo$, where these two (braid and spherical) twist groups can be naturally identified (\cite[Thm.~1]{QQ}).

In this work, we investigate the case with simple zeros, higher order poles and also double poles.
The first key observation is that the braid twist group is in fact,
the kernel of the Abel-Jacobi map.
In the finite area case (no double or higher order poles), this map was considered in \cite{W}.

%=========================================================
\paragraph{\textbf{Topological Abel-Jacobi maps}}\

We fix a Riemann surface $\rs$ and consider the quadratic differential $\phi$ of
prescribed singularity type
\[
    (\wty=\wt_{\ge-1},\wtw=\wt_{\le-2}),
\]
where $\wty$ and $\wtw$ are the collections of the finite/infinite singularities, respectively,
represented by their orders.
The numerical datum is encoded by a weighted DMS with punctures,
$\sop$\footnote{\footnotesize{The symbol $\sun$ is pronounced \emph{sun}.}}
which is the real blow-up of $\rs$ with respect to $\phi$ (as a topological surface).
The weighted decorations in $\Tri$ are finite singularities (=zeros and simple poles) in $\wty$;
the punctures in $\sun$ are order two poles in $\wt_{-2}$;
the marked points $\M\subset\partial\surf$ on the boundary components correspond to
(the distinguish directions at) higher order poles in $\wt_{\le-3}$.
We will mainly focus on the moduli space $\FQuad(\surfp)$ of $\surfp$-framed quadratic differentials.
It is a manifold and a $\MCG(\surfp)$-covering of the (unframed) moduli space $\Quad(\surfp)$.

The monodromy map naturally maps each loops in $\FQuad(\surfp)$ to
a loop in the configuration space of $\Tri$ of weighted points on $\rs$ with special marking $\wtw$.
The classical Abel-Jacobi map $\AJ^{\wt_{\le-3}}$ maps
special marking $\wt_{\le-3}(\subset\wtw)$, regarded as a divisor of $\rs$,
to a line bundle in the Picard variety $\on{Pic}(\rs,\wt_{\le-3})$.
Note that we forget about the punctures in $\sun=\wt_{-2}$ here.
Then we obtain the following map:
\begin{gather}\label{eq:ses0}
%    \FQuad(\surfp)
%        \xrightarrow{}
    \AJ^{\wt_{\le-3}} \colon  \on{conf}_{\Tri}(\rs,\wtw) \to \on{Pic}(\rs,\wt_{\le-3}).
\end{gather}
Taking $\pi_1$ of these spaces and combining with the monodromy map, we obtain a sequence of maps:
\begin{gather}\label{eq:ses0+}
    \pi_1\FQuad(\surfp)
        \xrightarrow{\Xi} \SBr(\surfp_{\Tri})
        \xrightarrow{ \AJ } \Ho{1}(\surf),
\end{gather}
where the second term is the surface braid group and the last term is the first homology.
We call the map $\AJ$ induced from $\AJ^{\wtw}$ the \emph{(topological) Abel-Jacobi map}.

We will focus on the simple zero case, i.e. $\wty=\wt_1$.
We will also consider the Teichm\"{u}ller framed (i.e. $\sop$-framed) version $\FQuad(\sop)$,
which is the $\MCG(\sop)$-covering of $\Quad(\surfp)$.
The following is the main result of the paper (\Cref{thm:main}).

\begin{theorem}\label{thm:1}
Let $\sop$ be a DMS with punctures. There is a short exact sequence
\begin{gather}\label{eq:ses1}
    1\to \pi_1\FQuad(\surfp)\xrightarrow{\Xi} \SBr(\sop)\xrightarrow{ \AJ } \Ho{1}(\surf)\to1
\end{gather}
Moreover, we have
\begin{equation}\label{eq:main}
  \begin{cases}
     \pi_1\FQuad^{\T}(\sop)=1,\\
     \pi_0\FQuad(\sop)\cong\Ho{1}(\surf),
  \end{cases}
\end{equation}
where $\FQuad^{\T}(\sop)$ is any connected component of $\FQuad(\sop)$.
\end{theorem}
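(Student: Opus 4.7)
The plan is to establish the short exact sequence (1.3) first, and then derive (1.4) from the covering-space structure $\FQuad(\sop)\to\FQuad(\surfp)$, whose regular deck group is $\SBr(\sop)=\ker\bigl(\MCG(\sop)\twoheadrightarrow\MCG(\surfp)\bigr)$ and whose monodromy homomorphism is precisely $\Xi$.

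Surjectivity of $\AJ$ is essentially tautological: given $[\gamma]\in\Ho{1}(\surf)$ represented by a loop $\gamma$ avoiding all singularities, moving a single decoration along $\gamma$ while keeping the others fixed defines an element of $\SBr(\sop)$ mapping to $[\gamma]$. The inclusion $\im(\Xi)\subseteq\ker(\AJ)$ follows because a loop in $\FQuad(\surfp)$ produces a continuous one-parameter family of zero-divisors of quadratic differentials; the Abel-Jacobi image of such a family is constant in $\on{Pic}(\rs,\wt_{\le-3})$, since any two zero-divisors along the loop differ by a principal divisor coming from the quotient of two differentials.

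The critical inclusion $\ker(\AJ)\subseteq\im(\Xi)$ I would prove by identifying both sides with the braid twist group $\BT(\sop)$. That $\ker(\AJ)=\BT(\sop)$: each braid twist along a simple closed arc is a loop bounding a disk in $\surf$, hence null-homologous; conversely, one decomposes any element of $\ker(\AJ)$ into braid twists using a presentation of $\SBr(\sop)$ together with the relations defining the abelianization $\Ho{1}(\surf)$. That $\im(\Xi)=\BT(\sop)$: I would use the natural projection $\FQuad(\surfp)\to\Conf_\Tri(\rs,\wtw)$, whose fiber (the space of $\surfp$-framed quadratic differentials with prescribed singular support) is simply connected; the long exact sequence of homotopy groups then yields injectivity of $\Xi$, while each braid twist generator is realized by a local one-parameter family in which two simple zeros approach and swap.

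For (1.4), since $\Xi$ is injective by (1.3), the monodromy action of $G:=\pi_1\FQuad(\surfp)$ on the fiber $\cong\SBr(\sop)$ is free, so each component of $\FQuad(\sop)$ has trivial fundamental group: $\pi_1\FQuad^\T(\sop)=1$. The orbit space equals $\SBr(\sop)/\Xi(G)\cong\Ho{1}(\surf)$ by (1.3), giving $\pi_0\FQuad(\sop)\cong\Ho{1}(\surf)$. The main obstacle will be the identification $\ker(\AJ)=\BT(\sop)$ in the punctured setting. In the previous work [KQ2] without punctures/double poles this is handled by direct topological arguments on the DMS; here the punctures $\sun$ enrich $\SBr(\sop)$ with loops around the order-$2$ poles that must be carefully accounted for. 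I expect the argument to proceed either by an explicit presentation of $\SBr(\sop)$ plus puncture-loop analysis, or by an inductive argument filling in one puncture at a time and invoking a Birman-type exact sequence at each step.
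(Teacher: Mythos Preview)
Your proposal has a fundamental misidentification. You assert $\ker(\AJ)=\BT(\sop)$, but this is false when $\sun\neq\emptyset$. The map $\AJ$ in the statement lands in $\Ho{1}(\surf)$, where the punctures have been \emph{forgotten}; hence the L-twist $L_\lp$ along any loop $\lp$ enclosing a single puncture also lies in $\ker(\AJ)$, since such a loop becomes null-homologous once the puncture is removed. The paper makes this precise by introducing the \emph{mixed twist group} $\MT(\sop)=\langle \BT(\sop),\, L_\lp:\lp\in\LA(\sop,\sun)\rangle$ and proving $\ker(\AJ)=\MT(\sop)$ (\Cref{thm:QZ+}), which strictly contains $\BT(\sop)$ in the punctured case. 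Your closing remark anticipates trouble here, but the formulation you give cannot be salvaged: the correct target is $\MT(\sop)$, not $\BT(\sop)$.

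The second, deeper gap is your injectivity argument. You propose to use a fibration $\FQuad(\surfp)\to\Conf_\Tri(\rs,\wtw)$ with simply connected fibers and invoke the homotopy long exact sequence. This does not work: the Riemann surface $\rs$ varies over the moduli space, so there is no single configuration space to map to; and even stratum-by-stratum the ``fiber'' (quadratic differentials with prescribed divisor on a fixed curve) is not obviously simply connected in any useful sense. The paper's approach is entirely different and genuinely categorical: it identifies $\FQuad^\T(\sop)\cong\Stab^\T(\Dsop)$ (\Cref{thm:CHQ2}), builds a \emph{flip groupoid} $\fg(\surfp)$ by imposing square/pentagon/dumbbell/hexagon relations on the flip graph, proves that the decorated flip groupoid $\fgt(\sop)$ is simply connected with covering group $\MT(\sop)$ (\Cref{thm:EG}), and then shows that all these relations hold in $\Stap$ via the $\CC$-action on stability conditions (\Cref{pp:SQH}). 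Simply connectedness of $\FQuad^\T(\sop)$ follows, and the short exact sequence is then read off from the covering $\FQuad(\sop)\to\FQuad(\surfp)$ combined with $\ker(\AJ)=\MT(\sop)$. The injectivity of $\Xi$ is thus a consequence of the hard combinatorial/categorical work, not of a fibration argument.
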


When $\sun=\emptyset$, the theorem is a slight improvement of the main result in \cite{KQ2}.

In the works \cite{W,Ham,CS2}, the authors studied maps similar to \eqref{eq:ses0+}
and described the image of the corresponding $\Xi$.
In our case, the most challenging part is the injectivity of $\Xi$.
Notice that in general, injectivity does not holds,
cf. \cite[Rem.~12.4]{Ham} and \cite[Thm.~A]{G}.

%=========================================================
\paragraph{\textbf{$K(\pi,1)$-conjecture and categorification}}\

In \cite{KZ0},
Kontsevich-Zorich conjectured that `each connected component of (a stratum of the) moduli space of quadratic differentials has homotopy type $K(\pi,1)$ for $\pi$ which is a group commensurable with some mapping class group'.
At least in our case (i.e. all finite singularities are simple zeros), we believe that is true.
In fact, it is closely related to the contractibility conjecture for the spaces of stability conditions.

The notion of stability condition on triangulated categories was introduced by Bridgeland \cite{B1},
with motivations coming from $\Pi$-stability in string theory.
By the work of \cite{HKK,CHQ},
any connected component of a moduli space of quadratic differentials with prescribed singularity type $\surfp_{\wty}$
can be realized by the space of stability conditions on some triangulated category $\D$:
\[  \FQuad^{\T}(\surfp_{\wty}) \cong \Stap(\D)    \]
Such a category $\D$ is not unique and can be chosen to be Calabi-Yau.
One should regard that $\Stab\D$ provides a categorification of the moduli space;
other topological operations (e.g. collision and collapsing subsurface)
may also be understood by some categorical operations (e.g. taking Verdier quotient in \cite{BMQS}).
In particular, we will use the 3-Calabi-Yau categories $\Dsop$ constructed in \cite{CHQ},
to prove the main result above.
More precisely, we show the following (\Cref{thm:stab}).

\begin{theorem}\label{thm:2}
$\Stap(\Dsop)$ is simply connected and hence is a universal cover of $\FQuad(\surfp)$. %$\FQuad(\surfp)$.
\end{theorem}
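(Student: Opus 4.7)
The plan is to deduce Theorem~\ref{thm:2} from Theorem~\ref{thm:1} together with the CHQ-type identification of $\Stap(\Dsop)$ with a connected component of the Teichm\"uller-framed moduli space.

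First, I would establish a biholomorphism $\Stap(\Dsop) \xrightarrow{\sim} \FQuad^{\T}(\sop)$ onto a connected component of the $\sop$-framed moduli, extending the construction of \cite{HKK,CHQ}. The central charge of a stability condition, evaluated on the simple objects of a heart attached to a triangulation, is matched with the periods $\int \sqrt{\phi}$ along the edges of the dual triangulation; the loop generators in $\Dsop$ associated to the punctures in $\sun$ then correspond to the residues of $\sqrt{\phi}$ at the double poles. A standard deformation/local-coordinate argument upgrades this central-charge map to a local biholomorphism whose image is a full connected component of $\FQuad(\sop)$.

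Granted this identification, simple-connectedness is immediate: by \eqref{eq:main} each component of $\FQuad(\sop)$ is simply connected, so $\pi_1\Stap(\Dsop) = \pi_1\FQuad^{\T}(\sop) = 1$. For the universal-cover part I would consider the tower
\[
  \Stap(\Dsop) \xrightarrow{\sim} \FQuad^{\T}(\sop) \hookrightarrow \FQuad(\sop) \longrightarrow \FQuad(\surfp),
\]
where the last map forgets the decorations and is a regular covering with deck group $\SBr(\sop)$. By \eqref{eq:main}, the quotient $\SBr(\sop)/\BT(\sop) \cong \Ho{1}(\surf)$ permutes the components of $\FQuad(\sop)$ freely and transitively, so the stabilizer of $\FQuad^{\T}(\sop)$ is exactly $\BT(\sop)$. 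By \eqref{eq:ses1} this stabilizer equals $\pi_1\FQuad(\surfp)$, making the composition $\Stap(\Dsop) \to \FQuad(\surfp)$ a regular cover with deck group $\pi_1\FQuad(\surfp)$; simple-connectedness of the total space then forces it to be the universal cover.

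The main obstacle is the first step---extending the \cite{CHQ} period-coordinate identification to accommodate the double poles. One needs to ensure that the loop classes at puncture vertices give the correct residue periods and that the isomorphism $\ST(\Dsop) \cong \BT(\sop)$ (the 3-CY analogue of \cite[Thm.~1]{QQ}) is compatible with the deck-transformation action on the covering tower. Once these technical points are in place, everything reduces to a formal manipulation of the Abel-Jacobi short exact sequence.
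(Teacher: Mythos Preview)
Your argument is circular. In the paper, the equality $\pi_1\FQuad^{\T}(\sop)=1$ in \eqref{eq:main} is \emph{deduced from} Theorem~\ref{thm:2} (via the CHQ identification $\Stap(\Dsop)\cong\FQuad^{\T}(\sop)$), not the other way round. Theorem~\ref{thm:1} is proved \emph{after} Theorem~\ref{thm:2} in the logical order of the paper: once $\Stap(\Dsop)$ is known to be simply connected, one reads off $\pi_1\FQuad^{\T}(\sop)=1$ and then combines with $\MT(\sop)=\ker\AJ$ and the $\pi_0$ count to assemble the short exact sequence. So invoking \eqref{eq:main} to prove Theorem~\ref{thm:2} assumes what you must establish.

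The paper's actual proof is combinatorial and does not go through Theorem~\ref{thm:1}. One embeds the flip graph $\FGT(\sop)$ as a skeleton of $\FQuad^{\T}(\sop)\cong\Stap(\Dsop)$, so that $\pi_1\FGT(\sop)\twoheadrightarrow\pi_1\Stap(\Dsop)$ is surjective. The substantive work is then (i) showing that the flip groupoid $\fgt(\sop)$, obtained by imposing square, pentagon, fat/thin dumbbell and symmetric hexagon relations, is simply connected (this is the content of \Cref{thm:EG}, built on \Cref{pp:whole} and the mixed-twist-group analysis), and (ii) verifying that each of those relations is contractible inside $\Stap(\Dsop)$ (\Cref{pp:SQH}, done by an explicit $\CC$-action argument). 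Steps (i) and (ii) together force the surjection above to factor through $\pi_1\fgt(\sop)=1$. Your plan bypasses all of this; without an independent proof of $\pi_1\FQuad^{\T}(\sop)=1$ it does not get off the ground.

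A smaller point: in your covering-tower paragraph you write $\SBr(\sop)/\BT(\sop)\cong\Ho{1}(\surf)$, but the correct stabilizer of a component is the mixed twist group $\MT(\sop)$, not $\BT(\sop)$; the quotient by $\BT(\sop)$ is $\Ho{1}(\surfp)$, cf.~\eqref{eq:kerker} and \eqref{eq:3x3}.
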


%=========================================================
\paragraph{\textbf{Donaldson-Thomas (DT) invariants and quantum dilogarithm identities}}\

Motivated by classical curve-counting problems,
Thomas \cite{T} proposed the so-called DT-invariants to count stable sheaves on Calabi–Yau 3-folds.
Kontsevich–Soibelman \cite{KS1,KS2} generalized their work and formalized a way of counting semistable objects
in 3-Calabi–Yau triangulated ($A_\infty$-)categories (or counting BPS states in physics terminology),
using the so-called motivic DT-invariants.
The quantum version of DT-invariants can be calculated via a product of quantum dilogarithms cf. \cite{K2,R,KS1,KS2,KW}.
Recently Haiden \cite{Hai} and Kidwai-Williams \cite{KW} gave formulae for calculating the quantum DT-invariants in the surface case by counting saddle trajectories and ring domains

The point is that one can think of the DT-invariants of abelian categories
(or hearts of some triangulated category) as a function on the spaces of stability conditions.
It is constant in a fixed cell and changes values when crossing a wall (of the first kind).
The fact that the product is an invariant is due to quantum dilogarithm identities (QDI).
In many cases, the QDI reduce to the pentagon identity (and the trivial commutation relation or square identity),
(e.g. \cite{Q1} for the Dynkin case, cf. \cite{Na1,Na2}).
In the marked surface case, such a reducibility (to square/pentagon identity) is equivalent to
the fact that the fundamental group of the unoriented flip exchange graph $\uEG(\surfp)$ of triangulations
is generated by squares and pentagons,
cf. \Cref{thm:H} for the classical result, \cite[Thm.~10.2]{FST} for the cluster case and
our slight generalization \Cref{pp:FST}.
Such results interpreted as local simply connectedness.

%=========================================================
\subsection{Techniques}\
%=========================================================

%=========================================================
\paragraph{\textbf{Mixed twist group}}(\Cref{sec:twist})\

The surface braid group $\SBr(\sop)$
was first introduced by Zariski (naturally generalizing Artin’s geometric definition)
and was rediscovered by Fox (cf. the survey \cite[\S~2]{GJP} and the references).
There are two types of standard generators:
the braid twist and the L-twists (point-pushing diffeomorphisms)
see \Cref{fig:T} or \Cref{fig:2loop} for the monodromy version in the moduli spaces.

\begin{figure}[bth]\centering
\makebox[\textwidth][c]{
 \includegraphics[width=7cm]{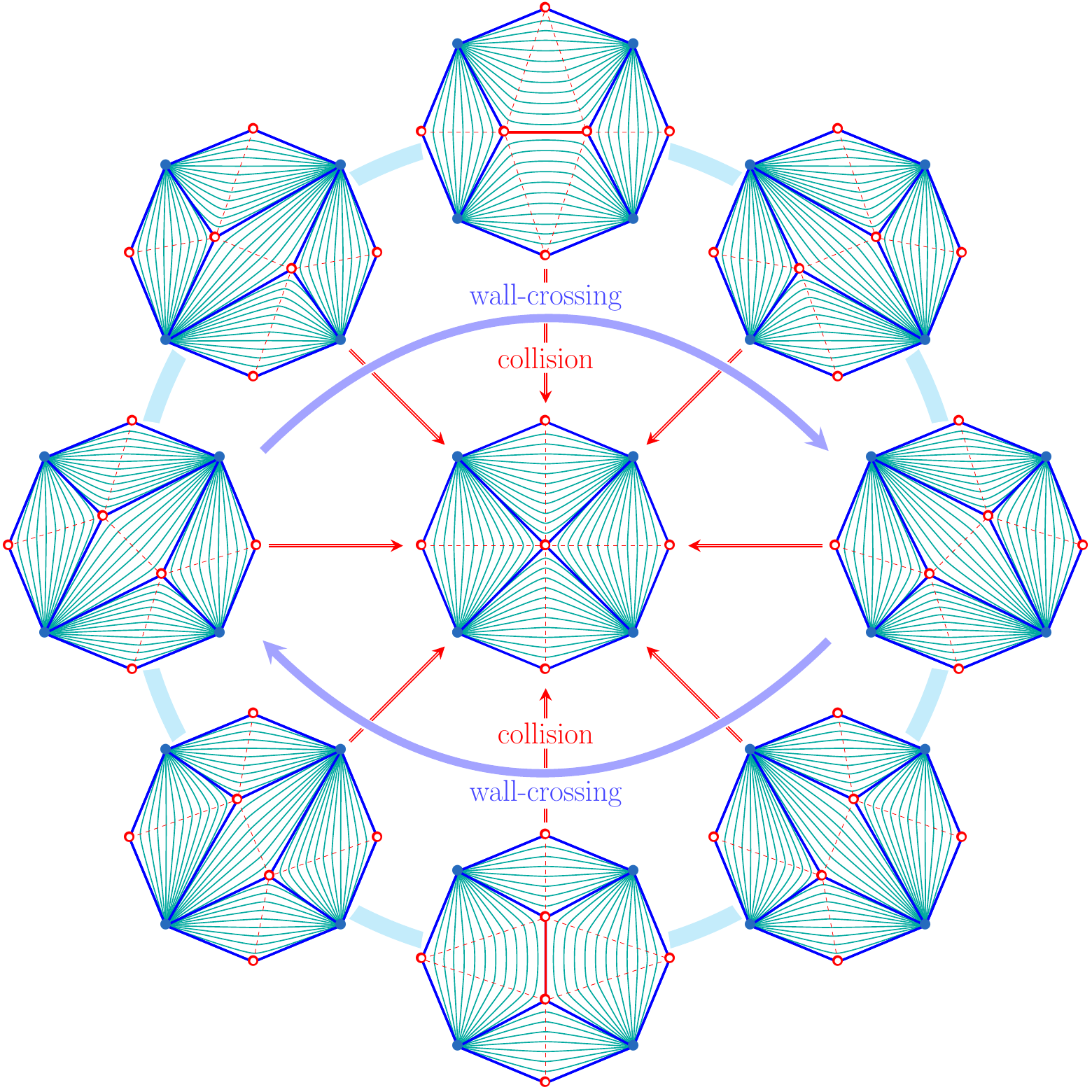}\;
  \includegraphics[width=7cm]{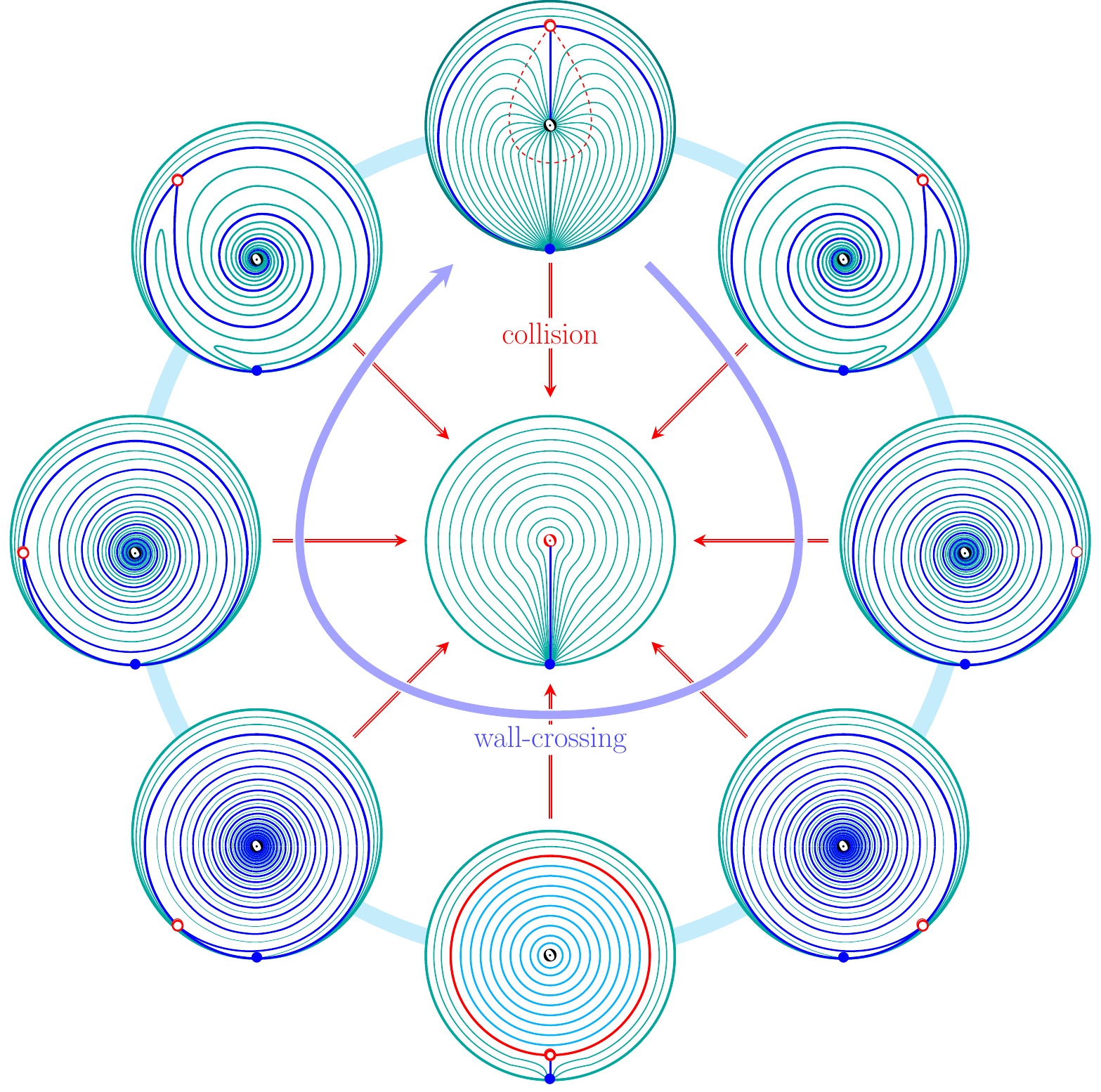}\;
}\caption{Loops in moduli spaces as braid twist/L-twist}\label{fig:2loop}
\end{figure}

The first step of proving \Cref{thm:1} is to understand the kernel of the Abel-Jacobi map in \eqref{eq:ses1}.
For this purpose, we introduce the \emph{mixed twist groups} $\MTsoy$,
which is the subgroup of $\SBr(\sop)$ generated by all braid twists and
the L-twists that wind a puncture in a chosen subset\footnote{\footnotesize{The symbol $\yue/\kui$ are pronounced \emph{left/right moon} respectively.}}
$\yue$ of $\sun$. Denote by $\kui=\sun\setminus\yue$.

We prove that (\Cref{thm:QZ+})
%\begin{theorem}
the mixed twist group $\MTsoy$ is the kernel of the (relative) Abel-Jacobi map $\AJ^{\sun}_{\yue}$:
\begin{equation}\label{eq:MT}
  \MTsoy=\ker \big(\SBr(\sop)\xrightarrow{\AJ^{\sun}_{\yue}}\Ho{1}(\surf^{\kui})\big),
\end{equation}
which admits a finite presentation in \Cref{thm:MT's}.
%\end{theorem}

Here, $\Ho{1}(\surf^{\kui})=\Ho{1}(\surf)\oplus\Ho{1}(\kui)$ for $\Ho{1}(\kui)=\ZZ^{\oplus\kui}$.
In particular, the biggest one is
\begin{equation}\label{eq:MT=pi1}
    \MT(\sop)=\MTrel{\sun}=\ker\AJ=\pi_1\FQuad(\surfp)
\end{equation}
for $\AJ=\AJ^{\sun}_{\sun}$ that appears in \eqref{eq:main},
and the smallest mixed twist group is the braid twist group
$\BT(\sop)=\MTrel{\emptyset}=\ker \AJ^{\sun}_\emptyset$.
The inbetweeners will appear when we calculating the fundamental groups of partial compactifications of $\FQuad(\surfp)$ later.

%=========================================================
\paragraph{\textbf{Flip groupoids and flip twist groups}}(\Cref{sec:cat,sec:fgoupoids})\

The other ingredient of proving \Cref{thm:1} is the study of fundamental groups of oriented flip graphs.
As mentioned in the DT-invariant paragraph, QDI is equivalent to the fact that
$\pi_1\uEG(\surfp)$ is generated by unoriented squares and pentagons.
We need to analyze the fundamental group $\pi_1\FG(\sop)$ of the flip graph of decorated triangulations of $\sop$.
There are four types of basic loops:
\begin{itemize}
  \item The squares/pentagons lifted from $\pi_1\uEG(\surfp)$, see \Cref{fig:456}.
  \item The short dumbbell relations introduced in \cite{KQ2}, see \eqref{eq:fat DB} and cf. \Cref{fig:456}.
  \item The long dumbbell relations, see \Cref{fig:Hex} and \Cref{fig:C-1}.
  \item The symmetric hexagons, see \Cref{fig:Hex} and \Cref{fig:C-2}.
\end{itemize}
When imposing these relations/faces in the oriented version $\FG(\surfp)$ of the flip graph for $\surfp$, e.g. the purple graph in the right picture of \Cref{fig:op-tri},
we obtain the \emph{flip groupoid} $\fg(\surfp)$, as a generalization of cluster exchange groupoid introduced in \cite{KQ2}.
The subgroup of the point group at a triangulation $\RT$ of $\pi_1\fg(\surfp)$,
generated by local flip twists, is the \emph{flip twist group} $\FT(\RT)$, cf. \Cref{def:LT}.

We show that there is a natural isomorphism between
a flip twist group and the corresponding mixed twist group (\Cref{thm:FT}):
\begin{equation}
    \FT(\RT)\cong\MTsot,
\end{equation}
where $\RT$ is the triangulation of $\surfp$ induced from a triangulation $\T$ of $\sop$, forgetting about the decorations, and $\yue(\T)$ is the set of $\T$-isolated punctures (which are in self-folded $\T$-triangles).

A highlight of the proof is that the higher braid relation
$\eta\delta\eta\delta=\delta\eta\delta\eta$ decomposes into 2 long dumbbell hexagons and 2 symmetric hexagons
as shown in \Cref{fig:4hex}, which is very different from
the other decomposition (via folding in cluster theory, cf. \cite{HHQ})
into non-symmetric hexagons shown in \Cref{fig:decomposition}.

%=========================================================
\subsection{Partial compactification with orbifolding via deformation}\
%=========================================================

After we study the mixed/flip twist groups, there is a strong hint
of studying the various compactifications of $\FQuad(\surfp)$,
in particular their categorifications using spaces of stability conditions.
Recently, there are many works on the topic of compactifying spaces of stability conditions,
including: \cite{BDL,KKO} of Thurston type compactification;
\cite{Bol,BPPW} of partial compactification
and \cite{BMQS,BMS} towards multi-scale compactification in the sense of \cite{BCGGM}.
In \Cref{sec:MSx}, we give a result of a similar nature.

%=========================================================
\paragraph{\textbf{The other natural moduli space}}\

The moduli space $\FQuad(\surfp)$ and 3-Calabi-Yau category $\Dsop$ in \Cref{thm:2} are
the natural ones associated to $\surfp$ from the point of view of dynamical systems and of higher categories (i.e. perverse schobers, cf. \cite{CHQ}).
Also, recent work \cite{KW} suggests the same from the point of view of DT-theory (and mathematical physics).

However, they are \emph{unnatural} from the point of view of cluster theory and representation theory of algebras (cf. \cite{FST,BS}), at least for the first glance.
Indeed, associated to a triangulation $\RT$ of $\surfp$,
the quiver with potential $\Qp$ with 3-cycles potential $\Wp$ for $\Dsop$ is \emph{degenerate},
in the sense of \cite{DWZ}.
On the other hand, the unique \emph{non-degenerate} quiver with potential
$(\Qv,\Wv)$\footnote{\footnotesize{The symbol $\vortex$ is pronounced \emph{Yin-Yang},
meaning $\ZZ_2$-symmetry in some sense.}}
is the cluster favorite (cf. \cite{LF}).
Indeed, the mutations of which match the original quiver mutations in the sense of \cite{FZ} and
the Jacobian algebras are of skew-gentle type, generalizing type $D_n$ and $\wh{D_n}$.
Fomin-Shapiro-Thurston \cite{FST} introduced the extra $\ZZ_2$ symmetry according to cluster theory,
for each puncture (which we will call it a vortex).
The corresponding interpretation in the moduli spaces of quadratic differentials
is the inclusion of signs of residue at double poles and
allowing the collision of simple zeros and double poles, cf. \cite{GMN,BS}.

%=========================================================
\paragraph{\textbf{The comparison}}(App.~\ref{sec:MSx})\

In the main theorem
\begin{equation}\label{eq:BS}
    \Quad(\surfv)=\FQuad(\surfv)/\MCG(\surfv)\cong\Stap(\Dsov)/\Autp
\end{equation}
of \cite[Thm.~1.2]{BS} (comparing to the quotient version \eqref{eq:CHQ2+} of \Cref{thm:CHQ2}),
\begin{itemize}
  \item The moduli space $\Quad(\surfv)$ is a partial compactification (with orbifolding) of $\Quad(\sop)$.
  \item The 3-Calabi-Yau category $\Dsov$ is a deformation of $\Dsop$,
  constructed from $(\Qv,\Wv)$ that provides categorification of the corresponding cluster algebra).
\end{itemize}
We show that the deformation process of the 3-Calabi-Yau categories in \Cref{thm:2}
perfectly matches with the partial compactification (with orbifolding) process.

More precisely, we can choose a partition $\sun=\yue\cup\kui$,
call punctures in $\kui$ by vortices and use $\vortex$
for $\kui$ to indicate the extra $\ZZ_2$-symmetry associated.
Roughly speaking,
by subtracting the cycles around puncture in $\vortex$ from the potential $\Wp$
we obtain a deformation $\Dsox$ of $\Dsop$.

Correspondingly, we first allow the collision of simple zeros with double poles in $\kui$
and get a partial compactification $\FQuad(\surfk)$.
Then by orbifolding the extra strata we get an orbifold $\FQuad(\surfx)$.
Note that a tricky thing is that we are actually
partially compactify and orifold $\FQuad(\surfp)$ instead of the unframed $\Quad(\surfp)$
since we need to distinguish usual punctures and the collidable one.

We show the following (\Cref{thm:app}).% for the unframed version of $\FQuad(\surfx)$.

\begin{theorem}
There is an isomorphism $$\Quad(\surfx)\colon=\FQuad(\surfx)/\MCG(\surfk)\cong\Stap(\Dsox)/\Autp$$ between complex orbifolds,
which specializes to \eqref{eq:CHQ2+} in \cite{CHQ} for $(\yue,\vortex)=(\sun,\emptyset)$
and \eqref{eq:BS} in \cite{BS} for $(\yue,\vortex)=(\emptyset,\sun)$.
\end{theorem}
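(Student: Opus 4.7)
The strategy is to interpolate between the two extreme cases using the partition $\sun = \yue \cup \vortex$ simultaneously on the moduli side and the categorical side, exploiting that both $(\yue,\vortex)=(\sun,\emptyset)$ and $(\yue,\vortex)=(\emptyset,\sun)$ are already available (via \Cref{thm:2}/\cite{CHQ} and \eqref{eq:BS}/\cite{BS} respectively).

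First I would analyse the inclusion $\FQuad(\surfp)\hookrightarrow\FQuad(\surfx)$ as the open complement of the divisor of collisions between simple zeros and double poles in $\vortex$. The local transverse model is a $\ZZ/_2$-orbifold: a simple zero colliding with a double pole produces a simple pole, whose framing requires a choice of square root of the (generically nonzero) residue, so each collision component is a genuine orbifold divisor of order two. This presents $\FQuad(\surfx)$ as a partial compactification of $\FQuad(\surfp)$ with $\vortex$-indexed orbifold strata, and similarly for the quotient $\Quadh(\surfx)$.

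Second, on the categorical side, I would show that passing from the degenerate potential $\Wp$ to its non-degenerate deformation $\Wx$ --- obtained by subtracting the $3$-cycles around each vortex (cf.~\cite{LF,FST}) --- induces a matching structure on $\Stap(\Dsox)$. Concretely, the corresponding Ginzburg dg-algebra becomes skew-gentle at every vortex vertex, the associated spherical twist acquires an order-two factor, and $\Stap(\Dsox)$ gains orbifold loci along the walls where the simple at a vortex becomes massless. Under the universal-cover identification of \Cref{thm:2}, this deformation translates into replacing the deck group $\MT(\sop)=\ker\AJ$ by $\MT(\sox)$, whose presentation (from \Cref{thm:MT's}) is obtained by imposing that the squares of the L-twists around vortices are trivial. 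The flip-groupoid dictionary then upgrades the isomorphism $\FT(\RT)\cong\MT(\sot)$ to a deformed version $\FT^{\vortex}(\RT)\cong\MT(\sox)$, where $\FT^{\vortex}$ incorporates the same square relations at vortex vertices.

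Finally, I would assemble the two parallel structures as follows. Because the CHQ construction of $\Stap(\Dsox)$ is compatible with the (deformed) flip groupoid, the universal-cover map $\Stap(\Dsop)\to\FQuad(\surfp)$ descends along both parallel quotients to an isomorphism $\Stap(\Dsox)/\MT(\sox)\cong\FQuad(\surfx)$ of complex orbifolds; further quotienting by the matching actions of $\Autp$ and $\MCG(\surfx)$ yields the claim. The main obstacle, I expect, is verifying that the local $\ZZ/_2$ quotient at a collision locus matches the local categorical deformation at the corresponding vortex --- that is, that the geometric involution (sign of residue at the double pole) is implemented by the same involution as the categorical one (the $\ZZ/_2$-symmetry in the skew-gentle/cluster picture at the vortex vertex). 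Once this compatibility is in place, specialising to $(\yue,\vortex)=(\sun,\emptyset)$ recovers \Cref{thm:2} and specialising to $(\yue,\vortex)=(\emptyset,\sun)$ recovers \eqref{eq:BS}, confirming that the interpolation is consistent at both endpoints.
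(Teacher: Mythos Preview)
Your proposal has a genuine structural gap. You want to start from the universal cover $\Stap(\Dsop)\to\FQuad(\surfp)$ of \Cref{thm:2} and \emph{descend} through parallel $\ZZ/_2$-quotients at each vortex. But the paper explicitly warns (see the remark following \Cref{pp:KY}) that in the deformed/mixed setting one does not know how to identify $\on{pvd}(\Gx)$ consistently over all $\xT\in\EG(\surfx)$: there are genuine monodromies (the spherical twists of \Cref{pp:KY}), so $\Stap(\Dsox)$ is \emph{not} presented as a quotient of the simply connected $\Stap(\Dsop)$ in any evident way. Your key step ``$\Stap(\Dsox)/\MT(\sox)\cong\FQuad(\surfx)$'' therefore floats free of any established fact.

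A second, more concrete error: you assert that the relevant deck group ``$\MT(\sox)$'' is obtained from $\MT(\sop)$ by imposing that the squares of the L-twists around vortices are trivial. No such presentation appears in the paper; the mixed twist groups $\MT(\soy)$ of \Cref{thm:MT's} are \emph{subgroups} of $\SBr(\sop)$ indexed by subsets $\yue\subset\sun$, not quotients, and none of them kill $L_\lp^2$. The $\ZZ/_2$ at a vortex enters as an extra \emph{sign datum} on triangulations and on residues, not as a relation in a twist group.

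What the paper actually does is work directly at the quotient level, bypassing \Cref{thm:1} and \Cref{thm:2} entirely. It first builds the combinatorics of signed triangulations and their associated quivers with potential (\Cref{sec:cluster}), proves that flips induce derived equivalences (\Cref{pp:KY}), and then establishes the graph isomorphism $\EG(\surfx)/\MCG(\surfx)\cong\EGp(\Dsox)/\Autp$ (\Cref{pp:tilting}) by matching $\MCG$-orbits of signed triangulations with $\Autp$-orbits of hearts via right-equivalence of the associated quivers with potential (\Cref{lem:isoT}, \Cref{rem:iff}). The upgrade from this skeleton isomorphism to an isomorphism of complex orbifolds then follows the cell/stratification argument of \cite[Thm.~11.2]{BS}, using the modified local coordinates at self-folded triangles with vortices (\Cref{fig:last}). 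No descent from the undeformed universal cover is needed or attempted.
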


In \cite{BMQS}, collapsing surfaces include the special case of collision of singularities.
However, one uses quotient categories for the extra (lower) strata there
while we use deformed categories for the unions with the extra strata.

%=========================================================
\paragraph{\textbf{Fundamental groups of compactified spaces with/without orbifolding}}(\S~\ref{sec:xxx})\

We end the paper with the calculation of the fundamental groups of
the partial compactifications of $\FQuad(\surfp)$,
namely, $\FQuad(\surfk)$ (without orbifolding) and $\FQuad(\surfx)$ (with orbifolding),
cf. \Cref{thm:pi1-kui} and \Cref{thm:pi1-v}.

\begin{theorem}
The fundamental groups of compactified moduli spaces are
\begin{equation}\label{eq:pix}
\begin{cases}
    \pi_1\FQuad(\surfk)=\pi_1\FQuad(\surfp)/\mu(\kui)\\
    \pi_1\FQuad(\surfx)=\pi_1\FQuad(\surfp)/\mu^2(\kui).
\end{cases}
\end{equation}
where $\mu(\kui)$ consisting of loops of the form in the right picture of \Cref{fig:2loop}, where the puncture is in $\kui$
and $\mu^2(\kui)$ consisting of the square of the loops above.
\end{theorem}

Denote by $\sox$ the DMS with mixed punctures \& vortices (=DMSx).
The difference from $\sop$ to $\sox$ is that when considering associated symmetric groups,
one quotients by the (normal) subgroup $\LLk$,
generated by squares of L-twist along any L-arc enclosing precisely one puncture in $\kui$.
Then we have DMSx version of \eqref{eq:ses1},% and \eqref{eq:ses1+},
which is equivalent to the second equation in \eqref{eq:pix}:
\begin{equation}\label{eq:pix2}
    \pi_1\FQuad(\surfx)=\MT(\sox)=
    \ker \big(\SBr(\sox)\xrightarrow{\AJ}\Ho{1}(\surf)\big).
\end{equation}

A key ingredient of the seminal works \cite{FST,BS} is the $\ZZv$-symmetry mentioned above.
Our topological interpretation of which is
\[
    \Ho{1}(\vortex)\colon=\Ho{1}(\kui)\big/ \AJ^{\sun}_{\yue}(\LLk) \cong \ZZv.
\]
%after quotienting $\LLk$.
Hence such a symmetry is hidden naturally in \eqref{eq:pix2}.

%=========================================================\mathcal{}
\paragraph{\textbf{Euclidean Artin braid groups and hyperplane arrangements}}(App.~\ref{sec:ex})\

\def\fkh{\mathfrak{h}}
\def\fkhd{\mathfrak{h}_{\ELam}}
\def\reg{{\on{reg}}}
\def\hreg{\fkh^\reg}

We have a byproduct by examples:
for any non-exceptional Artin braid group $\Br(\DorE)$ of (spherical or) Euclidean type $\DorE$,
there is a DMSx $\surfx$ such that
\[
    \Br(\DorE)=\MTsoyk=
        \ker \big(\SBr(\sox)\xrightarrow{\AJ^\vortex}\Ho{1}(\surf^\vortex)\big).
\]
The mixed twist group $\MTsoyk$ here is the quotient of $\MT(\sox)$ in \eqref{eq:pix2} by $\ZZv$.

Moreover, the corresponding hyperplane arrangements $\fkh^\reg_\DorE/W_\DorE$, quotient by the Coxeter/Weyl group $W_\DorE$,
can be realized by
\[
    \FQuad^\pm(\surfx)=\Stap(\D_3(\surfx))/\Br(\DorE),
\]
the moduli space of signed quadratic differentials on $\surfx$ or quotient of stability space on the associated 3-Calabi-Yau category (cf. \Cref{thm:PS+}).
This provides another justification for orbifolding in partial compactification.
Such a result is also the 3-Calabi-Yau analogue of \cite{B2} for Kleinian singularities.

%=========================================================
\subsection*{Convention}
%=========================================================
\begin{itemize}
\item \textbf{Composition:} we will write $a\circ b$ as $ab$.
\item \textbf{Inverse} we will write $s^{-1}$ as $\iv{s}$ and $\iv{st}=\iv{s}\cdot\iv{t}$.
\item \textbf{Conjugation:} we will write $\iv{b}ab$ as $a^b$ and then $\iv{(a^b)}=\iv{a}^b$.
\item \textbf{Relations:} we will use the higher braid relation of rank $m$:
\[
    \Br^m(a,b)\colon \underbrace{aba\cdots}_{m}=\underbrace{bab\cdots}_{m}.
\]
In particular,
we will often use commutation relation $\Co=\Br^2$ and usual braid relation $\Br=\Br^3$.
And the only other higher rank braid relation we will consider in this paper is $\Br^4$.
\item There is a partition $\sun=\yue\cup\kui$ and
$\vortex$ is the same set of $\kui$ but equipped with $\ZZ_2$-symmetry.
\end{itemize}

%=========================================================
\subsection*{Acknowledgments}
%=========================================================
I would like to thank Huang Jingyin, Han Zhe, Alastair King, Bernhard Keller, Martin M\"oller, Ivan Smith and Zhou Yu for inspiring conversations.
Also thanks to Anna Barbieri, Dawei Chen, Yitwah Cheung, He Ping and Nicholas Williams for useful comments.

This work is supported by National Natural Science Foundation of China (Grant No. 12425104).

%Ethical Statement: The authors declare that this work is original, has not been published elsewhere, is not under consideration by another journal, and complies with all ethical responsibilities of authors as required by this journal.

%No new data were created or analyzed in this study. Data sharing is not applicable to this article.

%=========================================================
%=========================================================
\section{Topological Abel-Jacobi maps and mixed twist groups}\label{sec:twist}
%=========================================================
%=========================================================

%=========================================================
\subsection{Decorated marked surfaces with punctures (DMSp)}\label{sec:DMSp}\
%=========================================================

Let $\sop=(\surf,\M,\sun,\Tri)$ be a \emph{decorated marked surface with punctures},
which is determined by the following data:
\begin{itemize}
  \item $\surf$ is a genus $g$ surface;
  \item A set $\M\ne\emptyset$ of \emph{marked points} in the boundary $\partial\surf$ of $\surf$;
  \item A set $\sun$ of \emph{punctures} in the interior $\surf^\circ$ of $\surf$;
  \item A set $\Tri$ of \emph{decorations} in $\surf^\circ$.
\end{itemize}
We require that each boundary component of $\partial\surf$ contains at least one marked point.
Let $m=|\M|$ and
\[
    \sun=\{P_1,\cdots,P_p\}, \quad \Tri=\{Z_1,\cdots,Z_\numtri\}
\]
for $p=|\sun|$, $\numtri=|\Tri|$.
In the figures, we draw marked points, punctures and decorations as black bullets, suns and red circles
(i.e. $\bullet$/$\sun$/\Note{$\circ$}), respectively.

We will denote by $\surf=(\surf,\M)$ the marked surface (without punctures and decorations),
$\surfo$ the decorated marked surface (DMS, forgetting about the punctures) and
$\surfp$ the marked surface with punctures (forgetting about the decorations).

Denote by $\MCG(\sop)$ the \emph{(mapping class) group} of isotopy classes of homeomorphisms of $\sop$
that preserves $\M$, $\sun$ and $\Tri$ setwise, respectively.
Similarly for other variations of $\MCG$ when forgetting about certain sets of points.

The DMSp $\sop$ can be regarded as a point in the (unordered) configuration space $\conf_\Tri(\surfp)$
of $\numtri=|\Tri|$ points in $\surfp$.
The \emph{surface braid group} is defined to be
\[
    \SBr(\sop)\colon=\pi_1\conf_\Tri(\surfp).
\]
Alternatively, consider the forgetful map $F^\Tri\colon \sop\to\surfp$ with induced map
\begin{equation}\label{eq:forget1}
    F^\Tri_*\colon\MCG(\sop)\to\MCG(\surfp).
\end{equation}
Then it is well-known that $\SBr(\sop)=\ker F_*^\Tri$
or there is a short exact sequence
\begin{gather}\label{eq:Sbr}
  1\to \SBr(\sop)\to \MCG(\sop)\xrightarrow{F^\Tri_*}\MCG(\surfp)\to1.
\end{gather}

For the later use, we introduce the following notions.
\begin{itemize}
  \item An \emph{open arc} is a curve connecting marked points or punctures,
  which will be usual drawn in blue/green colorway.
  \item A \emph{closed arc} is a curve connecting decorations,
  which will be usual drawn in red/orange colorway.
\end{itemize}
We say a closed arc is \emph{simple} if it has no self-intersection and its endpoints are different
and an L-arc is a closed arc with endpoints coincide and without self-intersection in $\surf^\circ$.
Denote by $\CA(\sop)$ and $\LA(\sop)$ the sets of simple closed arcs and of L-arcs in $\sop$, respectively.

Along any arc $\eta$ in $\CA(\sop)$, there is a \emph{braid twist} $B_\eta$ in $\SBr(\sop)$,
which only moves the two endpoints/decorations of $\eta$ anticlockwise within a neighbourhood of $\eta$,
as shown in the upper pictures of \Cref{fig:T}.
Along any arc $\lp$ in $\LA(\sop)$, there is an \emph{L-twist (or point-pushing diffeomorphisms)} $L_\lp$ in $\SBr(\sop)$,
which corresponds to drag the endpoint/decoration of $\lp$ along $\lp$ anticlockwise in $\pi_1\conf_\Tri(\surfp)$, as shown in the lower pictures of \Cref{fig:T}.

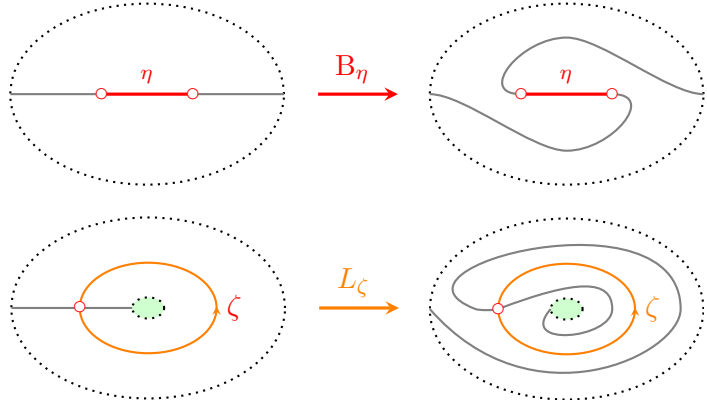
\begin{figure}[ht]\centering
\begin{tikzpicture}[scale=.3]
  \draw[dotted,thick](0,0)ellipse(6 and 4)node[above,red]{$_\eta$};
  \draw[red, very thick](-2,0)to(2,0);
  \draw[gray,thick](2,0)to(6,0)(-2,0)to(-6,0);
  \draw(-2,0)\ww(2,0)\ww;
  \draw[red,thick](0:7.5)edge[very thick,-stealth](0:11)
    (0:9)node[above]{$\Bt{\eta}$};
\end{tikzpicture}\quad
%=======================================================
\begin{tikzpicture}[scale=.3]
  \draw[dotted,thick](0,0)ellipse(6 and 4)node[above,red]{$_\eta$};
  \draw[red, very thick](-2,0)to(2,0);
  \draw[gray,thick](2,0).. controls +(0:2) and +(0:2) ..(0,-2.5)
    .. controls +(180:1.5) and +(0:1.5) ..(-6,0)
    (-2,0).. controls +(180:2) and +(180:2) ..(0,2.5)
    .. controls +(0:1.5) and +(180:1.5) ..(6,0);
  \draw(-2,0)\ww(2,0)\ww;
\end{tikzpicture}

	\begin{tikzpicture}[xscale=.3,yscale=.2]
	\draw[dotted,thick](0,0)circle(6);
    \draw[dotted,thick,fill=green!19](0,0)circle (.7);
	\draw[gray,thick](-6,0)to(-.7,0);
	\draw[\hhd,thick] (0,0) circle (3);
	\draw[\hhd,<-,>=stealth](3,0.1)to(3,-0.01);
	\draw(-3,0)\ww(3,0)node[right,red]{$\lp$};
	\draw[\hhd](0:7.5)edge[very thick,-stealth](0:11);
    \draw[\hhd,thick](0:9)node[above]{$L_{\lp}$};
	\end{tikzpicture}\quad
	%=======================================================
	\begin{tikzpicture}[xscale=.3,yscale=.2]\clip(-6,8)rectangle(6,-6);
	\draw[dotted,thick](0,0)circle(6);
    \draw[dotted,thick,fill=green!19](0,0)circle (.7);
	\draw[\hhd,thick] (0,0) circle (3);
	\draw[\hhd,<-,>=stealth](3,0.1)to(3,-0.01);
	\draw[gray,thick](-6,0)
    .. controls +(-60:9) and +(-90:3) ..(5,0)
	.. controls +(90:8) and +(150:5.5) ..(-4,0)
	.. controls +(-30:.2) and +(-150:.2) ..(-3,0)
	.. controls +(30:4) and +(90:2) ..(2,0)
	.. controls +(-90:2) and +(-120:3) ..(-.7,0);
	\draw(-3,0)\ww(3,0)node[right,\hhd]{$\lp$};
	\end{tikzpicture}
	\caption{The braid twist and L-twist}
	\label{fig:T}
\end{figure}

These are the two standard generators of $\SBr(\sop)$ (cf. \Cref{thm:SBr}),
which satisfies the following conjugation formula:
\begin{equation}\label{eq:Psi}
\begin{cases}
    B_{\Psi(\eta)}=\Psi\circ B_{\eta}\circ \Psi^{-1}=B_{\eta}^{\Psi^{-1}},\\
    L_{\Psi(\lp)}=\Psi\circ L_{\lp}\circ \Psi^{-1}=L_{\lp}^{\Psi^{-1}},\\
\end{cases}
\end{equation}
for any $\Psi\in\MCG(\sop)$, $\eta$ in $\CA(\sop)$ and $\lp\in\LA(\sop)$.
We will brief $B_\eta$ and $L_\lp$ as $\eta$ and $\lp$ unless there are confusions or
we want to emphasis they are twists instead of arcs.
Then equations in \eqref{eq:Psi} becomes $\Psi(a)=a^{\iv{\Psi}}$ for $a=\eta$ or $a=\lp$.

\begin{figure}[hbt]\centering
	\begin{tikzpicture}[scale=.3]
	\draw[red](10,-7)to(14,-9)(24,-9)to(20,-9);\draw[dashed,red](14,-9)to(24,-9);

	\draw[red](10,-7)to(8+2+4-.5,0);
	\draw[red,dashed](8+2+4-.5,0).. controls +(60:1) and +(180:1) ..(17,5);
	\draw[red,->-=.5,>=stealth](10,-7).. controls +(45:5) and +(0:3) ..(17,5);
	\draw[red,-<-=.5,>=stealth](10,-7).. controls +(35:5) and +(-100:2) ..(20+2+4-.3,0.3);
	\draw[red,->-=.5,>=stealth]
              (10,-7).. controls +(18:8) and +(-25:4) ..($(24,0)!-.2!(10,-7)$);
	\draw[red](10,-7).. controls +(40:8) and +(155:4) ..($(24,0)!-.2!(10,-7)$);
	\draw[red,->-=.5,>=stealth]
              (10,-7).. controls +(55:7) and +(-5:4) ..($(12,0)!-.2!(10,-7)$);
	\draw[red](10,-7).. controls +(95:7) and +(175:4) ..($(12,0)!-.2!(10,-7)$);
	\draw[red,->-=.5,>=stealth]
              (10,-7).. controls +(113:7) and +(30:2) ..($(6,0)!-.15!(10,-7)$);
	\draw[red](10,-7).. controls +(126:7) and +(-150:2) ..($(6,0)!-.15!(10,-7)$);
	\draw[red,->-=.5,>=stealth](10,-7).. controls +(140:10) and +(45:2) ..($(1,0)!-.1!(10,-7)$);
	\draw[red](10,-7).. controls +(145:10) and +(-135:2) ..($(1,0)!-.1!(10,-7)$);
	\draw[red,->-=.5,>=stealth](10,-7).. controls +(160:10) and +(79:1) ..($(-1,-4)!-.07!(10,-7)$);
	\draw[red](10,-7).. controls +(165:10) and +(-101:1) ..($(-1,-4)!-.07!(10,-7)$);
	\draw[red,->-=.5,>=stealth](10,-7).. controls +(183:10) and +(95:1) ..($(-1,-8)!-.07!(10,-7)$);
	\draw[red](10,-7).. controls +(188:10) and +(-85:1) ..($(-1,-8)!-.07!(10,-7)$);
	\draw[red,dashed](20+2+4-.3,0.3).. controls +(80:1) and +(180:1) ..(28,5);
	\draw[red](10,-7).. controls +(8:27) and +(0:2) ..(28,5);
	
	\foreach \j in {1,2.5} {
		\draw[thick](8*\j-2+4+.5,0).. controls +(45:1) and +(135:1) ..(8*\j+2+4-.5,0);
		\draw[very thick](8*\j-2+4+.3,0.3).. controls +(-60:1) and +(-120:1) ..(8*\j+2+4-.3,0.3);
	}
	\draw[very thick]
	   (-3,5)to(32,5)(32,-11)  (-3,-11)to(32,-11)
	   (-3,5)to[bend right=90](-3,-11);
	\draw[very thick,fill=gray!14] (32,-3) ellipse (1 and 8) 	node{\tiny{$\partial_{1}$}};
	\draw[very thick,fill=gray!14](1,0)circle(.6) node{\tiny{$\partial_{b}$}};
	\draw[very thick,fill=gray!14](6,0)circle(.6) node{\tiny{$\partial_{2}$}};
	\node at(3.5,0) {$\cdots$};
	\node at(18,0) {$\cdots$};
	\foreach \x/\y in {14/-9,10/-7,24/-9,20/-9}
	   {\draw(\x,\y)\ww;}
    \draw (10,-7)node[below]{\small{$Z_{1}$}}
    	(14,-9)node[above]{\small{$Z_2$}}
    	(20,-9)node[above]{\small{$Z_{\numtri-1}$}}
    	(24,-9)node[above]{\small{$Z_{\numtri}$}};
	\draw[red](-.5,2.2)node{$\delta_{2g+b-1}$} (6,2.2)node{$\delta_{2g+1}$}
    	(12,2.3)node{$\delta_{2g}$} (19,2.8)node{$\delta_{2g-1}$}
    	(23.5,2.4)node{$\delta_2$} (28,-3)node{$\delta_{1}$}
    	(11.5,-8.5)node{$\sigma_1$} (22,-10)node{$\sigma_{\numtri-1}$};
	%\draw[very thick,fill=gray!14] (38,-3) ellipse (1 and 8)
	%	node{\tiny{$\partial_{1}$}};
    \draw[]
        (-1,-4)\dpole (-2.5,-4)node{$P_1$} (-1,-2.5)node[red]{$\delta_{2g+b}$}
        (-1,-6)node[rotate=90]{$\cdots$}
        (-1,-8)\dpole (-2.5,-8)node{$P_p$} (-1,-9.5)node[red]{$\delta_{\rk}$};
	\end{tikzpicture}
	\caption{Generators for $\SBr(\szb)$}
	\label{fig:B's}
\end{figure}

In particular, a set of generators of $\SBr(\sop)$ is shown in \Cref{fig:B's},
which consists of braid twists along $\sigma_i$ and L-twist along $\delta_r$.
Set
\begin{equation}\label{eq:rk}
  \rk\colon=2g+b+p-1.
\end{equation}
Let
\begin{equation}\label{eq:lp}
  \lp_s\colon=\delta_{2g+b-1+s},\quad 1\le s\le p
\end{equation}
be the L-arc encloses the puncture $P_s$.

%=========================================================
\subsection{Mixed twist groups}\label{sec:middle}\
%=========================================================

In \cite{QQ}, we introduce a subgroup $\BT(\sop)$ of $\SBr(\sop)$,
called \emph{braid twist group} of $\sop$, which
is generated by the braid twist $B_\eta$ for any $\eta\in\CA(\sop)$.
In this paper, we shall introduce yet another class of twist groups,
that sits between the two groups above.

Let $\yue\subset\sun$ be a subset of punctures
and $\LA(\sop,\yue)$ be the set of L-arcs consisting of the one that encloses exactly one puncture $P_i$ in $\yue$.

For instance, $\delta_{2g+b+s-1}$ is in $\LA(\sop,\yue)$ if and only if $P_s\in\yue$,
for any $1\le s\le p $ in \Cref{fig:B's}.
In fact, we have
\[
    \LA(\sop,\yue)=\{ \Psi( \lp_s ) \mid P_s\in\yue, \Psi\in\SBr(\sop) \}.
\]
On the one hand, $\Psi\in\SBr(\sop)$ preserves punctures pointwise.
Hence $\Psi(\lp_s)$ encloses the exactly the same puncture $P_{s}$ as $\lp_s$ does.
On the other hand, for any $\delta$ in $\LA(\sop,\yue)$ that encloses the puncture $P_s$,
one can find a mapping class that maps $\delta$ to $\lp_s$, which confirms the claim.
Let
\begin{gather}\label{eq:L-yue}
  \on{L}(\yue)\colon=\< L_{\lp} \mid \lp\in \LA(\sop,\yue)  \>
\end{gather}
be the normal subgroup of $\SBr(\surfo)$ generated L-twist along the L-arcs in $\LA(\sop,\yue)$.

\begin{definition}\label{def:MT}
The \emph{mixed twist group} $\MTsoy=\<\BT(\sop),\on{L}(\yue)\>$ is defined to be
the (normal) subgroup of $\SBr(\sop)$ generated by L-twists in $\on{L}(\yue)$ and all braid twists.
\end{definition}

Take a sequence of subsets
\[
    \emptyset=\yue_0\subset\yue_1\subset\cdots\subset\yue_j\subset\cdots\subset\yue_p=\sun
\]
with $\yue_j=\{ P_1,\cdots, P_j \}$.
It induces a normal series/subnormal series
\begin{equation*}
    \BT(\sop)=\MTrel{\yue_0}
        \lhd \cdots \lhd\MTrel{\yue_j}\lhd\cdots
    \lhd \MTrel{\yue_p}=\MT(\sop) \lhd \SBr(\sop).
        %\textcolor{gray}{\;\;\big(\lhd\MCG(\sop)\big)}.
\end{equation*}
In fact, by the conjugation formula,
we know that any former group in the series is a normal subgroup of a latter one.
These are essentially all the different mixed twist groups we will study.

\begin{example}
When $\surf$ is a disk with $\sun=\emptyset$,
the only mixed twist groups equals $\MCG(\surfo)$,
which is the classical braid group $\Br_{\numtri}$ of type $A_{\numtri-1}$.

When $\surf$ is disk and $p=1$, then $\BT(\sop)$ is isomorphic to braid group of affine type $A_{\numtri-1}$
and is strictly smaller than $\MT(\sop)=\SBr(\sop)$, which is isomorphic to braid group of type $B_{\numtri}$.
\end{example}
%=========================================================
\subsection{Topological Abel-Jacobi maps}\label{sec:AJ}\
%=========================================================

Let $\Ho{1}(\surfy)=\Ho{1}(\surfy,\ZZ)$ be the first homology of $\surfy$.
Forgetting about punctures in $\yue$ gives a map $F^\yue\colon\surfp\to\surf^{\kui}$ and induces a short exact sequence
\[
    1\to\ZZ^{\yue}\to \Ho{1}(\surfp)\xrightarrow{F^\yue_*} \Ho{1}(\surf^{\kui})\to 1.
\]
We will write $\Ho{1}(?)$ for the lattice $\ZZ^?$ (indexed by $?$).

For any element $\lp$ in $\SBr(\sop)$,
there are $\numtri$ paths (or strings in $\pi_1\on{conf}_{\numtri}(\surfp)$)
\[
    \{ p_i\colon Z_i\to\lp(Z_i) \mid Z_i\in\Tri \}
\]
on $\sop$, which forms some numbers of cycles in $\surfp$.
Taking the union/product of these cycles gives an element in $\Ho{1}(\surfp)$,
which define a map
\begin{gather}\label{eq:AJp}
  \AJ^\sun\colon\SBr(\sop)\to\Ho{1}(\surfp)
\end{gather}
sending $\lp$ to $[\coprod p_i]$.
\begin{definition}
We define the \emph{(topological) Abel-Jacobi=AJ map relative to $\yue$} as
\begin{gather}\label{eq:AJ}
    \AJ^{\sun}_{\yue}=F^\yue_*\circ\AJ^\sun\colon\SBr(\sop)\to\Ho{1}(\surf^{\kui}).
\end{gather}
In particular, we have two special ones: $\AJ=\AJ^{\sun}_\sun$ and $\AJ^\sun=\AJ^\sun_\emptyset$.
%\emph{small AJ map}.
\end{definition}

In the remaining of the section, we prove that the mixed twist groups are
exactly the kernels of the corresponding relative AJ maps and give finite presentations of which.

%=========================================================
\subsection{The kernels of the relative AJ maps}\label{sec:SES}\
%=========================================================

We first recall a finite presentation of $\SBr(\sop)$.

\begin{theorem}[Bellingeri-Godelle]\label{thm:SBr}
There is a finite presentation of $\SBr(\sop)$ with
\begin{itemize}
  \item generators $\sigma_1,\cdots,\sigma_{\numtri-1}$ and $\delta_1,\cdots\delta_{\rk }$;
  \item relations
  \begin{itemize}
    \item $\Co(\sigma_i,\sigma_j)$, i.e. $\sigma_i,\sigma_j=\sigma_j\sigma_i$,\; if $|i-j|=1$;
    \item $\Br(\sigma_i,\sigma_j)$, i.e. $\sigma_i,\sigma_j\sigma_i=\sigma_j\sigma_i\sigma_j$,\; if $|i-j|>1$;;
    \item $\Co(\delta_r,\sigma_1\delta_r\sigma_1)$;
    \item $\Co(\sigma_i,\delta_r)$,\; if $i\neq 1$;
    \item $\Co(\delta_s^{\sigma_1},\delta_r)$,\;
        if $s<r$ and $s+1\notin \eveng$;
    \item $\sigma_1 \delta_{r} \sigma_1 \delta_{r-1} \sigma_1=\delta_{r-1} \sigma_1 \delta_{r}$,\;
        if $r+1\in \eveng$.
  \end{itemize}
\end{itemize}
Here $\eveng$ is the set of positive even (resp. odd) integer not bigger than $2g$.
\end{theorem}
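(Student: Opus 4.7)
The plan is to establish the presentation by induction on $\aleph = |\Tri|$ using the Fadell--Neuwirth fibration
\[
    \surfp \setminus \{Z_1,\ldots,Z_{\aleph-1}\} \longrightarrow \Conf_\aleph(\surfp) \longrightarrow \Conf_{\aleph-1}(\surfp),
\]
whose long exact sequence in homotopy yields a short exact sequence relating $\SBr(\sop)$ to the surface braid group with one fewer decoration. The base case $\aleph = 1$ reduces to presenting $\pi_1\surfp$: the generators $\delta_1,\ldots,\delta_{\rk}$ arise as $2g$ handle loops, $b-1$ loops around the auxiliary boundary components, and $p$ loops around the punctures, with a single surface relation expressing that their ordered product equals the loop around the distinguished boundary $\partial_1$.

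For the inductive step I would identify the fiber contribution $\pi_1(\surfp \setminus \{Z_1,\ldots,Z_{\aleph-1}\}, Z_\aleph)$ as a free group whose generators split into the new braid twist $\sigma_{\aleph-1}$ (a small loop around $Z_{\aleph-1}$) and the lifts of the $\delta_r$ attached to the new basepoint $Z_\aleph$. The generating set of the theorem then follows by combining these with the inductive generators. The type-$A$ relations, and the disjoint-support commutations $\Co(\sigma_i,\delta_r)$ for $i\neq 1$, are immediate from choosing representatives supported in disjoint discs. The relations $\Co(\delta_r,\sigma_1\delta_r\sigma_1)$ and $\Co(\delta_s^{\sigma_1},\delta_r)$ for non-handle-pair indices reflect the geometric fact that, after conjugation by $\sigma_1$, the relevant L-arcs can be isotoped into disjoint subsurfaces. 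The higher relation $\sigma_1\delta_r\sigma_1\delta_{r-1}\sigma_1 = \delta_{r-1}\sigma_1\delta_r$ on a handle pair is the classical chain relation on the associated genus-one subsurface, reflecting nontrivial symplectic intersection of the dual handle loops.

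The main obstacle, as always for a presentation theorem, is completeness: proving that every relation in $\SBr(\sop)$ is a consequence of those listed. For this I would apply the Reidemeister--Schreier procedure to the short exact sequence above, pulling back the inductive presentation of the base and combining with the presentation of the free fiber, then checking that the conjugation action on the fiber generators is accounted for exactly by the listed relations. An alternative, perhaps cleaner, route is to directly cite Bellingeri--Godelle's original computation (which treats the case with handles, boundary and punctures in full) and then verify that their generators match those in Figure~\ref{fig:B's} under an explicit change of variables. The bookkeeping in either approach is routine but tedious, and is where the bulk of the work lies.
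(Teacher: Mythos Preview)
The paper does not prove this theorem at all: it is stated with attribution to Bellingeri--Godelle and simply recalled as a known input (``We first recall a finite presentation of $\SBr(\sop)$''). So there is no proof in the paper to compare your proposal against.

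Your sketch via the Fadell--Neuwirth fibration and Reidemeister--Schreier is indeed the standard strategy behind the original Bellingeri--Godelle result, and your own ``alternative, perhaps cleaner, route'' of directly citing their computation and matching generators is exactly what the paper does. For the purposes of this paper, that citation is the intended treatment; supplying an independent proof here would be out of place.
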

For consentience, we introduce the $\varepsilon_r$ inductively as alternative generators
(set $\varepsilon_0=1$):
\begin{equation}\label{eq:vareps}
    \varepsilon_r=
    \begin{cases}
    \delta_r\varepsilon_{r-1}&\text{if $r\notin \eveng$,}\\
    \delta_r\varepsilon_{r-2}&\text{if $r\in \eveng$,}
    \end{cases}
  \quad\Longleftrightarrow\quad
    \delta_r=\begin{cases}
    \varepsilon_r\iv{\varepsilon_{r-1}}&\text{if $r\notin \eveng$,}\\
    \varepsilon_r\iv{\varepsilon_{r-2}}&\text{if $r\in \eveng$,}
    \end{cases}
\end{equation}
for $1\le r\le \rk $.

For $1\le s<r\le \rk $, the relative position of $\varepsilon_s$ and $\varepsilon_r$
are shown in \Cref{fig:abrs} depending on $s+1\notin \eveng$ (left picture)
or $s+1\in\eveng$ (right picture).
Let
\begin{align}\label{tau}
\tau_r=\sigma_1^{\iv{\varepsilon_r}},\quad 1\leq r\leq \rk .
\end{align}
Take any decoration other than $Z_1/Z_2$ and two closed arcs $a,b$ as shown there.
For instance, one can take $a=\sigma_1^{\iv{\sigma_2}}, b=\sigma_2$.

\begin{lemma}\label{lem:tech}
For $1\le s<r\le \rk $, the commutator
\begin{gather}\label{eq:in-BT}
    [\varepsilon_s,\varepsilon_r]=
    \begin{cases}
        \iv{\tau_{s}b}a\iv{\tau_{r}a\tau_{s}}ab\tau_{r}b  & \text{if $s+1\notin \eveng$},\\
        \iv{bb\tau_{s}b}a\tau_{r}\iv{a}\tau_{s}ab\tau_{r}b  & \text{if $s+1\in \eveng$},
    \end{cases}
\end{gather}
is in fact in $\BT(\szb)$. Denote the inverse of the right hand side of \eqref{eq:in-BT} by $n_{s,r}$.
\end{lemma}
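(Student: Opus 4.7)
The plan is to split the argument into two parts. First, I will show that every letter appearing in the word on the right-hand side of \eqref{eq:in-BT} is itself a braid twist along a simple closed arc, so that the whole product (and hence $\iv{n_{s,r}}$) lies in $\BT(\szb)$. Second, I will verify the algebraic identity $[\varepsilon_s,\varepsilon_r] = \iv{n_{s,r}}$ by a direct computation inside the Bellingeri--Godelle presentation of \Cref{thm:SBr}.

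For the first step, recall from \eqref{tau} that $\tau_r = \sigma_1^{\iv{\varepsilon_r}}$ is the conjugate of $\sigma_1$ by $\varepsilon_r\in\SBr(\sop)$. Since $\sigma_1$ is the braid twist $B_{\sigma_1}$ along a simple closed arc, the conjugation formula \eqref{eq:Psi} identifies $\tau_r$ with the braid twist along the image of that arc under $\iv{\varepsilon_r}$, so $\tau_r\in\BT(\szb)$. Choosing $a = \sigma_1^{\iv{\sigma_2}}$ and $b = \sigma_2$ as suggested after \eqref{tau}, both $a$ and $b$ are likewise braid twists along the simple closed arcs drawn in \Cref{fig:abrs}. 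Consequently every factor $\tau_s,\tau_r,a,b$ and their inverses on the right-hand side of \eqref{eq:in-BT} lies in $\BT(\szb)$, and therefore so does any product of them; this gives the conclusion of the lemma as soon as the identity in \eqref{eq:in-BT} is established.

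The substantive content is therefore the algebraic identity. My plan is to expand $\varepsilon_s$ and $\varepsilon_r$ via the inductive definition \eqref{eq:vareps} and rewrite the commutator $\iv{\varepsilon_s}\iv{\varepsilon_r}\varepsilon_s\varepsilon_r$ in terms of the $\delta$'s and $\sigma_i$'s. The relations $\Co(\sigma_i,\delta_r)$ for $i\neq 1$ allow one to move the higher-index $\sigma_i$'s past the $\delta$'s freely, so the only nontrivial moves involve $\sigma_1$ with two consecutive $\delta$'s. In the parity case $s+1\notin\eveng$, the key ingredient is the commutation relation $\Co(\delta_s^{\sigma_1},\delta_r)$; in the case $s+1\in\eveng$, one instead invokes the length-three braid-type relation $\sigma_1\delta_r\sigma_1\delta_{r-1}\sigma_1 = \delta_{r-1}\sigma_1\delta_r$. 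Each rewrite step has the net effect of producing a conjugate of $\sigma_1$, which is precisely one of the $\tau$-letters, and after suitable grouping one recognises the pattern of the word on the right-hand side of \eqref{eq:in-BT}.

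The main obstacle is the book-keeping in this second step: the word manipulations are delicate, and the two parity cases must be handled separately because both \eqref{eq:vareps} and the relevant defining relation of \Cref{thm:SBr} depend on the parity of $s+1$. I expect the even case to be harder, since the braid-length-three relation introduces extra $\sigma_1$-factors that need to be absorbed into the correct conjugates; this is why the stated word contains five interleaved $\tau$-letters rather than four. As a sanity check I would first expand the smallest non-trivial instances, say $(s,r)=(1,2)$ or $(s,r)=(2,3)$, explicitly before writing out the general case (which can be organised by induction on $r-s$, using the commutation relations to reduce the long-range case to a nearest-neighbour one). Once the identity is established, the first step above immediately yields $[\varepsilon_s,\varepsilon_r]\in\BT(\szb)$.
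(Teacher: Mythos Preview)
Your first step is correct and matches the trivial part of the argument: once the identity \eqref{eq:in-BT} is known, membership in $\BT(\szb)$ is immediate since $\tau_s,\tau_r,a,b$ are all conjugates of braid twists.

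For the identity itself you take a genuinely different route from the paper. The paper does \emph{not} verify \eqref{eq:in-BT} algebraically in the Bellingeri--Godelle presentation. Instead it argues topologically: it first checks (by hand) that the right-hand side preserves the arcs $b$ and $c_2$, just as $[\varepsilon_s,\varepsilon_r]$ obviously does; cutting along these arcs reduces the question to a neighbourhood of $\varepsilon_s\cup\varepsilon_r$, where the surface braid group has a single strand $Z_1$ and hence identifies with $\pi_1$ of the local surface. The identity then becomes a check that both sides move $Z_1$ along the same loop, which is done by rewriting the right-hand side as a short product of conjugated braid twists (using $\Br(b,\tau_r)$, $\Br(b,\tau_s)$) and reading off, from the picture of one auxiliary arc, the path traced by $Z_1$.

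Your algebraic plan is in principle legitimate, but as written it is only a sketch, and the difficulty is under-sold. The Bellingeri--Godelle relations are stated for the $\delta$'s, not for the $\varepsilon$'s, so expanding $[\varepsilon_s,\varepsilon_r]$ means unfolding both as products of $\delta$'s and then commuting many factors; the proposed ``induction on $r-s$'' does not obviously reduce the problem, since the target word involves only $\tau_s,\tau_r,a,b$ with no trace of the intermediate indices. You would need to show concretely how the cascade of $\Co(\delta_t^{\sigma_1},\delta_u)$ relations telescopes, and where the extra letters $a,b$ (which live at a \emph{third} decoration $Z_3$ not touched by the $\varepsilon$'s) enter the picture. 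The paper's geometric reduction to a one-strand $\pi_1$ computation sidesteps all of this; if you want to keep the algebraic approach, you should at least carry out one full case to convince the reader the telescoping really happens.
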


\begin{figure}[hbt]
\begin{tikzpicture}[scale=.9]
\def\cc{c}\clip(-3,-1.3)rectangle(10.5,3.5);
\begin{scope}[shift={(0,0)}]
    \draw[thick,blue!80,->-=.72,>=stealth]
        (-2,0) .. controls +(-16:4) and +(30:4) .. node[above]{$\quad\varepsilon_s$} (-2,0);
    \draw[thick,blue!80,->-=.5,>=stealth]
        (-2,0) .. controls +(-20:7) and +(69:7.5) .. node[right]{$\varepsilon_r$} (-2,0);
    \draw[red] (0,3) node[above]{$\tau_r$}
        plot[smooth,tension=3] coordinates {(2,0) (0,3) (-2,0)}
            (0,1) node[above]{$\tau_s$}
        plot[smooth,tension=1.5] coordinates {(2,0) (0,1) (-2,0)};
    \draw[gray!23,fill=gray!23](0,.25)circle(.4)(0,2)circle(.35);
    \draw[red](2,0) edge[Green] node[left]{$\cc_2$}(2,-2) edge node[below]{$b$}(0,-1) \ww
        (-2,0) edge[Green] node[right]{$\cc_1$}(-2,-2) edge node[below]{$a$}(0,-1)\ww
        (0,-1)\ww(-2,0) node[left]{$Z_1$} (2,0) node[right]{$Z_2$};
\end{scope}
\begin{scope}[shift={(7,0)}]
    \draw[thick,blue!80,->-=.5,>=stealth]
        (-2,0) .. controls +(-5:8) and +(52:7) .. node[right]{$\varepsilon_s$} (-2,0);
    \draw[red] (1.5,2.5) node[above]{$\tau_s$}
        plot[smooth,tension=3] coordinates {(2,0) (1.5,2.5) (-2,0)};
    \draw[white,line width=1mm]
        (-2,0) .. controls +(5:7.5) and +(120:5.5) .. (-2,0);
    \draw[thick,blue!80,->-=.42,>=stealth]
        (-2,0) .. controls +(5:7.5) and +(120:5.5) ..  (-2,0) (1.1,1)node{$\varepsilon_r$};
    \draw[white,line width=1mm]
        plot[smooth,tension=3] coordinates {(2,0) (-1.5,2.5) (-2,0)};
    \draw[red, thick] (-1.5,2.5) node[above]{$\tau_r$}
        plot[smooth,tension=3] coordinates {(2,0) (-1.5,2.5) (-2,0)};
    \draw[gray!23,fill=gray!23](0,1)circle(.4) (-2,1.5)circle(.25)(2,1.5)circle(.25);
    \draw[red](2,0) edge[Green] node[left]{$\cc_2$}(2,-2) edge node[below]{$b$}(0,-1) \ww
        (-2,0) edge[Green] node[right]{$\cc_1$}(-2,-2) edge node[below]{$a$}(0,-1)\ww
        (0,-1)\ww(-2,0) node[left]{$Z_1$} (2,0) node[right]{$Z_2$};
\end{scope}
\end{tikzpicture}
\caption{Commutators of $H$}\label{fig:abrs}
\end{figure}
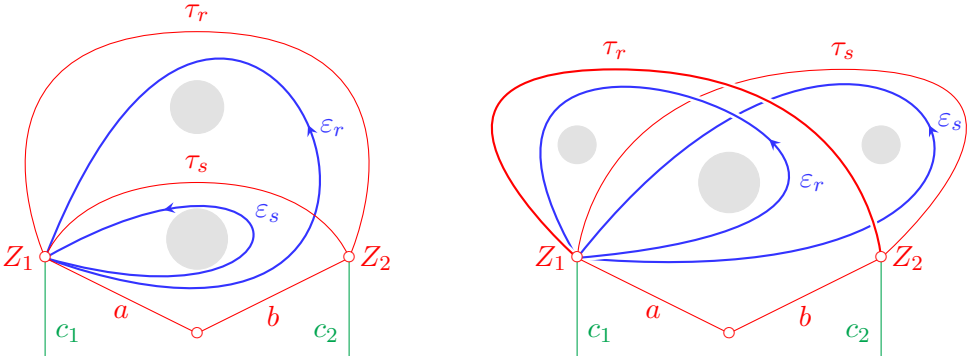

\begin{proof}
To examine the mapping classes, we can restrict to the neighbourhood of the union of arcs $a\cup b\cup\tau_s\cup\tau_r$.
A straightforward checking that $n_{s,r}$ preserves both the arc $b$ and $c_2$.
(Obviously $[\varepsilon_s,\varepsilon_r]$ preserves them too.)
Hence, we only need to looking at the neighbourhood of $(a\cup b\cup\tau_s\cup\tau_r)\setminus b$,
which is equivalent to the neighbourhood of the union of loops $\varepsilon_s\cup\varepsilon_r$.
This reduces to the case of the surface braid group of a $S_*$ with only one decoration $Z_1$.
As this local surface braid group can be identified with $\pi_1(S_*)$,
we only need to check if both sides of \eqref{eq:in-BT} move $Z_1$ in the same way.
%(or equivalently if they map $c_1$ to the same arc).
This is again an easy picture-chasing:
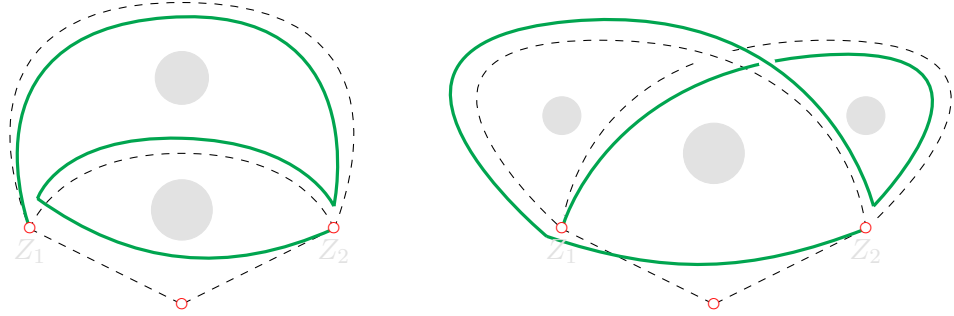
\begin{figure}[hbt]
\begin{tikzpicture}[scale=.9]
\def\cc{c}\clip(-3,-1.1)rectangle(10.5,3.5);
\begin{scope}[shift={(0,0)}]
    \draw[dashed,thin]
        plot[smooth,tension=3] coordinates {(2,0) (0,3) (-2,0)}
        plot[smooth,tension=1.5] coordinates {(2,0) (0,1) (-2,0)}
        (2,0) to (0,-1) (-2,0) to (0,-1);

    \draw[Green, very thick]
        plot[smooth,tension=2.5] coordinates {(2,0.3) (-.2,2.8) (-2,0)}
        plot[smooth,tension=1.5] coordinates {(2,0.3) (0,1.2) (-1.9,0.4)}
        (-1.9,0.4)to[bend right=30](2,0);

    \draw[gray!23,fill=gray!23](0,.25)circle(.4)(0,2)circle(.35)
        (0,-1) \ww (2,0)\ww node[below]{$Z_2$} (-2,0)\ww node[below]{$Z_1$} ;
\end{scope}
\begin{scope}[shift={(7,0)}]
    \draw[dashed,thin]
        (2,0) to (0,-1) (-2,0) to (0,-1)
        plot[smooth,tension=3] coordinates {(2,0) (1.5,2.5) (-2,0)};
    \draw[white, line width=3mm]
        plot[smooth,tension=3] coordinates {(2,0) (-1.5,2.5) (-2,0)};
    \draw[dashed,thin]
        plot[smooth,tension=3] coordinates {(2,0) (-1.5,2.5) (-2,0)};

    \draw[Green, very thick]
        plot[smooth,tension=2.5] coordinates {(2.1,0.3) (1.4,2.3) (-2,0)};
    \draw[fill=white,white] (.7,2.2) circle (.1);
    \draw[Green, very thick]
        plot[smooth,tension=2.8] coordinates {(2.1,0.3) (-1.9,2.75) (-2.2,-.1)}
        (-2.2,-.1)to[bend right=20](2,0);

    \draw[gray!23,fill=gray!23](0,1)circle(.4) (-2,1.5)circle(.25)(2,1.5)circle(.25)
        (0,-1) \ww (2,0)\ww node[below]{$Z_2$} (-2,0)\ww node[below]{$Z_1$} ;
\end{scope}
\end{tikzpicture}
\caption{The arcs $\tau_s^{a\tau_rb}$ and $\tau_r^{\iv{a}\tau_sb}$ in two cases, respectively}
\label{fig:check}
\end{figure}
\begin{itemize}
  \item In the former case, one can rewrite the right hand side as (using $\Br(b,\tau_r)$)
    \[
        \iv{\tau_{s}b}a\iv{\tau_{r}a} \tau_{s} a\tau_{r}b\tau_{r}=
            \iv{\tau_s} \cdot a^{b} \cdot \tau_s^{a\tau_rb} \cdot \tau_r,
    \]
    where the arc $\tau_s^{a\tau_rb}$ is the arc in the left pictures of \Cref{fig:check}
    that makes the checking straightforward.
  \item In the latter case, rewrite the right hand side as (using $\Br(b,\tau_r), \Br(b,\tau_s)$)
    \[
        \iv{b\tau_sb\tau_s}a\tau_r\iv{a}\tau_{s}ab\tau_{r}b=
        \iv{\tau_s}\cdot \iv{b}^{\iv{\tau_s} } \cdot \tau_r^{\iv{a}\tau_sb} \cdot a^b \cdot \tau_r^b \cdot \tau_r.
    \]
    The movement of $Z_1$ by the twist above is as follows:
    \begin{itemize}
      \item it moves along $\tau_r$ to $Z_2$, wouldn't effect by the twist $\tau_r^b$ and then
        moves along $a^b$ back to $Z_1$ (which is equivalent to moving along $\iv{\varepsilon_r}$ at this stage);
      \item it continues moving along $\tau_r^{\iv{a}\tau_sb}$ to $Z_2$, as shown in the right pictures of \Cref{fig:check}; then wouldn't effect by the twist $\iv{b}^{\iv{\tau_s} }$;
      \item it finally moves along $\iv{\tau_s}$ and one sees that the claim holds.\qedhere
    \end{itemize}
\end{itemize}
\end{proof}

Note that the calculation above (and the theorem below) is essentially hidden in \cite{QZ3}.
%More precisely, the expression in \eqref{eq:in-BT} is there.

\def\Hyue{H^{\yue}}
\def\Hkui{H^{\kui}}
\def\Hyui{H^{\kui}}
Let
\begin{gather}
    \Hyue\colon=\pi_1(\surfy,Z_1) = \SBr(\surfy_{Z_1}) \le \SBr(\sop),
\end{gather}
which can be identified with a subgroup of $\SBr(\szb)$ generated by $\lp_s$ for $P_s\in\yue$.
Moreover, one has $\Ho{1}(\surfy)=\Hyue\big{/}[\Hyue,\Hyue]$.

\begin{theorem}\label{thm:QZ+}
%Let $\kui=\kui.$
There is a short exact sequence
\begin{gather}\label{eq:SES-b}
    1 \to \MTsoy \to \SBr(\sop) \xrightarrow{\AJ^{\sun}_{\yue}} \Ho{1}(\surf^{\kui}) \to 1,
\end{gather}
or equivalently $\MTsoy=\ker\AJ^{\sun}_{\yue}$.
Moreover, $\Hyui\cap\MTsoy = [\Hyui,\Hyui]$.
\end{theorem}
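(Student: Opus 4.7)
The plan is to establish the short exact sequence in three steps (surjectivity, the easy containment, the hard kernel containment) and then deduce the \emph{moreover} clause. Surjectivity of $\AJ^{\sun}_{\yue}$ is immediate from \Cref{thm:SBr} and \Cref{fig:B's}: the classes of $\delta_1,\dots,\delta_{2g+b-1}$ together with $\delta_{2g+b-1+s}$ for $P_s\in\kui$ map to a $\ZZ$-basis of $\Ho{1}(\surf^{\kui})\cong\ZZ^{2g+b+|\kui|-1}$. For the easy containment $\MT(\soy)\subseteq\ker\AJ^{\sun}_{\yue}$ I will check the generators: a braid twist $B_\eta$ moves two decorations along a pair of paths bounding a regular neighbourhood of $\eta$, so its contribution is null-homologous; an L-twist $L_\lp$ with $\lp\in\LA(\sop,\yue)$ pushes a decoration once around a puncture $P_s\in\yue$, and the resulting class in $\Ho{1}(\surfp)$ dies under the forgetful $F^{\yue}_*\colon\Ho{1}(\surfp)\to\Ho{1}(\surf^{\kui})$ since the loop bounds once $P_s$ is filled in.

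The heart of the proof is the hard containment $\ker\AJ^{\sun}_{\yue}\subseteq\MT(\soy)$. I will analyze the quotient $G:=\SBr(\sop)/\MT(\soy)$ using the Bellingeri--Godelle presentation in \Cref{thm:SBr}. Each $\sigma_i$ is a braid twist and hence trivial in $G$; each $\delta_{2g+b-1+s}$ with $P_s\in\yue$ is an L-twist in $\on{L}_\yue$ and also trivial in $G$. The crucial input is \Cref{lem:tech}: every commutator $[\varepsilon_s,\varepsilon_r]$ lies in $\BT(\szb)\subseteq\MT(\soy)$, so the images of the $\varepsilon_r$'s pairwise commute in $G$, and consequently so do the images of the $\delta_r$'s via the change of variables \eqref{eq:vareps}. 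Under these reductions every relation of \Cref{thm:SBr} becomes either tautological or a commutativity: the pure-$\sigma$ relations are automatic; the mixed relations $\Co(\sigma_i,\delta_r)$ and $\Co(\delta_r,\sigma_1\delta_r\sigma_1)$ hold trivially; the relation $\Co(\delta_s^{\sigma_1},\delta_r)$ collapses to $\Co(\delta_s,\delta_r)$; and the long braid relation $\sigma_1\delta_r\sigma_1\delta_{r-1}\sigma_1=\delta_{r-1}\sigma_1\delta_r$ for $s+1\in\eveng$ collapses to $\Co(\delta_{r-1},\delta_r)$ upon setting $\sigma_1=1$. Hence $G$ is a free abelian group on the surviving $\delta_r$'s, of rank exactly $2g+b+|\kui|-1$, and the induced surjection $G\to\Ho{1}(\surf^{\kui})$ carries these generators to a basis and is therefore an isomorphism. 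Combined with the easy containment this yields $\MT(\soy)=\ker\AJ^{\sun}_{\yue}$ and the short exact sequence.

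For the \emph{moreover} statement, note that by the very construction of $\AJ^{\sun}$ (reading off the trajectories of the decorations), the restriction $\AJ^{\sun}_{\yue}|_{\Hyui}$ is identified with the Hurewicz abelianization $\Hyui=\pi_1(\surf^{\kui},Z_1)\to\Ho{1}(\surf^{\kui})$. Its kernel is the commutator subgroup $[\Hyui,\Hyui]$, and combined with the short exact sequence above this gives
\begin{equation*}
\Hyui\cap\MT(\soy)=\Hyui\cap\ker\AJ^{\sun}_{\yue}=\ker\bigl(\AJ^{\sun}_{\yue}|_{\Hyui}\bigr)=[\Hyui,\Hyui].
\end{equation*}

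The main obstacle is the reduction of the Bellingeri--Godelle long braid relation in $G$: it is precisely \Cref{lem:tech} with its delicate arc-chasing that makes this reduction possible, and it is this lemma that forces the inclusion of \emph{all} braid twists---not merely the L-twists generating $\on{L}_\yue$---into the definition of $\MT(\soy)$.
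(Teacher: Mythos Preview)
Your proof is correct and uses the same essential ingredient as the paper, namely \Cref{lem:tech}, but the packaging differs. The paper proceeds via the factorization $\SBr(\sop)=N\cdot H$ with $N=\MT(\soy)$ and $H=\Hyui$, so that $\SBr(\sop)/N\cong H/(H\cap N)$; it then proves the ``moreover'' statement $H\cap N=[H,H]$ \emph{first} (the inclusion $\subseteq$ from the factoring of $\AJ^{\sun}_{\yue}$ through the quotient, the inclusion $\supseteq$ from \Cref{lem:tech}), and the short exact sequence drops out. You instead manipulate the Bellingeri--Godelle presentation of the quotient $G$ directly and derive the ``moreover'' afterwards via Hurewicz. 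Both routes are sound; the paper's is a bit more conceptual (second isomorphism theorem), yours a bit more hands-on.

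One phrasing to tighten: after reducing the relations you write ``Hence $G$ is a free abelian group on the surviving $\delta_r$'s''. What you have actually established at that point is only that $G$ is \emph{abelian and generated} by those elements, i.e.\ a quotient of $\ZZ^{2g+b+|\kui|-1}$; you do not yet know there are no further relations coming from $\MT(\soy)$ beyond the $\sigma_i$, the $\lp_s$ with $P_s\in\yue$, and the commutators supplied by \Cref{lem:tech}. The isomorphism with $\ZZ^{2g+b+|\kui|-1}$ then follows, as you say in the next clause, from the surjection onto $\Ho{1}(\surf^{\kui})$ (a surjection from a quotient of $\ZZ^k$ onto $\ZZ^k$ is an isomorphism). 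So the logic is complete; just reorder the sentence so the quotient statement comes before the surjection is invoked.
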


\begin{proof}
\def\NN{N}
We write $\NN=\MTsoy=\<\BT(\surfp),\lp_s\mid s\in\yue\>, H=\Hyui$ in this proof.
As $\{\sigma_i,\delta_j\}$ is a generating set,
one deduces that $\SBr(\sop)=\NN\cdot H$.
Hence we have a short exact sequence
\begin{equation}\label{eq:ses.b}
  1\to \NN \to \SBr(\sop) \to H\big{/}  H\cap \NN  \to 1
\end{equation}
or $H \big{/} H\cap \NN=\SBr(\sop)/\NN$.

Clearly, $\BT(\surfp)\subset\ker\AJ^\sun$.
This is because any braid twist $B_\eta$ corresponds to two paths $p_i$
(i.e. going along $\eta$ one way or the other),
which are inverse to each other and form a trivial/contractible cycle.
As $\AJ^{\sun}_{\yue}=F_*^{\yue}\circ\AJ^\sun$, we have
$N\subset\ker\AJ^\sun\lhd \ker\AJ^{\sun}_{\yue}$.
Also, $\lp_s\in\AJ^{\sun}_{\yue}$ for $P_s\in\yue$ as such a puncture is forgotten in $F^{\yue}$.
Therefore the map $\AJ^{\sun}_{\yue}$ factors through the quotient by (the intersection with) $N$:
\begin{equation}\label{eq:Xi_*}
  \begin{tikzcd}[column sep=0]
    \SBr(\sop) \ar[rr,"\AJ^{\sun}_{\yue}"] \ar[dr,->>,] && \Ho{1}(\surf^{\kui})&(=H\big{/}[H,H]).\\
    & H\big{/} H\cap N \ar[ur,"\Xi"']
\end{tikzcd}
\end{equation}
Restricted to $H$, we see that $H\cap N \subset [H,H]$.

One the other hand, by \Cref{lem:tech}, $[\varepsilon_r,\varepsilon_s]$ is in $\BT(\sop)\lhd N$ for any $\varepsilon_r,\varepsilon_s\in H$.
As they (normally) generated $[H,H]$, it implies that $H\cap N \supset [H,H]$ and thus $H\cap N = [H,H]$.
Looking back at \eqref{eq:Xi_*}, we see that $\Xi$ is an isomorphism and
\eqref{eq:ses.b} becomes the required \eqref{eq:SES-b}.
\end{proof}

In particular, we have
\begin{equation}\label{eq:kerker}
\begin{cases}
 \BT(\sop)=\MTrel{\emptyset}=\ker\AJ^{\sun}_\emptyset,\\
 \MT(\sop)=\MTrel{\sun}=\ker\AJ^\sun_\sun=\ker\AJ
\end{cases}\end{equation}
and a commutative diagram of short exact sequences of groups (which will be called an octahedron)
show as the left diagram of \Cref{fig:3x3}.
The right graph of \Cref{fig:3x3} is the Auslander-Reiten quiver style view (of the octahedron among more short exact sequences).

\begin{figure}\centering
\begin{tikzpicture}[yscale=1.4,scale=1.1,font=\small]
\clip(-6,.8)rectangle(5.5,3.3);
\draw (-3,1.7) node{
    \begin{tikzcd}[column sep=17, row sep=24,font=\small]
      \MTsoy\ar[r,hookrightarrow] \ar[d,equal] &
        \MT(\sop) \ar[r,twoheadrightarrow] \ar[d,hookrightarrow] & \Ho{1}(\kui) \ar[d,hookrightarrow]\\
      \MTsoy \ar[r,hookrightarrow] &
        \SBr(\sop)\ar[r,twoheadrightarrow] \ar[d,twoheadrightarrow] & \Ho{1}(\surf^{\kui}) \ar[d,twoheadrightarrow]\\
      & \Ho{1}(\surf) \ar[r,equal] & \Ho{1}(\surf)\\
  \end{tikzcd}
};
\draw (3,3) node(x0){$\SBr(\sop)$}
    (2,2) node(x1){$\MT(\sop)$}
    (1,1) node(x2){$\MTsoy$}
    (4,2) node(z3){$\Ho{1}(\surf^{\kui})$}
    (5,1) node(z2){$\Ho{1}(\surf)$}
    (3,1) node(y1){$\Ho{1}(\kui)$};
\draw[-stealth] (x2)edge(x1) (x1)edge(x0)
    (x0)edge(z3) (z3)edge(z2) (x1)edge(y1) (y1)edge(z3);
\draw(4,1)node{$\oplus$};
%\draw (3,3) node(x0){$\SBr(\sop)$}(2,2) node(x1){$\MT(\sop)$}
%    (1,1) node(x2){$\MTsoy$}(0,0) node[gray](x3){$\BT(\sop)$}
%    (4,2) node[gray](z3){$\Ho{1}(\surfp)$}
%    (5,1) node(z2){$\Ho{1}(\surf^{\kui})$}(6,0) node(z1){$\Ho{1}(\surf)$}
%    (2,0) node[gray](y3){$\Ho{1}(\yue)$}
%    (4,0) node(y2){$\Ho{1}({\kui})$}(3,1) node[gray](y1){$\Ho{1}(\sun)$};
%\draw[-stealth] (x3)edge[gray](x2)(x2)edge(x1)(x1)edge(x0)
%    (x0)edge(z3)(z3)edge(z2)(z2)edge(z1)
%    (y3)edge[gray](y1)(y1)edge(y2) (x1)edge(y1)(x2)edge[gray](y3)(y1)edge[gray](z3) (y2)edge(z2);
%\draw[gray] (5,0)node{$\oplus$}(3,0)node{$\oplus$}(4,1)node{$\oplus$};
\end{tikzpicture}
\caption{An octahedron (left) and an AR-quiver style view (right)}\label{fig:3x3}
\end{figure}
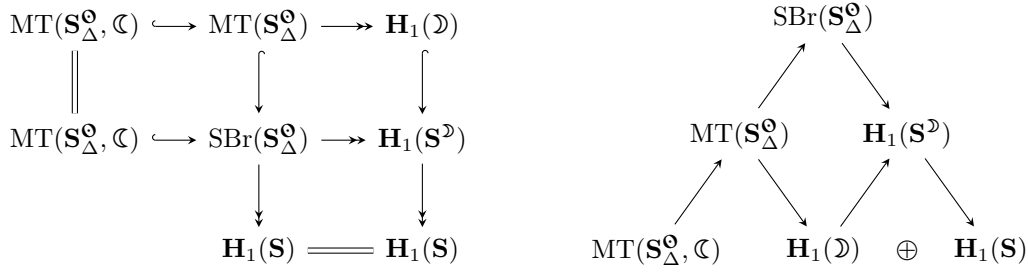

%=========================================================
\subsection{Generators from dual graph of a decorated triangulation}\label{sec:gen}\
%=========================================================

Recall that an open arc in $\surfp$ is a curve connected points in $\M\cup\sun$.
A \emph{triangulation} $\RT=\{\gamma_i\}$ of $\surfp$ is a maximal collection of compatible (i.e. no intersection in the interior of $\surfp$) open arcs $\gamma_i$, which will divide $\surfp$ into triangles
(called \emph{$\RT$-triangle} or just a triangle for short sometimes).
The following is a well-known fact.

\begin{lemma}\label{lem:wk}
Any triangulation $\RT$ of $\surfp$ consists of
\begin{gather}\label{eq:n}
    n=\rank(\surfp)=6g+3p+3b+m-6
\end{gather}
(simple essential) open arcs that divides $\surfp$ in to $({2n+m})/{3}$ triangles.
\end{lemma}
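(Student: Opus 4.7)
The plan is to apply Euler's formula to the CW decomposition of $\surf$ induced by the triangulation $\RT$ together with the subdivision of $\partial\surf$ by $\M$.

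First, I would identify the cells. The $0$-cells are exactly the points of $\M\cup\sun$, so $V=m+p$. The $1$-cells consist of the $n$ interior open arcs of $\RT$, together with the $m$ boundary segments cut off on $\partial\surf$ by the marked points (each boundary component carrying $k$ marked points contributes $k$ such segments, and these sum to $m$ since $\M\ne\emptyset$ on every component). Hence $E=n+m$. The $2$-cells are the $F$ triangles of $\RT$.

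Next, I would establish the second formula $F=(2n+m)/3$ by a double count of triangle-edge incidences: every triangle has three edge-slots, every interior arc of $\RT$ is glued to exactly two such slots, and every boundary segment to exactly one. This gives $3F=2n+m$.

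Finally, I would plug into Euler's formula. Since $\surf$ is a genus $g$ surface with $b$ boundary components, $\chi(\surf)=2-2g-b$, so
\[
(m+p)-(n+m)+\frac{2n+m}{3}=2-2g-b.
\]
Clearing the denominator and solving for $n$ yields $n=6g+3b+3p+m-6$, as claimed.

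The only point requiring care is the possible presence of self-folded triangles around a puncture (an interior arc whose two endpoints coincide, enclosing a puncture together with another arc). In that case the ``folded'' interior arc is glued to the \emph{same} triangle on both sides, but this contributes $2$ to the incidence count exactly as for an ordinary interior arc, so $3F=2n+m$ is unaffected. Beyond this bookkeeping remark, there is no real obstacle: the proof is an elementary Euler characteristic computation.
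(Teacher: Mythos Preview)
Your argument is correct and is exactly the standard Euler-characteristic computation one would expect. The paper itself does not supply a proof of this lemma at all; it simply introduces it with ``The following is a well-known fact'' and states the result. So there is nothing to compare against, and your write-up fills in precisely the elementary verification that the paper omits.
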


From now on, we assume that the DMSp $\sop$ satisfies
\begin{equation}\label{eq:aleph}
  \numtri=\frac{2n+m}{3}=4g+2p+2b+m-4\ge1.
\end{equation}
Since $m,b\ge1$ and $g\ge0$, then $\numtri\ge 2p-1$.

We associated a quiver with potential to any (decoration) triangulations.

\begin{definition}\label{def:QP}
Let $\RT$ be a triangulation of $\surfp$.
Define the (degenerate when $\sun\ne\emptyset$) quiver with potential $(\Qp,\Wp)$ as follows:
\begin{itemize}
\item The vertices of $\Qp$ are indexed by open arcs in $\RT$.
\item The arrow of $\Qp$ correspond to clockwise angles between edges of $\RT$-triangle $T$.
Hence, there is a 3-cycle $Q_T$ sub-quiver in $(\Qp,\Wp)$ for each $T$.
\item The potential $\Wp$ are the sum over all 3-cycle over all $\RT$-triangle $T$,
which will be called the 3-cycles potential.
\end{itemize}
\end{definition}

See \Cref{fig:QP} for example.
By definition, the (forward) flip of triangulations induces
the (forward) mutation of the associated quivers with potential.
\begin{figure}[hb]\centering
    \includegraphics[width=\textwidth]{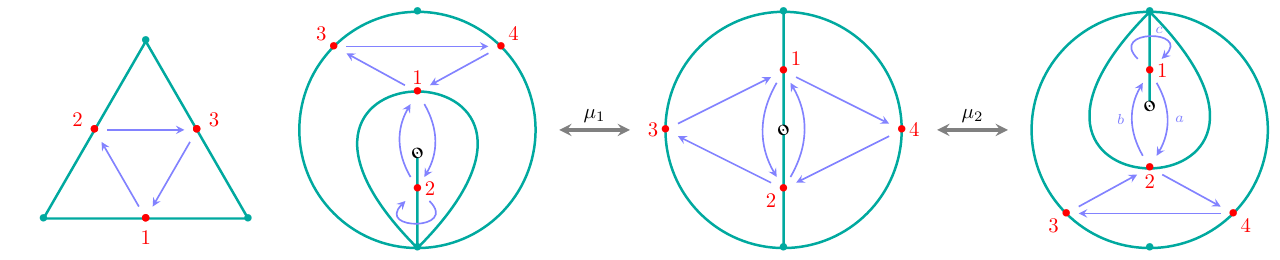}
\caption{Quivers with 3-cycles potential associated to triangulations}
\label{fig:QP}
\end{figure}

A $\RT$-triangle $T$ is \emph{self-folded} if two of its edges are coincide as one $\gamma_i$.
We call such a $\gamma_i$ a \emph{self-folded edge}.
The other edge of $T$ is call an \emph{enclosing edge}.

\begin{definition}
A puncture is called \emph{$\RT$-isolated}, if it is enclosed in a self-folded $\RT$-triangle.
Denote by $\yue(\RT)$ the set of $\RT$-isolated triangles.
We say a triangulation $\RT$ is \emph{admissible} if all punctures are $\RT$-isolated, i.e. $\yue(\RT)=\sun$.
\end{definition}

Note that the quiver $\Qp$ has exactly $|\yue(\RT)|$ loops and $|\yue(\RT)|$ 2-cycles,

For any decorated triangulation $\T$ of $\sop$,
the associated quiver with potential $(Q_\T,W_\T)$
is defined to the one associated to  $\RT=F^\Delta(\T)$ of $\surfp$.
Then we have the same notions of $\T$-isolated, $\yue(\RT)$ and admissible in the decorated case.

Let $\T^*$ be the dual graph of $\T$ consisting of closed arcs.
There is a partition $\T^*=\T^*_{\CA}\sqcup\T^*_{\LA}$ for
\[
    \T^*_{\CA}=\T^*\cap\CA(\sop) \quad\text{and}\quad \T^*_{\LA}=\T^*\cap\LA(\sop).
\]
Note that $\T^*_{\LA}$ are precisely the dual of self-folded edges in $\T$.

\begin{convention}
For $Q_\T$, we will identify its vertices with (simple or L-) closed arcs.
%(but for $\Qp$, they are identified with open arcs instead).
\end{convention}

\begin{definition}\label{def:MT2}
The \emph{mixed twist group} $\MT(\T)$ with respect to a triangulation $\T$ is defined to be
the subgroup of $\SBr(\sop)$ generated by the braid twists $\eta$ for $\eta\in\T^*_{\CA}$ and
the L-twists $\lp$ for $\lp\in\T^*_{\LA}$.
\end{definition}

By \cite[Lem.~A.1]{QZ1}, $\surfp$ (and hence $\sop)$ admits admissible triangulations.
Assume $\T$ is admissible.
Then by cutting out all the self-folded edges in self-folded $\T$-triangles,
we obtain a triangulation $\T_0$ for the result DMS $\surfo^0$ without punctures.
Moreover, one can naturally identify arcs in $\T_{\CA}$ and arcs in $\T_0$
as well as their dual graphs $\T^*_{\CA}=\T_0^*$.

By \cite[Thm.~5.9]{QZ3}, we have a presentation of
$\BT(\surfo^0)=\BT(\T_0)$ with $\T_0^*$ as the set of generators and
a set of relations, denote by $R(\T_0)$,
described in terms of the associated quiver with potential $(Q_{\T_0},W_{\T_0})$.
Here, $(Q_{\T_0},W_{\T_0})$ is the full sub-quiver of $(Q_{\T},W_{\T})$, restricted to $Q_{\T_{\CA}}=Q_{\T_0}$
and $\BT(\T_0)$ can be naturally embedded into $\MT(\T)$ (induced from the embedding $\surfo^0\hookrightarrow\sop$.

\begin{theorem}\label{thm:MT's}
Let $\T$ be an adimissible triangulation of $\sop$ and
$\T_0$ the corresponding triangulation of the surface $\surfo^0$ cutting at the punctures in $\yue(\T)$.
Then the mixed twist group $\MT(\sop)=\MT(\T)$ has the following finite presentation:
\begin{itemize}
	\item Generators:
    \begin{itemize}
      \item generators of $\BT(\T_0)$ (i.e. braid twists $B_\eta$ for $\eta\in\T^*_{\CA}$) together with
      \item L-twists $L_\lp$ for $\lp\in\T^*_{\LA}$.
    \end{itemize}
	\item Relations:
    \begin{itemize}
      \item $R(\T_0)$ for $\BT(\T_0)$ together with
      \item $\Br^{2\Int(\beta,\lp)+2}(\beta,\lp)$ for any $\beta\in \T^*$ and $\lp\in\T^*_{\LA}$.
    \end{itemize}
\end{itemize}
\end{theorem}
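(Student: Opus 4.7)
The plan is to proceed in three stages, combining Theorem~\ref{thm:QZ+}, the presentation of $\BT(\T_0)$ from \cite[Thm.~5.9]{QZ3}, and an inductive argument driven by the normal series \eqref{eq:mmt}.

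For generation: since $\T$ is admissible, cutting along all self-folded edges produces the DMS $\surfo^0$ with triangulation $\T_0$, and the natural embedding $\surfo^0\hookrightarrow\sop$ identifies $\T_0^{*}=\T_{\CA}^{*}$ and realises $\BT(\T_0)$ as a subgroup of $\MT(\sop)$. Each $\lp\in\T_{\LA}^{*}$ encloses a unique puncture of $\sun$, and all L-arcs around a given puncture are $\BT$-related, so by the conjugation formula \eqref{eq:Psi} the set $\T_{\CA}^{*}\cup\T_{\LA}^{*}$ generates $\langle\BT(\sop),\on{L}_{\sun}\rangle=\MT(\sop)$. The relations in $R(\T_0)$ hold in $\BT(\T_0)\subset\MT(\sop)$ by \cite[Thm.~5.9]{QZ3}, while each higher braid relation $\Br^{2\Int(\beta,\lp)+2}(\beta,\lp)$ is verified by a local picture-chasing check in a tubular neighbourhood of $\beta\cup\lp$, reducing to the classical higher braid relation on a disk with two decorations and one puncture.

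The core of the argument is completeness, which I would establish by induction along the normal series \eqref{eq:mmt}. From the $3\times 3$ diagram \eqref{eq:3x3} applied to successive steps $\yue_j\subset\yue_{j+1}=\yue_j\cup\{P_{j+1}\}$, the quotient $\MT(\surfo^{\sun\mid\yue_{j+1}})/\MT(\surfo^{\sun\mid\yue_j})$ is infinite cyclic, generated by the class of any L-twist $L_{\lp_{j+1}}$ around $P_{j+1}$. Since $L_{\lp_{j+1}}$ has infinite order (its $\AJ^{\sun}_{\yue_j}$-image is a basis vector of a $\ZZ$-factor), this extension splits, yielding
\[
    \MT(\surfo^{\sun\mid\yue_{j+1}})\;\cong\;\MT(\surfo^{\sun\mid\yue_j})\rtimes_{\phi_j}\ZZ
\]
with $\phi_j$ conjugation by $L_{\lp_{j+1}}$. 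A presentation of such a semi-direct product is obtained from any presentation of the left factor by adjoining the generator $L_{\lp_{j+1}}$ together with the conjugation relations $L_{\lp_{j+1}}\cdot g\cdot\iv{L_{\lp_{j+1}}}=\phi_j(g)$ for each generator $g$. Induction, starting from $\BT(\T_0)=\MT(\surfo^{\sun\mid\yue_0})$ with presentation $R(\T_0)$ and choosing $\lp_{j+1}\in\T_{\LA}^{*}$ at each step, then produces the required presentation of $\MT(\T)=\MT(\sop)$.

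The main obstacle will be identifying each conjugation relation with a single stated higher braid relation. For each previously adjoined generator $g\in\T_{\CA}^{*}\cup\{\lp_1,\ldots,\lp_j\}$, one must show that $L_{\lp_{j+1}}\,g\,\iv{L_{\lp_{j+1}}}=\phi_j(g)$ is equivalent, within the already-presented group, to the single relation $\Br^{2\Int(g,\lp_{j+1})+2}(g,\lp_{j+1})$. When $g$ is a braid twist this is the classical fact that two twists whose supporting arcs cross $k$ times satisfy $\Br^{2k+2}$; when $g$ is itself an L-twist around another puncture the picture is more delicate and I would reduce to a direct calculation on a disk with two punctures and two decorations, in the spirit of Lemma~\ref{lem:tech} and the thin dumbbell and symmetric hexagons highlighted in the introduction.
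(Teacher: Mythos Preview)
Your overall strategy---semidirect product induction along the normal series \eqref{eq:mmt}---is the paper's strategy, but there is a genuine gap in your base case. You write ``starting from $\BT(\T_0)=\MT(\surfo^{\sun\mid\yue_0})$'', yet these groups are not equal: $\MT(\surfo^{\sun\mid\yue_0})=\BT(\sop)$ is generated by braid twists along \emph{all} simple closed arcs on $\sop$, in particular the arcs $\tau_{-s}$ winding around punctures (cf.\ \Cref{thm:pre}), whereas $\BT(\T_0)$ is generated only by twists along $\T^*_{\CA}$, which avoid the punctures. For $p>0$ one has $\BT(\T_0)\subsetneq\BT(\sop)$, so your induction does not start at the bottom of \eqref{eq:mmt}, and the intermediate groups $\langle\BT(\T_0),\lp_1,\ldots,\lp_j\rangle$ are \emph{not} the $\MT(\surfo^{\sun\mid\yue_j})$; consequently the quotient computation from \eqref{eq:3x3} does not apply to them.

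The paper repairs this as follows. It starts the induction from the presentation of $\BT(\sop)$ in \Cref{thm:pre}, whose generating set $G_0$ contains the extra $\tau_{-1},\ldots,\tau_{-p}$. The inductive step is \Cref{lem:adding}: after conjugating $\tau_{-s}$ into a standard position $a$ relative to a chosen arc $\eta$ (the ``moving'' formula~\eqref{eq:move} and \Cref{fig:br4}), the semidirect product with $\ZZ\langle\lp\rangle$ yields among its conjugation relations $a^{\lp}=\eta$. This allows one to \emph{eliminate} $a$ from the generating set; the remaining relation $\eta^{\lp}=a^{\iv{\eta}}$ then collapses precisely to $\Br^4(\lp,\eta)$ via \eqref{eq:rels}, while all other generators commute with $\lp$. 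So each step both adds an L-twist and removes one of the $\tau_{-s}$; after $p$ steps one is left with exactly $G_p\cup\{\lp_1,\ldots,\lp_p\}$, i.e.\ the generators of $\BT(\T_0)\cong\BT(\Sigma_p)$ together with the L-twists. Your proposal misses this elimination mechanism, which is what reconciles the larger base group $\BT(\sop)$ with the smaller generating set $\T^*_{\CA}\cup\T^*_{\LA}$ claimed in the theorem, and it is also what makes the identification of the conjugation relations with $\Br^4$ and $\Co$ completely mechanical rather than a case-by-case picture check.
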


A special case of the theorem above is the following (when $Q_\T$ has no double arrows),
where the relations are much simpler.

\begin{corollary}\label{cor:MT's2}
For an admissible triangulation $\T$ of $\sop$ such that there is no double arrow in $Q_\T$,
then the mixed twist group $\MT(\sop)$ has the following finite presentation:
\begin{itemize}
	\item Generators: braid twists $\eta$ for $\eta\in\T^*_{\CA}$ and
        L-twists $\lp$ for $\lp\in\T^*_{\LA}$.
	\item Relations:
    \begin{itemize}
      \item higher braid relation $\Br^{2\Int(a,b)+2}(a,b)$ for any $a,b\in \T^*$.
      \item triangle relation $abca=bcab=cabc$ if $a,b,c\in\T^*_{\CA}$ share a common endpoint/decoration $Z$ and they are in clockwise order at $Z$.
    \end{itemize}
\end{itemize}
\end{corollary}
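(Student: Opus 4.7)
The corollary is intended as a specialization of Theorem~\ref{thm:MT's} under the extra hypothesis that $Q_\T$ has no double arrows, so my plan is to invoke that theorem and check that its relation set collapses to the two families listed here.

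The generators in Theorem~\ref{thm:MT's} and in the corollary already coincide verbatim: the braid twists $B_\eta$ for $\eta\in\T^*_{\CA}$ together with the L-twists $L_\lp$ for $\lp\in\T^*_{\LA}$. What remains is to match the relations. I would first unpack $R(\T_0)$ using the presentation [QZ3, Thm.~5.9] of $\BT(\T_0)$, which is stated in terms of the quiver with potential $(Q_{\T_0},W_{\T_0})$. Here $Q_{\T_0}$ is the full subquiver of $Q_\T$ spanned by $\T^*_{\CA}$, and by hypothesis it inherits the no-double-arrow property. Consequently, each pair of vertices carries at most one arrow in each direction, and the only relations between two generators $a,b\in\T^*_{\CA}$ that survive are a commutation $\Co(a,b)$ when $\Int(a,b)=0$ and an ordinary braid relation $\Br(a,b)$ when $\Int(a,b)=1$; both are packaged uniformly as $\Br^{2\Int(a,b)+2}(a,b)$. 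The remaining potential-induced relations come from the $3$-cycle summands of $W_{\T_0}$, and these read off precisely as the triangle relations $abca=bcab=cabc$ stated in the corollary.

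For the second block of relations in Theorem~\ref{thm:MT's}, namely $\Br^{2\Int(\beta,\lp)+2}(\beta,\lp)$ for $\beta\in\T^*$ and $\lp\in\T^*_{\LA}$, nothing further is needed: they already have the correct form, and together with the simple--simple relations obtained above they assemble into the single higher braid family $\Br^{2\Int(a,b)+2}(a,b)$ for arbitrary $a,b\in\T^*$.

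The only step requiring genuine care, and the likely technical obstacle, is verifying that no longer Artin-type relations and no higher cyclic-derivative relations from $W_{\T_0}$ survive under the no-double-arrow hypothesis, so that the only nontrivial potential-induced relations are the triangle relations coming from $3$-cycles. This is a bookkeeping exercise directly against [QZ3, Thm.~5.9]: in the absence of multiple arrows, every cyclic derivative produces either a two-term relation reducible to $\Co$ or $\Br$, or a three-term relation reducible to the triangle relation. Once this inspection is complete, combining the two simplified families gives exactly the presentation asserted.
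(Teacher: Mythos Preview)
Your approach is correct and matches the paper's implicit reasoning: the corollary is stated immediately after Theorem~\ref{thm:MT's} without a separate proof, the paper merely noting afterward the three cases of the higher braid relation that occur. One small slip to fix: for two simple closed arcs $a,b\in\T^*_{\CA}$ sharing a single endpoint, the intersection number (in the sense of \cite[Def.~3.1]{QQ}) is $\Int(a,b)=1/2$, not $1$; this is what makes $\Br^{2\Int(a,b)+2}=\Br^3=\Br$, whereas $\Int=1$ yields $\Br^4=\on{Sb}$, which is reserved for the mixed case $a\in\T^*_{\CA}$, $b\in\T^*_{\LA}$.
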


Note that for the (higher) braid relation in \Cref{cor:MT's2}, there are three cases:
\begin{itemize}
    \item $\Co(a,b)$ for any $a,b\in\T^*$ with $\Int(a,b)=0$.
    \item $\Br(a,b)$ for any $a,b\in\T^*_{\LA}$ with $\Int(a,b)=1/2$.
    \item $\Br^4(a,b)$ for $a\in\T^*_{\CA}$ and $b\in\T^*_{\LA}$ with $\Int(a,b)=1$.
\end{itemize}
Here $\Int$ is the intersection number, cf. \cite[Def.~3.1]{QQ}.

%=========================================================
\subsection{Proof of \Cref{thm:MT's}}\label{sec:pf}\
%=========================================================

%=========================================================
\paragraph{\textbf{Generators}}\

We first show that $\MTrel{\yue_{j+1}}=\<\MTrel{\yue_j},\lp_{j}\>$,
which follows from the following lemma.

\begin{lemma}\label{lem:conju-lp}
If two L-arcs $\lp_1$ and $\lp_2$ in $\LA(\yue)$ enclosing the same puncture $P$,
then there is $b\in\BT(\sop)$ such that $\lp_1=b(\lp_2)$.
In particular, $L_{\lp_2}=L_{\lp_1}^b$.
\end{lemma}
\begin{proof}
An L-arc $\lp$ in $\LA(\yue)$ cut out a once-puncture disk,
which contains a unique simple arc connecting the puncture inside and the decoration (its endpoint).
Call such an arc the bone of $\lp$. Denote by $g_i$ the bone of $\lp_i$ for $i=1,2$.
Then we only need to show that there exist $b\in\BT(\sop)$ such that $b(g_2)=b(g_1)$.

We choose a triangulation $\T$ of $\sop$ such that $g_i$ is contained in a triangle.
This is can be done easily due to change of coordinate principal.
Then we have
$$\Int(\T,g_1)=\sum_{\gamma\in\T}\Int_{\surf^\circ}(\gamma,g_1)=0.$$
By (the proof of) \cite[Lem.~3.14]{QQ},
there exist $b'\in\BT(\sop)$ such that $\Int(\T,b'(g_2))<\Int(\T,g_2)$ unless $\Int(\T,g_2)=0$
By induction on $\Int(\T,g_i)$, there exists $b\in\BT(\sop)$ such that $\Int(\T,b(g_2))=0$,
which implies that $b(g_2)=g_1$.
\end{proof}

%=========================================================
\paragraph{\textbf{An inductive algorithm}}\

\begin{lemma}\label{lem:adding}
Suppose that there is a finite presentation
\[
    \MTrel{\yue_j}=\frac{ \<a,\eta,x,\cdots\> } { \Big(
        \Br(\eta,x), \Br(a,x), \Br(a,x^\eta), \Co(?,a), \Co(?,\eta), ?\ne x \Big)
        \bigcup R_j },
\]
where $\eta,x,a$ are shown in \Cref{fig:br4},
the rest of the generators are disjoint with $\eta\cup a$ and
any relation in $R_j$ does not involve $\eta$ and $a$.

Then there is a finite presentation
\begin{equation}\label{eq:MT+1}
    \MTrel{\yue_{j+1}}=\frac{ \<\lp,\eta,x,\cdots\> } { \Big(
        \Br^4(\lp,\eta), \Br(\eta,x), \Co(x,\lp),\Co(?,\lp), \Co(?,\eta), ?\ne x \Big)
        \bigcup R_j}
\end{equation}
for some conjugation $\lp$ of $\lp_{j+1}$.
\end{lemma}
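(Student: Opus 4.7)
The plan is to perform a Tietze transformation on the presentation of $\MT(\surfo^{\sun\mid\yue_j})$: replace the generator $a$ by $\lp$ (a conjugate of $\lp_{j+1}$), and translate the relations involving $a$ into the new relations involving $\lp$. The guiding geometric picture is that activating the puncture $P_{j+1}$ in the interior of a region bounded in part by the closed arc $a$ allows one to open $a$ up into an L-arc around $P_{j+1}$; the change from the braid relation $\Br(a,x)$ to the commutation $\Co(x,\lp)$, and from $\Br(a,x^\eta)$ to the rank-$4$ braid relation $\Br^4(\lp,\eta)$, reflects the change in how these generators intersect after the surgery.

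First, I would pin down the position of $P_{j+1}$ relative to the configuration of $\eta$, $x$, $a$ from \Cref{fig:br4}, placing it inside a disk bounded in part by $a$ so that an L-arc $\lp$ based at a decoration endpoint of $a$ and encircling only $P_{j+1}$ can be chosen to (a) be disjoint from $x$ and from each other generator $?\neq x$, and (b) intersect $\eta$ exactly once. The commutation relations $\Co(x,\lp)$ and $\Co(?,\lp)$, as well as the higher braid relation $\Br^4(\lp,\eta)$, then follow from the intersection pattern via \Cref{cor:MT's2}.

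Second, I would establish an explicit word identity $a = w(\lp,\eta)$ in $\SBr(\sop)$ by picture-chasing the action of both sides on the relevant decorations, in the spirit of the proof of \Cref{lem:tech}. A natural candidate has the form $a = \iv{\eta}\cdot \lp\cdot\eta\cdot \iv{\lp}$ (or a close variant), expressing the braid twist along $a$ as the combined effect of encircling $P_{j+1}$ and crossing $\eta$. Since $\lp\in\MT(\surfo^{\sun\mid\yue_{j+1}})$ by definition of $\yue_{j+1}$, this identity makes $a$ redundant in the new presentation, and conversely $\lp_{j+1}$ itself is recovered as a suitable conjugate of $\lp$ by mapping classes already present.

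Finally, I would complete the Tietze transformation: add $\lp$ together with the defining relation $a = w(\lp,\eta)$, then eliminate $a$. One must verify that, modulo this substitution and the retained relations $\Br(\eta,x)$ and $\Co(?,\eta)$, the old triple $\bigl(\Br(a,x),\Br(a,x^\eta),\Co(?,a)\bigr)$ is equivalent to the new triple $\bigl(\Co(x,\lp),\Br^4(\lp,\eta),\Co(?,\lp)\bigr)$, while the relations in $R_j$ are untouched since by hypothesis they involve neither $a$ nor $\eta$. The main obstacle is the translation of $\Br(a,x^\eta)$ into $\Br^4(\lp,\eta)$: after substituting $a = w(\lp,\eta)$ into $a\cdot x^\eta\cdot a = x^\eta\cdot a\cdot x^\eta$ and conjugating by $\eta$, the identity must be collapsed to $\lp\eta\lp\eta = \eta\lp\eta\lp$ using only the kept relations, which requires careful algebraic juggling. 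A secondary task, easier to handle, is to confirm that no additional relations are needed; for this one invokes \Cref{thm:QZ+} together with the fact that the larger mixed twist group is generated by $\MT(\surfo^{\sun\mid\yue_j})$ and $\lp_{j+1}$, whose conjugates are all captured by the new generator $\lp$.
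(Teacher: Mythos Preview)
Your overall shape—introduce $\lp$, express $a$ in terms of $\lp$ and $\eta$, then eliminate $a$—matches the paper's endgame, but you are missing the structural step that makes the argument work. A Tietze transformation preserves the group; here you are passing from $\MT(\surfo^{\sun\mid\yue_j})$ to the strictly larger $\MT(\surfo^{\sun\mid\yue_{j+1}})$, so you must first explain why adjoining $\lp$ with \emph{only} the relations you list suffices. The paper handles this via the short exact sequence
\[
1\to \MT(\surfo^{\sun\mid\yue_j}) \to \MT(\surfo^{\sun\mid\yue_{j+1}}) \to \ZZ\langle\lp\rangle \to 1,
\]
which follows from \Cref{thm:QZ+} (the quotient is the extra $\ZZ$ coming from the newly activated puncture). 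Since the quotient is free the sequence splits as an inner semidirect product, and a presentation of the extension is obtained mechanically: add the single generator $\lp$ together with, for each old generator $g$, a relation $g^{\lp}=\rho_\lp(g)$ recording the conjugation action. Computing $\rho_\lp$ geometrically from \Cref{fig:br4} gives $a^{\lp}=\eta$ (hence $a=\eta^{\iv{\lp}}=\lp\eta\iv{\lp}$, a conjugate rather than your commutator-type guess) and $\eta^{\lp}=a^{\iv{\eta}}$, with $x$ and all remaining generators fixed. The first relation eliminates $a$; the second then rewrites directly as $\Br^4(\lp,\eta)$; and the old relations $\Br(a,x)$, $\Br(a,x^\eta)$ become consequences of $\Br(\eta,x)$, $\Co(x,\lp)$ and $\Br^4(\lp,\eta)$ by a short computation.

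Your appeal to \Cref{cor:MT's2} for the intersection-to-relation step is circular: that corollary is derived from \Cref{thm:MT's}, whose proof relies on the present lemma. And your final check that ``no additional relations are needed'' cannot be settled by knowing a generating set; it is precisely the split-extension argument above that certifies completeness of the relations, not an ad hoc verification.
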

\begin{proof}
We can make the (inner) semidirect product $\MTrel{\yue_j}\rtimes_{\rho}\ZZ\<\lp\>$ that fits into the following commutative diagram of short exact sequences of groups:
\begin{equation}\label{eq:2x3}
  \begin{tikzcd}
    & 1 \ar[r] & \MTrel{\yue_j} \ar[r] \ar[equal]{d} &
        \MTrel{\yue_j}\rtimes_{\rho}\ZZ\<\lp\> \ar[d] \ar[r] & \ZZ\<\lp\> \ar[r]\ar[equal]{d} & 1\\
    & 1 \ar[r] & \MTrel{\yue_j} \ar[r]  & \MTrel{\yue_{j+1}}  \ar[r] & \ZZ\<\lp\> \ar[r] & 1.\\
  \end{tikzcd}
\end{equation}
By calculating the conjugation action of generators by $\lp$  (denoted by $\rho_{\lp}$),
e.g.
\[
    \rho_\lp(\eta)=a'=a^{\iv{\eta}} \quad\text{gives relation}\quad \eta^{\lp}=a^{\iv{\eta}},
\]
we deduce that $\MTrel{\yue_j}$ admits a presentation with generators
$\<\lp,a,\eta,x,\cdots\>$ and relations
\begin{equation}\label{eq:MT1}
\Big(
        \begin{cases}
            \Br(\eta,x), \Br(a,x), \Br(a,x^\eta), \Co(?,a), \Co(?,\eta), ?\ne x \\
            a^{\lp}=\eta, \eta^{\lp}=a^{\iv{\eta}}, x^{\lp}=x, \Co(*,\lp), *\ne \eta,a,x
        \end{cases}
        \Big) \bigcup R_j.
\end{equation}
Here, for most conjugations by $\rho_\lp$, the element $*$ (e.g. $x$) does not change
so that we get $\Co(*,\lp)$.
%For instance, $x^{\iv{\lp}}=x\Longleftrightarrow\Co(\lp,x)$.
By $a^{\lp}=\eta \Leftrightarrow a=\eta^{\iv{\lp}}$, we can eliminate $a$ from the generating set and direct calculation shows the following
\begin{gather}\label{eq:rels}
    \eta^{\lp}=a^{\iv{\eta}} \;\Longleftrightarrow\; \iv{\lp} \eta \lp=\eta \eta^{\iv{\lp}} \iv{\eta}
         \;\Longleftrightarrow\; \lp\eta\lp\eta=\eta\lp\eta\lp
\end{gather}
or $\Br^4(\lp,\eta)$ as expected.
Furthermore, using these relation, we have
\[\begin{cases}
    \Br(a,x) \;\Longleftrightarrow\; \Br(\eta^{\iv{\lp}},x) \;\xLongleftrightarrow{\Co(\lp,x)}\; \Br(\eta,x)\\
    \Br(a,x^\eta) \;\Longleftrightarrow\; \Br(\eta^{\iv{\lp}},x^\eta)
        \;\Longleftrightarrow\; \Br(\eta^{\iv{\lp\eta}},x)
        \;\xLongleftrightarrow{\Br^4(\lp,\eta)}\; \Br(\eta^{\lp},x)
        \;\xLongleftrightarrow{\Co(\lp,x)}\; \Br(\eta,x).
\end{cases}\]
Thus, the presentation \eqref{eq:MT1} simplifies to \eqref{eq:MT+1}.
\end{proof}

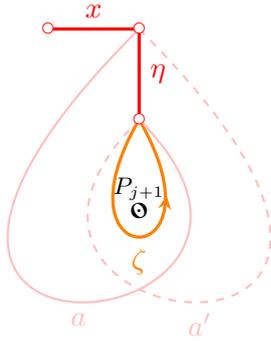
\begin{figure}[ht]\centering\vskip -.5cm
\begin{tikzpicture}[scale=.6]\clip (-6,5.1)rectangle(6,-2.8);
  \draw[\hhd, very thick,>=stealth,-<-=.35]
    (0,2).. controls +(-60:4) and +(-120:4) ..node[below]{$\lp$}(0,2);
  \draw[pink, thick]
    (0,4).. controls +(-135:12) and +(-45:7) ..node[below]{$a$}(0,2);
  \draw[pink, dashed, thick]
    (0,4).. controls +(-45:12) and +(-135:7) ..node[below]{$a'$}(0,2);
  \draw[very thick,red]
    (0,2)\ww to node[right]{$\eta$}(0,4)    (0,4)\ww to node[above]{$x$}(-2,4)\ww;
  \draw(0,0)\dpole node[above,font=\footnotesize]{$P_{j+1}$};
\end{tikzpicture}
\caption{Presentation by extension}\label{fig:br4}
\end{figure}

%=========================================================
\paragraph{\textbf{A first presentation for $\MT(\sop)$}}\

We proceed to calculate presentations of the mixed twist groups $\MTrel{\yue_j}$.
We follow the notations of \Cref{fig:B's} and \eqref{eq:lp}.
%and recall that  $\tau_r=\sigma_1^{ \iv{\varepsilon_r} }$.
Set
\[
    \tau_{-s}=\sigma_1^{ \zeta_p\cdots \zeta_{s} } ,\quad 1\le s\le p
\]
to replace $\tau_{2g+b-1+s}$ for $1\le s\le p$.
Then the relative position of these $\tau_r$'s is shown in the top picture of \Cref{fig:gen}.
We recall another finite presentation in \cite{QZ3} (with slight modification).

\begin{theorem}\cite[Thm.~4.1]{QZ3}\label{thm:pre}
Suppose that either $\numtri\geq 5$, or $\numtri=4$ and $\rk\le 2$.
The braid twist group $\BT(\sop)$ has the following finite presentation.
	\begin{itemize}
		\item Generators: $\sigma_i$ and $\tau_r$ for $1\leq i\leq \numtri-1, -p\leq r(\ne0)\leq 2g+b-1$.
		\item Relations: the standard ones
        \begin{equation}\label{eq:standards}\begin{cases}
        	\Co(\sigma_i,\sigma_j)&    \text{if $|i-j|>1$}\\
            \Br(\sigma_i,\sigma_{j})&   \text{if $|i-j|=1$}\\
            \Co(\tau_r,\sigma_i)&       \text{if $i>2$}\\
            \Br(\tau_r,x)& \forall r\\
            \Br(\tau_r,y)& \forall r\\
        \end{cases}\end{equation}
        together with two extra ones
        \begin{equation}\label{eq:whtau sr}\begin{cases}
            \Co({\tau_r}^y,{\tau_s}^x) &\text{if $r<s<0$ or $s<0<r$,}\\
            \Co({\tau_r}^y,{\tau_s}^x) &\text{if $0<s<r$ and $s\in\eveng$,}\\
            \Co({\tau_r}^{\iv{y}},{\tau_s}^x) &\text{if $0<s<r$ and $s+1\in\eveng$,}
        \end{cases}\end{equation}
        for (cf. \cite[Fig.~11]{QZ3}
        $$x:=\tau_{-1}{}^{\iv{\sigma_2}}=\sigma_2^{\tau_{-1}},\quad y:=\sigma_3^{\iv{\sigma_2}}=\sigma_2^{\sigma_3}$$
        and any $-p\le s, r< 2g+b$ with $sr\ne0$.
	\end{itemize}
\end{theorem}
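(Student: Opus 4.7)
The plan is to derive the presentation from that of the ambient surface braid group by applying the Reidemeister--Schreier procedure to the short exact sequence
\[ 1 \to \BT(\sop) \to \SBr(\sop) \xrightarrow{\AJ^\sun} \Ho{1}(\surfp) \to 1 \]
of Theorem~\ref{thm:QZ+} (with $\yue=\emptyset$), starting from the Bellingeri--Godelle presentation in Theorem~\ref{thm:SBr}. Since the quotient is free abelian on the images of $\varepsilon_1,\ldots,\varepsilon_{\rk}$, a natural Schreier transversal consists of ordered monomials in the $\varepsilon_r$, and the associated rewriting process converts each ambient relation into a kernel relation among generators of $\BT(\sop)$.

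First I would verify that the listed $\sigma_i$ and $\tau_r=\sigma_1^{\iv{\varepsilon_r}}$ already generate $\BT(\sop)$. The Schreier machine produces kernel generators of the form $\sigma_i^w$ as $w$ ranges over the transversal. Using the commutation relations $\Co(\sigma_i,\delta_r)$ for $i\ge 2$ from Theorem~\ref{thm:SBr}, only $\sigma_1^w$ can vary nontrivially, and the braid relations $\Br(\sigma_i,\sigma_{i\pm 1})$ together with commutation with distant $\varepsilon_s$ reduce each such $w$ to a single $\iv{\varepsilon_r}$. This recovers the $\tau_r$ and the $\sigma_i$ as the claimed generating set.

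Next I would check that the listed relations hold. The standard relations \eqref{eq:standards} are direct translations of the braid and commutation relations of Theorem~\ref{thm:SBr} after substituting $\tau_r=\sigma_1^{\iv{\varepsilon_r}}$; in particular $\Br(\tau_r,x)$ and $\Br(\tau_r,y)$ reflect the geometric fact that the arcs underlying $x$ and $y$ each share a single decoration with the arc for $\tau_r$. The two commutator relations \eqref{eq:whtau sr} are the topological content of Lemma~\ref{lem:tech}: the commutators $[\varepsilon_s,\varepsilon_r]$ lie in $\BT(\sop)$ and can be rewritten as products of braid twists, and appropriate conjugations convert these identities into the disjointness of the arcs underlying $\tau_r^{\pm y}$ and $\tau_s^x$, with the sign and case split corresponding to the two local configurations in Figure~\ref{fig:abrs}.

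The main obstacle is completeness: showing that every relation of $\BT(\sop)$ follows from the listed ones. After Schreier rewriting, most derived relations collapse to consequences of \eqref{eq:standards} because the corresponding ambient relations already decouple from the $\delta_r$ direction. The genuinely new content arises from the final relation of Theorem~\ref{thm:SBr}, which couples $\sigma_1$ with $\delta_{r-1},\delta_r$; its rewrites across the transversal produce precisely the two cases of \eqref{eq:whtau sr}, with the dichotomy $s\in\eveng$ versus $s+1\in\eveng$ reflecting whether the rewriting crosses a symplectic pair or not. The hypothesis $\aleph\ge 5$ (or $\aleph=4$ with $\rk\le 2$) provides enough decorations for the Tietze manipulations to be carried out without collision between the arcs involved; the remaining low-complexity cases would need separate direct verification, which is the reason for the numerical restriction in the statement.
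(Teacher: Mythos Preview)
The paper does not give its own proof of this statement: it is explicitly cited as \cite[Thm.~4.1]{QZ3} and introduced with ``We recall another finite presentation in \cite{QZ3} (with slight modification).'' So there is no in-paper argument to compare against; the result is imported wholesale from the reference.

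Your Reidemeister--Schreier outline is a sensible route and is presumably close in spirit to what \cite{QZ3} does, since the short exact sequence of Theorem~\ref{thm:QZ+} (with $\yue=\emptyset$) and the explicit commutator identities of Lemma~\ref{lem:tech} are exactly the ingredients one needs to run that machine. Two cautions, though. First, your proposal is still only a plan: the reduction of the Schreier generators $\sigma_1^w$ to the finite list $\{\tau_r\}$ as $w$ ranges over an infinite transversal in $\ZZ^{\rk}$ is the genuinely delicate step, and ``braid relations together with commutation reduce each such $w$ to a single $\iv{\varepsilon_r}$'' hides real work---one must show that conjugation by any monomial in the $\varepsilon_r$ can be rewritten, modulo the listed relations, as conjugation by a single $\varepsilon_r$. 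Second, the completeness argument you sketch (that the rewrites of the final Bellingeri--Godelle relation yield exactly \eqref{eq:whtau sr}) is asserted rather than carried out; this is where the numerical hypothesis $\aleph\ge 5$ actually bites, and a full proof would need to track the Tietze moves explicitly. None of this is wrong, but it is not yet a proof.
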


Denote by $\Sigma_j$ the surface obtained form $\sop$ by forgetting about the punctures
$\{P_s\mid 1\le s\le j\}$.
Note that $\Sigma_0=\sop$ and $\Sigma_p\cong\surfo^0$ (as decorated surfaces).
The braid twist group $\BT(\Sigma_j)$ is a subgroup of $\BT(\sop)$ with generators
\begin{gather}\label{eq:Gj}
    G_j=\{\sigma_i, \tau_r \mid 1\leq i\leq \numtri-1, \ -p+j\leq r(\ne0) \leq 2g+b-1\}
\end{gather}
and same relations involving these generators.
Denote the set of relations by $\Lambda_j$.

The relative positions of the generators in $G_0$ are shown in the top picture of \Cref{fig:gen}
and there are four kinds of them:
\begin{itemize}
  \item $\tau_s$ for ${1\le s\le 2g}$ correspond to arcs going around the genus.
  \item $\tau_s$ for ${2g+1\le s\le 2g+b-1}$ correspond to arcs going around the boundary components.
  \item $\tau_{-s}$ for $1\le s\le p$ correspond to arcs going around punctures.
  \item $\sigma_i$ for ${1\le i\le \numtri-1}$ correspond to the ones connecting other decorations.
\end{itemize}

\begin{figure}[htb]\centering
    \includegraphics[width=13cm]{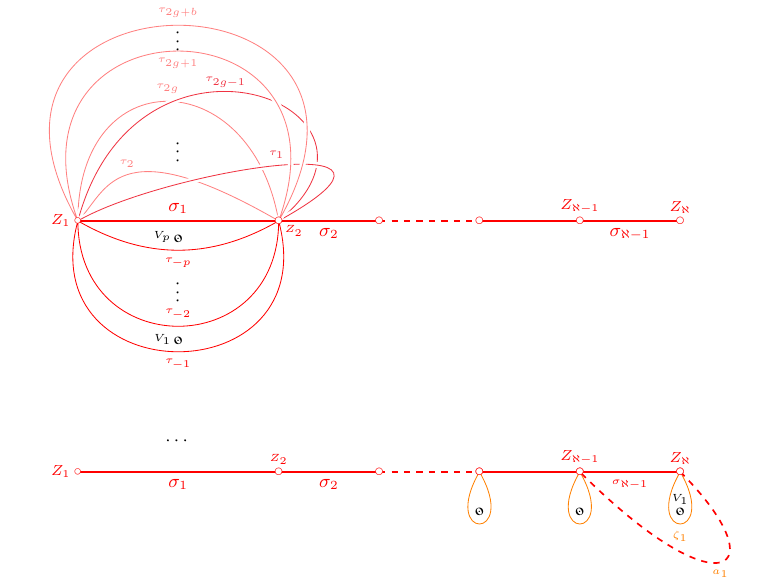}\vskip -.5cm
\caption{Moving the generators}
\label{fig:gen}
\end{figure}

Our strategy is to moving the arc $\tau_{-s}$, for $1\le s\le p$, so that their endpoints are separated
the upper part of the top picture of \Cref{fig:gen} as shown in the bottom picture there.
The method of moving is by conjugation.
For instance, we begin with moving $\tau_{-1}$ to $a_1$,
by conjugating $\sigma_2, \ldots , \sigma_{\numtri-1}$ first, then by $\tau_{-2}$ and
then by $\sigma_2, \ldots , \sigma_{\numtri-2}$.
Explicitly, we have
\begin{gather}\label{eq:move}
  a_1=\tau_{-1}^{\quad\iv{ \left( \sigma_2 \cdots \sigma_{\numtri-1} \right) \cdot \tau_{-2}  \left( \sigma_2 \cdots \sigma_{\numtri-2} \right)   }}
\end{gather}
The change of the relations are easy to write down accordingly as
at each step we only conjugate by one of the generators.
Now we apply \Cref{lem:adding},
setting $(x,\eta,a,\lp)=(\sigma_{\numtri-2}.\sigma_{\numtri-1},a_1,\lp_1)$,
to get a presentation of the mixed twist group
\[
    \MTrel{\yue_1}=\<\MTrel{\yue_0},\lp_{1}\>=\<\BT(\surfo),\lp_1\>
\]
from \Cref{thm:pre},
with the set of generators $(G_0\setminus\{\tau_{-1}\})\cup \{\lp_1\}=G_1\cup \{\lp_1\}$.
Directly calculation shows that the relations are precisely $\Lambda_1$ together with
\[
    \Br^4(\lp_1,\sigma_{\numtri-1}) \quad\text{and}\quad
    \Co(\lp_1,\alpha), \alpha\in G_1\setminus\sigma_{\numtri-1}.
\]

Now we can proceed to move next $\tau_{-s}$ and then include another $\lp_s$ in $\LA(\sop)$ to
get a presentation inductively for
\begin{equation}\label{eq:MTj}
    \MTrel{\yue_j}=\<\MTrel{\yue_{j-1}},\lp_{j}\>.
\end{equation}
Note that in order to apply \Cref{lem:adding} properly, we need to move $\sigma_{\numtri-s}$ a bit,
e.g. by conjugating $\iv{\sigma_{\numtri-s-1}\sigma_{\numtri-s-2}}$ (and move back afterward).
Similarly as above, the set of generators is $\{\lp_s\mid1\le s\le j\}\cup G_j$,
for the L-twist along some L-arcs $\lp_s$, and the relations are precisely
$\Lambda_j$ together with
\[
    \Br^4(\lp_s,\beta_s) \quad\text{and}\quad
    \Co(\lp_s,\alpha), \alpha\in G_j\setminus\beta_s;\quad 1\le s\le j,
\]
where $\beta_s$ is the only arc in $G_j$ that intersect with $\lp_s$ (and with $\Int=1$).

%=========================================================
\paragraph{\textbf{Equivalence between presentations}}\

For $s=p$, we obtain a presentation for $\MT(\sop)$,
which compares with the presentation of $\BT(\Sigma_p)=\BT(\sop)$,
the set of generators has extra $p$ L-twists $\{\lp_s\mid1\le s\le p\}$,
the set of relations has extra relations:
\[
    \Br^4(\lp_s,\beta_s) \quad\text{and}\quad
    \Co(\lp_s,\alpha), \alpha\in G_j\setminus\beta_s;\quad 1\le s\le p.
\]
Since $\surfo\cong\surfo^0$ as decorated surfaces,
we have $\BT(\surfo)\cong\BT(\surfo^0)=\BT(\RT_0^*)=\BT(\T^*_{\CA})$.
As shown in \cite{QZ3}, the presentation of $\BT(\RT_0^*)$ is in fact derived from $\BT(\surfo)$.
Thus, the above presentation is equivalent to the one in \Cref{thm:MT's}.
\qed

%=========================================================
%=========================================================
\section{Flip graphs and categorification}\label{sec:cat}
%=========================================================
%=========================================================
%=========================================================
\subsection{Flip (exchange) graphs}\label{sec:FG}\
%=========================================================

Let $\RT$ be a triangulation of $\surf$.
For any $\gamma$ in $\RT$,
the neighbourhood $N_\RT(\gamma)$ of $\gamma$ are the union of all $\RT$-triangles containing $\gamma$.
If $\gamma$ is not a self-folded edge, there are exactly two $\RT$-triangles containing $\gamma$
and $N_\RT(\gamma)$ is a quadrilateral.
If $\gamma$ is a self-folded edge in $\RT$,
then $N_\RT(\gamma)$ is a self-folded triangle but still can be treated as quadrilateral
with two edges gluing together.

At each non self-folded edge $\gamma$ of $\RT$,
there is an operation $\mu_\gamma\colon \RT-\RT'$,
known as the \emph{usual (unoriented) flip} with respect to $\gamma$,
which produces another triangulation $\RT'$ from $\RT$ by replacing the diagonal $\gamma$ of $N_\RT(\gamma)$
by the unique other diagonal $\gamma'$ of $N_\RT(\gamma)$.

Denote by $\uEG(\surfp)$ the unoriented exchange graph of triangulations of $\surfp$,
whose vertices are triangulations and whose edges are usual flips.
The following is a well-known fact, cf. \cite{Har} or \cite[Thm.~3.10]{FST}.

\begin{theorem}\label{thm:H}
$\uEG(\surfp)$ is connected and its fundamental group is generated
by (unoriented) squares and pentagons, locally looks like the left and middle pictures of \Cref{fig:456}
(forgetting about the orientations of edges and the decorations there).
\end{theorem}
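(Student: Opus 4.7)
The statement is cited as a well-known fact, with references to Hatcher, Harer, and Fomin--Shapiro--Thurston, so my plan is to sketch the standard strategy rather than give a self-contained proof. There are two parts to verify: connectedness of $\uEG(\surfp)$, and simple connectedness of the 2-complex $X$ obtained from $\uEG(\surfp)$ by attaching 2-cells along all square and pentagon relations.

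For connectedness, I would fix a reference triangulation $\RT_0$ and induct on the total geometric intersection number $\sum_{\gamma\in\RT}|\gamma\cap\RT_0|$ between an arbitrary triangulation $\RT$ and $\RT_0$, where we put arcs in minimal position. If this number is zero then $\RT=\RT_0$; otherwise, one locates an arc $\gamma\in\RT$ meeting $\RT_0$ transversely, identifies an innermost bigon or triangle, and produces a flip inside $\RT$ that strictly decreases the intersection count. The treatment of self-folded triangles requires care: a self-folded edge cannot be flipped, so one must first flip the enclosing edge to convert the configuration to a non-self-folded one before proceeding with the induction.

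For the $\pi_1$ assertion, the plan is to realize $X$ up to homotopy as the dual 2-skeleton of a cellulation of a known contractible space, for instance the arc complex of Harer--Hatcher--Penner, or Penner's decorated Teichm\"uller space, or the cluster complex in the sense of \cite{FST}. In such cellulations the codimension-one strata correspond to single flips and the codimension-two strata correspond exactly to the two local degenerations: pairs of disjoint simultaneous flips (producing squares) and triples of arcs sharing a common endpoint whose successive flips close up after five steps (producing pentagons). Contractibility of the ambient space then forces every loop in $\uEG(\surfp)$ to bound a disc assembled from these basic 2-cells, proving that squares and pentagons generate $\pi_1\uEG(\surfp)$.

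The hardest point will be the careful treatment of punctures and self-folded triangles. In the unpunctured case this follows essentially from Harer's classical argument, but punctures introduce enclosing-edge flips which a priori give rise to additional cycle types that must be shown to decompose into squares and pentagons. This is precisely the delicate verification carried out by Fomin--Shapiro--Thurston in \cite[Thm.~3.10]{FST}, and my plan would be to quote that result at this point rather than re-derive it, since the current theorem is stated as a reference result serving the subsequent analysis of the oriented flip graph $\FG(\surfp)$.
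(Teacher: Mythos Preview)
Your proposal is correct and matches the paper's treatment: the paper does not prove this theorem at all but simply records it as a well-known fact with references to \cite{Hat,Har} and \cite[Thm.~3.10]{FST}. Your sketch of the standard strategy (intersection-number induction for connectedness, contractibility of the arc/cluster complex for the $\pi_1$ statement, with the delicate punctured case deferred to \cite{FST}) is exactly the content behind those citations, so there is nothing to add.
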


There is an oriented version of $\uEG(\surfp)$.

\begin{definition}\label{def:fflip}
Let $\RT$ be a triangulation of $\surfp$ and $\gamma$ be an open arc in $\RT$.
There is an operation $\mu^\sharp_\gamma\colon \RT\to\RT^\sharp_\gamma$,
known as the \emph{forward flip} with respect to $\gamma$,
which produces another triangulation $\RT^\sharp_\gamma$ from $\RT$
by replacing the arc $\gamma$ with $\gamma^\sharp$.
Here, $\gamma^\sharp$ is the open arc obtained from $\gamma$
by moving each of its endpoint,
anticlockwise along boundaries of $N_\RT(\gamma)$ to the next marked point or puncture.

The forward flip operation also applies to decorated triangulations of $\sop$.
The inverse operation of which is the backward flip, denoted by $\mu^\flat_\gamma=(\mu^\sharp_\gamma)^{-1}$.
We will use the notation $\pm\sharp=\mp\flat$ from now on.
\end{definition}

The three possible cases of forward flips are shown in \Cref{fig:flip}, for both $\sop$ and $\surfp$
with vertical correspondence by forgetful map.
The first two are \emph{usual forward flips}
and the last one (on the right) is the \emph{self-folded forward flip} or \emph{forward L-flip} for short. These oriented flips have been considered by many people before,
e.g. \cite{GMN} and \cite[Fig. 33/34]{IN}.

\begin{figure}[htpb]\centering
    \includegraphics[width=\textwidth]{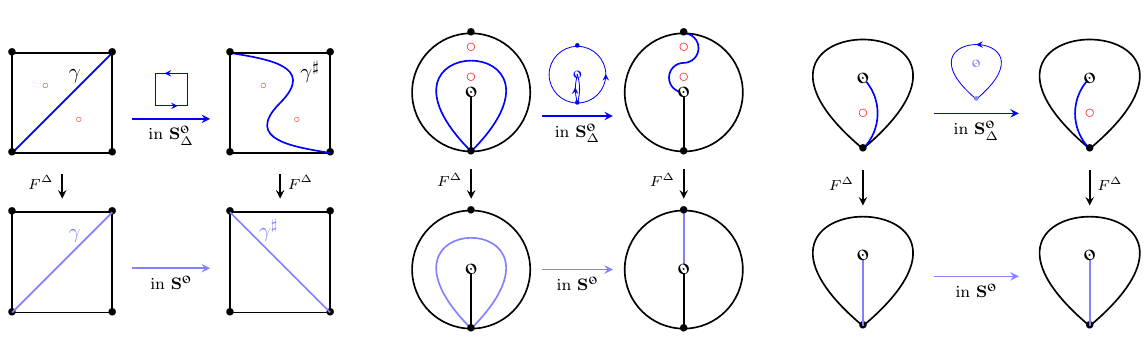}
\caption{The forward flips with forgetful map}
\label{fig:flip}
\end{figure}

Denote by $\FG(\surfp)$ and $\FG(\sop)$ the \emph{flip (exchange) graph} of (decorated) triangulations of $\surfp$ and $\sop$, respectively, where oriented edges are forward flips.
We use $\FG$ instead of $\EG$ to distinguish with the various other exchange graphs.

\begin{figure}[ht]\vskip -.2cm
\centering\makebox[\textwidth][c]{
    \includegraphics[width=14cm]{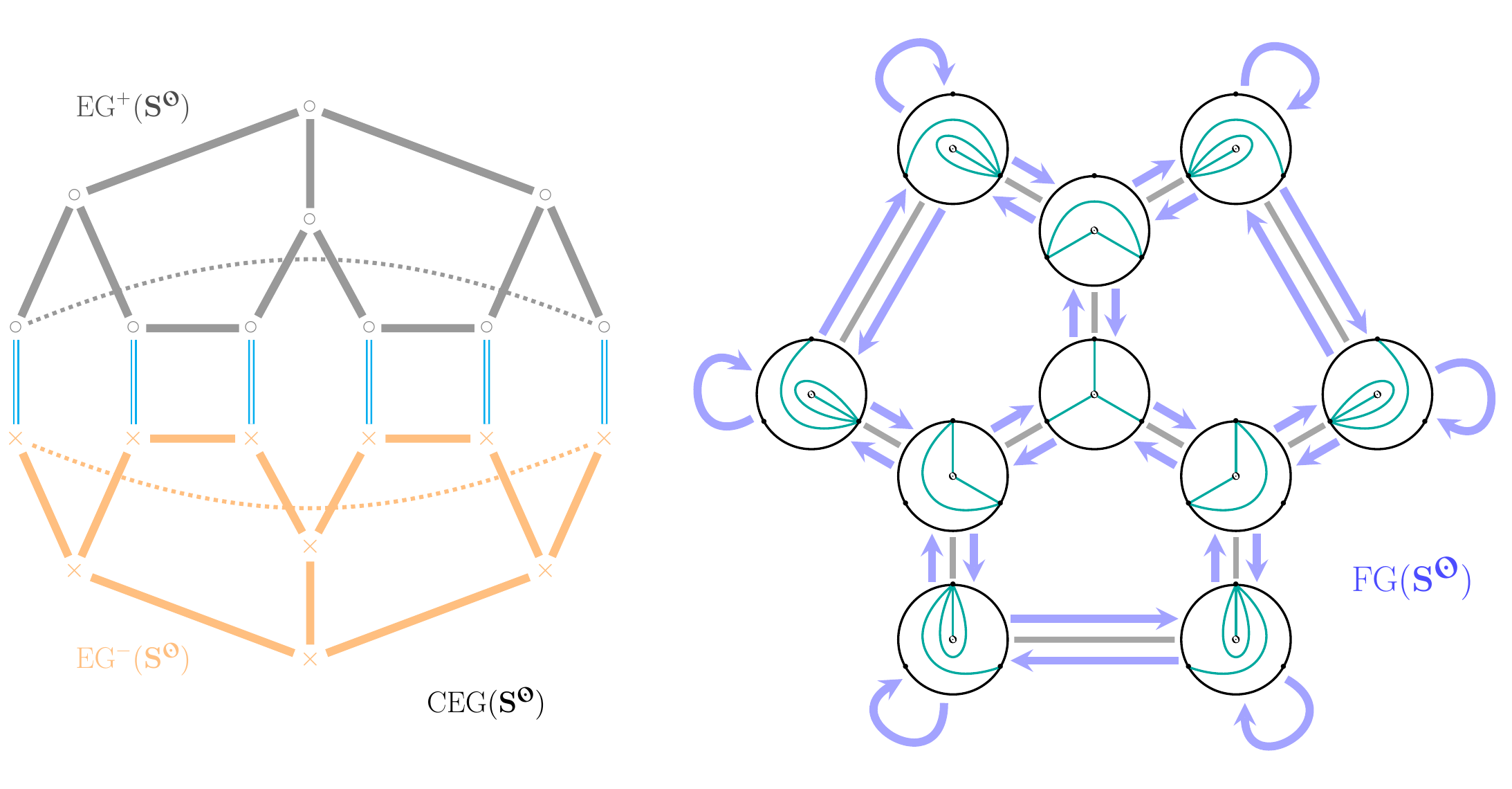}}\vskip -.2cm
\caption{Cluster exchange graph V.S. flip graph for once-punctured triangle}\label{fig:op-tri}
\end{figure}

\begin{example}
If $\surfp$ is a once punctured triangle,
then $\uEG(\surfp)$ and $\FG(\surfp)$ are shown in the right picture of \Cref{fig:op-tri},
where the unoriented/oriented version is in gray/purple, respectively.
\end{example}

\begin{remark}\label{rem:FG}
When there is no punctures, $\FG(\surfp)$ can be obtained from $\uEG(\surfp)$ by replacing each unoriented edge with a 2-cycle, as we did in \cite{KQ2}.
However, with punctures, $\uEG(\surfp)$ is not $n$-regular (i.e. there are $n$ edges at each vertex).
We fix this by including forward L-flip, which is a loop in $\FG(\surfp)$,
so that $\FG(\surfp)$ is still $(n,n)$-regular
(i.e. there are $n$ edges going out and in, respectively, at each vertex).
Moreover, one can obtained $\uEG(\surfp)$ from $\FG(\surfp)$ by deleting loop edges and replacing every 2-cycle by an unoriented edge.

%In \Cref{sec:MSx}, we will study the flip exchange graph with cluster style tagging at some of the punctures.
%For instance, the left picture of \Cref{fig:op-tri} is the cluster exchange graph associated to $\surfp$
%(the famous associahedron).
%To distinguish with the various exchange graphs, we will use flip graphs from now on.
\end{remark}

As in the undecorated case, there a natural covering.
\begin{lemma}\label{lem:covering}
The forgetful map induces a Galois covering $F^\Tri_*:\FG(\sop)\to\FG(\surfp)$ with covering group $\SBr(\sop)$.
\end{lemma}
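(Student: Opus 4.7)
The plan is to verify the three standard ingredients of a Galois covering of graphs: that $F^\Tri_*$ is well-defined and locally bijective, that $\SBr(\sop)$ acts as deck transformations, and that the action is free and transitive on fibers.

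First I would check that $F^\Tri_*$ is a map of graphs. On vertices, forgetting decorations sends the isotopy class of a decorated triangulation $\T$ to the isotopy class of the underlying arcs in $\surfp$, which is a triangulation $\RT$ by \Cref{lem:wk} (the count $n$ is independent of decorations). On edges, given a forward flip $\mu_\gamma^\sharp \colon \T \to \T'$, the arc $\gamma$ and its flip $\gamma^\sharp$ are defined purely in terms of the neighbourhood $N_\T(\gamma)$, whose vertex set consists of marked points and punctures (not decorations). So $F^\Tri$ carries $\gamma, \gamma^\sharp$ to arcs in $\surfp$ with the analogous flip relationship. This covers all three cases of \Cref{fig:flip}, including the self-folded/L-flip, where the decorated flip is still an edge in $\FG(\sop)$ but its image is a loop in $\FG(\surfp)$.

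Next, to see $F^\Tri_*$ is a covering of graphs, I would prove local bijectivity of stars. At $\T \in \FG(\sop)$, out-edges are indexed by the $n$ arcs of $\T$, and similarly for $\RT$; the forgetful correspondence is a bijection of arcs. This immediately gives a bijection of out-stars (and similarly in-stars), so $F^\Tri_*$ is a graph covering. The apparent mismatch at self-folded loops of $\FG(\surfp)$ is resolved by observing that the fiber above such a loop is an edge from $\T$ to a distinct vertex $\T'$ obtained by dragging one decoration across the self-folded edge; the local star bijection still holds.

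For the deck action, I would invoke the short exact sequence \eqref{eq:Sbr}: $\SBr(\sop) = \ker F^\Tri_*$ consists of mapping classes of $\sop$ inducing the identity on $\surfp$. Hence the natural $\MCG(\sop)$-action on decorated triangulations restricts to an action of $\SBr(\sop)$ that preserves every fiber of $F^\Tri_*$ and commutes with forward flips. For transitivity on fibers, given $\T, \T'$ with $F^\Tri(\T) = F^\Tri(\T') = \RT$, a homeomorphism of $\surfo$ carrying $\T$ to $\T'$ can be built arc-by-arc; after stabilizing it to fix the marked points and punctures, its image in $\MCG(\surfp)$ fixes $\RT$ setwise, and can be further isotoped to the identity on $\surfp$ using that the stabilizer of a triangulation in $\MCG(\surfp)$ is trivial; hence the original class lies in $\SBr(\sop)$. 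For freeness, if $\Psi \in \SBr(\sop)$ fixes $\T$, then $\Psi$ preserves every arc of $\T$ up to isotopy in $\sop$; since $\T$ fills $\surfo$, the Alexander method forces $\Psi = \id$.

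The main obstacle I anticipate is the freeness/transitivity pair, specifically handling the self-folded triangles correctly: the decoration trapped inside a self-folded triangle has a restricted range of motion (it cannot cross the enclosing edge without a forward L-flip), so when comparing $\T, \T'$ in a fiber one must be careful that the constructed diffeomorphism actually lies in $\SBr(\sop)$ and not merely in $\MCG(\sop)$. A clean way to avoid this bookkeeping is to identify $\FG(\sop)$ with a fiber product of $\FG(\surfp)$ with the universal cover of $\conf_\Tri(\surfp)$ and then deduce the lemma from \eqref{eq:Sbr} and the definition $\SBr(\sop) = \pi_1\conf_\Tri(\surfp)$.
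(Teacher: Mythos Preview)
Your approach is essentially what the paper's proof unwinds to: the paper simply observes that both graphs are $(n,n)$-regular (the loop edges at self-folded triangles being the only new feature compared to the unpunctured case) and then cites \cite[Lem.~3.8]{KQ2} for the remainder. Your three-step verification is precisely the content of that citation.

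One imprecision worth flagging in your transitivity step: the \emph{setwise} stabilizer of a triangulation $\RT$ in $\MCG(\surfp)$ need not be trivial (think of a once-punctured square with its symmetric triangulation, fixed by the quarter-turn). What you actually obtain from your arc-by-arc construction is stronger: since each arc $\gamma_i$ of $\T$ is sent to the corresponding arc $\gamma_i'$ of $\T'$, and both project to the \emph{same} arc of $\RT$, the image $F^\Tri_*(\phi)$ fixes every arc of $\RT$ individually. The arcwise stabilizer \emph{is} trivial by the Alexander method (complementary regions are disks), so your conclusion $F^\Tri_*(\phi)=\id$ stands. The cleaner fiber-product reformulation you suggest at the end also works and sidesteps this bookkeeping entirely.
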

\begin{proof}
Notice that although some of the edges are loops, the two graphs are both $(n,n)$-regular.
The rest follows the same as \cite[Lem.~3.8]{KQ2}.
\end{proof}

Usually, $\FG(\sop)$ is not connected.
The more tricky business is to determine the covering group when restricted to a connected component.
In the unpunctured case, this is stated properly as \cite[Thm.~4.8]{BMQS}, cf \cite[Rem.~3.10]{QQ}.
In the punctured case, it is even harder (as the cluster theory is not applicable).

Fix any $\T\in\FG(\sop)$.
Denote by $\FGT(\sop)$ the connected component of $\FG(\sop)$ containing $\T$.
So there is a covering
\begin{gather}\label{eq:FGT}
  F^\Tri_*\mid_{\T}\colon\FGT(\sop)\to\FG(\surfp),
\end{gather}
whose covering group is denoted by $\MTT\le\SBr(\sop)$.
We will show that $\MTT$ is in fact $\MT(\sop)$ in \Cref{cor:mt=mt},
which implies the following (using \Cref{thm:QZ+}).

\begin{corollary}\label{cor:pi0}
The connected components of $\FG(\sop)$ is parameterized by $$\SBr(\sop)/\MT(\sop)\cong \Ho{1}(\surf).$$
\end{corollary}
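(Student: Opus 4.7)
The plan is to combine two ingredients: the forthcoming identification of the covering group $\MTT$ with $\MT(\sop)$, and \Cref{thm:QZ+} applied with $\yue=\sun$ (so that $\surf^{\kui}=\surf$), which immediately gives $\SBr(\sop)/\MT(\sop)\cong\Ho{1}(\surf)$. By \Cref{lem:covering} and the restricted covering \eqref{eq:FGT}, the fiber of $F^\Tri_*|_\T$ is a torsor under $\MTT$ and the set of connected components is the coset space $\SBr(\sop)/\MTT$. Hence the corollary reduces to proving
\begin{equation*}
    \MTT=\MT(\sop).
\end{equation*}

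For the inclusion $\MT(\sop)\subseteq\MTT$, I would choose $\T$ admissible and use \Cref{def:MT2}: the group is generated by the braid twists $B_\eta$ for $\eta\in\T^*_{\CA}$ and the L-twists $L_\lp$ for $\lp\in\T^*_{\LA}$. Each such generator is realized as the deck-transformation monodromy of an elementary loop at $\T$ in $\FGT(\sop)$: $B_\eta$ lifts the square obtained by performing the usual forward flip at the dual open arc twice (the underlying triangulation in $\FG(\surfp)$ returns, while the two decorations are interchanged anticlockwise), while $L_\lp$ lifts the self-folded forward L-flip loop at the corresponding self-folded edge (cf. the right column of \Cref{fig:flip} and the loop edges described in \Cref{rem:FG}). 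Since these loops live in $\FGT(\sop)$ by construction, their monodromies lie in $\MTT$.

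For the reverse inclusion $\MTT\subseteq\MT(\sop)$, the strategy is to fix a finite generating family of based loops for $\pi_1\FG(\surfp)$ at $\RT=F^\Tri(\T)$ and verify, by a direct local monodromy computation, that the lift of each such loop is a word in the $B_\eta$ and $L_\lp$ above, hence lies in $\MT(\sop)$ by \Cref{def:MT2}. The natural generating family is the one previewed in the introduction: squares and pentagons lifted from the classical \Cref{thm:H} via the oriented enhancement of \Cref{rem:FG}, the fat dumbbell hexagons from \cite{KQ2}, and the new thin dumbbell and symmetric hexagons that appear in the presence of punctures. For the squares and pentagons the monodromy analysis is essentially the one already used in \cite{KQ2}; for the dumbbell hexagons one adapts the argument of loc. cit.

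The main obstacle is the last family, the symmetric and thin-dumbbell hexagonal loops around punctures, whose monodromy computations must collectively encode the decomposition of the higher braid relation $\on{Sb}$ into two thin-dumbbell and two symmetric hexagons (in contrast to the cluster-style folding decomposition into non-symmetric hexagons). Once these monodromies are identified with the expected words in $\T^*_{\CA}\cup\T^*_{\LA}$, the equality $\MTT=\MT(\sop)$ follows, and combining it with \Cref{thm:QZ+} completes the parameterization.
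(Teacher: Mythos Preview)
Your overall plan is right: the corollary reduces to $\MTT=\MT(\sop)$, and the inclusion $\MT(\sop)\subseteq\MTT$ via \eqref{eq:twist=flip2} and \eqref{eq:twist=flip} is exactly what the paper does (first claim in the proof of \Cref{thm:pi0}).

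The gap is in the reverse inclusion. The family you propose --- squares, pentagons, fat/thin dumbbells, symmetric hexagons --- is \emph{not} a generating family for $\pi_1\FG(\surfp,\RT)$; these are the \emph{relations} of the flip groupoid $\fg(\surfp)$. In fact each of them lifts to a \emph{closed} loop in $\FGT(\sop)$ (this is precisely what \Cref{fig:456} and \Cref{fig:4hex} depict), so their monodromies in $\MTT$ are all trivial. Verifying that trivial elements lie in $\MT(\sop)$ tells you nothing, and certainly does not cover $\MTT$. What you are missing from your generating family are the $2$-cycles $e\bar e$ and the loop-edges themselves (at \emph{every} triangulation, not just at $\T$): the map $\FG(\surfp)\to\uEG(\surfp)$ collapsing these has kernel on $\pi_1$ normally generated by them, and $\pi_1\uEG(\surfp)$ is then generated by squares and pentagons (\Cref{thm:H}). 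So a correct direct argument for $\MTT\subseteq\MT(\sop)$ reads: the monodromy of any $2$-cycle or loop-edge is a braid or L-twist by \eqref{eq:twist=flip2}--\eqref{eq:twist=flip}, hence in $\MT(\sop)$; their conjugates stay in $\MT(\sop)$ by normality (\Cref{thm:QZ+}); and the remaining squares/pentagons have trivial monodromy. The hexagons play no role in this step --- they are needed only later, for the simply connectedness of $\fgt(\sop)$.

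The paper's own route is less direct: it first proves $\FT(\RT_0)=\pi_1(\fg(\surfp),\RT_0)$ for admissible $\RT_0$ (\Cref{pp:whole}), using the conjugation formulae for flip twists, and then reads off $\MTT=\MT(\T_0)=\MT(\sop)$ from the surjection \eqref{eq:MT=FT} together with \eqref{eq:ses MTT}. Either way, the crux you should be computing is the monodromy of $2$-cycles and L-flip loops, not of the hexagons.
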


%=========================================================
\subsection{The 3-Calabi-Yau category $\Dsop$}\label{sec:D3}\
%=========================================================

Recall that we assume that \eqref{eq:aleph} which ensures that $\sop$ admits decorated triangulations.
We will fix a triangulation $\T$,
which induces a natural grading on $\sop$ induced by $\T$, cf. \cite[\S~2]{CHQ} for more details.
Note that any triangulation and the dual graph of which carries natural gradings.
Denote by $\wh{\on{CA}}(\sop)$ the set of graded arcs whose underlying arcs are in $\CA(\sop)$. Similarly for $\wh{\on{LA}}(\sop)$ or a graded arc $\wh{\alpha}$.

For the preliminaries on simple tilting theory, see \Cref{sec:prel}.

Let $\Gamma_\T^\sun=\Gamma(Q_\T^\sun,W_\T^\sun)$ be the \emph{Ginzburg algebra} associated to the quiver with potential $(Q_\T^\sun,W_\T^\sun)$
and $\Dsop$ the \emph{finite dimensional derived category} of $\Gamma_\T^\sun$.

The following is well-known (cf. \cite{KVB}):
$\Dsop$ is 3-Calabi-Yau with a canonical heart $\h_\T^\sun$ being (dg) module category of $\Gamma_\T^\sun$.
Denote by $\Sph\Dsop$ the set of \emph{(reachable) spherical objects} in $\Dsop$
and $\EGT(\Dsop)$ the principal component of exchange graph of hearts of $\Dsop$.
Here is a summery of the results in \cite{Chr,CHQ}, specified to our category.

\begin{theorem}\cite{CHQ}\label{thm:CHQ1}
There is an injective map:
\begin{equation}\label{eq:X}
\begin{array}{rcl}
  \Xto\colon \wh{\on{CA}}(\sop)\bigsqcup\wh{\on{LA}}(\sop)&\to& \on{Obj}\Dsop,  \\
    \wh{\alpha} &\mapsto& \Xto_{\wh{\alpha}},
\end{array}
\end{equation}
satisfying
\begin{equation}\label{eq:Int}
    \Int^d(\wh{\alpha},\wh{\beta})=\dim\Hom^d(\Xto_{\wh{\alpha}},\Xto_{\wh{\beta}}).
\end{equation}

Moreover, $\Xto$ induces an isomorphism of oriented graphs:
\begin{equation}\label{eq:iso}
\begin{array}{rcl}
  \Xto_*\colon  \FGT(\sop) &\xrightarrow{\cong}& \EGT(\Dsop),  \\
    \T &\mapsto& \h_\T^\sun,\\
\end{array}
\end{equation}
such that $\Sim\h_\T^\sun=\Xto(\wh{\T^*})$.
\end{theorem}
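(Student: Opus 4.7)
The plan is to bootstrap from the fixed reference triangulation $\T$ and extend $\Xto$ equivariantly via the compatibility between flips on the surface side and simple tiltings on the categorical side. Concretely, at the base point I would set $\Xto_{\widetilde{\gamma^*}}=S_\gamma$, where $S_\gamma$ is the simple $\Gamma_\T^\sun$-module indexed by the vertex of $Q_\T^\sun$ corresponding to $\gamma\in\T$. Because $\Gamma_\T^\sun$ is a 3-CY Ginzburg dg algebra, each $S_\gamma$ is automatically spherical and sits in the canonical heart $\h_\T^\sun$. The Int-Hom identity \eqref{eq:Int} at this level amounts to checking that (i) for two distinct vertices $\gamma,\gamma'\in\T$, the graded intersection $\Int^d(\gamma^*,\gamma'^*)$ counts arrows from $\gamma$ to $\gamma'$ in $Q_\T^\sun$ (matching $\dim\Hom^1$) and arrows the other way by Serre duality (matching $\dim\Hom^2$), and (ii) for a self-folded $\T$-triangle the corresponding L-dual arc has a self-intersection profile reproducing the loop at a $\T$-isolated puncture. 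This is exactly the FST-type surface-to-quiver dictionary with the decoration grading of \cite{CHQ} bookkept.

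Second, I would extend $\Xto$ to all graded arcs in $\widetilde{\CA}(\sop)\sqcup\widetilde{\LA}(\sop)$ by transport along the flip graph. On the surface side, every graded closed or L-arc in the image of $\FGT$ is obtained from some $\widetilde{\gamma^*}\in\widetilde{\T^*}$ after a finite sequence of forward flips (keeping track of grading shifts around the quadrilateral $N_\T(\gamma)$). On the categorical side the analogous moves are forward simple tilts, which produce all spherical objects in the principal orbit of the tilting action. For each elementary flip $\mu^\sharp_\gamma:\T\to\T^\sharp_\gamma$, one has to verify that the transition rule for the graded dual arc at $\gamma$ matches the rule $S_\gamma\mapsto S_\gamma[1]$ together with the triangle-replacements of the remaining simples given by Koszul-dual extensions against $S_\gamma$. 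This local check is by case analysis of the three pictures in Figure \ref{fig:flip}, including the L-flip at a self-folded edge; and it propagates the Int-Hom identity from $\T$ to any neighbouring triangulation by inducting on flip distance.

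Third, for injectivity of $\Xto$ and of $\Xto_*$ on vertices, I would use the Int-Hom formula itself as a separating invariant: distinct graded arcs can be told apart by their intersection profile against some triangulation dual, which by \eqref{eq:Int} forces the corresponding spherical objects to be non-isomorphic. For the graph isomorphism statement, surjectivity follows because $\EGT(\Dsop)$ is by definition the connected component of $\h_\T^\sun$ under simple tilting, while the left side is the connected component of $\T$ under forward flips; the flip-tilt compatibility just established yields a graph morphism which is bijective on vertices and sends oriented edges to oriented edges, hence is an isomorphism. The identity $\Sim\h_\T^\sun=\Xto(\widetilde{\T^*})$ then holds at $\T$ by construction and is preserved under forward flip/tilting.

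The main obstacle, and the reason this is non-trivial enough to be outsourced to \cite{CHQ}, is the degeneracy of $(\Qp,\Wp)$ caused by the punctures: whenever a puncture is $\T$-isolated, $Q_\T^\sun$ carries a loop together with a 2-cycle whose potential contribution is the one the deformation of $\Dsop$ into $\Dsox$ later modifies. The standard DWZ mutation of quivers with potential does not apply to such degenerate data, so one cannot appeal to cluster-categorical mutation to control tilting. The fix, implicit in the perverse schober construction of \cite{CHQ}, is to work directly with the Ginzburg algebra and to match the forward L-flip (a loop edge in $\FG(\sop)$) with the forward simple tilt at the L-vertex simple, which is a spherical twist that fixes the heart set-wise but reshuffles its simple set; checking this match consistently with the grading at each $\T$-isolated puncture is exactly where the subtle part of the proof lies.
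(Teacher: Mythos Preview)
The paper does not prove this theorem; it is stated as a summary of results imported from \cite{Chr,CHQ} (see the sentence preceding the theorem: ``Here is a summery of the results in \cite{Chr,CHQ}, specified to our category''). So there is no in-paper proof to compare against, and your proposal is really a sketch of how the argument in \cite{CHQ} might go rather than a reconstruction of anything in the present paper.

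That said, your sketch contains a genuine error. You write that the forward L-flip should match ``the forward simple tilt at the L-vertex simple, which is a spherical twist that fixes the heart set-wise''. But the simple attached to an L-arc is \emph{not} spherical: immediately after the theorem the paper records that for $\widetilde{\lp}\in\widetilde{\LA}(\sop)$ one has $\Ext^1(\Xto_{\widetilde{\lp}},\Xto_{\widetilde{\lp}})=\k$, i.e.\ this simple is not rigid. Consequently there is no spherical twist available at that vertex, and the standard rigid simple-tilting formulae do not apply; this is precisely why \cite{CHQ} needs the more general simple-tilting formulae referenced in \cite[App.~A]{CHQ}. Your ``fix'' paragraph identifies the right difficulty (degeneracy of $(Q_\T^\sun,W_\T^\sun)$ and failure of DWZ mutation) but then proposes the wrong mechanism to resolve it.

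More broadly, the actual argument in \cite{CHQ} does not proceed by inductive transport along flips as you outline. It builds $\Xto$ via a perverse-schober / local-to-global construction on the surface, so that the Int--Hom formula \eqref{eq:Int} is a consequence of a geometric model for morphisms rather than something propagated flip-by-flip. The flip/tilt compatibility is then a consequence of that model, not the engine of the proof. Your inductive strategy could in principle be made to work for the rigid (non-L) part, but for the L-flips you would need an independent argument replacing the nonexistent spherical twist, and you would also need to show well-definedness of the transported $\Xto$ (independence of the flip path), which your sketch does not address.
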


In particular, for $\wh{\lp}\in\wh{\LA}(\sop)$,
$\Ext^1(\Xto_{\wh{\lp}},\Xto_{\wh{\lp}})=\k$ and when restricted to $\wh{\CA}(\sop)$,
one has an injection
\begin{equation}\label{eq:X2}
\begin{array}{rcl}
  \Xto \colon \wh{\CA}(\sop) &\xrightarrow{1-1}& \Sph^\circ(\Dsop),  \\
    \wh{\eta} &\mapsto& \Xto_{\wh{\eta}},
\end{array}
\end{equation}
Thus, $\Xto_{\wh{\alpha}}$ is rigid if and only if $\alpha$ is in $\CA(\sop)$.

Denote by $\FGC(\sop)$ the subgraph of $\FG(\sop)$ by deleting all L-flip edges.
Equivalently, it consists of flips $\mu^\sharp_\gamma:\T\to\T^\sharp$
such that the dual closed arc of the flipped open arc $\gamma$ is in $\CA(\sop)$ (not in $\LA(\sop)$).
Similarly for other variation of $\FGC$.
We get the following corollary.

\begin{corollary}
The isomorphism $\Xto_*$ in \eqref{eq:iso} restricts to an isomorphism
\begin{equation}\label{eq:iso2}
  \Xto_*\colon  \FGC^\circ(\sop) \xrightarrow{\cong} \REGp(\Dsop).
\end{equation}
\end{corollary}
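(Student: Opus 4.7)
The plan is to verify that the subgraphs deleted on each side of the isomorphism $\Xto_*$ of \eqref{eq:iso} correspond bijectively, so that the restriction is well defined and remains an isomorphism. On the flip side, $\FGC^\circ(\sop)$ is obtained from $\FGT(\sop)$ by erasing precisely the L-flip edges $\mu^\sharp_\gamma$ whose dual closed arc lies in $\LA(\sop)$ (the self-folded case, the right picture of \Cref{fig:flip}). On the categorical side, $\REGp(\Dsop)$ is the principal component of the rigid exchange graph, whose edges are the simple tilts performed at \emph{rigid} simples only.

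First I would recall the vertex dictionary in \Cref{thm:CHQ1}: $\T\mapsto\h_\T^\sun$ with $\Sim\h_\T^\sun=\Xto(\widetilde{\T^*})$, so the simples of the heart $\h_\T^\sun$ are indexed by the graded dual closed arcs of $\T$; the subset indexed by $\T^*_{\LA}$ corresponds exactly to dual arcs of self-folded edges of $\T$. Second, the discussion around \eqref{eq:X2} together with the identity $\Ext^1(\Xto_{\widetilde{\lp}},\Xto_{\widetilde{\lp}})=\k$ for $\widetilde{\lp}\in\widetilde{\LA}(\sop)$ shows that $\Xto_{\widetilde{\alpha}}$ is rigid if and only if $\alpha\in\CA(\sop)$; in other words, the non-rigid simples of $\h_\T^\sun$ are precisely the objects $\Xto_{\widetilde{\lp}}$ attached to L-arcs $\lp\in\T^*_{\LA}$.

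Next I would match edges. Under the graph isomorphism \eqref{eq:iso}, a forward flip $\mu^\sharp_\gamma\colon\T\to\T^\sharp_\gamma$ corresponds to the forward simple tilt of $\h_\T^\sun$ at the simple $\Xto_{\widetilde{\gamma^*}}$ attached to the dual closed arc $\gamma^*$ of $\gamma$. Consequently $\mu^\sharp_\gamma$ is an L-flip if and only if $\gamma^*\in\LA(\sop)$, if and only if the corresponding simple tilt is taken at a non-rigid simple, if and only if this edge is excluded from $\REGp(\Dsop)$. Thus $\Xto_*$ sets up a bijection between the deleted L-flip edges and the deleted non-rigid tilt edges, and restriction yields the desired isomorphism $\FGC^\circ(\sop)\xrightarrow{\cong}\REGp(\Dsop)$.

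The only genuinely nontrivial point, as opposed to bookkeeping, is the identification \emph{forward L-flip = simple tilt at an L-arc simple}. This is part of the content of \Cref{thm:CHQ1} itself, packaged into the way $\Xto_*$ is constructed in \cite{CHQ}; once one inspects that construction on a self-folded neighborhood, the corollary is an immediate consequence of the rigidity dichotomy supplied by \eqref{eq:X2}.
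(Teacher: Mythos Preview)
Your proof is correct and follows the same approach as the paper, which simply presents the corollary as an immediate consequence of the rigidity dichotomy stated just before it (namely that $\Xto_{\widetilde{\alpha}}$ is rigid if and only if $\alpha\in\CA(\sop)$). Your write-up is just a more explicit unpacking of that one-line deduction.
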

Note that for any $\T\in\FGC(\sop)$, there are exactly $\#\T^*_{\CA}$ edges coming in and going out.

Two of the key features of the categorical side are
\begin{itemize}
  \item the symmetry triangulated shift $[1]$ and
  \item the natural partial order on hearts.
\end{itemize}
Under the correspondences in \Cref{thm:CHQ1},
we can also use $[1]$ and $\le$ on the topological side for decorated triangulations of $\sop$.

\begin{remark}
For closed arcs, the shift correspond to grading shift.
The map $X$ also extends to open arc and the alternative version of \eqref{eq:iso} is that
$X(\T)$ is in fact a silting set in the perfect derived category $\per(\sop)$,
which is the Koszul dual of $\h_\T^\sun$ in $\Dsop$.
See \cite{QQ2,Chr,CHQ} for more discussion.

On open arcs, the shift corresponds to the \emph{universal rotation} $\wh{\rho}\in\MCG(\sop)$.
More precisely, for any boundary component $\partial\in\partial\sop$,
there is a rotation/operation $\rho_\partial$ that moves each marked point to the next consecutive one:
\[\begin{tikzpicture}[scale=.3]
\draw[thick](0,0) circle (5) ;
\draw[thick,fill=gray!20](0,0) circle (1.5);
  \foreach \j in {1,...,5}{\draw(90-72*\j:5) node{$\bullet$};}
  \foreach \j in {1,...,3}{\draw(-90-120*\j:1.5) node{$\bullet$};}
\draw[teal,-stealth](90+36-15:5.4)arc(90+36-15:90+36+15:5.4);
\draw[teal](126:6)node{$\rho_1$}(90:.5)node{$\rho_2$};
\draw[teal,-stealth](90+36:1.1)arc(90+36:90+36-70:1.1);
\end{tikzpicture}
\label{fig:rotate}\]
These rotations are roots of the Dehn twists, i.e. $\rho_\partial^{m_\partial}=\on{D}_\partial$.
And the universal rotation is the product of all:
\begin{equation}\label{eq:uni-rho}
   \wh{\rho}\colon=\prod_{\partial\subset\partial\sop} \rho_\partial.
\end{equation}
In particular, we have
\[
    \wh{\rho} \big( X(\T) \big) = X(\T)[1].
\]
Notice that in each long dumbbell/symmetric hexagon in \eqref{fig:4hex},
the source triangulated digon is the universal rotation of the sink triangulated digon.
\end{remark}

%=========================================================
\subsection{The fundamental domain}\label{sec:pi0}\
%=========================================================

Denote by $\REGp[\h_0,\h_0[1]]$ the connected component in $\REG[\h_0,\h_0[1]]$ containing $\h_0$.
By the isomorphism \eqref{eq:iso}, we have
\[
    \FGCp[\T_0,\T_0[1]]=(\Xto_*)^{-1}( \REGp[\h_0,\h_0[1]] )
\]
for $\T_0=(\Xto_*)^{-1}(\h_0)$.

Denote by $\dbloop{\FGCp[\T_0,\T_0[1]]}$ the graph obtained from $\FGCp[\T_0,\T_0[1]]$ by adding a reverse edge for each of its edges and adding $\#(\T')^*_{\LA}=\#\yue(\T')$ loops at each vertex $\T'$.

\begin{theorem}\label{thm:pi0}
Suppose that $\T_0$ is admissible.
Then there is a natural isomorphism
\begin{gather}
    \FGT(\sop)/\MT(\sop)\cong\dbloop{\FGCp[\T_0,\T_0[1]]}.
\end{gather}
\end{theorem}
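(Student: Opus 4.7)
The plan is to transfer the problem to the categorical side via the isomorphism $\Xto_*\colon \FGT(\sop)\xrightarrow{\cong}\EGT(\Dsop)$ of \Cref{thm:CHQ1}, under which $\FGCp[\T_0,\T_0[1]]$ corresponds to the simple-tilting interval $\REGp[\h_0,\h_0[1]]$ by \eqref{eq:iso2}. The claim then reduces to showing that this interval is a sharp fundamental domain for a natural $\MT(\sop)$-action on $\EGT(\Dsop)$, together with the correct accounting for L-flip and backward-flip edges when passing to the quotient graph.

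First I would categorify the $\MT(\sop)$-action: each braid twist $B_\eta$ with $\eta\in\T^*_{\CA}$ lifts to the spherical twist $\Phi_{\Xto_{\widetilde\eta}}\in\Autp(\Dsop)$ via \eqref{eq:X2}, and each L-twist $L_\lp$ with $\lp\in\T^*_{\LA}$ lifts to the analogous twist along the non-rigid object $\Xto_{\widetilde\lp}$. Since the universal rotation $\widetilde\rho$ realizing the shift $[1]$ is \emph{not} in $\MT(\sop)$, the hearts $\h_0$ and $\h_0[1]$ lie in distinct $\MT(\sop)$-orbits and should appear as distinct vertices of the quotient. I would then prove that every $\MT(\sop)$-orbit in $\EGT(\Dsop)$ contains exactly one representative in $\REGp[\h_0,\h_0[1]]$: for existence, I induct on the simple-tilting distance from $\h_0$, using that a heart above $\h_0[1]$ (resp.\ below $\h_0$) can always be pulled into the interval by a suitable spherical twist from $\MT(\sop)$; for uniqueness, I use the admissibility of $\T_0$, the finite presentation of \Cref{thm:MT's}, and the strict partial order on hearts to rule out any nontrivial identifications within the interval.

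Finally, I would verify the edge structure. At each $\T\in\FGCp[\T_0,\T_0[1]]$ the $n$ forward flips of $\FGT(\sop)$ split by \Cref{lem:wk} as $n=\#\T^*_{\CA}+\#\T^*_{\LA}$: the $\#\T^*_{\LA}=\#\yue(\T)$ L-flips are loops in $\FGT(\sop)$ and match the loops added by $\dbloop{\FGCp[\T_0,\T_0[1]]}$; non-L forward flips whose target stays in the interval give its native forward edges; and non-L forward flips whose target leaves the interval are $\MT(\sop)$-identified with the formal inverse of some interval edge, supplying the reverse edges added by $\dbloop{\FGCp[\T_0,\T_0[1]]}$. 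The main obstacle is the fundamental-domain property of the second paragraph: tracking how spherical and L-twists interact with the tilting partial order in the presence of self-folded triangles requires the interplay between the relations in \Cref{thm:MT's} and the categorical structure of $\Dsop$, and in the unpunctured case ($\sun=\emptyset$) specializes to the cluster-exchange-groupoid picture of \cite{KQ2}.
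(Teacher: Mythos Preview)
Your overall plan is sound in spirit, but it takes a longer categorical detour and contains two genuine soft spots where the paper's route is both shorter and safer.

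The paper does \emph{not} categorify the $\MT(\sop)$-action at all. Since $\MT(\sop)\le\SBr(\sop)$ already acts (freely) on $\FG(\sop)$ as a subgroup of the deck group of \Cref{lem:covering}, the entire argument runs on the topological side, powered by the two elementary identities
\[
(\T')_\gamma^{\pm2\sharp}=B_{\eta}^\mp(\T'),\qquad (\T')_\gamma^{\pm\sharp}=L_{\lp}^\mp(\T')
\]
(these are \eqref{eq:twist=flip2} and \eqref{eq:twist=flip}), where $\eta\in\CA(\sop)$ resp.\ $\lp\in\LA(\sop)$ is the dual of $\gamma$. They immediately give $\MT(\sop)\cdot\FGCp[\T_0,\T_0[1]]\subset\FGT(\sop)$: each generator of $\MT(\T_0)=\MT(\sop)$ is realized by a concrete flip sequence from $\T_0$, as in \cite[(8.4)]{KQ1}. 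For the reverse inclusion one shows closure under flips of any $\T_1\in\FGCp[\T_0,\T_0[1]]$: for $\eta\in(\T_1)^*_{\CA}$ with dual arc $\gamma$, \Cref{lem:ss} (transported via \eqref{eq:iso2}) says exactly one of $(\T_1)^{\pm\sharp}_\gamma$ stays in the interval, and the other is its $B_\eta^\mp$-translate by \eqref{eq:twist=flip2}; for $\lp\in(\T_1)^*_{\LA}$, both flips are L-twist translates of $\T_1$ itself by \eqref{eq:twist=flip}. The same dichotomy yields the edge matching with $\dbloop{\FGCp[\T_0,\T_0[1]]}$ in one stroke.

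Your lift of $L_\lp$ to ``the analogous twist along the non-rigid object $\Xto_{\widetilde\lp}$'' is not a standard construction: spherical twists require spherical objects, and here $\Ext^1(\Xto_{\widetilde\lp},\Xto_{\widetilde\lp})=\k$. An autoequivalence corresponding to the L-flip does exist (cf.\ \Cref{pp:KY} and \Cref{rem:auto}, built via Koszul duality and a local computation), but invoking it is both harder and unnecessary once \eqref{eq:twist=flip} is in hand. Your uniqueness step---``use the finite presentation of \Cref{thm:MT's} and the partial order''---is also too vague: it is unclear how a presentation of $\MT(\sop)$ rules out two interval vertices sharing an orbit. In the paper no separate uniqueness step is isolated; the vertex bijection and the edge count come out simultaneously from the two displayed identities together with \Cref{lem:ss}.
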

\begin{proof}
Firstly, we claim that $\MT(\sop) \cdot \FGCp[\T_0,\T_0[1]]\subset\FGT(\sop)$.
This is because the braid twist/L-twist $\MT(\sop)$ can be realized as either a composition of flips or a flip on its own.
More precisely, for any $\T'\in \FG(\sop)$, one has (cf. $u^2$ and $s$ in \Cref{fig:4hex}, respectively)
\begin{gather}\label{eq:twist=flip2}
  (\T')_\gamma^{\pm2\sharp}=B_{\eta}^\mp(\T'), \\
  (\T')_\gamma^{\pm\sharp}=L_{\lp}^\mp(\T'). \label{eq:twist=flip}
\end{gather}
Here, $\eta$ or $\lp$ in $(\T')^*$ is the dual of the arc $\gamma\in\T'$
and $(-)^{\pm2\sharp}$ means the iterated flips.
Then one can write down explicitly that the twist $\Psi(T_0)$ can be expressed as a sequence of flips staring from $T_0$ in the same way as the formula \cite[(8.4)]{KQ1}, for any $\Psi\in\MT(\T_0)=\MT(\sop)$.

Secondly, we claim that $\MT(\sop) \cdot \FGCp[\T_0,\T_0[1]]\supset\FGT(\sop)$.
We only need to how that if $\T'=\Xi(\T_1)$ is in $\MT(\sop) \cdot \FGCp[\T_0,\T_0[1]]$ for $\Xi\in\MT(\sop)$ and $\T_1\in \FGCp[\T_0,\T_0[1]]$,
then so are the any flips of $\T'$.

Equivalently, we only need to show that any flips of $\T_1$ are still in $\MT(\sop) \cdot \FGCp[\T_0,\T_0[1]]$ if $\T_1$ is.
This follows as below:
\begin{itemize}
  \item For any $\eta\in(\T_1)^*_{\CA}$ with $\gamma=\eta^*\in\T_1$,
  exactly one of the flips $(\T_1)^{\pm\sharp}_{\gamma}$ is in $\FGCp[\T_0,\T_0[1]]$ by \Cref{lem:ss} (via the identification \eqref{eq:iso2}).
  The other flip will be one of the braid twist $B_\eta^\pm$ of which by formula \eqref{eq:twist=flip2}.
  \item For any $\lp\in\T^*_{\LA}$ with $\gamma=\eta^*\in\T_1$,
  the flips $(\T_1)^{\pm\sharp}_{\gamma}$ are precisely the L-twists by \eqref{eq:twist=flip}.
\end{itemize}
Hence $\MT(\sop) \cdot \FGCp[\T_0,\T_0[1]] = \FGT(\sop)$ as sets.

Finally, the formulae \eqref{eq:twist=flip2} implies that each edge in $\FGT(\sop)$ needs to be doubled and \eqref{eq:twist=flip} implies that there are certain loops need to be added, so that
one matches $\dbloop{\FGCp[\T_0,\T_0[1]]}$ with $\MT(\sop)$ precisely as graphs.
\end{proof}

Recall that there is a covering \eqref{eq:FGT} so that
\begin{gather}\label{eq:MTT}
    \FGT(\sop)/\MTT=\FG(\surfp).
\end{gather}
More precisely, $\MTT$ is the subgroup of $\SBr(\sop)$ consisting of elements $f\in\SBr(\sop)$ satisfying $f(\T)\in\FGT(\sop)$.
Thus, $\MT(\sop)\le\MTT$ and, by the conjugation formula \eqref{eq:Psi}, $\MT(\sop)\lhd\MTT$.
But at the moment, we only have the following commutative diagram,
\begin{equation}\label{eq:3FGs}
\begin{tikzpicture}[xscale=3,yscale=1,baseline=(bb.base)]
\path (0,1) node (bb) {}; % baseline
\draw (-1,0) node (s0) {$\FGT(\sop)$}
 (1,-1) node (s1) {$\FG(\surfp)$}
 (1,1) node[] (s2) {$\dbloop{\FGCp[\T_0,\T_0[1]]}$};
\draw [-stealth, font=\scriptsize]
 (s0) edge node [below] {$\big/ \MTT$} (s1)
 (s0) edge node [above] {$\big/ \MT(\sop)$} (s2)
 (s2) edge node [right] {$\on{Ni}$} (s1);
\end{tikzpicture}
\end{equation}
for any admissible $\T_0$, where $\on{Ni}=\MTT/\MT(\sop)$.

In \Cref{sec:pi1}, we will show that $\MTT=\MT(\sop)$ and $\FG(\surfp)=\dbloop{\FGCp[\T_0,\T_0[1]]}$.

%=========================================================
%=========================================================
\section{Flip groupoids and flip twist groups}\label{sec:fgoupoids}
%=========================================================
%=========================================================

In \cite{KQ2}, we introduce the cluster exchange groupoids as the enhancement of cluster exchange graphs,
using the cluster combinatorics to produce generalization of braid groups.
Naturally, in the unpunctured surface case, as well as Dynkin case, we identified
cluster braid groups with braid twist groups and spherical twist groups.

In this section, we introduce the flip groupoids as refinements of Cayley graphs of mixed twist groups,
with the same nature above.
However, the square/pentagon/dumbbell relations there are not enough here.
New phenomenon appears due to the loop edges in $\FG(\surfp)$.

%=========================================================
\subsection{Relations in cluster exchange graphs}\label{sec:fg0}\
%=========================================================

%=========================================================
\paragraph{\textbf{Classical square and pentagon (relations)}}\

Given a triangulation $\RT$ in $\FG(\surfp)$ with any two vertices $i, j\in Q_\RT$,
there is the following local full subgraph in $\FG(\surfp)$:
\begin{gather}\label{eq:local EG}
\begin{tikzpicture}[scale=1.5, rotate=0,
          arrow/.style={->,>=stealth,bend left=25,very thick}]
    \clip(-1.2,-.5)rectangle(2.2,.5);
    \draw[orange] (1,0) node[] (t3) {{$\RT'$}} (2,0) node (t4) {{$\bullet$}} (-1,0) node(t1) {{$\bullet$}}
        (0,0)node[](t2) {{$\RT$}};
    \draw[font=\scriptsize](0.5,0) node {{$i$}} (-.5,0) node {{$j$}} (1.5,0) node {{$j$}};
    \foreach \j/\k in {1/2,2/3,3/4}{
    \draw[Emerald,arrow] (t\j) edge node[above]{$x$} (t\k);
    \draw[blue!50,arrow] (t\k) edge node[below]{$y$}(t\j);}
\end{tikzpicture}.
\end{gather}
Suppose that there is no arrow from $i$ to $j$.
If there is at most one arrow from $j$ to $i$ in $Q_\RT$,
then such a subgraph completes into a double square or a double pentagon,
depending on if the numbers of arrow is zero or one, as shown below.
\begin{gather}\label{eq:Sqr-Pen}
        \begin{tikzpicture}[scale=.7, rotate=0,
          arrow/.style={->,>=stealth,very thick}]
        \begin{scope}[shift={(0,0)},rotate=45]
        \foreach \j in {1,2,3,4}{
            \draw[orange] (90*\j+45:2) node (t\j) {{$\bullet$}};
            \draw[blue!50] (90*\j:2) node {{$x$}};
            \draw[Emerald] (90*\j:.8) node {{$y$}};}
        \foreach \a/\b in {1/2,2/3,3/4,4/1}{
            \draw[Emerald] (t\a) edge[arrow,bend left=15]  (t\b);
            \draw[blue!50] (t\b) edge[arrow,bend left=15]  (t\a);};
            \draw[white,fill=white](135:2)circle(.1) node[orange]{$\otimes$}node[left,orange,font=\small]{$\RT=\;$};
            \draw[white,fill=white](-45:2)circle(.1) node[orange]{$\odot$};
        \end{scope}
            \draw[](3.5,-.4)node{\huge{\; $\Big/_{x^2=y^2\;,}$}};
%            \draw[font=\scriptsize](0,1.414)node{$a$} (-1.414,0)node{$b$};
        \begin{scope}[shift={(10,0)},rotate=90]
        \foreach \j in {1,2,3,4,5}{
            \draw[orange] (72*\j:2) node (t\j) {{$\bullet$}};
            \draw[blue!50] (72*\j+36:2.1) node {{$x$}};
            \draw[Emerald] (72*\j+36:1) node {{$y$}};}
        \foreach \a/\b in {1/2,2/3,3/4,4/5,5/1}{
            \draw[Emerald] (t\a) edge[arrow,bend left=15]  (t\b);
            \draw[blue!50] (t\b) edge[arrow,bend left=15]  (t\a);};
            \draw[](-.4,-3.8)node{\huge{\; $\Big/_{x^2=y^3\;.}$}};
            \draw[white,fill=white](72:2)circle(.1) node[orange]{$\otimes$}node[left,orange,font=\small]{$\RT=\;$};
            \draw[white,fill=white](-72:2)circle(.1) node[orange]{$\odot$};
        \end{scope}
        \end{tikzpicture}
\end{gather}
The \emph{square/pentgon relation} (at $\RT$ with respect to $i,j$)
is the relations $x^2=y^h$ staring at $\RT$ (where $h=2$ for square and $h=3$ for pentagon).
In fact, by the symmetry, all $x^2=y^h$ are this type of relations.

Note that these relations are well-known, cf. \cite{Kr,Q1}.
They are not exactly the ones in \Cref{thm:H}, but the oriented version.

\begin{figure}[ht]
  \centering\makebox[\textwidth][c]{
    \includegraphics[width=14cm]{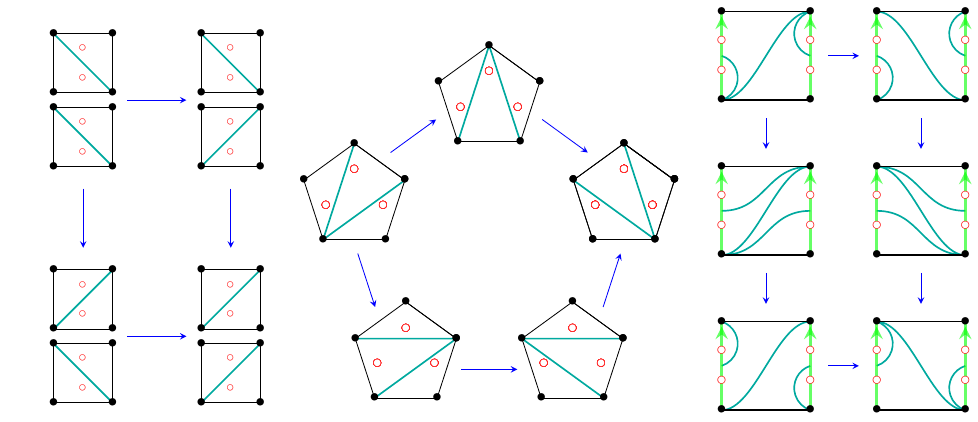}   \qquad    }
\caption{Square, pentagon and S-DB in $\FG(\sop)$}
\label{fig:456}
\end{figure}

%=========================================================
\paragraph{\textbf{Original short dumbbell (S-DB) relations}}\cite[\S~2]{KQ2}\

Given a triangulation $\RT$ in $\FG(\surfp)$ with two vertices $i,j$ in $Q_\RT$
such that the corresponding open arcs in $\RT$ are not self-folded edges.
Suppose that there is no arrow from $i$ to $j$ as above and we still have \eqref{eq:local EG}.
We name the edges/flips $x$ and $y$ (omitting the subscripts for simplicity).
When there are arrows in one direction (between $i$ and $j$),
then $x$ and $y$ can be distinguished as being forward mutation at either the tail or head of arrows.
\begin{gather}\label{eq:fat DB}
\begin{tikzpicture}[scale=1.5,arrow/.style={->,>=stealth,very thick}]
    \clip(-1.2,-.7)rectangle(2.2,.7);
    \draw[orange] (1,0) node (t3) {{$\bullet$}} (2,0) node (t4) {{$\bullet$}} (-1,0) node(t1) {{$\bullet$}}
        (0,0)node[white](t2) {{$\bullet$}};\draw(t2)node[orange]{$\RT$};
    \draw[Emerald,arrow] (t2.30) to node[above]{$x$} (t3.150);
    \draw[Emerald,arrow] (t3.60) to[bend left=60] node[above]{$x$} (t4.90);
    \draw[Emerald,arrow] (t1.-90) to[bend left=-60] node[below]{$x$} (t2.-120);
    \draw[Emerald,arrow] (t2.-30) to node[below,white]{$x$} (t3.-150);
    \draw[blue!50,arrow] (t4.-90) to[bend left=60] node[below]{$y$} (t3.-60);
    \draw[blue!50,arrow] (t2.120) to[bend left=-60] node[above]{$y$} (t1.90);
\end{tikzpicture}.
\end{gather}
The \emph{short dumbbell (S-DB) relation} (at $\RT$ with respect to $i,j$) is the hexagonal relations shown in \eqref{eq:fat DB},
or $x^2y=yx^2$ staring at $\RT$.
Let $t_i=yx$ and $t_j=xy$ be the \emph{flip twists (i.e. 2-cycle)} at $\RT$ and $\RT'$, respectively.
Then the relation can be written as conjugation relation $t_j=t_i^x$.

%=========================================================
\subsection{Extra relations in the punctured case}\label{sec:fg}\
%=========================================================

\begin{figure}[ht]\centering
 \includegraphics[]{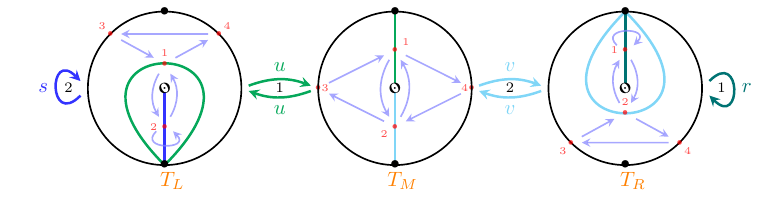}
\caption{$\FG(\surfp)$ for a once-punctured digon as local flip graph}\label{fig:D2}
\end{figure}

Given a triangulation $\RT$ in $\FG(\surfp)$ with two vertices $i,j$ in $Q_\RT$
such that they are in a digon as shown in the middle picture of \Cref{fig:D2}.
They there is a full subgraph of $\FG(\surfp)$ as shown there.
We name three triangulations $\RT_L,\RT_M$ and $\RT_R$, respectively (cf. \Cref{fig:D2} and the third picture of \Cref{fig:Hex}).
We also name usual flips/edges $u,v$ and L-flips/loop edges $s,r$ as shown there,
where the color matches the open arc that they flip at.

%=========================================================
\paragraph{\textbf{Long dumbbell (L-DB) relations}}\

The \emph{long dumbbell relations} (at $\RT_M$ with respect to $i,j$)
are
\begin{itemize}
  \item $r=s^{uv}$ ($\Leftrightarrow suv=uvr$) at $\RT_L$ and
  \item $s=r^{vu}$ ($\Leftrightarrow vus=rvu$) at $\RT_R$,
\end{itemize}
as shown in the left two picture of \Cref{fig:Hex}.

\begin{figure}[ht]\centering
    \includegraphics[width=\textwidth]{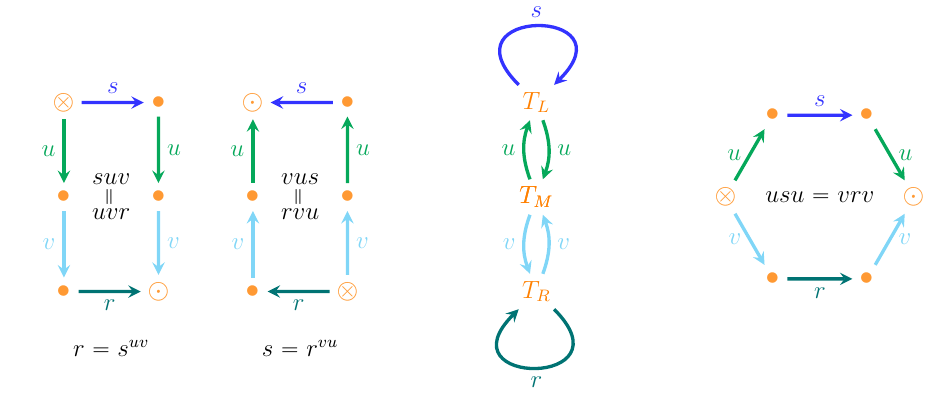}\vskip -.5cm
\caption{Two L-DB and a Sym-Hex}\label{fig:Hex}
\end{figure}

\paragraph{\textbf{Symmetric hexagon (Sym-Hex) relations}}\

Continue the set up as above (for long dumbbell relation).

The \emph{symmetric hexagon relations} (at $\RT_M$ with respect to $i,j$) is $usu=vrv$,
as shown in the right picture of \Cref{fig:Hex}.
Note that they will also play a role in the conjugation formula later.

\begin{definition}\label{def:fg}
The \emph{flip groupoid} $\fg(\surfp)$ is the quotient of
the path groupoid of $\FG(\surfp)$
by all (square/pentagon, S-DB, L-DB, Sym-Hex) relations above.
\end{definition}

These extra relations have been observed by some experts, cf. \cite{Hai,HKQ,IN}.

%=========================================================
\subsection{Covering groupoids}\label{sec:covering}\
%=========================================================

Similar to $\fg(\surfp)$, we have the groupoid version of $\FG(\sop)$.
\begin{figure}[ht]
  \centering\makebox[\textwidth][c]{
    \includegraphics[width=16cm]{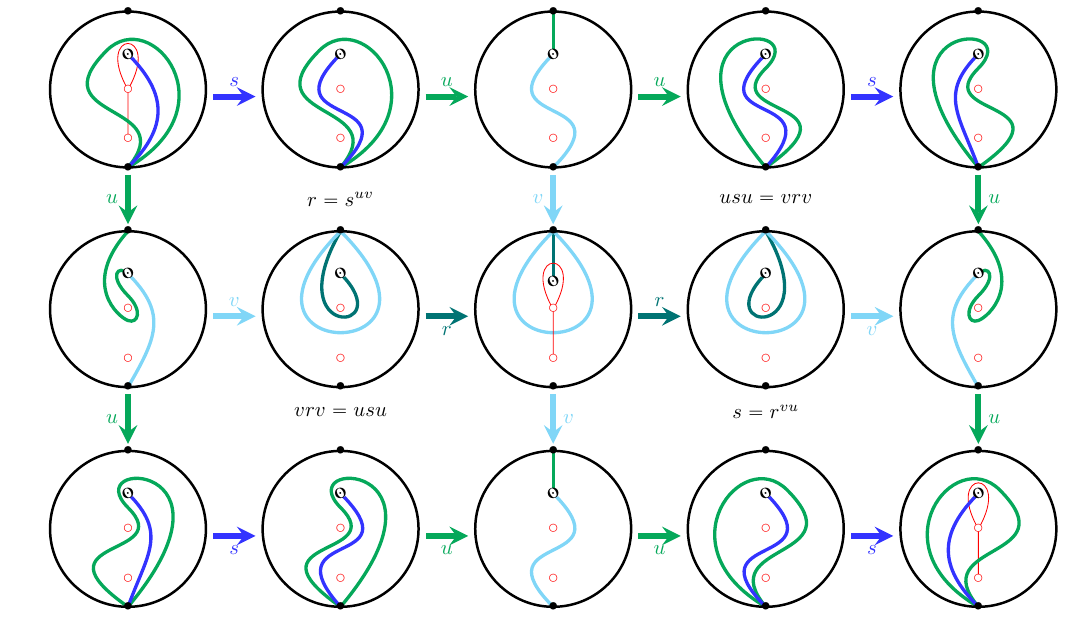}       \quad}
\caption{Higher braid relation $\eta\delta\eta\delta=\delta\eta\delta\eta$ decomposes into 4 hexagons}
\label{fig:4hex}
\end{figure}

\begin{definition}\label{def:gp.sop}
There are the following lifts in $\FG(\sop)$:
\begin{itemize}
  \item A square is shown in the left picture of \Cref{fig:456}.
  \item A pentagon is shown in the middle picture of \Cref{fig:456}.
  \item An S-DB is shown in the right picture of \Cref{fig:456}.
  \item Two L-DB are shown as the top-left/bottom-right hexagons in \Cref{fig:4hex}.
  \item Two Sym-Hex are shown as the top-right/bottom-left hexagons in \Cref{fig:4hex}.
\end{itemize}
The \emph{flip groupoid} $\fg(\sop)$ is the quotient of
the path groupoid of $\FG(\sop)$ by the five kinds of relations above.
\end{definition}

As the relations in $\fg(\surfp)$ lifts to relations in $\fg(\sop)$,
the isomorphism in \Cref{thm:pi0} upgrades to an isomorphism between groupoids:
\[
    F^\Tri_*\colon\fgt(\sop)/\MTT\cong\fg(\surfp).
\]

%=========================================================
\subsection{Local flip twists}\label{sec:FT}\
%=========================================================

There are two types \emph{(local) flip twists} at a triangulation $\RT\in\FG(\surfp)$:
the \emph{2-cycle (twist)} $t_i$ corresponding flipping twice a non self-folded edge $i$ of $\RT$
(as mentioned above) and the \emph{loop (twist)} $l_s=s$ corresponding flipping once a self-folded edge $s$ of $\RT$.
In total, there are $n$ flip twists at each $\T$.
\begin{definition}\label{def:LT}
Define the \emph{flip twist group} $\FT(\RT)$ to be
the subgroup of $\fg(\surfp)$ generated by flip twists at $\RT$.
\end{definition}

\begin{lemma}\label{lem:decompo}
The following (higher) braid relations hold in any local flip group $\FT(\RT)$:
\begin{itemize}
  \item $\Co(t_i,t_j)$ between two 2-cycles if there is no arrow between $i$ and $j$ in $Q_\RT$.
  \item $\Br(t_i,t_j)$ between two 2-cycles if there is exactly 1 arrow between $i$ and $j$ in $Q_\RT$.
  \item $\Co(l_s,l_r)$ between two loops if there is no arrow between $s$ and $r$ in $Q_\RT$.
  \item $\Co(t_i,l_s)$ between a 2-cycle and a loop if there is no arrow between $i$ and $s$ in $Q_\RT$.
  \item $\Br^4(t_i,l_s)$ between a 2-cycle and a loop if there are two arrows between $i$ and $s$ in $Q_\RT$
  (that forms a two-cycle).
\end{itemize}
\end{lemma}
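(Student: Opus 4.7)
The plan is to verify each of the five relations directly inside the flip groupoid $\fg(\surfp)$, by realizing the loop based at $\RT$ that expresses the relation as a composition of the five types of $2$-cells (square, pentagon, fat-dumbbell, thin-dumbbell, symmetric-hexagon) listed in \Cref{def:fg}. For each ordered pair of vertices of $Q_\RT$, I would first describe the local sub-graph of $\FG(\surfp)$ obtained by iterating forward and backward flips at only the two arcs in question. Its combinatorial shape is controlled by the number of arrows between the two vertices in $Q_\RT$ and by whether either vertex is a loop (i.e.\ corresponds to a self-folded triangle), and it is this local sub-graph that supplies the available $2$-cells.

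For cases (1) and (2), both vertices are non-loops, and the relevant local sub-graph is the ``doubled'' square or pentagon, namely the classical associahedron $1$-skeleton with each edge replaced by the $2$-cycle of \eqref{eq:fat DB}. The twist $t_i$ is precisely the loop at $\RT$ going once around the $2$-cycle over the $i$-edge, and similarly for $t_j$. The commutator $t_i t_j t_i^{-1} t_j^{-1}$ -- respectively the $\Br$-commutator in $t_i$ and $t_j$ -- is then a loop entirely contained in this local sub-graph, which I would contract by alternately applying the square (resp.\ pentagon) relation to shuffle forward flips at $i$ past those at $j$, and the fat-dumbbell relation to transport the $2$-cycles from one endpoint of an edge to the other via conjugation $t_j = t_i^x$. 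For cases (3) and (4), at least one of the generators is an L-flip loop $l_s$; however, the absence of an arrow in $Q_\RT$ still forces the local sub-graph (now enlarged to include L-flip loops, cf.\ \Cref{fig:D2}) to be square-like, and the desired commutation follows by the same shuffle argument, using one of the thin-dumbbell relations to move an L-flip loop from one vertex to another.

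Case (5) is the main obstacle, and corresponds to a $2$-cycle between $i$ and $s$ in $Q_\RT$, i.e.\ a self-folded triangle sharing one edge with the arc dual to $i$. The plan here is not to compute but to read off the decomposition displayed in \Cref{fig:4hex}: the $4$-braid loop $t_i l_s t_i l_s \cdot \bigl( l_s t_i l_s t_i \bigr)^{-1}$ tiles as a concatenation of two thin-dumbbell hexagons and two symmetric hexagons. The delicate point is to check that the twelve boundary edges of these four hexagons fit together exactly along the two length-$8$ paths constituting the $\on{Sb}$-relation, with all forward/backward orientations and all L-flip loop edges correctly accounted for, and that no further $2$-cells are required. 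This bookkeeping -- best done by labelling carefully which arc is flipped at each step and tracking the universal rotation $\widetilde{\rho}$ interchanging source and sink triangulated digons -- is precisely what distinguishes the symmetric decomposition of \Cref{fig:4hex} from the non-symmetric cluster-theoretic decomposition of \cite{HHQ}, and is the genuinely new ingredient required beyond the unpunctured case of \cite{KQ2}.
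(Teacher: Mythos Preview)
Your proposal is correct and follows essentially the same route as the paper. Cases (1)--(2) are exactly the square/pentagon plus fat-dumbbell argument of \cite[Lem.~2.7]{KQ2}; cases (3)--(4) the paper handles ``the same way as the first commutation relation'' (no thin-dumbbell is needed here, since with no arrow the L-flip loop commutes past the other flip via a degenerate square); and case (5) is precisely the tiling by two thin-dumbbells and two symmetric hexagons in \Cref{fig:4hex}. For the bookkeeping you flag as delicate in case (5), the paper gives the explicit algebraic check: at $\RT_L$ with $l_s=s$ and $t_i=u^2$, one computes
\[
suusuu=(suv)\,\iv{v}\,(usu)\,u=(uvr)\,\iv{v}\,(vrv)\,u=uvrrvu=u\,(vrv)\,\iv{v}\,(rvu)=u\,(usu)\,\iv{v}\,(vus)=uusuus,
\]
applying one thin-DB ($suv=uvr$), one Sym-Hex ($usu=vrv$), and then the reverse pair ($vrv=usu$, $rvu=vus$).
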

\begin{proof}
The first two relations are shown in \cite[Lem.~2.7, cf. Fig.~2]{KQ2}.
The next two commutation relations follow the same way as the first commutation relation,
(as the flips are apart).

We only need to check the last higher braid relation.
It follows by assembling 2 long dumbbells and 2 symmetric hexagons
as shown in the left picture of \Cref{fig:decomposition}.
More precisely, at $\T_s$ in \Cref{fig:Hex,fig:Hex}, we have $l_s=s$ and $t_i=u^2$.
The corresponding algebraic calculation is:
\[\begin{array}{rl}
    suusuu=&(suv)\cdot\iv{v}\cdot(usu)\cdot u\\
    =&(uvr)\cdot\iv{v}\cdot(vrv)\cdot u\\
    =&uvrrvu\\
    =&u\cdot(vrv) \cdot\iv{v} \cdot (rvu)\\
    =&u\cdot(usu) \cdot\iv{v} \cdot (vus)=uusuu.
\end{array} \qedhere \]
\end{proof}

\begin{remark}
The commutation, braid and higher braid relations decomposes into 4 squares, 6 pentagons and 4 hexagons, respectively.
\Cref{fig:6pen} and \Cref{fig:4hex} shows the lifting of braid and higher braid relations in $\fg(\sop)$, respectively.
\begin{figure}[ht]
  \centering\makebox[\textwidth][c]{
    \includegraphics[width=17cm]{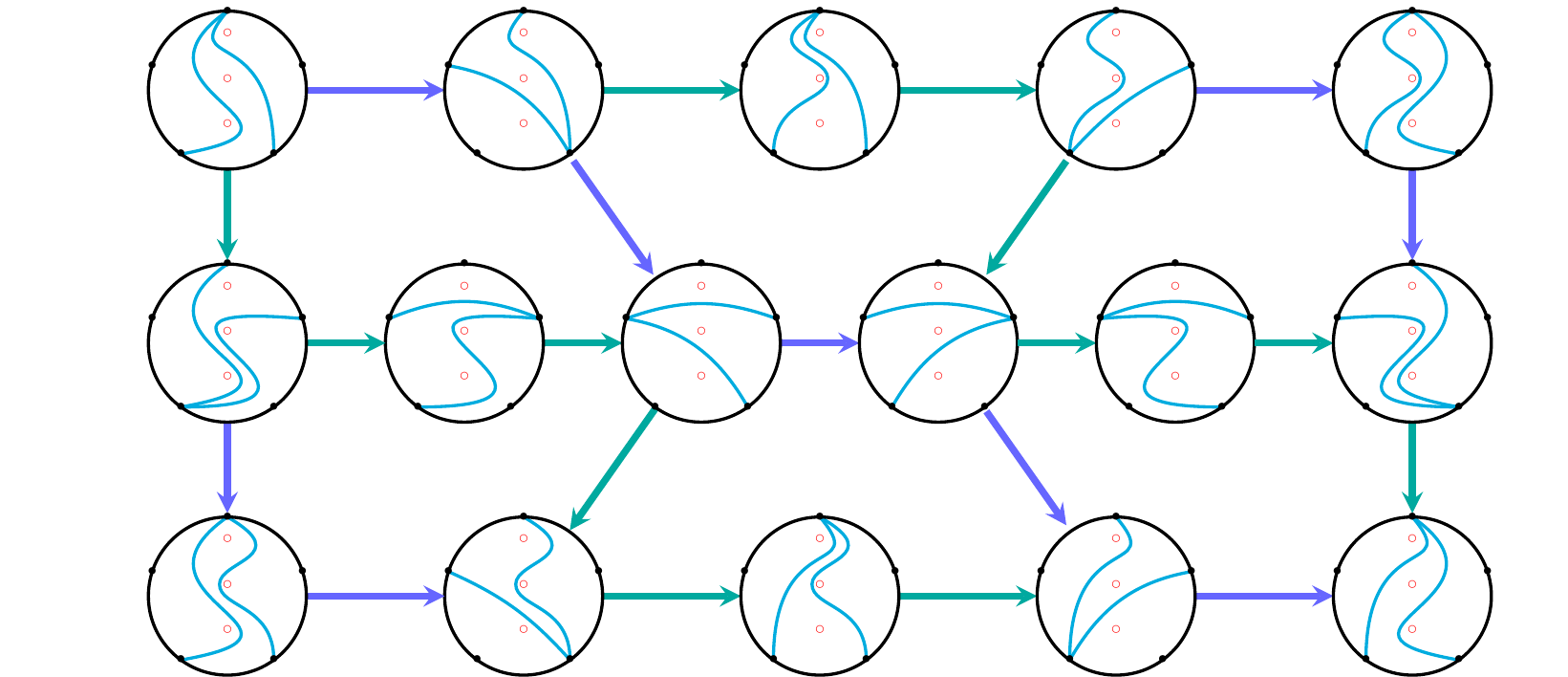}       \qquad}
\caption{Braid relation decomposes into 6 pentagons}
\label{fig:6pen}
\end{figure}
\end{remark}

\begin{figure}[ht]\centering
    \includegraphics[width=\textwidth]{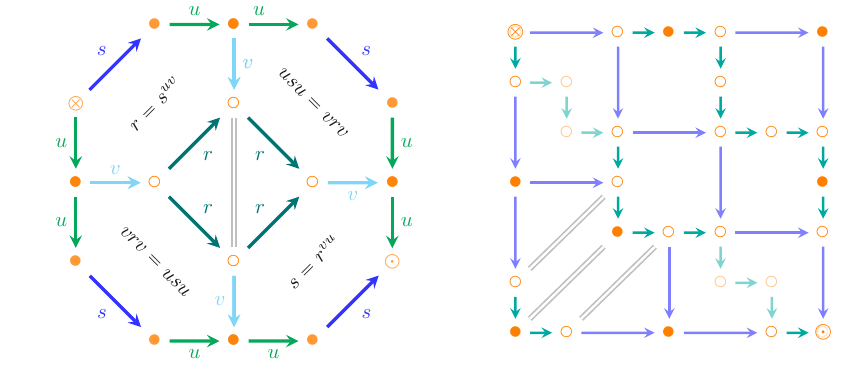}
\caption{Two ways of decomposing of higher braid relation $\Br^4$.\\
Left: 2 L-DB and 2 Sym-Hex;\quad Right: 8 Non-Sym-Hex}
\label{fig:decomposition}
\end{figure}

\begin{remark}[Alternative decomposition of $\Br^4$ via non-symmetric hexagon relations]\

There is another type of (non--symmetric) hexagon relation, appearing in the cluster braid groupoids of type $I_2(4)=B_2=C_2$ Dynkin diagrams,
see the picture below.
\begin{gather}\label{eq:h}
        \begin{tikzpicture}[scale=.7, rotate=0,
          arrow/.style={->,>=stealth,very thick}]
        \begin{scope}[shift={(0,0)},rotate=90]
        \foreach \j in {1,2,3,4,5,6}{
            \draw[orange] (60*\j:2) node (t\j) {{$\bullet$}};
            \draw[blue!50] (60*\j+30:2.2) node {{$x$}};
            \draw[Emerald] (60*\j+30:1.2) node {{$y$}};}
        \foreach \a/\b in {1/2,2/3,3/4,4/5,5/6,6/1}{
            \draw[Emerald] (t\a) edge[arrow,bend left=15]  (t\b);
            \draw[blue!50] (t\b) edge[arrow,bend left=15]  (t\a);};
            \draw[white,fill=white](60:2)circle(.1) node[orange]{$\otimes$}node[left]{$\RT=\;$};
            \draw[white,fill=white](-60:2)circle(.1) node[orange]{$\odot$};
        \end{scope}
            \draw[](3.8,-.4)node{\huge{\; $\Big/_{x^2=y^4}$ .}};
        \end{tikzpicture}
\end{gather}
These relations clearly different from our new long dumbbell/symmetric hexagon relations.
Nevertheless, they both produce $\Br^4$ relation in different fashion.
See the right picture of \Cref{fig:decomposition} that $\Br^4$ relation decomposes into 8 non-symmetric hexagons
(which is the same style as \Cref{fig:6pen}).

In general, there are polygonal (i.e. $(h+2)$-gon) relations (cf. \cite{HHQ}) of the form $x^2=y^h$,
which essentially come from weighted folding for type $I_2(h)$ quiver/diagram
and produces $\Br^h$ relations.
\end{remark}

Recall that we have chosen an initial triangulation $\T\in\FG(\sop)$.

By \eqref{eq:MTT}, we have a short exact sequence:
\begin{equation}\label{eq:ses MTT}
    1\to \pi_1\fgt(\sop,\T) \to \pi_1 \fg(\surfp,\RT) \xrightarrow{\man_\RT} \MTT \to 1
\end{equation}
for $\RT=F_*^\Tri(\T)$.
By formula \eqref{eq:twist=flip2} and \eqref{eq:twist=flip}, there is a natural homomorphism
\begin{equation}\label{eq:MT=FT}
\begin{array}{rcl}
    \man_\RT\colon \FT(\RT) &\twoheadrightarrow& \MT(\T) \; \big(\lhd \MT(\sop) \lhd \MTT \big)\\
        t_i &\mapsto& B_\eta, \\
        l_j &\mapsto& L_\delta,
\end{array}
\end{equation}
for $\eta\in\T^*_{\CA}$ dual to a non self-fold arc in $\RT$ that corresponds to vertex $i$ in $Q_\RT$ and,
for $\delta\in\T^*_{\LA}$ dual to a self-fold arc in $\RT$ that corresponds to vertex $j$ in $Q_\RT$.

%=========================================================
%=========================================================
%\section{Properties of flip twist groups}\label{sec:ftg}
%=========================================================
%=========================================================
%We study relations between flip twist groups associated to different triangulations and their relations with the corresponding mixed twist groups in this section.
\def\CFT{\CT}

%=========================================================
\subsection{Conjugation formulae}\label{sec:conj.FT}\
%=========================================================

%=========================================================
\paragraph{\textbf{2-cycle twist groups}}\

Let $\RT$ be a triangulation of $\surfp$ in $\FG(\surfp)$ and $P\in\yue(\RT)$ a $\RT$-isolated puncture,
locally shown as $\RT_L$ in \Cref{fig:D2}.
There is a loop twist $s$ associated to the self-folded edge (or vertex $2$ there)
and a 2-cycle twist $u^2$ associated to the enclosing edge.
Call the conjugated twist
\[
    (u^2)^{\iv{s}}=su \cdot u\iv{s} \xlongequal{usu=vrv} \iv{u}vrv \cdot u\iv{s}
        \xlongequal{r=s^{\iv{uv}}}  \iv{u}vvu=(v^2)^u
\]
the \emph{2-cycle (twist)} associated to
the loop $s$ at $\RT$ of $\fg(\surfp)$.
Then to each triangulation of $\fg(\surfp)$, there are $n$ associated 2-cycle twists.

\begin{definition}
The 2-cycle twist group $\CFT(\RT)$ is the subgroup of $\FT(\RT)$ generated by
all 2-cycle twists at $\RT$.
\end{definition}

%=========================================================
\paragraph{\textbf{Twist groups related by a flip}}\

We now study the relationship between the flip twist groups of triangulations that differ by a flip.
Recall that $\yue(\RT)$ denotes the set of punctures that are enclosed in some self-folded $\RT$-triangles.

\def\RTc{\RT_\circ}
\def\RTb{\RT_\bullet}
Consider a pair of forward flips between two triangulations (which may coincide):
\[
\begin{tikzpicture}[xscale=1.8, rotate=0,
          arrow/.style={->,>=stealth,bend left=25}]
    \clip(-1.2,-.5)rectangle(2.2,.5);
    \draw[] (1,0) node[] (t3) {{$\RTb$}}node[right]{$\colon\mu_\bullet$}
        (0,0)node[](t2) {{$\RTc$}}node[left]{$\mu_\circ\colon$};
    \draw[font=\scriptsize](0.5,0) node {{$i$}};
    \foreach \j/\k in {2/3}{
    \draw[arrow] (t\j) edge node[above]{$z_\circ$} (t\k);
    \draw[arrow] (t\k) edge node[below]{$z_\bullet$}(t\j);}
\end{tikzpicture}
\]
in $\fg(\surfp)$ at arcs $z_\circ$ and $z_\bullet$ respectively.
Denote by $i$ the corresponding vertex in $Q_{\RTc}$ and $Q_{\RTb}$.
%We use $\mu$ to emphasize that it is a morphism in $\fg(\surfp)$ instead of an arc.
Note that $\yue(\RTc)$ and $\yue(\RTb)$ differ by at most one puncture.
Without loss of generality, suppose that $\yue(\RTc) \subset \yue(\RTb)$.
The flip twist of $\RT_*$ with respect to a vertex $i$ of $Q_{\RT_*}$ will be denoted by $t^*_i$ or $l^*_i$.

\begin{lemma}\label{lem:conj}[Conjugation formulae]\

\begin{itemize}
\item  If the flip $\mu_i$ is a loop in $\FG(\surfp)$, i.e. as
$\mu_\circ=\mu_2=s$ at $\RTc=\RT_L=\RTb$ in \Cref{fig:D2}.
Then there is a conjugation/isomorphism
\[\begin{array}{rcl}
    \conj_{\mu_\circ}=\conj_{s} \colon \FT(\RTc)&\to&\FT(\RTb),\\
        f &\mapsto& f^s.
\end{array}\]

\item  If $\yue(\RTc)=\yue(\RTb)$ and $\RTc\ne\RTb$, then
there is a natural isomorphism
\[\begin{array}{rcl}
    \conj_{\mu_\circ}\colon \FT(\RTc)&\xrightarrow{\cong}&\FT(\RTb),\\
            t^\circ_i &\mapsto& t^\bullet_i , \\
            l^\circ_j &\mapsto& l^\bullet_j, \\
            t^\circ_k &\mapsto& \iv{t^\bullet_i} t^\bullet_k t^\bullet_i,
\end{array}\]
for any $j$ in $Q_{\RTc}$ corresponding to a self-fold arc
for any $k\ne i$ in $Q_{\RTc}$ corresponding to a non self-fold arc.
More precisely, one has $\conj_{\mu_\circ} ( t^\circ_k )=t^\bullet_k$ if and only if there is no arrow
from $k$ to $i$ in $Q_{\RTc}$.

\item  If $\yue(\RTb)=\yue(\RTc)\cup\{P\}$, i.e. as
$(\RTc,\RTb)=(\RT_M,\RT_L)$ and $\mu_\circ=v,\mu_\bullet=u$ in \Cref{fig:D2}.
Then there is an injection
\begin{equation}\label{eq:tricky}
\begin{array}{rcl}
    \conj_{\mu_\circ}=\conj_v\colon \FT(\RT_M)&\to&\FT(\RT_L),\\
        t^M_1 &\mapsto& t^L_1 , \\
        t^M_2 &\mapsto& l^L_2 t^L_1 \iv{l^L_2}, \\
        t^M_3 &\mapsto& \iv{t^L_1} t^L_3 t^L_1, \\
        f^M_k &\mapsto& f^L_k,
\end{array}
\end{equation}
with vertices $1,2,3$ of $Q_{\RT_M}$ shown in \Cref{fig:D2} and any other vertex $k$ with flip twist $f^?_k$ in $\RT_?$.
In fact, we have $\FT(\RT_L)=\< \FT(\RT_M)^\mu,l_2^L \>$ for $l_2^L=s$.
\end{itemize}
When restricted to $\CFT$, we always have a natural isomorphism
\begin{equation}\label{eq:CFT}
\begin{array}{rcl}
    \conj_{\mu_\circ} \colon \CFT(\RTc)&\to&\CFT(\RTb),\\
        f &\mapsto& f^s.
\end{array}
\end{equation}
\end{lemma}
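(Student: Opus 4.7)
The plan is to verify the three cases separately by working inside the flip groupoid $\fg(\surfp)$ and exploiting the five types of local relations (square, pentagon, fat dumbbell, thin dumbbell, symmetric hexagon) listed in \Cref{def:fg}. In all cases, the underlying claim is that the path $\mu_\circ$ defines a morphism of groupoids between the point groups at $\RTc$ and $\RTb$, and inside this morphism one can read off how each local flip twist transforms. The formulae we need to check are, up to conjugation, all of the form $f^{\mu_\circ} = g$ for $f$ a generator of $\FT(\RTc)$ and $g$ the predicted image, so each identity reduces to a purely local hexagonal (or smaller) relation in $\fg(\surfp)$.

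For Case 1, where $\mu_\circ = s$ is an L-flip loop at $\RTc=\RTb$, conjugation by $s$ is automatically an inner automorphism of the point group $\FT(\RTc)$; what needs checking is only that each generator at $\RTc$ is again sent to a generator. For flip twists supported disjointly from $N_{\RTc}$ of the self-folded edge this is immediate from commutation. For flip twists at arcs adjacent to the self-folded triangle one invokes $\on{Sb}(t_i, l_s)$ from \Cref{lem:decompo}, which yields exactly the required conjugation identity. For Case 2 (the case with $\yue(\RTc)=\yue(\RTb)$ and $\RTc\neq\RTb$), the argument is the direct analogue of the one given in \cite{KQ2} for the unpunctured surface: the fat dumbbell relation $x^2 y = y x^2$ applied at $(\RTc,\RTb)$ with $y=\mu_\circ$ sends $t^\circ_i$ to $t^\bullet_i$; the commutation/square relations produce $\conj_{\mu_\circ}(f^\circ_k) = f^\bullet_k$ whenever $k$ is not connected to $i$ in $Q_{\RTc}$; and the pentagon relation yields the conjugation $\conj_{\mu_\circ}(t^\circ_k) = \iv{t^\bullet_i}t^\bullet_k t^\bullet_i$ when there is an arrow $k \to i$. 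The same analysis in the loop slot handles $l^\circ_j$.

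Case 3, where $\yue(\RTb) = \yue(\RTc)\cup\{P\}$ and $\mu_\circ = v$ in the local picture of \Cref{fig:D2}, is the main obstacle, since the flip crosses a self-folded/non-self-folded boundary and thus passes between a 2-cycle twist and a loop twist. The verification of the three non-trivial images proceeds by direct rewriting inside $\fg(\surfp)$: the identity $\conj_v(t^M_1) = t^L_1$ is a fat dumbbell at the edge $1$ (which is unaffected by the L-flip); the formula $\conj_v(t^M_2) = l^L_2 t^L_1 \iv{l^L_2}$ is obtained by applying the symmetric hexagon $usu = vrv$ together with the thin dumbbell $suv = uvr$, exactly as displayed by the hexagonal decomposition in \Cref{fig:4hex}; and $\conj_v(t^M_3) = \iv{t^L_1} t^L_3 t^L_1$ reduces to a pentagon between the arcs $1$ and $3$. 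For a vertex $k$ outside the local $N(z_\circ)$, the corresponding flip twist commutes with $v$ by a square relation. Injectivity of $\conj_v$ then follows from the identity $\FT(\RT_L) = \bigl\langle \conj_v(\FT(\RT_M)),\, l^L_2 \bigr\rangle$, which together with \Cref{lem:decompo} (the relation $\on{Sb}(t^L_1, l^L_2)$) presents $\FT(\RT_L)$ as an HNN-type extension of $\conj_v(\FT(\RT_M))$ by the loop twist.

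The last assertion concerning the restriction to $\CFT$ follows uniformly from the three cases, because every 2-cycle twist in $\FT(\RTb)$ is, by the very definition preceding the lemma, conjugate (via the local flip data) to a 2-cycle twist of $\FT(\RTc)$: the identity $(u^2)^{\iv{s}} = (v^2)^u$ established before the lemma is exactly the Case 3 instance, while Cases 1 and 2 are visibly automorphisms on 2-cycle twist groups. The hard part of the whole lemma is therefore the honesty check in Case 3 that the new thin dumbbell and symmetric hexagon relations are \emph{sufficient} to reduce every defining relation of $\FT(\RT_M)$ (in particular the higher braid relation $\on{Sb}$ linking 2-cycles to loops) to valid relations in $\FT(\RT_L)$; this is precisely what \Cref{lem:decompo} was designed to supply, and its combinatorial content (the decomposition $uusuu = suusuu$ of \Cref{fig:4hex}) is what makes the conjugation $\conj_v$ well-defined.
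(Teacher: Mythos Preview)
Your overall architecture matches the paper's: treat the three cases separately, reduce each generator's image to a local relation in $\fg(\surfp)$, and invoke \cite{KQ2} for Case~2. However, there are two concrete errors in your Case~3 analysis and one conceptual misunderstanding.

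\textbf{Case 3, specific generators.} The flip $\mu_\circ$ here is $u$ (the statement's ``$\mu_\circ=v$'' is a typo; compare the proof of \Cref{lem:conj2}, where \eqref{eq:tricky} is applied as $(-)^u\colon\FT(\RT_M)\to\FT(\RT_L)$). For vertex~$1$ the paper simply observes $t^M_1=u^2$, hence $(u^2)^u=u^2=t^L_1$ is a tautology: the flip is \emph{at} vertex~$1$, so no dumbbell relation is involved. Your ``fat dumbbell at the edge~$1$'' is misplaced. Conversely, you omit vertex~$4$, the external edge of the quadrilateral adjacent to the flipped arc: this is where the Fat-DB relation is actually used, giving $(t^M_4)^u=t^L_4$. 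Only vertices beyond~$4$ fall under the square/commutation case.

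\textbf{Injectivity and well-definedness.} Your last two paragraphs over-engineer the problem. The map $\conj_{\mu_\circ}$ is literally conjugation by a morphism in the groupoid $\fg(\surfp)$, so it is automatically a well-defined injective homomorphism between vertex groups; relations of $\FT(\RT_M)$ are preserved for free, and no ``HNN-type'' argument is needed. The only content of the lemma is checking that each generator lands on the asserted element of $\FT(\RT_L)$. The final claim $\FT(\RT_L)=\langle \conj_{\mu_\circ}(\FT(\RT_M)),\,l^L_2\rangle$ then follows by inspection: $t^L_1,t^L_4,f^L_k$ are already in the image, and $t^L_3 = t^L_1\cdot\conj_{\mu_\circ}(t^M_3)\cdot\iv{t^L_1}$ is as well. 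Your discussion of ``reducing every defining relation'' via \Cref{lem:decompo} is therefore unnecessary and obscures the simple reason the map exists.
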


\begin{proof}
For the first case, the formula for $\conj_\mu(t_z)$ follows from the higher braid relation $\Br^4(s,u)$
as we have seen in \Cref{fig:decomposition}.

For the second case, it follows essentially the same way as \cite[Prop.~2.8]{KQ2}.
The only difference is that for $j$ corresponding to a self-fold arc,
there is no edge between $j$ and $i$ -- otherwise $\yue(\RTc)\ne\yue(\RTb)$.
Hence $(l^\circ_j)^{\mu_i} = l^\bullet_j$ is a square-commutation relation.

For the third case,
we have
\begin{itemize}
  \item For $1$, $(t^M_1)^u=(u^2)^u=u^2=t^L_1$.
  \item For $2$, $(t^M_2)^u=(v^2)^u=\iv{u}v^2u$ and $l^L_2 t^L_1 \iv{l^L_2}=su^2\iv{s}$.
  Then
    \[
        usuu\xlongequal{usu=vrv}vrvu\xlongequal{r=s^{\iv{uv}}}vvus
    \]
    (or the combination of the two hexagon relations on the right in \Cref{fig:4hex})
    implies that $(t^M_2)^u=l^L_2 t^L_1 \iv{l^L_2}$.

  \item For $3$, $(t^M_3)^u=\iv{t^L_2} t^L_3 t^L_2$ follows the same way as in the second case.
  \item For $k=4$, $(t^M_4)^u=t^L_4$ is an S-DB relation.
  \item For other $k$, $(f^M_k)^u=f^L_k$ is a square=commutation relation (as there is no arrows between $i=2$ and $k$).\qedhere
\end{itemize}
\end{proof}

Note that the flip twist groups are NOT all isomorphic to each other by \eqref{eq:tricky}.

As a consequence, the 2-cycle twist groups are all isomorphic to each other.
In fact, a more precise statement can be made as follows.
\begin{corollary}\label{cor:CFT}
$\{ \CFT(\RT) \mid \RT\in\fg(\surfp) \}$ is a normal subgroupoid of $\pi_1\fg(\surfp)$.
\end{corollary}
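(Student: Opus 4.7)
The plan is to verify the two defining properties of a normal (wide) subgroupoid in turn: that $\{\CFT(\RT)\}_{\RT}$ assembles into a subgroupoid of $\pi_1\fg(\surfp)$, and that it is stable under conjugation by arbitrary morphisms. Since each $\CFT(\RT)$ is a subgroup of the vertex group $\FT(\RT) \le \pi_1(\fg(\surfp),\RT)$, the first condition is automatic once we declare $\mathcal{C}(\RT,\RT') = \emptyset$ for $\RT \ne \RT'$: identities lie in each $\CFT(\RT)$, composition stays inside, and inversion is a group operation. So the entire content is the normality check.

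For normality, I need to show that for every morphism $g\colon \RTc \to \RTb$ in $\pi_1\fg(\surfp)$ and every $f\in\CFT(\RTc)$, the conjugate $\iv{g}fg$ lies in $\CFT(\RTb)$. Because $\pi_1\fg(\surfp)$ is generated as a groupoid by the forward flips (together with their formal inverses), it suffices to verify this when $g = \mu_\circ$ is a single forward flip $\RTc \to \RTb$. Indeed, once we know $\conj_{\mu}(\CFT(\RTc)) = \CFT(\RTb)$ for every generating flip $\mu$, the same equality for an arbitrary composable word $g = \mu_1\cdots\mu_k$ follows by induction on the length of the word, using associativity of conjugation in the groupoid and applying the established statement at each step.

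The one-flip case is exactly what is packaged in equation \eqref{eq:CFT} of \Cref{lem:conj}: in all three cases of flips (loop flip, ordinary flip preserving $\yue$, and ordinary flip enlarging $\yue$ by one puncture), the conjugation $\conj_{\mu_\circ}$ restricts to a natural isomorphism $\CFT(\RTc) \xrightarrow{\cong} \CFT(\RTb)$. The main subtle point is the third case, where $\conj_{\mu_\circ}\colon \FT(\RT_M)\to\FT(\RT_L)$ is only an injection on the full flip twist group: one has to make sure that the 2-cycle twist of the newly enclosing edge in $\RT_M$ (namely $t^M_2 = v^2$) is sent to the 2-cycle twist associated to the loop $s$ in $\RT_L$. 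This is precisely the computation $(v^2)^u = \iv{u}v^2u$, which by the symmetric hexagon and thin dumbbell relations equals $su^2\iv{s} = l_2^L t_1^L \iv{l_2^L}$; under the definition of the 2-cycle twist attached to a loop at the start of \Cref{sec:conj.FT}, this is exactly the 2-cycle twist of $\RT_L$ associated to $s$. Hence $\conj_{\mu_\circ}$ does send 2-cycle generators to 2-cycle generators in all three cases.

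Putting these pieces together: the wide subgroupoid $\{\CFT(\RT)\}$ is preserved by conjugation with every generating flip, hence with every morphism of $\pi_1\fg(\surfp)$, which establishes normality and completes the proof. The only real obstacle is bookkeeping in the third flip case of \Cref{lem:conj}, and that has already been resolved by the explicit formulae there combined with the definition of the 2-cycle twist at a loop.
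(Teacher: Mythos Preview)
Your proposal is correct and takes essentially the same approach as the paper: both reduce normality to the conjugation isomorphisms \eqref{eq:CFT} for a single flip, with the paper simply citing the analogue \cite[Prop.~2.9]{KQ2} and leaving the bookkeeping implicit. Your write-up spells out the reduction to generators and the third-case check (that $(t^M_2)^u$ lands on the 2-cycle twist associated to the loop $s$ at $\RT_L$), which is exactly the content behind the paper's assertion of \eqref{eq:CFT}; nothing new is happening beyond that.
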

\begin{proof}
This is the analogue of the result of first paragraph \cite[Prop.~2.9]{KQ2},
which follows the same way since we have the conjugation isomorphisms \eqref{eq:CFT}.
\end{proof}

%=========================================================
\paragraph{\textbf{The $\lozenge$-flips}}\

\begin{definition}\cite{FST}
A (forward) $\lozenge$-flip is the composition of locally flipping twice at a enclosing edge of a self-folded triangle of a triangulation $\RT$ of $\surfp$.
%For instance, the $\lozenge$-flip $uv$/$vu$ of the left/right triangulated digon in \Cref{fig:D2} is the right/left one, respectively.
\end{definition}

\begin{lemma}\label{lem:conj2}
Consider a pair of $\lozenge$-flips
$\begin{tikzcd}
    \RT_L \ar[r,"uv",shift left=1,bend left=15] & \RT_R \ar[l,"vu",shift left=1,bend left=15]
\end{tikzcd}$
between admissible triangulations as in \Cref{fig:D2}.
Then there is a natural isomorphism
\begin{equation}\label{eq:tricky*}
\begin{array}{rcl}
    \conj_{uv}\colon \FT(\RT_L)&\to&\FT(\RT_R),\\
        t^L_1 &\mapsto& l^R_1 t^R_2 \iv{l^R_1} , \\
        l^L_2 &\mapsto& l^R_1, \\
        t^L_4 &\mapsto& (t^R_4)^ {t^R_2 l^R_1 t^R_2 \iv{l^R_1} }, \\
        f^L_k &\mapsto& f^R_k,\end{array}
\end{equation}
\end{lemma}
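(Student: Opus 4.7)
The plan is to factor the $\lozenge$-flip $uv\colon\RT_L\to\RT_R$ through the intermediate non-admissible triangulation $\RT_M$ appearing in \Cref{fig:D2}, decomposing $uv$ into two ordinary forward flips between $\RT_L$, $\RT_M$, $\RT_R$, each performed at an enclosing edge. Each component flip changes the isolated-puncture set $\yue$ by the single element $P$ (the first removes it, the second restores it), so we land squarely in the third case of \Cref{lem:conj}. Accordingly, I would write $\conj_{uv}$ as a composition of two conjugations and evaluate it on each generator of $\FT(\RT_L)$ by applying the formula \eqref{eq:tricky} at each step, simplifying after each step by the local relations of $\fg(\surfp)$ established in Sections \ref{sec:fg0} and \ref{sec:fg}.

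For $l^L_2=s$ the claim is immediate: the thin-dumbbell relation $suv=uvr$ of \Cref{fig:Hex} is precisely $s^{uv}=r=l^R_1$. For the 2-cycle $t^L_1$, the strategy is to first conjugate through the intermediate $\RT_M$ (landing in a subgroup generated by the $t^M_\bullet$ together with $l^L_2$), then push through the second flip, and finally use the symmetric-hexagon relation $usu=vrv$ to collect the conjugating factors into the form $l^R_1(-)\iv{l^R_1}$; by the mirror of the formula $\conj_v(t^M_2)=l^L_2 t^L_1\iv{l^L_2}$ from \Cref{lem:conj} case 3, this produces exactly $l^R_1 t^R_2\iv{l^R_1}$. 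The formula for $t^L_4$ follows in the same way but tracks an additional fat-dumbbell conjugation across $\RT_M$, which accumulates into the factor $t^R_2 l^R_1 t^R_2\iv{l^R_1}$. For $f^L_k$ whose vertex $k$ is not adjacent in the relevant quivers to the flipped vertices, both component conjugations reduce to square-commutation and yield $f^L_k\mapsto f^R_k$.

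The final point is bijectivity: unlike \Cref{lem:conj} case 3, the present map is a genuine isomorphism. The cleanest way I see this is by symmetry --- the identical recipe applied to the reverse $\lozenge$-flip $vu\colon\RT_R\to\RT_L$ gives an explicit candidate inverse $\conj_{vu}$, and one verifies $\conj_{vu}\circ\conj_{uv}=\on{id}$ on generators using the same thin-DB and Sym-Hex relations, each composite collapsing to the identity after a short word reduction. The main obstacle is the bookkeeping in the formula for $t^L_1$: one must apply $usu=vrv$ at precisely the right moment, because a premature simplification destroys the $l^R_1$-conjugation structure needed on the right-hand side. The parallel formula $\conj_v(t^M_2)=l^L_2 t^L_1\iv{l^L_2}$ in \Cref{lem:conj} serves as the guiding template --- the $\lozenge$-flip case is, up to the mirror symmetry $L\leftrightarrow R$, a self-iterate of that situation.
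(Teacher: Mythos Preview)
Your approach is essentially the paper's: factor the $\lozenge$-flip through $\RT_M$ and apply the third case of \Cref{lem:conj} on each leg, handling $l^L_2$ separately via the thin-dumbbell identity $s^{uv}=r$. The paper organises the computation slightly differently: rather than pushing forward along $u$ and then along $v$, it records both injections $(-)^u\colon\FT(\RT_M)\hookrightarrow\FT(\RT_L)$ and $(-)^v\colon\FT(\RT_M)\hookrightarrow\FT(\RT_R)$ in a single table \eqref{eq:tricky2}, then uses the trivial rewriting $(f)^{uv}=\bigl(f^{\iv u}\bigr)^{u^2v}$ together with $u^2=t^M_1\in\FT(\RT_M)$ to reduce each $(t^L_\bullet)^{uv}$ to an already-tabulated $(t^M_\bullet)^v$. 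This sidesteps any direct appeal to Sym-Hex at this stage and keeps the bookkeeping entirely in terms of the two instances of \eqref{eq:tricky}. Your explicit verification of bijectivity via $\conj_{vu}$ is sound but unnecessary: conjugation is automatically injective, and surjectivity follows once the images of the generators are seen to generate $\FT(\RT_R)$, which is immediate from the symmetric description $\FT(\RT_R)=\langle \FT(\RT_M)^v,\,l^R_1\rangle$ in \Cref{lem:conj}.
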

\begin{proof}
%Firstly, we have
%\[\begin{cases}
%    (t^L_1)^{uv}= (u^2)^{uv} =\iv{v}u(uvr)\iv{r}\xlongequal{uvr=suv}\iv{v}(usu)\iv{r}
%        \xlongequal{usu=vrv}rv^2\iv{r}=l^R_1 t^R_2 (l^R_1)^{-1} \\
%    (l^L_2)^{uv}=s^{uv}=r=l^R_1.
%\end{cases}\]
%Secondly,
Applying \eqref{eq:tricky} twice, we have
\begin{equation}\label{eq:tricky2}
\begin{array}{ccccc}
    \FT(\RT_L) &\xleftarrow{(-)^u}& \FT(\RT_M)&\xrightarrow{(-)^v}&\FT(\RT_R),\\
      t^L_1 &\mapsfrom& t^M_1 &\mapsto&   l^R_1 t^R_2 \iv{l^R_1}, \\
      l^L_2 t^L_1 \iv{l^L_2} &\mapsfrom& t^M_2 &\mapsto& t^R_2, \\
      \iv{t^L_1} t^L_3 t^L_1 &\mapsfrom& t^M_3 &\mapsto& t^R_3, \\
      t^L_4 &\mapsfrom& t^M_4 &\mapsto& \iv{t^R_2} t^R_4 t^R_2, \\
      f^L_k &\mapsfrom& f^M_k &\mapsto& f^R_k, \\
\end{array}
\end{equation}
Thus, we deduce that
\[\begin{cases}
    (t^L_1)^{uv} %(t^L_1)^{uv}= (u^2)^{uv} =\iv{v}u(uvr)\iv{r}\xlongequal{uvr=suv}\iv{v}(usu)\iv{r}
        %\xlongequal{usu=vrv}rv^2\iv{r}=l^R_1 t^R_2 \iv{l^R_1}, \\
        =\big( (t^L_1)^{\iv{u}} \big) ^{u^2v}
        =(t^M_1)^{u^2v} \xlongequal{u^2=t^M_1} (t^M_1)^{v} = l^R_1 t^R_2 \iv{l^R_1}, \\
    (l^L_2)^{uv}=s^{uv}=r=l^R_1,\\
    (t^L_3)^{uv}=\Big( (t^L_1)^{\iv{u}} \cdot
        \big(\iv{t^L_1} t^L_3 t^L_1\big)^{\iv{u}} \cdot (\iv{t^L_1})^{\iv{u}}   \Big)^{u^2v}
        = (t_1^M t_3^M \iv{t_1^M} )^{ t^M_1 v}=(t_3^M)^v=t_3^R,\\
    (t^L_4)^{uv}=\big( t_4^M \big) ^{u^2v} = \big( (t_4^M)^{t^M_1} \big) ^{v}
        = \big( \iv{t^R_2} t^R_4 t^R_2 \big)^ { l^R_1 t^R_2 \iv{l^R_1} }\\
    (f^L_k)^{uv}= \big( (f_k^M)^{t^M_1} \big) ^{v}    \xlongequal{\Co(f_k^M,t^M_1)}     (f_k^M)^{v}=f_k^R,
\end{cases}\]
as required.
\end{proof}

Combining with the fact, \cite[Lem.~A.2]{QZ1}, that
any two admissible triangulations in $\FG(\surfp)$ are related by a sequence of $\lozenge$-flips.
We know that all $\FT(\RT)$ are isomorphic for admissible triangulations.
We proceed to show that flip twist groups at admissible triangulations are the whole point groups/fundamental groups of $\fg(\surfp)$.

%=========================================================
\subsection{Simply connectedness}\label{sec:pi1}\
%=========================================================

\begin{proposition}\label{pp:whole}
For any admissible triangulation $\RT_0$, $\FT(\RT_0)=\pi_1(\fg(\surfp),\RT_0)$.
\end{proposition}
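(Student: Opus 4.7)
The inclusion $\FT(\RT_0) \subseteq \pi_1(\fg(\surfp), \RT_0)$ is tautological, so the substance lies in the reverse inclusion. My plan is to argue via a spanning-tree presentation of the fundamental group, combined with an induction that uses the conjugation formulae of \Cref{lem:conj} and \Cref{lem:conj2}. Since $\FG(\surfp)$ is connected (\Cref{thm:H} together with \Cref{rem:FG}), I would choose a spanning tree in its underlying $1$-complex that contains no L-flip loops and contains exactly one edge of each $2$-cycle between distinct triangulations. Letting $p_\RT$ denote the unique tree path from $\RT_0$ to $\RT$, every non-tree edge is either a loop (L-flip) at some $\RT$ or the second edge of a $2$-cycle, so the associated generator of $\pi_1$ is, after concatenating with tree paths and using relation $t_i = yx$ in a $2$-cycle, of the form $p_\RT \cdot f \cdot p_\RT^{-1}$ for some local flip twist $f \in \FT(\RT)$. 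The proposition thus reduces to the claim that for every $\RT \in \FG(\surfp)$, every path $p \colon \RT_0 \to \RT$ in $\fg(\surfp)$, and every $f \in \FT(\RT)$, one has $p \cdot f \cdot p^{-1} \in \FT(\RT_0)$.

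I would prove this claim by induction on the length of $p$. Writing $p = p' \cdot e$ with $e \colon \RT'' \to \RT$ a single flip, one has $p f p^{-1} = p'\, (e f e^{-1})\, p'^{-1}$, and it suffices to express $efe^{-1}$ as a product of elements of $\FT(\RT'')$ together with conjugates by strictly ``simpler'' paths to which the inductive hypothesis applies. Cases (1) and (2) of \Cref{lem:conj} dispose of this immediately: if $e$ is an L-flip loop at $\RT = \RT''$ or a forward flip with $\yue(\RT'') = \yue(\RT)$, then $e f e^{-1} \in \FT(\RT'')$.

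The delicate case is \Cref{lem:conj}(3), where $e$ creates a newly isolated puncture, so $\RT'' = \RT_M$ is non-admissible at that puncture while $\RT = \RT_L$ is admissible there. One then has $\FT(\RT_L) = \langle \FT(\RT_M)^{e},\, l^L_2 \rangle$, and conjugates of elements of $\FT(\RT_M)^{e}$ pull back to $\FT(\RT_M)$ by the formula. The obstruction is the extra generator $l^L_2 = s$. To handle it, I would invoke the thin dumbbell relation of \Cref{fig:Hex}, which in $\fg(\surfp)$ identifies $s$ at $\RT_L$ with the conjugate $(uv) \cdot r \cdot (uv)^{-1}$ of the loop twist $r$ at the twin admissible triangulation $\RT_R$ along the $\lozenge$-path joining $\RT_L$ to $\RT_R$ through $\RT_M$. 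Substituting rewrites $p \cdot s \cdot p^{-1}$ as the conjugate of $r \in \FT(\RT_R)$ by the path $p \cdot uv$, whose endpoint $\RT_R$ is again admissible.

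The main obstacle is that the thin-dumbbell substitution does not shorten the path, so induction on raw path length does not close. I would resolve this by a two-layer induction: an outer induction on $\lozenge$-flip distance from $\RT_0$ within the set of admissible triangulations (which is connected by $\lozenge$-flips by \cite[Lem.~A.2]{QZ1}), using \Cref{lem:conj2} to transport $\FT(\RT_L) \cong \FT(\RT_R)$ across each admissible-to-admissible $\lozenge$-flip, with the inner step (Cases (1)--(2) of \Cref{lem:conj} combined with the thin-dumbbell and sym-hex relations) used to absorb any excursion through a non-admissible triangulation into a single $\lozenge$-edge between two admissible triangulations. Making this bookkeeping precise, and checking that the outer induction genuinely terminates once every excursion has been absorbed, is the technical heart of the argument.
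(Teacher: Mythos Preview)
Your spanning-tree reduction is fine, and the case analysis via \Cref{lem:conj} is the right starting point. The genuine gap is in the proposed two-layer induction. After applying the thin-dumbbell substitution you have replaced $p\,s\,p^{-1}$ by $(p\cdot uv)\,r\,(p\cdot uv)^{-1}$, but $p\cdot uv$ is still an arbitrary path, not a $\lozenge$-path. To invoke the outer induction on $\lozenge$-distance you would need to know that conjugation of $r$ along $p\cdot uv$ agrees, modulo $\FT(\RT_0)$, with conjugation along some $\lozenge$-path from $\RT_0$ to $\RT_R$. But that is equivalent to knowing that the loop $(p\cdot uv)\cdot q^{-1}$ lies in $\FT(\RT_0)$ for $q$ a $\lozenge$-path, which is precisely an instance of the proposition you are trying to prove. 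Your ``inner step'' of absorbing excursions through non-admissible triangulations into $\lozenge$-edges is not supported by Cases~(1)--(2) of \Cref{lem:conj} and the hexagon relations alone; those only tell you how flip twists transform under a single edge, not that two distinct paths induce the same conjugation modulo $\FT(\RT_0)$.

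The paper avoids this circularity by separating the roles of $2$-cycle twists and loop twists. The key inputs you are not using are \Cref{cor:CFT}, which says that the $2$-cycle twist groups $\CFT(\RT)$ form a \emph{normal} subgroupoid of $\pi_1\fg(\surfp)$, and \Cref{thm:H}, which controls loop-free paths. Concretely: first build, for each $\RT$, one specific path $p$ to $\RT_0$ along which $\yue$ is monotone non-decreasing; along such a $p$ the conjugation formulae give an honest injection $\FT(\RT)\hookrightarrow\FT(\RT_0)$ with no bad case. Now for an arbitrary path $w$, strip out any loop edge $r$ occurring in $w=w_1 r w_2$ using $\conj_{w}(s)=\conj_{w_2}(r)^{-1}\cdot\conj_{w_1w_2}(s)\cdot\conj_{w_2}(r)$ and induction on length; this reduces to $w$ loop-free. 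Then $p^{-1}w$ is a loop in $\FG(\surfp)$ with no loop edges, hence lives in $\uEG(\surfp)$, and by \Cref{thm:H} it decomposes into squares and pentagons plus $2$-cycles; the former are trivial in $\fg(\surfp)$ and the latter lie in $\CFT(\RT_0)$ by \Cref{cor:CFT}. This is what closes the argument, and it is exactly the global input your edge-by-edge induction cannot manufacture on its own.
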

\begin{proof}

We first show that, for any triangulation $\RT$, there is a path/morphism
\begin{equation}\label{eq:path w}
\begin{tikzcd}
    p\colon\RT=\RT_1 \ar[r,-]& \RT_2 \ar[r,-]& \cdots \ar[r,-]& \RT_m=\RT_0
\end{tikzcd}
\end{equation}
in $\fg(\surfp)$, such that $\yue(\RT_i)\subset\yue(\RT_{i+1})$ for any $i$.
To do this, we first keep all the self-folded edges in $\RT$ (or equivalently cut them),
and (repeatedly) flip $\RT$ to an admissible triangulation $\RT'$, such that along the way,
$\yue(\RT_i)$ is not decreasing.
This can be done due to connectedness of flip graphs of triangulations.
Then by \cite[Lem.~A.2]{QZ1} again, we can $\lozenge$-flip $\RT'$ to $\RT_0$ to get a path $p$ as claimed.
Note that along the way, we can get rid of any loops in $p$ since they are redundant.
By formulae in \Cref{lem:conj} and \Cref{lem:conj2},
the path $p$ induces a conjugation $\conj_p\colon\FT(\RT)\to\FT(\RT_0)$, which is an injection.

Now, take any path/morphism $w$ from $\RT$ to $\RT_0$ in $\fg(\surfp)$.
We claim that the conjugation $\conj_{w}$ above is still an injection.
By \Cref{cor:CFT}, $\conj_{w}$ restricts to an isomorphism $\CFT(\RT)\cong\CFT(\RT_0)$.
We only need to show that for each loop twist $s\in \FT(\RT)$, $\conj_{w}(s)$ is in $\FT(\RT_0)$.
Use induction on the length of ${w}$ starting with the trivial case with $|{w}|=0$.
If there is a loop $r$ in the path $w$, i.e. $w=w_1rw_2$,
then $\conj_{w_2}(r)$ is in $\FT(\RT_0)$ by induction.
As
\[
    \conj_{w}(s)=\iv{w_2rw_1}sw_1rw_2=\iv{r}^{w_2}s^{w_1w_2}r^{w_2}=
        \iv{\conj_{w_2}(r)} \cdot \conj_{w_1w_2}(s) \cdot \conj_{w_2}(r),
\]
we only need to show that $\conj_{w_1w_2}(s)$ is in $\FT(\RT_0)$.
In other words, we can get rid of all loop twists in ${w}$.
Since $\conj_{w}(s)=(\iv{w}p)\cdot\conj_p(s)\cdot(\iv{p}w)$,
we only need to show that $\iv{p}w$ is in $\FT(\RT_0)$.
Note that $\iv{p}w$ does not contain any loop twist.
By \Cref{thm:H}, when regarding $\iv{p}w$ as a path in $\uEG(\surfp)$,
it decomposes into (unoriented) squares and pentagons.
Taking account of the orientation, this means that $\iv{p}w$ in $\FG(\surfp)$ can be written a product of oriented squares/pentagons, together with 2-cycles.
On the other hand, all the 2-cycles are in the 2-cycle twist subgroupoid
by \Cref{cor:CFT}. Thus, $\iv{p}w$ is in $\CFT(\RT_0)\le\FT(\RT_0)$ as required.

So we have shown that the conjugations of any 2-cycles and loops $\fg(\surfp)$ by any paths are in $\FT(\RT_0)$.
On the other hand, let $\ueg(\surfp)$ be the groupoid obtained from $\uEG(\surfp)$ by composing square/pentagon relations.
Consider the canonical functor $\fg(\surfp)\to\ueg(\surfp)$,
which sets all 2-cycles and loops equal to 1.
By \Cref{thm:H}, $\pi_1\ueg(\surfp)$ is trivial and hence $\fg(\surfp)$ is generated by 2-cycles and loops.
This implies that $\FT(\RT_0)=\pi_1(\fg(\surfp),\RT_0)$.
\end{proof}

An immediate consequence is the following.
\begin{corollary}\label{cor:mt=mt}
$\MTT=\MT(\sop)$ and $\dbloop{\FGCp[\T_0,\T_0[1]]}=\FG(\surfp)$ for admissible $\T_0$.
\end{corollary}
\begin{proof}
Choose admissible triangulations $\T$ and $\RT$,
\eqref{eq:MT=FT} implies that $\MT(\T)=\MTT$, which implies the first claim.
Combining with \eqref{eq:3FGs}, the second claim follows.
\end{proof}

Now we can prove the main theorem of this section.
\begin{theorem}\label{thm:FT}
$\fgt(\sop)$ is a universal cover of $\fg(\surfp)$ with covering group $\MT(\sop)$ and %$$\man_\RT\colon\pi_1(\fg(\surfp))\cong\MT(\sop).$$ Moreover,
$\man_\RT$ in \eqref{eq:MT=FT} restricts to a natural isomorphism $\FT(\RT)\cong\MT(\T)$,
which further restricts to an isomorphism $\CT(\RT)\cong\BT(\sop)$ .
\end{theorem}
\begin{proof}
In the case when $\RT$ (and $\T$) is admissible,
$\man_\RT$ in \eqref{eq:MT=FT} becomes $\FT(\RT)\twoheadrightarrow\MT(\T)$,
sending the standard generators to the standard ones.
On the other hand, by \Cref{cor:MT's2},
there is a presentation of $\MT(\T)$, with three type of relations (e.g. $\Co, \Br$ and $\Br^4$).
Each of which, also holds for (the standard generators of) $\FT(\RT)$, as shown in \Cref{lem:decompo}.
Thus, $\man_\RT$ admits a well-defined inverse that forces it to be an isomorphism.
In particular, in \eqref{eq:ses MTT}, we have $\pi_1\fgt(\sop)$ is trivial, i.e. $\fgt(\sop)$ is simply connected and is a universal cover of $\fg(\surfp)$.

In the case when $\RT$ (and $\T$) is not admissible,
the isomorphism $\man_\RT$ restricts to an isomorphism \eqref{eq:MT=FT} between subgroups of
$\pi_1\FG(\surfp,\RT) \cong \MT(\sop)$.
\end{proof}

Another related consequence is that we obtain \Cref{cor:pi0} therefore.

%=========================================================
%=========================================================
\section{On fundamental groups of moduli spaces}\label{sec:uni}
%=========================================================
%=========================================================

%=========================================================
\subsection{Quadratic differentials}\label{sec:quad}\
%=========================================================

Let $\rs$ be a compact Riemann surface and $\omega_\rs$ be its holomorphic cotangent bundle.
A \emph{meromorphic quadratic differential} $\phi$ on $\rs$ is a meromorphic section
of the line bundle $\omega_{\rs}^{2}$.
In terms of a local coordinate at $z$ on $\rs$,
$\phi$ can be written as $\phi(z)=g(z)\, \dd z^2$, for a meromorphic function
$$g(z)=z^d(a_d+\cdots)$$ for $a_d\ne0$.
If $d>0$, $z$ is a \emph{zero} of \emph{order} $d$; if $d<0$, $z$ is a \emph{pole} of \emph{order} $-d$;
if $d=0$, $z$ is a \emph{smooth} point.
The zeros and poles are singularities/critical points of $\phi$.
For $d\in\ZZ_{>0}$,
denote by $\wt_{d}(\phi)$ the set of zeroes of order $d$ and $\wt_{-d}(\phi)$ the set of poles of order $d$.
There are two type of singularities:
\begin{itemize}
\item the finite type, including all zeroes and simple (i.e. order 1) poles.
Let $$\wty=(\wt_{-1},\wt_0,\wt_1,\cdots)$$ be the collection of them.
\item the infinite type, including all poles with order at least two.
Let $$\wtw=(\wt_{-2},\wt_{-3},\cdots)$$ be the collection of them.
\end{itemize}
Denote by $\rs^\circ=\rs^\circ(\phi)=\rs\setminus(\wty\cup\wtw)$.

In general, one could also allow a set $\wt_0(\phi)$ of special smooth markings of $\phi$ in $\rs$.
%However, we will not consider them here.

%=========================================================
\paragraph{\textbf{Foliation structure}}\

The key structure of quadratic differentials is the \emph{foliation} structure.
At a smooth point of $\rs^\circ$,
there is a  distinguished local coordinate $\omega=\int \sqrt{g(z)}\dd z$,
uniquely defined up to transformations: $\omega \mapsto \pm\, \omega+\operatorname{const}$,
with respect to which, one has $\phi(\omega)=\dd \omega \otimes \dd \omega$.
Then by locally pulling back the Euclidean metric on $\CC$ using a distinguished coordinate $\omega$,
$\phi$ induces the $\phi$-metric and geodesics on $\rs$.
Each geodesics have a constant phase with respect to $\omega$.

A \emph{trajectory} of a $\phi$ on $\rs$ is a maximal geodesic $\gamma\colon(0,1)\to\surp$,
with respect to the $\phi$ metric (i.e. corresponds to the equation $\on{Im}\omega\equiv\on{const}$).
When $\lim\gamma(t)$ exists in $\rs$ as $t\to0$ (resp. $t\to1$),
the limits are called the left (resp. right) endpoint of $\gamma$.
The \emph{horizontal} trajectories (with $\on{Im}\omega\equiv0$) of a meromeorphic quadratic differential $\phi$
provide the \emph{horizontal foliation} on $\rs$.

\begin{figure}[ht]\centering
    \includegraphics[width=\textwidth]{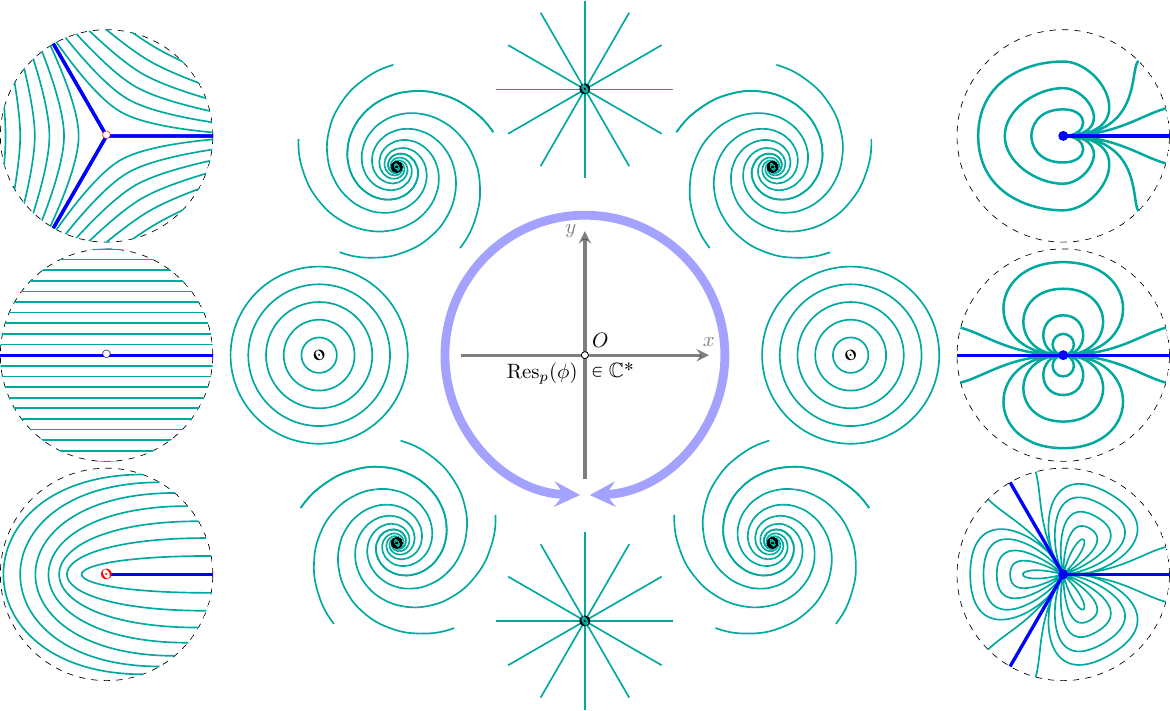}
\caption{Foliation structure around a point of a Riemann surface}\label{fig:foli}
\end{figure}

In \Cref{fig:foli}, one has examples of (horizontal) foliations. Namely:
\begin{itemize}
  \item On the left column, the pictures show the foliation at a simple zero/smooth point/simple pole (up to bottom). In general, there are $d+2$ distinguished directions at a finite singularity (conical point) of order $d\ge-1$
      that divides the neighbourhood of such a point into $d+2$ (local) half planes at zero.
  \item In the middle column, the pictures show the foliation at a double pole $p$ depending on the residue
  \begin{gather}\label{eq:residue}
    \on{Res}_p(\phi)\colon=\pm2\int_{\wh{\delta_p}}\sqrt{\phi},
  \end{gather}
  of $\phi$ at $p$, where the integration is along the lift $\wh{\delta_p}$ of a small loop $\delta_p$
  on the spectral cover (w.r.t. $\phi$) of $\rs$.
  \item On the right column, the pictures show the foliation at a pole of order 3/4/5 (up to bottom).
  In general, there are $|d|-2$ distinguished directions at a pole of order $d\le-3$
      that divides the neighbourhood of such a point into $d+2$ (local) half planes at infinity.
\end{itemize}

%=========================================================
\paragraph{\textbf{Horizonal strip decomposition}}\

There are the following types of trajectories of a quadratic differential:
\begin{itemize}
\item \emph{saddle trajectories}, either whose both ends are in $\wty$.
\item \emph{separating trajectories} with one endpoint in $\wty$ and the other in $\wtw$;
\item \emph{generic trajectories} whose both ends are in $\wty$;
\item \emph{recurrent} if at least one of its directions is recurrent.
\end{itemize}
But from now on, we now fix the horizonal direction to unless specified otherwise,
i.e. trajectories means horizontal trajectories.
A \emph{saddle connection} is a trajectory in some arbitrary direction
and a saddle trajectory means a horizontal saddle connection.

Removing from $\rs$ the separating trajectories and saddle trajectories,
it decomposes the surface into connected components,
each of which is of the following types:
\begin{itemize}
\item a \emph{horizontal strip}, i.e. is isomorphic to
$\{z\in \CC\mid a<\on{Im}(z)<b\}$
equipped with the differential $\dd z^{2}$ for some $a<b \in \RR$.
It is swept out by generic trajectories connecting two (not necessarily distinct) poles.
\item a \emph{half-plane}, i.e. is isomorphic to
$\{z\in \CC \mid \on{Im}(z)>0\}$
equipped with the differential $\dd z^{2}$.
It is swept out by generic trajectories which connect a fixed pole to itself.
\item a \emph{ring domain} or \emph{cylinder}, i.e. is foliated by closed trajectories.
\item a \emph{spiral domain}, i.e. is the interior of the closure of a recurrent trajectory.
\end{itemize}
We call this union the \emph{horizontal strip decomposition} of $\rs$ w.r.t. $\phi$.

%\begin{figure}[hbt]
%\centering\makebox[\textwidth][c]{
% \includegraphics[width=9cm]{New-collision.pdf}
%}   \caption{A loop in $\FQuad(\surfp)$ with a missing center in the partial compactification}\label{fig:loop}
%\end{figure}

A quadratic differential $\phi$ on $\rs$ is \emph{saddle-free}, if it has no saddle trajectory.
Then there are only half-planes and horizontal strips by \cite[Lem.~3.1]{BS}.
Note that in such a case:
\begin{itemize}
\item In each horizontal strip, the trajectories are isotopic to each other.
\item The boundary of any component consists of separating trajectories.
\item In each horizontal strip, there is a unique geodesic,
the \emph{saddle connection}, connecting the two zeroes on its boundary.
\end{itemize}

\begin{example}
The right picture of \Cref{fig:2loop} shows eight local horizontal strip decompositions on $\rs$ corresponding to the motion of a L-flip at a self-folded triangle (in the top disk),
which are around a collision state--the middle disk.

In particular, there is a `wall crossing' state (the bottom disk)
when there is a saddle trajectory (red cycle),
which bounds a \textrm{degenerate} ring domain (meaning one of its boundary components is a double pole).
Note that, in the disks
\begin{itemize}
\item the emerald/blue vertices (bullets) are poles or marked points,
\item the black/red suns are double/simple poles respectively,
\item the red vertices (small circles) are simple zeroes,
\item the emerald arcs are (generic) geodesics,
% and the darker ones (in the top picture) are representatives forming a triangulation,
\item the blue arcs are separating trajectories,
\item the red solid arc/cycle is a saddle trajectory (in the bottom disk)
    and the red dashed arc is a saddle connection (in the top disk).
\end{itemize}

Moreover,
the right picture of \Cref{fig:2loop} also demonstrates a loop in the moduli space of quadratic differentials which corresponds to the L-twist (as the bottom picture of \Cref{fig:T}) of the saddle connection in these octagons.

For comparison, see the left picture of \Cref{fig:2loop} (or \cite[Fig.~4.3]{KQ2})
shows eight horizontal strip decompositions of another loop in the moduli space,
which corresponds to a braid twist (as the top picture of \Cref{fig:T}),
which are around a collision state--the middle octagon.
\end{example}

The \emph{real blow-up} of $\rs$ with respect to $\phi$ is a weighted DMSp $\rs^\phi_{\wty}$ obtained from $\rs$ by replacing each puncture $p$ in $\wt_{\le-3}$ with a boundary component $\partial_p$ with $\on{ord}_\phi(p)-2$ marked points
and regarding $\wty=\Tri$ as decoration set with weights given by the degree of the finite singularities, $\wt_{-2}=\sun$ as puncture set.
One can also forget about the decorations and denote by $\rs^\phi$ the corresponding weighted marked surface with punctures.

%=========================================================
\paragraph{\textbf{Moduli spaces and framing}}\

The \emph{moduli space} $\Quad(g)$ of quadratic differentials on genus $g$ Riemann surfaces decomposes into many strata,
parameterized by their prescribed singularity type, i.e.
\[
    \Quad(g)=\bigsqcup_{|\wt|=4g-4} \Quad(g;\wty,\wtw),
\]
where
\begin{equation}\label{eq:4g-4}
    \|\wt\|=\| \wty \|+\| \wtw \|\colon=\sum_{k\in\ZZ} k|\wt_k|
\end{equation}
satisfying $\|\wt\|=4g-4$,
for $|\wt_k|$ the number of points in $\wt_k$.

The \emph{numerical data} of singularity type of $\phi$ on $\rs$ is $(\wty,\wtw)$,
which can be encoded by the associated weighted DMSp $\rs^\phi_{\wty}$.
We will only consider the strum $\Quad(g;\wt_1,\wtw)$ for $\wty=\wt_1$ (i.e. only simple zeros as finite singularities) in this section and
$\wtw\ne \wt_{-2}$ (i.e. not only just double poles as infinite singularities).
In such a case, a quadratic differential in $\Quad(g;\wt_1,\wtw)$ is parameterized by a DMSp $\sop$
(with trivial weight).

More precisely, a point $(\rs,\phi,\psi)$ in $$\Quad(\surfp)=\Quad(\sop)=\Quad(g;\wt_1,\wtw)$$
consists of a Riemann surface $\rs$ with quadratic differential $\phi$,
equipped with a diffeomorphism $\psi\colon\surfp \to\rs^\phi$, preserving the marked points and punctures.
Two such quadratic differentials $(\rs_1,\phi_1,\psi_1)$ and $(\rs_2,\phi_2,\psi_2)$
are equivalent, if there exists a biholomorphism $f\colon\rs_1\to\rs_2$
such that $f^*(\phi_2)=\phi_1$ and furthermore $\psi_2^{-1}\circ f_*\circ\psi_1\in\Diff(\surfp)$,
where $f_*\colon\rs_1^{\phi_1}\to\rs_2^{\phi_2}$ is the induced diffeomorphism.

As discussed in \cite[\S~4.1]{KQ2}, there are two natural way to frame on $\Quad(\surfp)$.
Namely, in the definition above, if we change $\Diff(\surfp)$ to $\Diff_0(\surfp)$ to identify equivalent elements,
then we obtain the $\surfp$-framed moduli space $\FQuad(\surfp)$.
Similarly, we can also use $\sop$ to frame, i.e. replacing the diffeomorphism $\psi\colon\surfp \to\rs^\phi$
by a diffeomorphism $\psi\colon\sop \to\rs^\phi_{\wt_1}$ preserving the marked points, punctures and decorations.
These three moduli spaces fits in the following diagram:

\begin{equation}\label{eq:3Moduli}
\begin{tikzpicture}[xscale=1.6,yscale=0.8,baseline=(bb.base)]
\path (0,1) node (bb) {}; % baseline
\draw (0,2) node (s0) {$\FQuad(\sop)$}
 (0,0) node (s1) {$\FQuad(\surfp)$}
 (2.5,1) node (s2) {$\Quad(\surfp)$};
\draw [-stealth, font=\scriptsize]
 (s0) edge node [left] {$\SBr(\sop)$} (s1)
 (s0) edge node [above] {$\MCG(\sop)$} (s2)
 (s1) edge node [below] {$\MCG(\surfp)$} (s2);
\end{tikzpicture}
\end{equation}

For a $\sop$-framed
the generic trajectories on $\rs$ (with respect to $\phi$)
are inherited by $\surf$, for any $\psi\colon\surf\to\rs^\phi$, and
all trajectories on $\rs$ (with respect to $\phi$)
are inherited by $\surfo$, for any $\Psi\colon\surfo\to\rs^\phi$.

So generic trajectories become open arcs on $\surf$ (as well as on $\surfo$)
and saddle trajectories/connections becomes closed arcs on $\surfo$.

\begin{example}\label{ex:D2}
The foliations in the right picture of \Cref{fig:2loop} are precisely a loop in the once-punctured monogon case,
i.e. $\Quad(\surfp)=\Quad(0;1,-2,-3)$.
One can write down the quadratic differentials globally (on $\CC\mathbb{P}^1$):
\[
    \Quad(0;1,-2,-3)=\{ c\in\CC^* \mid \phi(z)=\frac{z+c}{z^2} \dd z ^2 \}.
\]
More precisely
\begin{itemize}
  \item The residue at the double pole (i.e. zero) is $s=\on{Res}_0(\phi)=4\pi\mathbf{i}\sqrt{c}$.
  \item The central foliation in the right picture of \Cref{fig:2loop} correspond to the $c=0$ case:
  then the quadratic differential $\phi(z)=z^{-1} \dd z ^2$ is in $\Quad(0;-1,-3)$.
\end{itemize}
Moreover, in this case, we have
\begin{itemize}
  \item $\FQuad(\surfp)=\Quad(\surfp)\cong\CC^*$.
  \item $\FQuad(\sop)=\{\on{log}(c)\mid c\in\CC^*\}\cong\CC$ is the $\ZZ$-cover/universal cover of $\FQuad(\surfp)$.
%  \item In \Cref{sec:}, we will consider the double cover $\Quad^\pm(\surfp)$ of $\Quad(\surfp)$ that corresponds to choosing a square-root of $s=\sqrt{c}$ (cf. \cite[Ex.~12.3]{BS})      and the orbifold
\end{itemize}
\end{example}

%=========================================================
\subsection{Cell structures}\label{sec:cell}\
%=========================================================

%=========================================================
\paragraph{\textbf{Stratification}}\

Let $\Psi\in\FQuad(\sop)$.
One has the following numerical equality:
\[
    r_\Psi+2s_\Psi+t_\Psi=\| \wty \|+2|\wty|=\sum_{k=-1}^{\infty}(\wt_k+2)=\colon K
\]
where $s_\Psi/r_\Psi/t_\Psi$ are the numbers of saddle/recurrent/separating trajectories, respectively.

There is a natural filtration of $\FQuad(\sop)$ given by
\[
    B_0 = B_1 \subset B_2 \subset \cdots \subset B_K = \Quad(\sop),
\]
for
\begin{gather}\label{eq:B}
    B_p = B_p(\sop) = \{ \Psi \in  \Quad(\sop):
        r_\Psi + 2s_\Psi \leq p\},
\end{gather}
observing that $B_0 = B_1$ is the set of saddle-free differentials.
The corresponding \emph{stratification} is given by $F_p(\sop) = B_p \setminus B_{p-1}$.

The space~$B_2$ is called the space of \emph{tame differentials},
We observe that $F_0$ is dense, $F_1$ is empty, and $F_2$ consists of differentials with exactly one saddle trajectory.
Here are two more precisely statements, which was proved in \cite{BS} and are generalized in \cite{BMQS}.

\begin{lemma}\cite[Lem.~4.1]{BMQS}\label{le:B0dense}
$\FQuad(\sop)=\CC\cdot B_0(\sop)$ and hence $B_0(\sop)$ is dense in $\FQuad(\sop)$.
Moreover, $F_2(\sop)$ has codimension 1 and $B_2(\sop)$ has codimension 2.
\end{lemma}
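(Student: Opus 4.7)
The plan is to exploit the natural $\CC^*$-action on $\FQuad(\sop)$ by rotation of the differential, $\lambda\cdot\phi := \lambda^{2}\phi$, under which the horizontal direction of $\lambda\cdot\phi$ is the rotation of that of $\phi$ by $\arg(\lambda^{-2})$. For any fixed $(\rs,\phi,\psi)$, the set of all saddle connections of $\phi$ (i.e. geodesic segments in arbitrary phase with both endpoints in $\wty$) is countable, since each is uniquely determined by its homotopy class rel $\wty$ on $\rs^\circ$. Hence the set of angles $\theta\in S^1$ for which $e^{\mai\theta/2}\cdot\phi$ has at least one horizontal saddle trajectory is countable, so its complement is dense in $S^1$. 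This gives $\FQuad(\sop)=\CC\cdot B_0(\sop)$, and density of $B_0(\sop)$ follows immediately.

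For the codimension assertions I would work in period coordinates. Via the spectral double cover $\widehat{\rs}\to\rs$ branched at the odd-order singularities, integration of $\sqrt{\phi}$ defines a local biholomorphism
\[
    \Pi\colon\FQuad(\sop)\longrightarrow \Hom\bigl(\Ho{1}(\widehat{\rs},\widehat{\wty};\ZZ)^{-},\,\CC\bigr),
\]
whose components are complex-linear local coordinates. Any saddle connection $\gamma$ of $\phi$ lifts to a relative cycle $\widehat{\gamma}$, and $\gamma$ is a \emph{horizontal} saddle trajectory precisely when $\on{Im}\Pi(\widehat{\gamma})=0$, a single real-linear condition on the ambient chart. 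Thus the locus where a specified saddle connection becomes horizontal is a smooth real-codimension-$1$ submanifold.

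Using this, $F_2(\sop)$, the locus of differentials with exactly one horizontal saddle trajectory and no recurrence, is obtained from such a codimension-$1$ submanifold by removing the sub-loci where a second saddle becomes horizontal or recurrence sets in, each being of higher codimension; hence $F_2(\sop)$ is locally closed of real codimension $1$. For $B_2(\sop)$, the remainder $B_2\setminus(B_0\cup F_2)$ consists of differentials with either two horizontal saddle trajectories (two independent real-linear conditions, hence real codimension $2$) or a recurrent horizontal direction. In the recurrent case, the classical structure theorem forces the boundary of any spiral domain to consist of horizontal saddle trajectories, containing at least two independent ones, which again contributes two independent period conditions. Therefore $B_2(\sop)$ has real codimension $2$.

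The main obstacle is the recurrent case: recurrence is a global dynamical property rather than the vanishing of a single period, so its codimension cannot be read off directly from the period map. The cleanest route around this is the spiral-domain argument above, which reduces the presence of recurrence to the presence of at least two horizontal saddle trajectories on the spiral-domain boundary, thereby bringing the question back into the purview of the period coordinates. Care must be taken when several of the relevant homology classes coincide on $\widehat{\rs}$, but this only affects a countable union of lower-dimensional sub-loci and does not change the codimension count.
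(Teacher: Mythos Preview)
The paper does not give its own proof of this lemma; it is quoted directly from \cite[Lem.~4.1]{BMQS} and stated without argument. So there is nothing to compare against, and your sketch is essentially the standard argument one finds in \cite{BS,BMQS}: rotate to avoid the countable set of saddle directions for density, and use period coordinates to read off the codimensions.

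That said, there is a genuine confusion in your write-up of the codimension-$2$ part. You write ``the remainder $B_2\setminus(B_0\cup F_2)$'', but by definition $B_2=B_0\cup F_2$ (since $B_0=B_1$ and $F_2=B_2\setminus B_1$), so this set is empty. What is meant in the statement --- and what your argument is actually trying to establish --- is that the \emph{complement} $\FQuad(\sop)\setminus B_2$ has (real) codimension at least $2$. You should rewrite this paragraph accordingly.

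Your treatment of the recurrent case also has a gap. You assert that the boundary of a spiral domain contains ``at least two independent'' horizontal saddle trajectories, but this is not automatic: the boundary of a spiral domain can in principle be a single saddle loop, and even when there are several saddle trajectories on the boundary their relative homology classes on the spectral cover need not be independent. The correct argument (as in \cite[\S5]{BS}) is more delicate: one uses that in the presence of infinite singularities a recurrent direction forces a ring/spiral domain whose existence imposes a second independent period condition beyond a single ``$\on{Im}=0$'' hyperplane, or else one argues via the Walls-have-Ends property that such loci are contained in a countable union of real-codimension-$2$ subsets. Either way, the step ``recurrence $\Rightarrow$ two independent period conditions'' needs an actual argument, not just an assertion.
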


\begin{lemma}\cite[Cor.~4.3]{BMQS}\label{lem:pi0}
The connected components of $\FQuad(\sop)$ naturally correspond to the connected components of $\FG(\sop)$.
\end{lemma}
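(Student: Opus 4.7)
The plan is to use the horizontal strip decomposition to produce a bijection between $\pi_0\FQuad(\sop)$ and $\pi_0\FG(\sop)$. To each saddle-free differential $\Psi\in B_0(\sop)$, associate a decorated triangulation $\T(\Psi)$ of $\sop$ as follows: the open arcs are the ``cross strip'' arcs, one per horizontal strip, each connecting the two (not necessarily distinct) poles/marked points on its boundary; the decorations are at the zeros, which are the endpoints of the unique saddle connection inside each strip. A puncture in $\sun$ becomes $\T(\Psi)$-isolated precisely when its neighborhood exhibits the degenerate ring domain pattern of \Cref{fig:loop}, giving a self-folded $\T(\Psi)$-triangle. This matches the loop/L-flip structure in $\FG(\sop)$ (\Cref{rem:FG}).

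Next, verify that $\T(-)$ is locally constant on $B_0(\sop)$: the isotopy class of each strip, and hence of each cross arc, varies continuously with $\Psi$, so $\T$ descends to a map $\pi_0 B_0(\sop)\to \FG(\sop)$. Using the $\sop$-framing, every decorated triangulation arises (by a standard gluing construction on strips, as in \cite{BS}), and the preimage of each $\T\in\FG(\sop)$ in $B_0(\sop)$ is a single cell, in particular connected. Hence $\T$ induces a bijection $\pi_0 B_0(\sop)\cong \FG(\sop)$ as sets.

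It remains to promote this to a bijection of connected components of $\FQuad(\sop)$. By \Cref{le:B0dense}, $B_0(\sop)$ is open and dense, and $B_2(\sop)\setminus F_2(\sop)$ has codimension $\ge 2$, so it does not separate $\FQuad(\sop)$. Consequently two saddle-free differentials lie in the same component of $\FQuad(\sop)$ iff they can be joined by a path meeting only $B_0(\sop)\cup F_2(\sop)$. On such a path, each generic crossing of $F_2(\sop)$ corresponds to exactly one saddle trajectory appearing and then disappearing in the opposite horizontal strip, which is precisely a forward flip of $\T(\Psi)$: the three cases of \Cref{fig:flip} correspond to, respectively, an ordinary strip exchange, a strip exchange touching a boundary, and (when the saddle trajectory bounds a degenerate ring domain around a double pole as in \Cref{fig:loop}) an L-flip. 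Thus the equivalence relation on triangulations generated by wall-crossings in $F_2(\sop)$ is exactly the connectedness relation of $\FG(\sop)$.

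The only real subtlety is the self-folded case: one must check that the degenerate ring domain configuration around a puncture in $\sun$ is a codimension-one wall inducing a single L-flip (rather than a higher-codimension phenomenon), and that conversely every puncture which becomes $\T$-isolated after the flip arises in this way. Granted this local analysis (essentially \Cref{ex:D2}, globalized), the above chain of identifications yields $\pi_0\FQuad(\sop)\cong \pi_0\FG(\sop)$.
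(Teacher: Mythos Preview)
The paper does not give its own proof of this lemma; it simply cites \cite[Cor.~4.3]{BMQS}. Your outline reproduces the standard argument behind that citation and is essentially correct: the cells $\cubc(\T)\cong\UHP^n$ of $B_0(\sop)$ are indexed by decorated triangulations, each codimension-one wall in $F_2(\sop)$ joins two cells whose triangulations differ by a single forward flip (usual or L-flip), and the complement $\FQuad(\sop)\setminus B_2(\sop)$ has real codimension $\ge 2$ so cannot separate components. Hence $\pi_0\FQuad(\sop)\cong\pi_0 B_2(\sop)\cong\pi_0\FG(\sop)$.

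One correction to your first paragraph: a saddle-free $\Psi$ never exhibits a ring domain, degenerate or otherwise, since the boundary of a ring domain contains a saddle trajectory. A puncture $P\in\sun$ is $\T(\Psi)$-isolated when the separating trajectories emanating from a nearby simple zero both spiral into $P$, producing the self-folded triangle pattern (the \emph{top} picture of \Cref{fig:loop}, not the bottom one). The degenerate ring domain appears only on the wall in $F_2$ (the bottom picture), when the dual L-arc becomes horizontal; crossing that wall is indeed the L-flip, as you say later. So your ``only real subtlety'' is genuine but is handled by exactly the local picture you cite (\Cref{ex:D2}): the period of the L-arc is a local coordinate, the wall is where it becomes real, and the two sides are related by the L-twist, matching \eqref{eq:twist=flip}.
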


All the statements above also apply to the $\surfp$-framed version.

%=========================================================
\paragraph{\textbf{WKB-triangulations}}\

Let
\[
    \UHP=\{z\in\CC\mid \on{Im}(z)>0\}\subset\CC
\]
be the strict upper half plane and $\uhp=\UHP\cup\RR_{<0}$ the half-open-half-closed one.

Let $\Psi$ be a saddle-free $\sop$-framed quadratic differential.
Then there is an associated \emph{(WKB-)triangulation} $\T_\Psi$ of $\sop$,
where the arcs are (isotopy classes of inherited) generic trajectories.
The dual graph $\T_\Psi^*$ consisting of saddle connections.
Denote by $\cubc(\RT)$ be the subspace in $\FQuad(\surfp)$ consisting of
those saddle-free $\Psi$ support on $\RT$ (i.e. whose associated triangulation is $\RT$).
Then $\cub(\RT)\isom\uhp^{\RT}$ and
\begin{gather}\label{eq:Squad*}
    B_0(\surfp)=\bigcup_{\RT\in\FG(\surfp)} \cubc(\RT).
\end{gather}
for $\cubc(\RT)\isom\UHP^{\RT}$.
The coordinates $(u_\gamma)_{\gamma\in \RT}$ give the complex modulus of the horizontal strip with generic trajectory in the isotopy class $\gamma$.
Thus the $\cubc(\RT)$ are precisely the connected components of $B_0(\surf)$.
The boundary of $\cubc(\RT)$ meets $F_2(\surf)$ in $2\numarc$ connected components,
which we denote $\partial^\sharp_\gamma\cubc(\RT)$ and $\partial^\flat_\gamma\cubc(\RT)$,
where the coordinate $u_\gamma$ goes to the negative or positive real axis, respectively.
Note that $u_\gamma$ cannot go to zero since it means the collision of two zeros.

By Walls-have-Ends property (\cite[Prop~5.8]{BS} for simple zero case and \cite[Cor.~4.3]{BMQS} in general),
we have the following, showing that $\FG(\sop)$ is a skeleton for $\FQuad^{\T}(\sop)$.

\begin{lemma}\label{lem:f.d.1}
There is a (unique up to homotopy) canonical embedding
\begin{equation}\label{eq:Sembed}
\skel_{\sop}\colon\FGT(\sop)\to\FQuad^{\T}(\sop)
\end{equation}
whose image is dual to $B_2(\sop)$
and which induces a surjective map
\begin{equation}\label{eq:skel}
  \begin{tikzcd}
    \skel_*\colon \pi_1\FGT(\sop) \ar[r,twoheadrightarrow]& \pi_1\FQuad^{\T}(\sop).
  \end{tikzcd}
\end{equation}
\end{lemma}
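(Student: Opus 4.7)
The plan is to build the skeleton $\skel_{\sop}$ cell-by-cell using the stratification $B_0 \subset B_2 \subset \FQuad^{\T}(\sop)$, and then deduce surjectivity from a general-position argument using the codimension of $B_2$. The essential structural input is already assembled: by \eqref{eq:Squad*}, the dense open stratum $B_0$ decomposes into the cells $\cubc(\RT)\cong\UHP^{\RT}$ indexed by triangulations $\RT\in\FG(\surfp)$, and each cell $\cubc(\RT)$ has exactly $2n$ codimension-one boundary strata $\partial^\sharp_\gamma\cubc(\RT),\partial^\flat_\gamma\cubc(\RT)$ in $F_2$, one for each choice of direction along each arc $\gamma\in\RT$. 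This already matches precisely the $(n,n)$-regular structure of $\FG(\surfp)$, and lifts to $\FGT(\sop)$ via \Cref{lem:covering} together with \Cref{lem:pi0}.

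First I would construct $\skel_{\sop}$. For each vertex $\T\in\FGT(\sop)$ mapping to $\RT$ under $F^\Tri_*$, pick an interior basepoint $\Psi_\T\in\cubc(\RT)$; this choice is unique up to homotopy because $\cubc(\RT)\cong\UHP^{\RT}$ is contractible. For each oriented edge $\mu^\sharp_\gamma\colon\T\to\T^\sharp_\gamma$ in $\FGT(\sop)$, choose a small real-analytic arc leaving $\Psi_\T$, crossing $\partial^\sharp_\gamma\cubc(\RT)$ transversally at a single point of $F_2$, and ending at $\Psi_{\T^\sharp_\gamma}$ in the adjacent cell; again the choice is unique up to endpoint-fixing homotopy since $F_2$ is a codimension-one submanifold and both cells are contractible. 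This produces a continuous map $\skel_{\sop}\colon\FGT(\sop)\to\FQuad^{\T}(\sop)$ whose image is exactly the dual 1-complex to $B_2(\sop)$ in the sense claimed.

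For the surjectivity of $\skel_*$, take any loop $\alpha\colon S^1\to\FQuad^{\T}(\sop)$ based at some $\Psi_\T$. Since $B_2(\sop)$ has real codimension $2$ by \Cref{le:B0dense}, a small perturbation makes $\alpha$ disjoint from $B_2(\sop)$, so $\alpha$ takes values in $B_0(\sop)\cup F_2(\sop)$. A further generic perturbation makes $\alpha$ transverse to the smooth codimension-one stratum $F_2(\sop)$, so $\alpha$ meets it in finitely many transversal wall-crossings. Between consecutive crossings, $\alpha$ lies entirely in some open cell $\cubc(\RT_k)$, which is contractible; hence $\alpha$ is endpoint-homotopic there to the corresponding segment of the skeleton. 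Each transversal crossing of a wall $\partial^{?}_\gamma\cubc(\RT_k)$ corresponds by construction to a forward or backward flip edge of $\FGT(\sop)$ (this is where the sign conventions $\sharp/\flat$ are used, together with the fact that the two sides of a wall belong to different cells). Assembling these pieces expresses $[\alpha]$ as the $\skel_*$-image of an edge-path loop in $\FGT(\sop)$, proving surjectivity.

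The main obstacle I anticipate is ensuring that the perturbations are available and that the local picture near $F_2$ really corresponds to the combinatorial flip in $\FGT(\sop)$ rather than to some more degenerate transition. Avoiding $B_2(\sop)$ uses only the codimension estimate of \Cref{le:B0dense}, but the transversality step requires the Walls-have-Ends property that is cited from \cite[Cor.~4.3]{BMQS} (for the general punctured case) and \cite[Prop.~5.8]{BS}: this is what guarantees that each component of $F_2(\sop)$ meets exactly the two cells $\cubc(\RT)$ and $\cubc(\RT^\sharp_\gamma)$ predicted by the flip combinatorics, and in particular rules out pathological boundary behavior near the punctures. With that result in hand, both the construction and the surjectivity arguments reduce to the unpunctured case treated in \cite[\S 4]{KQ2}, whose adaptation is routine.
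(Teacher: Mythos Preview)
Your proposal is correct and matches the paper's approach: the paper does not write out a proof but simply cites the Walls-have-Ends property (\cite[Prop.~5.8]{BS} and \cite[Cor.~4.3]{BMQS}) as the justification, and your argument is precisely the standard fleshing-out of that citation---choose basepoints in the contractible cells $\cubc(\RT)$, connect them by transversal arcs across the codimension-one walls in $F_2$, and use the codimension-two estimate of \Cref{le:B0dense} to push an arbitrary loop off $B_2$ and then onto the skeleton. The only point worth noting is that the cell decomposition \eqref{eq:Squad*} is stated for the $\surfp$-framed space, so the lift to $\FQuad^{\T}(\sop)$ via \Cref{lem:covering} and \Cref{lem:pi0} that you invoke is indeed needed; the paper leaves this implicit.
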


Again, these statements also apply to the $\surfp$-framed version.
For the later use, we record them as well:
there is a canonical embedding
\begin{equation}\label{eq:Sembed2}
    \skel_{\surfp}\colon\FG(\surfp)\to\FQuad(\surfp)
\end{equation}
that induces a surjective map
\begin{equation}\label{eq:skel2}
  \begin{tikzcd}
    \skel_*\colon \pi_1\FG(\surfp) \ar[r,twoheadrightarrow]& \pi_1\FQuad(\surfp).
  \end{tikzcd}
\end{equation}

Parallel to the cell structure of moduli spaces of quadratic differentials,
one has stability structures on triangulated categories.
% see \Cref{sec:stab}.
We refer to \cite{BS,BMQS}, \cite[\S~4]{KQ2} and \cite[App.~A]{CHQ} for more details
on stability conditions.
We recall a upgraded version of \Cref{thm:CHQ1}.

\begin{theorem}\cite[Thm.~1.1]{CHQ}\label{thm:CHQ2}
The isomorphism \eqref{eq:iso} can be complexified to an isomorphism between complex manifolds
\begin{gather}\label{eq:CHQ2}
\begin{array}{rcl}
  \Xto_*\colon\FQuad^{\T}(\sop)&\cong&\Stab^\T(\Dsop),\\
  s\cdot \Psi&\mapsto& s\cdot(Z,\h_{\T_{\Psi}}),
\end{array}
\end{gather}
for any $s\in\CC$ and saddle-free $\sop$-framed quadratic differential $\Phi$, where
\begin{equation}
    Z(X_\eta)=\int_{\wh{\eta}}\sqrt{\Psi}.
\end{equation}
\end{theorem}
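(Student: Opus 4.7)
The plan is to upgrade the combinatorial isomorphism $\Xto_*\colon\FGT(\sop) \xrightarrow{\cong} \EGT(\Dsop)$ of \Cref{thm:CHQ1} to a biholomorphism of complex manifolds, cell by cell, and then glue along wall-crossings. The strategy mirrors the Bridgeland--Smith program as developed in the simple-zero setting, but has to accommodate punctures (i.e.\ double poles).

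First, I would work on a single cell. Fix a triangulation $\RT \in \FG(\surfp)$ lifted to $\T \in \FGT(\sop)$. On the quadratic-differentials side, the open cell $\cubc(\RT)\isom\UHP^{\RT}$ is coordinatised by the complex moduli $(u_\gamma)_{\gamma\in\RT}$ of the horizontal strips. On the categorical side, the hearts-of-tilting cell $\UHP(\h_\T^\sun)\subset\Stap(\Dsop)$ is parameterised by the central charges $(Z(S_\eta))_{\eta\in\T^*}\in\UHP^{\T^*}$ of the simples of $\h_\T^\sun$, under Bridgeland's local homeomorphism $\Stab\to\Hom_{\ZZ}(\Grot\Dsop,\CC)$. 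I would then check that the period map
\[
    Z(X_{\widetilde{\eta}}) = \int_{\widetilde{\eta}}\sqrt{\Psi},\qquad \widetilde{\eta}\in\widetilde{\T^*},
\]
identifies the two cells biholomorphically. This uses the intersection-pairing formula \eqref{eq:Int} and \Cref{thm:CHQ1}, which already tell us which $\Xto_{\widetilde{\eta}}$ is the simple object at each vertex of $Q_\T^\sun$; the only remaining input is the elementary fact that the Euclidean area of a horizontal strip with modulus $u_\gamma\in\UHP$ equals the (squared) period of $\sqrt{\Psi}$ along the corresponding saddle connection.

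Second, I would show compatibility with the $\CC$-action on both sides: multiplying $\Psi$ by $e^{2\pi\mathbf{i}s}$ rotates all periods, which matches the standard $\CC$-action $(Z,\sli)\mapsto(e^{\mathbf{i}\pi s}Z, \sli\{s\})$ on $\Stap(\Dsop)$. Combined with \Cref{le:B0dense} ($\FQuad^\T(\sop)=\CC\cdot B_0(\sop)$) and the analogous density of rotations of $\UHP$-cells in $\Stap(\Dsop)$, this promotes the cell biholomorphism to cover a dense open subset of $\FQuad^\T(\sop)$.

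Third, I would glue across codimension-one walls. A point of $F_2(\sop)$ carries exactly one saddle trajectory and lies in the common boundary of two adjacent cells $\cubc(\RT)$ and $\cubc(\RT^\sharp)$; combinatorially this is precisely the forward flip $\mu^\sharp_\gamma$. Under $\Xto_*$ this corresponds to a simple tilt of $\h_\T^\sun$ (cf.\ \Cref{thm:CHQ1} and the identification $\FGCp \cong \REGp$ in \eqref{eq:iso2}), so the two local biholomorphisms agree on the overlap. The loop-edges of $\FG(\sop)$ (forward L-flips at self-folded triangles) correspond exactly to the degenerate ring-domain wall illustrated in \Cref{fig:loop}, which on the categorical side is the simple tilt at an object $\Xto_{\widetilde{\lp}}$ with $\Ext^1=\k$; checking that the local biholomorphism extends continuously through such a wall is the main new subtlety compared to \cite{BS}. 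Finally, \Cref{lem:f.d.1} (Walls-have-Ends) ensures that no higher-codimension pathology obstructs the gluing, so a standard connectedness argument using \Cref{lem:pi0} yields a global biholomorphism $\Xto_*\colon\FQuad^\T(\sop)\cong\Stap(\Dsop)$, which by construction sends $s\cdot\Psi$ to $s\cdot(Z,\h_{\T_\Psi})$. The main obstacle is precisely this L-flip/degenerate-ring-domain wall analysis: one must verify that the period integral still defines a stability function on the tilted heart across a wall where a double pole becomes the centre of a ring domain, using the residue formula \eqref{eq:residue} to match the categorical data.
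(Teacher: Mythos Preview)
This theorem is not proved in the present paper: it is quoted verbatim as \cite[Thm.~1.1]{CHQ}, and the paper uses it as a black box (see the sentence ``We recall an upgraded version of \Cref{thm:CHQ1}'' preceding the statement). There is therefore no proof here to compare your proposal against.

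That said, your sketch is broadly in the spirit of the Bridgeland--Smith argument that \cite{CHQ} generalises: identify cells via periods, use the $\CC$-action and the density statement (\Cref{le:B0dense}), and glue across codimension-one walls using the skeleton embedding (\Cref{lem:f.d.1}). You have correctly flagged the genuinely new phenomenon relative to \cite{BS}, namely the L-flip wall where a degenerate ring domain appears around a double pole and the corresponding simple has $\Ext^1=\k$ rather than being spherical. However, the actual proof in \cite{CHQ} does not proceed by this direct cell-by-cell geometric gluing; it goes through the machinery of perverse schobers and a local-to-global principle for the Ginzburg categories (building on \cite{Chr}), which is what allows the non-rigid simple tilts to be handled uniformly. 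Your residue-matching idea is plausible as a heuristic, but making it rigorous---in particular showing that the support property and the HN filtrations behave correctly through the degenerate-ring-domain wall---is exactly the hard step that the schober framework is designed to circumvent. If you want to reconstruct the proof, you should consult \cite{CHQ} directly rather than adapt \cite{BS}.
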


In \Cref{sec:MSx}, we will study a quotient version of \eqref{eq:CHQ2}:
\begin{gather}\label{eq:CHQ2+}
  \Quad(\surfp)=\FQuad(\surfp)/\MCG(\surfp) \cong \Stab^\T(\Dsop)\Autp,
\end{gather}
where $\Autp$ is a quotient group of the autoequivalence group of $\Dsop$, cf. \eqref{eq:Autp}.
%=========================================================
\subsection{The kernel of AJ map as the fundamental group}\label{sec:main}
%=========================================================

\begin{proposition}\label{pp:SQH}
Under the embedding \eqref{eq:Sembed} and the isomorphism \eqref{eq:CHQ2},
any square, pentagon, S-DB, L-DB or Sym-Hex in $\FG(\sop)$ is contractible in $\Stab^\T(\Dsop)$.
\end{proposition}
\begin{proof}
The square, pentagon and the S-DB follows precisely as in the proof of \cite[Prop.4.14]{KQ2}.
The other two new relations (L-DB and Sym-Hex) follows in a similar fashion.
We sketch the proof as follows.

By \Cref{thm:CHQ1},
a L-DB or Sym-Hex in $\FGT(\sop)$ induces a corresponding hexagon in $\EGT(\Dsop)$
as shown in \Cref{fig:C-1} or \Cref{fig:C-2} respectively.
\begin{figure}[hb]\centering
    \includegraphics[width=14cm]{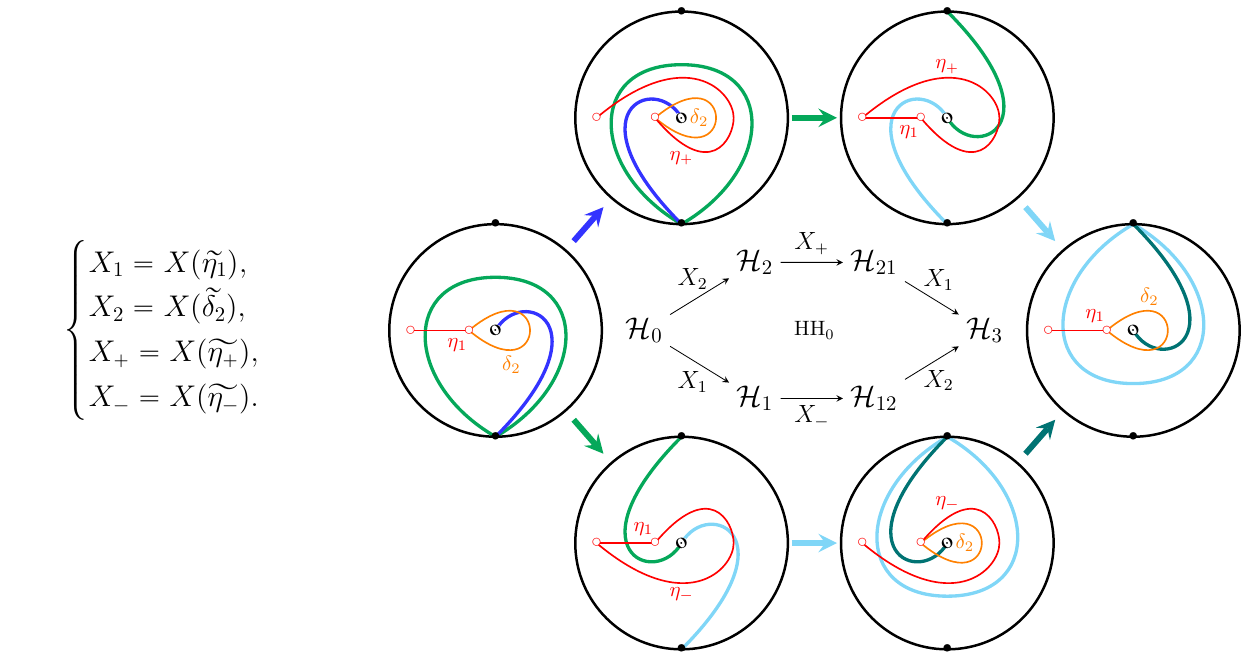}
    \caption{L-DB in $\EGT(\Dsop)$}\label{fig:C-1}
\end{figure}
\begin{figure}[ht]\centering
  \includegraphics[width=14cm]{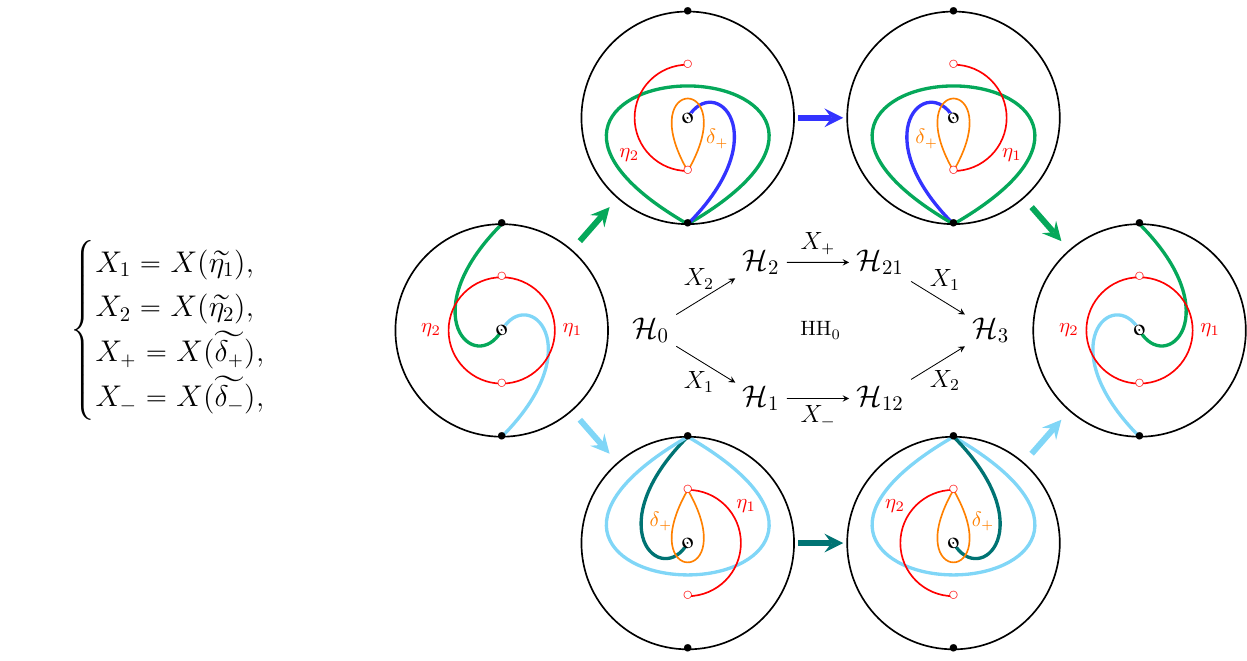}
    \caption{Sym-Hex in $\EGT(\Dsop)$}\label{fig:C-2}
\end{figure}

In fact, such an induced hexagon can be described in the same way in $\EGT(\Dsop)$
by setting the simples $X_1,X_2$ in the source heart $\h_0$ correspond to the proper (graded) closed arcs
as the formulae shown in the left of the figures.
Then there are always two short exact sequences in $\h_0$
\begin{gather}\label{eq:xpm}
    0\to X_2\to X_+\to X_1\to0,\\
    0\to X_1\to X_-\to X_+\to0.
\end{gather}
and the same hexagon of hearts (=$\on{HH}_0$) in $\FGT(\Dsop)$ with
\begin{gather*}
  \begin{cases}
    \h_i=(\h_0)_{X_i}^\sharp,\; i=1,2 \\
    \h_{21}=(\h_2)_{X_+}^\sharp, \\
    \h_{12}=(\h_{1})_{X_-}^\sharp , \\
    \h_{3}=(\h_{21})_{X_1}^\sharp=(\h_{12})_{X_2}^\sharp,
  \end{cases}
\end{gather*}
which consists of two paths:
\[\begin{cases}
    \on{HH}_0^+ \colon \h_0\to\h_2\to\h_{21}\to\h_3,\\
    \on{HH}_0^- \colon \h_0\to\h_1\to\h_{12}\to\h_3.
\end{cases}\]
Note that the rigidities of the simple tiltings are different in these two cases.

Now we use the $\CC$-action to show that such induced hexagons are contractible.
Namely, we can choose $\sigma_\pm$ in $\cub(\h_0)$ such that
\begin{gather}
    \begin{cases}
        Z_+(X_1[1]) = \exp 3\pi\mai\delta, &\\
        Z_+(X_2[1]) = \exp \pi\mai\delta, &
    \end{cases}
    \quad
    \begin{cases}
        Z_-(X_1[1]) = \exp \pi\mai\delta, &\\
        Z_-(X_2[1]) = \exp 3\pi\mai\delta, &
    \end{cases}
\\
        Z_\pm(X)=M\cdot \exp \textfrac{\mai\pi}{2}, \qquad X\in\Sim\h_0\setminus\{X_1,X_2\}
\end{gather}
for some real number $M\gg1$.
for some small $\delta\in\RR_{>0}$ and large $M\in\RR_{>0}$ as shown in \Cref{fig:Z}.

\begin{figure}\centering
\begin{tikzpicture}
\begin{scope}[shift={(6,0)},scale=3,xscale=1]
\draw[thick](0,0)edge[-stealth](15:.4) (0,0)edge[-stealth](5:.4) (0,0)edge[-stealth]($(5:.4)+(15:.4)$)
    (60:.6)edge[orange,-stealth,bend left=30] node[above]{$\cdot\epsilon$}(30:.6)
    (0,0) edge[-stealth](0,1.4);
\draw[thick,dashed,gray](-.2,0)edge[-stealth](.8,0)(0,-.2)edge[-stealth](0,.5);
\draw[gray](.8,0)node[right]{$x$}(0,.5)node[left]{$y$}(0,0)\nn;
\draw[font=\small]    ($(5:.4)+(15:.4)$)node[above]{$Z_+$}
    (15:.4)node[above]{$Z_2$}(5:.4)node[below]{$Z_1$}
    (0,1.4) node[above]{$Z(X)$};
\end{scope}
\begin{scope}[shift={(0,0)},scale=3,xscale=1]
\draw[thick](0,0)edge[-stealth](15:.4) (0,0)edge[-stealth](5:.4) (0,0)edge[-stealth]($(5:.4)+(15:.4)$)
    (60:.6)edge[orange,-stealth,bend left=30] node[above]{$\cdot\epsilon$}(30:.6)
    (0,0) edge[-stealth](0,1.4);
\draw[thick,dashed,gray](-.2,0)edge[-stealth](.8,0)(0,-.2)edge[-stealth](0,.5);
\draw[gray](.8,0)node[right]{$x$}(0,.5)node[left]{$y$}(0,0)\nn;
\draw[font=\small]    ($(5:.4)+(15:.4)$)node[above]{$Z_-$}
    (15:.4)node[above]{$Z_1$}(5:.4)node[below]{$Z_2$}
    (0,1.4) node[above]{$Z(X)$};
\end{scope}
\end{tikzpicture}
\caption{Central charges of $\sigma_+,\sigma_-$}\label{fig:Z}
\end{figure}
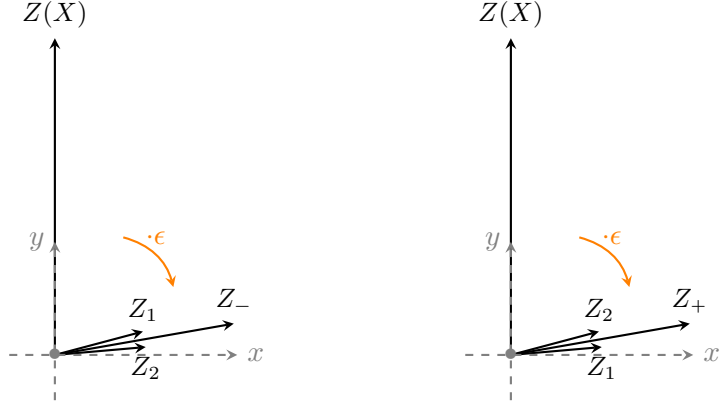

Then we can deduce from \eqref{eq:xpm} that
$X_\pm$ is stale with respect to $\sigma_\pm$ but unstable with respect to $\sigma_\mp$.
Furthermore, by the simple tilting formulae in \cite[App.~A]{CHQ},
we deduce that $k\delta\cdot\sigma_+$, for $k=1,2,3$, are in the three tilting walls of the path
$\on{HH}_0^+$ while $(k\delta-\delta,k\delta)\cdot\sigma_+$ is in the corresponding cube $\cubc(\h_{l})$
for $k/l=1/0,2/2,3/21,4/3$.
Hence we deduce that $[0,4\delta]\cdot\sigma_+$ is homotopy to $\skel_*(\on{HH}_0^+)$.
Similarly, we have $[0,4\delta]\cdot\sigma_-$ is homotopy to $\skel_*(\on{HH}_0^-)$.
Then if we connect $\sigma_\pm$ with a line segment $l$ in $\cubc(\h_0)$,
we see that $4\delta\cdot l$ will be in $\cubc(\h_3)$ and
the boundary of the rectangle $[0,4\delta]\cdot l$ is homotopy to $\skel_*(\on{HH}_0)$.
Thus, $\skel_*(\on{HH}_0)$ is contractible as required.
\end{proof}

By \Cref{eq:Sembed} and \Cref{pp:SQH}, the simply connectedness of $\fgt(\sop)$ in \Cref{thm:FT}
implies the simply connectedness of $\Stab^\T(\Dsop)$.
\begin{theorem}\label{thm:stab}
$\Stab^\T(\Dsop)$ is the universal cover of $\FQuad(\surfp)$.
\end{theorem}

Combining \Cref{thm:QZ+} (that $\MT(\sop)=\ker\AJ$), \Cref{thm:FT} (that $\FT(\RT)\cong\MT(\T)$)
and the characterization of connected components (in \Cref{cor:pi0} and \Cref{lem:pi0}),
we obtain our main theorem.

\begin{theorem}\label{thm:main}
Let $\sop$ be a DMSp. Then we have
\begin{gather}\label{eq:long}
    \pi_1\FQuad(\surfp)\cong\pi_1\fg(\surfp)\cong\MT(\sop)=\ker\AJ
\end{gather}
and
%that fits into the short exact sequence
%\begin{gather}\label{eq:ses1*}
%    1\to \pi_1\FQuad(\surfp)\to \SBr(\sop)\xrightarrow{ \AJ } \Ho{1}(\surf)\to1.
%\end{gather}
%Moreover, we have
\begin{equation}\label{eq:main*}
  \begin{cases}
     \pi_1\FQuad^{\T}(\sop)=1,\\
     \pi_0\FQuad(\sop)\cong\Ho{1}(\surf).
  \end{cases}
\end{equation}
\end{theorem}

%=========================================================
%=========================================================
\section{On orbifold fundamental groups}\label{sec:xxx}
%=========================================================
%=========================================================
\subsection{Partial compactification}\
%=========================================================

%Simple poles from collision
Consider a $\{1,2\}$-valued function $\uk\colon \sun\to\{1,2\}$.
%We write $\uk_i=\uk^{-1}(i)$, $i=1,2$, for simplicity.
Let $\uk_i=\uk^{-1}(i)$, $i=1,2$.

\begin{definition}
Denote by $\FQuad_{\uk}(\surfp)$ the moduli space of
\emph{$\surfp$-framed quadratic differentials with parameter $\uk$},
where a point $(\rs,\phi,\psi)$ in it consists of a quadratic differential $\phi$ on $\rs$
with a diffeomorphism/framing $\psi\colon\surfp \to \rs^\phi_{\wt_1(\phi)}$, sending
\begin{itemize}
  \item the punctures in $\uk_2\subset\sun$ to the double poles in $\wt_2(\phi)$ and
  \item the `collided points' $\uk_1\subset\sun$ to simple poles in $\wt_1(\phi)$.
\end{itemize}
In particular, the number $|\wt_1(\phi)|$ of simple zeros of $\phi$ equals $|\Tri|-|\uk_1|$,
determined by the equation $\|\wt\|=4g-4$.
Two such data $(\rs_i,\phi_i,\psi_i)$ are equivalent if there exists a biholomorphism $f\colon\rs_1\to\rs_2$
such that $f^*(\phi_2)=\phi_1$ and $\psi_2^{-1}\circ f_*\circ\psi_1\in\Diff_0(\surfp)$.
\end{definition}

Now we fix a partition $\sun=\yue\cup\kui$ and make the punctures in $\kui$ collidable.
Define the \emph{moduli space of quadratic differentials on $\surfk$} to be the union
\begin{gather}\label{eq:FQuad-k}
    \FQuad(\surfk)=\coprod_{\uk(\yue)=2} \FQuad_{\uk}(\surfp),
\end{gather}
which is a partial compactification of $\FQuad(\surfp)$.
As in \cite[\S~6.5]{BS}, the period coordinate of $\FQuad(\surfp)$ extends to $\FQuad(\surfk)$.

\begin{remark}\label{rem:k.vs.x}
The mapping class group $\MCG(\surfk)\lhd\MCG(\surfp)$ naturally acts on $\FQuad(\surfk)$
(by left composing with the framing).
However, $\MCG(\surfp)$ does not act on it since one can not permute a usual puncture with a collidable one,
except for the cases $(\yue,\kui)=(\sun,\emptyset)$ and $(\yue,\kui)=(\emptyset,\sun)$,
where $\MCG(\surfp)=\MCG(\surfk)$.
\end{remark}

$\FQuad(\surfp)\subset\FQuad(\surfk)$ is called the \emph{complete} part,
consisting of the differentials without simple poles.
The incomplete part is denoted by
$$ \on{IC}(\surfk)\colon=\FQuad(\surfk)\setminus\FQuad(\surfp).$$

%=========================================================
\subsection{Killing loops}\
%=========================================================

Let $\mu(\kui)$ be the set of (forward) L-flips in $\FG(\surfp)$ with the corresponding isolated punctures in $\kui$.
We will regard L-flips in $\mu(\kui)$ as loops/elements in $\pi_1\FQuad(\surfp)$
under the map \eqref{eq:skel2}.
By the right picture of \Cref{fig:2loop}, we see that any L-flip in $\mu(\kui)$ becomes contractible in $\FQuad(\surfk)$ after the partial compactification.
We claim that this is essentially the only type of loops $\FQuad(\surfp)$ that get killed in $\pi_1\FQuad(\surfk)$.

\def\Loop{\on{c}}
\begin{proposition}\label{pp:tech}
Let $\Loop$ be a loop in $\FG(\surfp)$. If it is contractible in $\FQuad(\surfk)$,
then, it decomposes to a finite sum of L-flips in $\mu(\kui)$
in $\pi_1\FQuad(\surfp)$ up to homotopy.
\end{proposition}
\begin{proof}
$\FQuad_{\uk}(\surfp)$ has complex co-dimension $|\uk_1|$ in $\FQuad(\surfk)$.
Let $R(\surfk)$ be the subspace of $\FQuad(\surfk)$, consisting of saddle-free quadratic differentials
with exactly one simple pole.
These are the `top strata' in the incomplete part $\on{IC}(\surfk)$
with real co-dimension two in $\FQuad(\surfk)$.
The rest of $\on{IC}(\surfk)$ has real co-dimension at least three in $\FQuad(\surfk)$.

Let $D$ be a disk with $\partial D=\Loop$ that is contractible in $\FQuad(\surfk)$.
By perturbation, we can make $D$ only intersect $\on{IC}(\surfk)$ in $R(\surfk)$
and intersect $R(\surfk)$ transversally.
By dimension reason, the intersection consists of isolated points.
If $D$ does not intersect $\on{IC}(\surfk)$, then it is trivial and there is nothing to prove.
Otherwise, we can decompose $D$ into many small disks such that each of which
only intersect $R(\surfk)$ at one point.
We only need to show that the assertion holds for each disk.
In other words, we may assume that $D$ intersects $\on{IC}(\surfk)$ at one point $\Psi_0$
and we only need to show that it is homotopy to an L-flip in $\mu(\kui)$.

As $\Psi_0$ has exactly one simple pole and is saddle-free,
it admits a WKB-triangulation with exactly one triangle containing a simple pole with foliation shown in the center octagon of the right picture of \Cref{fig:2loop}, and all other triangles are once-decorated (i.e. with a simple zero).
Then there is a saddle-free differential $\Psi$ in $\FQuad(\surfp)$, cf. the top disk of the right picture of \Cref{fig:2loop}, such that $\Psi_0$ is the closure (taken in $\FQuad(\surfk)$) of the cell
\[
    \cubc(\RT_\Psi)\subset B_0(\surfp)\subset \FQuad(\surfp),
\]
where $\RT_\Psi$ is the WKB-triangulation associated to $\Psi$.
Note that $\RT_\Psi$ is the same WKB-triangulation for $\Psi_0$ (after forgetting about the simple zeros and simple poles) and let $T_0$ be the self-folded triangle in $\RT_\Psi$ containing the simple pole of $\Psi_0$.
Recall from \Cref{sec:cell} that $\cubc(\RT_\Psi)\isom\UHP^{\RT_\Psi}$ with coordinate $(u_\gamma)_{\gamma\in \RT_\Psi}$.
Let $u_0$ be the one corresponding to the saddle connection in $T_0$
(dual to self-folded edge $\gamma_0$, cf. dashed red arc in the top octagon of the right picture of \Cref{fig:2loop})
and then $\Psi_0$ is in the boundary
\[
    \partial_0=\{ (u_\gamma)_{\gamma\in \RT_\Psi} \mid u_0=0 \} \subset \partial \UHP^{\RT_\Psi}.
\]
In fact, a neighbourhood of $\Psi_0$ in $\on{IC}(\surfk)$ lies in $\partial_0$.
Therefore,
we deduce that when taking a neighbourhood $N_D(\Psi_0)$ of $\Psi_0$ in $D$, we have
\[
    \left( N_D(\Psi_0)\setminus\{\Psi_0\} \right) \subset
        \left( \UHP^{\RT_\Psi}\bigsqcup \partial^\sharp_{\gamma_0} \UHP^{\RT_\Psi} \right),
\]
where $\partial^\sharp_{\gamma_0} \UHP^{\RT_\Psi}$ is the wall containing differential with exactly one saddle trajectory in the triangle $T_0$, cf. the bottom disk in the right picture of \Cref{fig:2loop}.
Hence we see that $\partial N_D(\Psi_0)$ is homotopy to (up to sign) the L-flip $$\mu_{\gamma_0}^\sharp\colon\RT_\Psi\to\RT_\Psi.$$
By the assumption that $D$ is contractible in $\FQuad(\surfk)$,
we see that
\[
    \Loop=\partial D\backsimeq \partial N_D(\Psi_0)\backsimeq \pm \mu_{\gamma_0}^\sharp
\]
in $\pi_1\FQuad(\surfp)$ as required.
\end{proof}

Recall that $\LA(\sop,\kui)$ consists of L-arcs of $\sop$ enclosing exactly a puncture in $\kui$
and $\on{L}(\kui)$ is the normal subgroup in $\SBr(\sop)$ generated by all L-twists along L-arcs in $\LA(\sop,\kui)$.
We have the following corollary of \Cref{thm:main}.

\begin{theorem}\label{thm:pi1-kui}
The fundamental group of $\FQuad(\surfk)$ is
\begin{gather}\label{eq:long-k}
  \pi_1\FQuad(\surfk)=\pi_1\FQuad(\surfp)/\mu(\kui)\cong\MT(\sop)/\on{L}(\kui)
    .%=\MT(\surfo^\yue),
\end{gather}
%where $\surfo^\yue$ is the DMSp obtained from $\surfp$ by deleting punctures in $\kui$.
\end{theorem}
%Note that $\surfo^\yue$ does not satisfy \eqref{eq:aleph}.
\begin{proof}
Denote by $\fg(\surfk)$ the groupoid obtained from $\fg(\surfp)$ by adding the relations
$\mu=1$ for any L-flip $\mu$ in $\mu(\kui)$.

Choose a saddle-free different $\Psi$ in $\FQuad(\surfp)$ with admissible WKB-triangulation $\RT$.
By \eqref{eq:long}, we have that
\[
    \pi_1\FQuad(\surfp,\Psi)\cong\pi_1\fg(\surfp,\RT) \; (=\FT(\RT)\cong\MT(\T)=\MT(\sop)),
\]
where $\T$ is a decorated triangulation satisfying $\RT=F_*^\Tri(\T)$.
By \Cref{pp:tech}, we know that
\[
    \pi_1\FQuad(\surfk,\Psi)=\pi_1\FQuad(\surfp)/\mu(\kui)\cong \pi_1\fg(\surfk,\RT)
        =\MT(\sop)/\on{L}(\kui),
\]
so that we have \eqref{eq:long-k}.
%except for the last equation,
%which then follows from the presentation of $\MT(\sop)$ in \Cref{thm:MT's},
%comparing to the one of $\MT(\surfo^\yue)$.
\end{proof}

%=========================================================
\subsection{Orbifolding}\
%=========================================================

%The extra strata play the role of hyperplanes, which should get a $\ZZ_2$-orbifolding in the quotient space.%, cf. \Cref{sec:ex} for examples.
%=========================================================
\paragraph{\textbf{Signed quadratic differentials}}\

A quadratic differential on $\surfx$ is a \emph{signed quadratic differential} $(\Psi,\epsilon)$,
consisting of $\Psi=[\rs,\phi,\psi]\in\FQuad(\surfk)$ and
together with a choice $\epsilon(V)$ of signs of the residue $\on{Res}_V(\phi)$ at each $V\in\uk_2\cap\kui$,
cf. \eqref{eq:residue}, and a random choice of signs at each $V\in\uk_1$.
Two such differentials $(\Psi_i,\epsilon_i)$ are equivalent if $\Psi_1=\Psi_2$ in $\FQuad(\surfk)$
and $\epsilon_1(V)=\epsilon_2(V)$ for all $V\in\uk_2\cap\kui$.

Denote by $\FQuads(\surfx)$ the moduli space of quadratic differentials on $\surfx$.
Then the $\MCG(\surfk)$-action on $\FQuad(\surfk)$ upgrades to a $\MCG(\surfx)$-action on $\FQuads(\surfx)$,
which is free only when restricted to the complete part.
Here, $\MCG(\surfx)$ is defined as in \eqref{eq:MCGx}.
%where $\MCG(\surfx)$ is defined as
%\begin{equation}\label{eq:MCGx}
%    \MCG(\surfx)=\MCG(\surf^{\yue,\kui})\ltimes\ZZv.
%\end{equation}

%\begin{definition}\label{def:Quadh}
The \emph{moduli space of signed quadratic differentials on $\surfx$} is defined to be the orbifold
\begin{gather}
    \Quad(\surfx)\colon=\FQuads(\surfx)/\MCG(\surfx).
\end{gather}
%\end{definition}
Another framed version we will consider is
%Instead of calculating the fundamental group of $\Quad(\surfx)$,
%let us look at a
the $\MCG(\surfk)$-covering of $\Quad(\surfx)$, that fits into the
%\begin{gather}\label{eq:FQuadh}
%  \FQuad(\surfx)\colon=\FQuads(\surfx)/\ZZv.
%\end{gather}
%So we have the
following commutative diagram
\begin{equation}\label{eq:3Orbifold}
\begin{tikzpicture}[xscale=1.6,yscale=0.8,baseline=(bb.base)]
\path (0,1) node (bb) {}; % baseline
\draw (0,2) node (s0) {$\FQuads(\surfx)$}
 (0,0) node (s1) {$\FQuad(\surfx)$}
 (2.5,1) node (s2) {$\Quad(\surfx)$};
\draw [-stealth, font=\scriptsize]
 (s0) edge node [left] {$\ZZv$} (s1)
 (s0) edge node [above] {$\MCG(\surfx)$} (s2)
 (s1) edge node [below] {$\MCG(\surfk)$} (s2);
\end{tikzpicture}
\end{equation}

Similar to a comment in \cite[\S~6.3]{BS}, that
the only difference between the spaces $\FQuad(\surfx)$ and $\FQuad(\surfk)$ is some extra orbifolding along the incomplete locus $\on{IC}(\surfk)$.

\begin{example}\label{ex:D2+}
Following \Cref{ex:D2}, we have
\[
    \FQuads(\surf^{\emptyset,\vortex})=\big(\ZZ_2\times\FQuad(0;1,-2,-3)\big)\bigsqcup\FQuad(0;-1,-3).
\]
The complete part $\ZZ_2\times\FQuad(0;1,-2,-3)$ is a double cover of $\FQuad(\surfp)$.
For instance, the purple loop in the right picture of \Cref{fig:2loop} lifts to the purple loop in \Cref{fig:foli}.
\end{example}
%=========================================================
%\subsection{Orbifold fundamental groups}\
%=========================================================

%=========================================================
\paragraph{\textbf{Orbifold fundamental groups}}\

Let $\mu^2(\kui)$ be the set of squares of L-flips in $\mu(\kui)$ and
\begin{gather}\label{eq:L-yue2}
  \LLk\colon=\< L_{\lp}^2 \mid \lp\in \LA(\sop,\kui)  \> \quad (\lhd \on{L}(\kui) \lhd \SBr(\sop)).
\end{gather}

\begin{theorem}\label{thm:pi1-v}
The fundamental group of $\FQuads(\surfk)$ is
\begin{gather}\label{eq:long3}
  \pi_1\FQuad(\surfx)=\pi_1\FQuad(\surfp)/\mu^2(\kui)\cong\MT(\sop)/\LLk.
\end{gather}
\end{theorem}
\begin{proof}
The top strata $R(\surfk)$ in the incomplete locus $\on{IC}(\surfk)$ has precisely extra $\ZZ_2$-orbifolding.
Other part of $\on{IC}(\surfk)$ will have higher $\ZZ_2^{\oplus k}$-orbifolding, for some $k\ge2$,
but they don't effect the fundamental group due to co-dimension reason.

By the proof of \Cref{pp:tech}, we know that $\mu^2(\kui)$ are all the loops in $\FQuad(\surfp)$
get killed after partial compactification and orbifolding. Thus the theorem follows as \Cref{thm:pi1-kui}.
\end{proof}

%=========================================================
\subsection{Deformed AJ maps}\label{sec:dmsx}\
%=========================================================

\paragraph{\textbf{Turning some punctures into vortices}}\

Let $\sok$ be the DMSp with a fixed partition $\sun=\yue\cup\kui$ of punctures,
which differs from $\sop$ when considering mapping class groups,
similar to \Cref{rem:k.vs.x}.
Namely, $\MCG(\sok)$ requires to preserve $\yue$ and $\kui$ setwise separately and
$\MCG(\sop)$ only requires to preserve $\sun$ setwise. Hence, we have
$\MCG(\sok)\lhd\MCG(\sop)$.
Note that such a change does not effect surface braid group and mixed twist groups, e.g. $$\SBr(\sok)=\SBr(\sop) \quad\text{and}\quad \MT(\sok)=\MT(\sop).$$

Let $\sox$ be the DMS with mixed punctures \& vortices (DMSx),
obtained from $\sok$ by turning the punctures in $\kui$ into vortices $\vortex$.
The difference (when considering symmetry groups) between $\sox$ and $\sok$ is
we include the following extra relations for $\sox$:
\begin{itemize}
  \item $L_{\lp}^2=1$ for any $\lp\in \LA(\sop,\kui)$.
\end{itemize}
For $\on{Sym}^*$ being $\MCG,\SBr$ or $\MT$, the subgroup $\LLk$ is normal in $\on{Sym}^*(\sok)$
and thus we define
\[
    \on{Sym}^*(\sox)\colon=\on{Sym}^*(\sok)/\LLk.
\]
In the case when $\LLk$ is not a normal subgroup of some symmetry groups associated to $\sok$,
we need to modify the quotient a bit. More precisely:
\begin{itemize}
  \item Let $\cok=\LLk\cap\MTsoy$ and define
  \begin{gather}
        \MTsoyk\colon=\MTrel{\yue}/\cok.
  \end{gather}
  \item Let $\Ho{1}^2(\kui)=\LLk/\cok$ be the abelian subgroup of $\Ho{1}(\kui)$,
  generated by the square of the standard generators of $\Ho{1}(\kui)$.
  Define $$\Ho{1}(\vortex)=\Ho{1}(\kui)/\Ho{1}^2(\kui)\cong\ZZv.$$
  We will write $\Ho{1}(\surfv)\cong\Ho{1}(\surf)\oplus\Ho{1}(\vortex)$.
\end{itemize}
Now, we can quotient the whole octahedron in \Cref{fig:3x3} by $\LLk$ and obtain a short exact sequence of octahedrons in \Cref{fig:3Octa}.
The groups in the quotient octahedron are the ones for DMSx $\sox$.
%which we will use to analysis the fundamental groups of quadratic differentials on $\surfx$.

\begin{figure}[htb]
\pgfmathsetmacro{\r}{43}
\centering\makebox[\textwidth][c]{
\begin{tikzpicture}[scale=1.4,font=\scriptsize,rotate=-\r]
\begin{scope}[shift={(\r:0)}]
  \draw[]
        (150:2)node(a1){$\cok$}(90:1)node[red,,](a2){$\LLk$}
        (30:2)node(a3){$\Ho{1}^2(\kui)$}
        (0,0)node(b1){$\LLk$}(-30:1)node(b2){$\Ho{1}^2(\kui)$}(0,-2)node(b3){1};
\end{scope}
\begin{scope}[shift={(\r:4)}]
  \draw[] (150:2)node(c1){$\MTsoy$}(90:1)node[red,,](c2){$\MT(\sop)$}
        (30:2)node(c3){$\Ho{1}(\kui)$}
        (0,0)node(d1){$\SBr(\sop)$}(-30:1)node(d2){$\Ho{1}(\surf^{\kui})$}
        (0,-2)node(d3){$\Ho{1}(\surf)$};
\end{scope}
\begin{scope}[shift={(\r:8)}]
  \draw[] (150:2)node[red,,](e1){$\MTsoyk$}(90:1)node[red,,](e2){$\MT(\sox)$}
        (30:2)node[red,,](e3){$\Ho{1}(\vortex)$}
        (0,0)node(f1){$\SBr(\sox)$}(-30:1)node(f2){$\Ho{1}(\surf^{\vortex})$}
        (0,-2)node(f3){$\Ho{1}(\surf)$};
\end{scope}
\draw[{Hooks[right]}-stealth]
    (a1)edge(a2)edge(b1)    (a2)edge(b1) (a3)edge(b2)
    (c1)edge(c2)edge(d1)    (c2)edge(d1) (c3)edge(d2)
    (e1)edge(e2)edge(f1)    (e2)edge(f1) (e3)edge(f2);
\draw[->>,>={Stealth[scale length=.4]}]
    (a2)edge(a3)    (b1)edge(b2)    (b1)edge(b3)(b2)edge(b3)
    (c2)edge(c3)    (d1)edge(d2)    (d1)edge(d3)(d2)edge(d3)
    (e2)edge node[red,above]{$\pi_\varepsilon$}(e3)    (f1)edge(f2)    (f1)edge(f3)(f2)edge(f3);
\foreach \j in {1,...,2}
{\draw[{Hooks[right]}-stealth,cyan,thick,opacity=.8]
    (a\j)edge(c\j) (b\j)edge(d\j);
    \draw[->>,>={Stealth[scale length=.4]},cyan,thick,opacity=.8](c\j)edge(e\j) (d\j)edge(f\j);}
    \draw[{Hooks[right]}-stealth,cyan,thick,opacity=.8](a3)edge(c3) ;
    \draw[->>,>={Stealth[scale length=.4]},cyan,thick,opacity=.8](c3)edge(e3) ;
    \draw[cyan,thin](d3)edge[double](f3);\draw[cyan](b3)edge[dotted](d3);
\end{tikzpicture}
}
\caption{A short exact sequence of octahedrons}\label{fig:3Octa}
\end{figure}
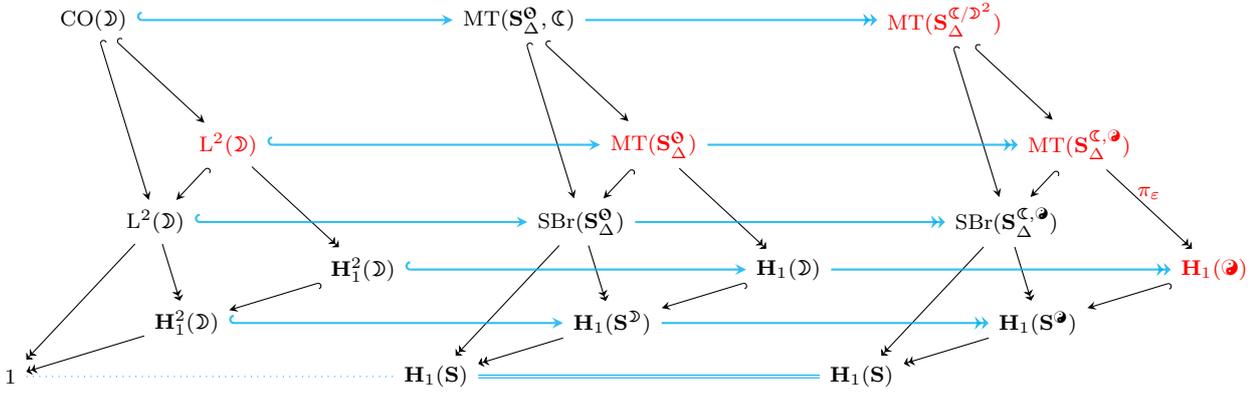

In particular, we have a short exact sequence
\begin{equation}\label{eq:ses H1-xx}
  \begin{tikzcd}[column sep=17,row sep=10]
    1 \ar[r] & \MTsoyk \ar[r] & \SBr(\surfx) \ar[r,"\AJ^\vortex"] & \Ho{1}(\surfv) \ar[r]& 1.
  \end{tikzcd}
\end{equation}
When $\surfp$ is a twice-punctured $(n-2)$-gon,
this short exact sequence specializes to \cite[Thm.~1.1 and Thm.~1.2]{All}
for Euclidean Artin braid groups. More precisely, let $\sun=\{P_1,P_2\}$.
Then
\begin{equation}\label{eq:BCD}
  \MTsoyk=
    \begin{cases}
      \Br(\widetilde{C_n}), & \mbox{if $(\yue,\kui)=(\sun,\emptyset)$}, \\
      \Br(\widetilde{B_n}), & \mbox{if $(\yue,\kui)=(\{P_1\},\{P_2\})$}, \\
      \Br(\widetilde{D_n}), & \mbox{if $(\yue,\kui)=(\emptyset,\sun)$}.
    \end{cases}
\end{equation}

\paragraph{\textbf{Deformed main theorem}}\

Now we can reformulate \Cref{thm:pi1-v} as a deformation of \Cref{thm:main}.
\begin{theorem}\label{thm:Dmain}
Let $\sox$ be a DMSx. Then
\begin{gather}\label{eq:Dmain}
    \pi_1\FQuad(\surfx)=\MT(\sox)=
        \ker\left( \SBr(\sox)\xrightarrow{\AJ}\Ho{1}(\surf) \right).
\end{gather}
\end{theorem}
\begin{remark}\label{rem:final}
Equivalently, we have
\[
    \pi_1\FQuads(\surfx)=\MTsoyk=
        \ker\left( \SBr(\sox)\xrightarrow{\AJ^\vortex}\Ho{1}(\surfv) \right).
\]
Then any non-exceptional Euclidean Artin braid groups can be realized as $\pi_1\FQuads(\surfx)$
by \eqref{eq:BCD} and by \cite[(8.6)]{QQ} for $\Br(\widetilde{A_n})$.
\end{remark}

\appendix
%=========================================================
%=========================================================
\section{Tilting and stability structures}
%=========================================================
%=========================================================
\subsection{Simple tilting theory}\label{sec:prel}\
%=========================================================

We will first recall basic notions and notations about triangulated categories,
tilting theory and the 3-Calabi-Yau categories associated to a DMSp.
Then we review the categorification of flip graphs from \cite{CHQ}, cf. \cite{QQ,Chr}.

We work over algebraically closed field $\k$ and $\k$-linear additive categories.
A category usually means an abelian or triangulated category.
Most stuffs in this subsection are classical, we refer to \cite{B1,KQ1,Q1} for general references.

We write $\hua{C}=\hua{T}\perp\hua{F}$ for a torsion pair in a category $\hua{C}$.
An object $S$ in $\hua{C}$ is \emph{rigid} if $\Ext^1(S,S)=0$.

For an abelian category $\h$, denote by $\Sim\h$ the set of simples of $\h$.
Such an $\h$ is called \emph{finite} if $\Sim\h$ is finite and generates $\h$
(so that $\h$ is a length category).

Let $\D$ be a triangulated category.
A \emph{t-structure} $\sli$ of $\D$ is the torsion part of a torsion pair $\D=\sli\perp\sli^\perp$
such that $\sli[1]\subset\sli$.
It is bounded if
\[
  \hua{D}= \displaystyle\bigcup_{i,j \in \ZZ} \hua{P}^\perp[i] \cap \hua{P}[j],
\]
The \emph{heart} of a bounded t-structure $\sli$ is $\h=\sli^\perp[1]\cap\sli$, which
is abelian and determines $\sli$ uniquely.
%The heart induces a homology on $\D$. Namely,
For any object $E\in\D$, there is a HN-filtration
\begin{equation}\label{eq:HN}
\begin{tikzcd}[column sep=.8pc]
    0=E_0 \ar[rr] && E_1 \ar[rr]\ar[dl] && E_2 \ar[r]\ar[dl] & \ldots \ar[r] & E_{l-1} \ar[rr] && E_l=E \ar[dl] \\
        &F_1 \ar[ul,dashed]&&F_2 \ar[ul,dashed]&&&& F_l \ar[ul,dashed]
\end{tikzcd}
\end{equation}
with factors $F_i\in\h[k_i]$ for $k_1>k_2>\cdots>k_l$.

Given a torsion pair $\h=\hua{T}\perp\hua{F}$ in $\h$,
there is a new heart $\h^\sharp$, known as the \emph{forward (HRS) tilt} of $\h$ with respect to the torsion pair
(cf. \cite{HRS}), which admits a torsion pair $\hua{F}[1]\perp\hua{T}$ (and hence determined by them).
The dual notion of forward tilting is backward tilting.
For $\h=\hua{T}\perp\hua{F}$, one has $\h^\flat=\h^\sharp[-1]=\hua{F}\perp\hua{T}[-1]$.
%Here, HRS stands for Happel-Reiten-Smal{\o}.

There is a natural partial order on the set of hearts, i.e.
\begin{equation}\label{eq:order}
  \h_1 \leq \h_2\; \Longleftrightarrow\; \hua{P}_2\subset\hua{P}_1 \;\Longleftrightarrow\;
    \hua{P}^\perp_1\subset\hua{P}^\perp_2.
\end{equation}
Another way to characterize forward tilts of a heart is that they are precisely all heart in the interval
$[\h,\h[1]]$, i.e. (cf. \cite[\S ~3]{KQ1})
\begin{equation}\label{eq:[1]}
  \h'=\h^\sharp \text{ w.r.t. $\h=\hua{T}\perp\hua{F}$ if and only if}  \h\le \h'\le\h[1].
\end{equation}
Indeed, in such a case, one has $\hua{T}=\h'\cap\h$ and $\hua{F}=\h'[-1]\cap\h$.

\begin{definition}\cite{KQ1}
A forward tilting $\h\to \h^\sharp$ with respect to $\h=\hua{T}\perp\hua{F}$ is \emph{simple} if the corresponding torsion part $\hua{F}$ is generated by a single simple $S\in\Sim\h$.
Denote by $\h^\sharp_S$ the simple forward tilt in this case and by
$\mu_S^\sharp\colon \h\to \h^\sharp_S$ the simple forward tilting.

A backward tilting is simple if the corresponding torsion part is generated by a single simple $S$
and the backward tilt will be denoted by $\h^\flat_S$.
The backward tilting is the inverse operation of forward tilting and we have
$\mu_{S[-1]}^\sharp\colon \h^\flat_S\to \h$.

The \emph{exchange graph (of hearts)} $\EG(\D)$ of a triangulated category $\D$ is the directed graph whose vertices are hearts and whose edges are simple forward tilting.
When $\D$ admits a canonical heart $\h$,
denote by $\EGp(\D)$ the principal component of $\EG(\D)$, i.e. the connected component containing $\h$.
\end{definition}

We often interested in intervals of in $\EGp(\D)$.
Denote by $\EGp[\h_1,\h_2]$ the connected component of the full subgraph $\EGp(\D)\cap[\h_1,\h_2]$ containing $\h_1$.

There are couple of simple tilting formulae,
e.g \cite[Prop.~5.4]{KQ1} for the rigid case and \cite[\S~A.2]{CHQ} for a slight generalization.

In many good scenarios, $\EGp(\D)$ will consists of finite hearts and the formula apply to any of its edges.
In that case, $\EGp(\D)$ is $(n,n)$-regular, e.g. when $\D=\D^b(Q)$ for an acyclic quiver $Q$ or $\D=\D_N(Q)$ for the $N$-Calabi-Yau version of $\D^b(Q)$.

As rigid simple tilting is very nice and we introduce the rigid exchange graph
$\REG(\D)$ to be the subgraph of $\EG(\D)$ whose edge are simple tilting with respect to rigid simples.
Similar for other variation of $\EG$, e.g. $\EGp$, $\EGp[\h_1,\h_2]$, etc.
Note that the formula above only ensures the finiteness of the tilted heart, not the rigidity of the tilted simples.
Hence, $\REGp(\D)$ may not be equal $\EGp(\D)$ even if it is $(n,n)$-regular.

We recall another technical result from \cite[Lem.~3.8]{KQ1}. %, Lem.~5.6 and Cor.~5.7
\begin{lemma}\label{lem:ss}
Let $\h\in\EGp[\h_0,\h_0[1]]$. % (for some $m\in\NN$) and
%$\Ho{\bullet}$ the homology with respect to $\h_0$.
Then, for any rigid $S\in\Sim\h$,
it must be in one of $\h_0$ or $\h_0[1]$ and
\begin{itemize}
\item $\tilt{\h}{\sharp}{S}\in \EGp[\h_0,\h_0[1]]$
    if and only if $S\in\h_0$ if and only if $S\notin\h_0[1]$.
\item $\tilt{\h}{\flat}{S}\in \EGp[\h_0,\h_0[1]]$
    if and only if $S\notin\h_0$ if and only if $S\in\h_0[1]$.
\end{itemize}
In other words, exactly one of $\tilt{\h}{\pm\sharp}{S}$ is still in $\EGp[\h_0,\h_0[1]]$.
\end{lemma}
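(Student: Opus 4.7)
The plan is to extract the dichotomy on $S$ from the interval characterisation \eqref{eq:[1]}, and then analyse the two simple tilts via $\h_0$-cohomology. Applying \eqref{eq:[1]} to $\h_0$, the heart $\h$ arises as the forward HRS tilt of $\h_0$ with respect to a torsion pair $\h_0=\torsion_0\perp\torfree_0$, with $\torsion_0=\h\cap\h_0$ and $\torfree_0=\h[-1]\cap\h_0$; explicitly $\h=\torfree_0[1]\perp\torsion_0$, so $(\torfree_0[1],\torsion_0)$ is itself a torsion pair on $\h$. Every simple $S\in\Sim\h$ thus fits in a short exact sequence $0\to A\to S\to B\to 0$ in $\h$ with $A\in\torfree_0[1]\subset\h_0[1]$ and $B\in\torsion_0\subset\h_0$; simplicity forces $A=0$ or $B=0$, and since $\h_0\cap\h_0[1]=0$ in a bounded t-structure, this is the required disjoint dichotomy $S\in\h_0$ or $S\in\h_0[1]$. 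Rigidity is not used at this first step.

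For the tilting assertions, rigidity makes $\Add S$ closed under extensions in $\h$, so $\h=\torsion\perp\Add S$ with $\torsion=\{X\in\h:\Hom(X,S)=0\}$ is a torsion pair, and $\tilt{\h}{\sharp}{S}=\Add S[1]\perp\torsion$. The lower bound $\h_0\leq\tilt{\h}{\sharp}{S}$ is automatic from $\h_0\leq\h\leq\tilt{\h}{\sharp}{S}$; the content of the lemma is the upper bound $\tilt{\h}{\sharp}{S}\leq\h_0[1]$. Suppose first that $S\in\h_0$. Every $X\in\tilt{\h}{\sharp}{S}$ sits in a triangle $F[1]\to X\to T\to F[2]$ with $F\in\Add S\subset\h_0$ and $T\in\torsion\subset\h$, so $F[1]\in\h_0[1]$ while $T$ has $\h_0$-cohomology concentrated in degrees $-1$ and $0$; the long exact sequence then places $X$ in $\h_0[1]\ast\h_0$. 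In the finite-length hearts of the paper one identifies the torsion pair on $\h_0$ that actually governs $\tilt{\h}{\sharp}{S}$ by enlarging $\torfree_0$ to the smallest subobject- and extension-closed subcategory of $\h_0$ containing $\torfree_0\cup\{S\}$ and taking its right orthogonal as the new torsion class, and checks directly that the associated forward tilt coincides with $\tilt{\h}{\sharp}{S}$; by \eqref{eq:[1]} this puts $\tilt{\h}{\sharp}{S}\in[\h_0,\h_0[1]]$.

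Conversely, if $S\in\h_0[1]$ then $S[1]\in\h_0[2]$ has $\h_0$-cohomology in degree $-2$, so $S[1]\notin\h_0[1]\ast\h_0$; since $S[1]$ is a simple of $\tilt{\h}{\sharp}{S}$, this forces $\tilt{\h}{\sharp}{S}\notin[\h_0,\h_0[1]]$. The backward case is verbatim dual: the simple backward tilt is $\tilt{\h}{\flat}{S}=\torfree'\perp\Add S[-1]$ with $\torfree'=\{X\in\h:\Hom(S,X)=0\}$; the bound $\tilt{\h}{\flat}{S}\leq\h_0[1]$ is automatic from $\tilt{\h}{\flat}{S}\leq\h\leq\h_0[1]$, and whether $\h_0\leq\tilt{\h}{\flat}{S}$ holds is decided by whether $S[-1]\in\h_0$, equivalently whether $S\in\h_0[1]$. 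Collecting the four cases gives both equivalences and shows that exactly one of $\tilt{\h}{\pm\sharp}{S}$ stays in $[\h_0,\h_0[1]]$.

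The step I expect to require the most care is upgrading the cohomological containment $\tilt{\h}{\sharp}{S}\subset\h_0[1]\ast\h_0$ to genuine interval membership: this forces us to exhibit a concrete torsion pair on $\h_0$ whose forward tilt matches $\tilt{\h}{\sharp}{S}$, and to verify Hom-orthogonality and extension-completeness of the enlarged pair. Rigidity of $S$ is used crucially at two points: first to ensure $\Add S$ is the torsion-free class of a well-defined simple tilt in $\h$, and second to control $\Ext^{1}_{\h_0}(-,S)$ when checking that the enlarged pair on $\h_0$ is in fact a torsion pair. In the finite-length setting of $\Dsop$ these checks are routine but need to be done carefully.
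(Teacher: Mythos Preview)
Your argument is essentially correct, and in fact stronger than you give it credit for. The paper itself does not prove this lemma: it is quoted verbatim as \cite[Lem.~3.8]{KQ1} and used as a black box, so there is no ``paper's proof'' to compare with beyond the original reference.

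One point worth tightening: the step you flag as delicate---``upgrading the cohomological containment $\tilt{\h}{\sharp}{S}\subset\h_0[1]\ast\h_0$ to genuine interval membership''---is in fact already complete. For any heart $\h'$, the condition $\h'\subset\h_0[1]\ast\h_0$ (equivalently, $H^i_{\h_0}(X)=0$ for $i\neq 0,-1$ and all $X\in\h'$) is \emph{equivalent} to $\h_0\le\h'\le\h_0[1]$; the torsion pair on $\h_0$ is recovered as $\torsion'=\h'\cap\h_0$, $\torfree'=\h'[-1]\cap\h_0$, and the fact that this really is a torsion pair with forward tilt $\h'$ follows from $\h'$ being a heart (no finite-length or rigidity hypotheses needed here). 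So your long exact sequence computation already finishes that direction, and the explicit enlargement of $\torfree_0$ you sketch is unnecessary.

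With that simplification, rigidity of $S$ is used only once, namely to ensure $\Add S$ is extension-closed so that the simple tilt $\tilt{\h}{\sharp}{S}$ is defined; it plays no further role in the interval argument.
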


%=========================================================
\subsection{Stability structures on triangulated categories}\label{sec:stab}\
%=========================================================

Let $\D$ be a triangulated category.
Recall the basic notion and result from \cite{B1}.

\begin{definition}\cite{B1}\label{def:stab}
A \emph{stability condition} $\sigma=(Z,\sli)$ consists of a \emph{central charge} $Z\in\Hom(\Grot(\D),\CC)$ and a $\RR$-family $\sli_\RR=\{\sli(\phi)\mid \phi\in\RR\}$ of additive (in fact, abelian) subcategories (known as the \emph{slicing}), satisfying (cf. \cite{B1} for more details):
\begin{itemize}
  \item if $0 \neq E \in \sli(\phi)$,
  then $Z(E) = m(E) \exp(\phi  \pi \mathbf{i} )$ for some $m(E) \in \RR_{>0}$;
  \item $\sli(\phi+1)=\sli(\phi)[1]$, for all $\phi \in \RR$;
  \item $\Hom(\sli(\phi_1),\sli(\phi_2))=0$ if $\sli(\phi_1)>\sli(\phi_2)$;
  \item each nonzero object $E \in \hua{D}$ admits a \emph{Harder-Narashimhan} filtration \eqref{eq:HN}
  with factors $F_i\in\sli(\phi_i)$ for $\phi_1>\cdots>\phi_l$.
  The upper and lower phases of $E$ are $\phi_1$ and $\phi_l$ respectively.
  \item the so-called \emph{support property}.
\end{itemize}
\end{definition}

Objects in the slice $\sli(\phi)$ are called \emph{semistable} objects with phase $\phi$;
simple objects in $\sli(\phi)$ is called \emph{stable} objects with phase $\phi$.
For an interval $J$, denote by $\sli(J)$ the subcategory consisting of objects whose upper and lower phases
are in $J$.
The \emph{heart} $\h_\sigma$ of a stability condition $\sigma=(Z,\sli)$ is the abelian category $\sli(0,1]$.

\begin{theorem}\cite[Thm.~1]{B1}
All stability conditions on a triangulated category $\D$
form a complex manifold, denoted by $\Stab(\D)$, with local coordinate given by the central charge.
\end{theorem}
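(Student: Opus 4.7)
The plan is to follow Bridgeland's original deformation strategy, which reduces the complex manifold structure to a local homeomorphism statement for the forgetful-to-central-charge map. More precisely, I would introduce the natural generalized metric on $\Stab(\D)$ measuring the supremum over nonzero objects of the discrepancy of upper/lower phases together with the norm difference of the central charges, verify that this is well-defined using the support property, and check that the induced topology makes the map $\mathcal{Z}\colon\Stab(\D)\to\Hom(\Grot(\D),\CC)$, $(Z,\sli)\mapsto Z$, continuous. So the target of analysis is local: around a fixed $\sigma=(Z,\sli)$, I want to show $\mathcal{Z}$ is a local homeomorphism onto an open neighbourhood of $Z$ in the finite-dimensional complex vector space $\Hom(\Grot(\D),\CC)$.

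Next I would carry out the uniqueness half: if two stability conditions $\sigma'=(Z',\sli')$ and $\sigma''=(Z',\sli'')$ close enough to $\sigma$ in the above metric share the same central charge, then their slicings coincide. The idea is that on each thin slice $\sli(\phi-\epsilon,\phi+\epsilon)$ the two induced hearts and phase functions are forced to agree because a phase discrepancy would violate $Z'$-semistability against the HN filtration of the other.

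The existence half is the deformation lemma, which I expect to be the main obstacle. Given $Z'$ in $\Hom(\Grot(\D),\CC)$ sufficiently close to $Z$ so that $Z'$ still satisfies the support property relative to $\sigma$, I need to build a new slicing $\sli'$ realising $Z'$. The construction proceeds slice by slice: for a small interval $I$ of length less than $1$, the subcategory $\sli(I)$ is a quasi-abelian category of finite length, its simple objects are $\sigma$-stable, and $Z'$ restricted to the finitely many classes of such simples (controlled by the support property) lets one refine $\sli(I)$ into strictly semistable subcategories $\sli'(\phi)$ indexed by the arguments of $Z'$ on those simples. One then checks the HN property for $\sli'$ by chaining these local refinements, and verifies that the resulting $\sigma'=(Z',\sli')$ is again close to $\sigma$ in the metric, with distance controlled linearly by $\|Z'-Z\|$.

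Finally, I would combine these two halves: the deformation lemma provides a continuous local inverse to $\mathcal{Z}$ on a small open ball around $Z$ in the subspace cut out by the support property, and uniqueness shows this inverse is well-defined. Consequently $\mathcal{Z}$ is a local homeomorphism to an open subset of the complex vector space $\Hom(\Grot(\D),\CC)$, pulling back the complex-analytic structure. Compatibility of the charts on overlaps is automatic because the transition is the identity on central charges. The single hard step is the deformation lemma; everything else is packaging.
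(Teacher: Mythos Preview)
The paper does not prove this theorem at all: it is stated with the citation \cite[Thm.~1]{B1} and no proof is given, as it is a foundational result quoted from Bridgeland's original paper. Your proposal is a reasonable sketch of Bridgeland's own argument (the generalized metric, local injectivity of the central-charge map, and the deformation lemma for existence), so there is nothing in the present paper to compare it against.
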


There are two natural actions on $\Stab(\D)$, the $\CC$-action given by
\[
    s \cdot (Z,\sli)=(Z \cdot e^{-\mathbf{i} \pi s},\sli_{\on{Re}(s)}),
\]
for $\sli_x(\phi)=\sli(\phi+x)$ and the $\Aut\D$-action given by
\[
    \Phi  (Z,\sli)=\big(Z \circ \Phi^{-1}, \Phi (\sli) \big).
\]

%=========================================================
\paragraph{\textbf{Generic-finite component}}\

For a finite heart $\h$,
the stability conditions supported in $\h$ (i.e. with $\h_\sigma=\h$) is parameterized by
the central charges $Z(S)\in\uhp$ for $S\in\Sim\h$.
Thus there is a cell/subspace $\cub(\h)\isom\uhp^{\Sim\h}$ in $\Stab(\D)$ consisting of
those stability conditions supported on $\h$.

For each tilting $\h\to\h'=\tilt{\h}{\sharp}{S}$,
there is co-dimensional one wall where $Z(S)\in \RR_{>0}$,
\[
 \partial^\sharp_S\cub(\h) = \partial^\flat_{S[1]}\cub(\h')
 = \overline{\cub(\h)}\cap\overline{\cub(\h')}.
\]

For a connected component $\EGp$ in $\EG(\D)$, define
\[
    B_0(\EGp)=\bigcup_{\h\in\EGp} \cubc(\h)
\]
and
\[
    B_2(\EGp)=B_0(\Gamma)\cup\bigcup_{
        \begin{smallmatrix}\h\in\EGp\\ S\in\Sim\h\end{smallmatrix}}
        \partial^\sharp_S\cub(\h).
\]

A connected component $\Stap$ in $\Stab(\D)$ is \emph{generic-finite}
if $\Stap=\CC\cdot B_2(\EGp)$ for some connected component $\EGp$ of $\EG(\D)$
consisting of finite hearts.

%=========================================================
%=========================================================
\section{Marked surfaces with mixed punctures \& vortices}\label{sec:MSx}
%=========================================================
%=========================================================
Recall that there is a partition $\sun=\yue\cup\kui$.
We add extra $\ZZ_2$-symmetry to punctures in $\kui$, call them \emph{vortices}
and use $\vortex$ to denote the set of vortices.
Let $\surfx$ be the marked surface with mixed punctures \& vortices (MSx).
More precisely, what this means is:
\begin{itemize}
  \item Categorically, we will add terms around each vortex in $\vortex$ for quivers with potential associated to triangulations and deform $\Dsop$ into $\Dsox$.
  \item Geometrically, we will allow collisions between simple zeros in $\Tri$ and double poles in $\vortex$,
  so that we partially compactify the moduli space $\FQuad(\surfp)$ into $\FQuads(\surfx)$ by adding strata with simple poles from collision (with orbifolding).
\end{itemize}
Then we show that the corresponding quotient space $\Stab\Dsox/\Autp$ is isomorphic to
the unframed version of $\FQuads(\surfx)$.
When $\Dsox$ is fully deformed, i.e. taking $\vortex=\sun$,
We will call the corresponding MSx by MSv (with notation $\surfv$).
In such a case, $\Quad(\surfv)$ becomes the moduli space in \cite{BS}.

\begin{remark}\label{rem:assumption}
We assume the following when considering MSx:
\begin{itemize}
  \item $\partial\surfx\neq\emptyset$.
  \item $n\ge2$, where $n$ is defined in \eqref{eq:n}.
\end{itemize}
The main results are covered in \cite{KQ2} when $\sun=\emptyset$
and hence we usually assume $\sun\neq\emptyset$ in the proofs.
\end{remark}
%=========================================================
\subsection{Cluster-ish combinatorics for MSx}\label{sec:cluster}
%=========================================================

%=========================================================
\paragraph{\textbf{Signed triangulations of MSx}}\

A \emph{signed triangulation} $\xT$ of a MSx $\surfx$ consists of
an (ideal) triangulation $\RT$ of MSp $\surfp$ with a \emph{sign}
\begin{equation}\label{eq:epsilon}
    \epsilon\colon\vortex\to\{\pm1\}\cong\ZZ_2.
\end{equation}
The set of signs can be identified with $\ZZv$.
Two signed triangulations $\xT_1=(\RT_1,\epsilon_1)$ and $\xT_2=(\RT_2,\epsilon_2)$ are equivalent if
\begin{itemize}
  \item $\RT_1=\RT_2$ and
  \item $\epsilon_1(V)=\epsilon_2(V)$, for any vortex $V$ that is not in a self-folded triangle.
\end{itemize}
Namely, we have the (local) equivalence shown in \Cref{fig:equiv}.
In other words, the sign at a vortex which is in a self-folded triangulation can be chosen arbitrarily
without changing the equivalent class of the signed triangulation.
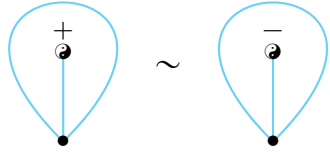
\begin{figure}[hb]
\begin{tikzpicture}[scale=1.2];
\clip(-.7,-1.1)rectangle(3,.7);
\begin{scope}[shift={(0,0)}]
\draw[\qlan,thick](0,0)to(0,-1)
    .. controls +(45:2.9) and +(135:2.9) .. (0,-1);
%\draw(0,.25)\ww(0,0.75)\ww;
\draw[thick](0,-1)\nn
    (0,0)node[white]{\tiny{$\blacksquare$}} node[]{\scriptsize{\vortex}}node[above]{$+$};
\end{scope}
\draw(2.3/2,0)node[font=\Large,below]{$\sim$};
\begin{scope}[shift={(2.3,0)}]
\draw[\qlan,thick](0,0)to(0,-1)
    .. controls +(45:2.9) and +(135:2.9) .. (0,-1);
%\draw(0,.25)\ww(0,0.75)\ww;
\draw[thick](0,-1)\nn
    (0,0)node[white]{\tiny{$\blacksquare$}} node[]{\scriptsize{\vortex}}node[above]{$-$};
\end{scope}
\end{tikzpicture}
\caption{Equivalent signed triangulations (local)}
\label{fig:equiv}
\end{figure}

There is a forward flip between signed triangulations $\mu^\sharp\colon \xT_1\to\xT_2$
if one can choose the (representatives of) signs $\epsilon_1=\epsilon_2$,
such that there is a usual forward flip between the corresponding triangulations $\mu^\sharp\colon \RT_1\to\RT_2$
or there is a forward L-flip between the corresponding triangulations, where the corresponding isolated puncture is not a vortex.
For more details, cf. \cite{FST}.
Then we have the corresponding notation of (oriented) cluster-ish exchange graph $\FG^\pm(\surfx)$ for MSx.

Alternatively/concretely, $\FG^\pm(\surfx)$ can be constructed as follows.
Note that when we write $\kui$ instead of $\vortex$,
we mean there is no $\ZZ_2$-symmetry (i.e. signs) attached.
\begin{itemize}
  \item Denote by $\FG(\surfk)$ the flip graph for $\surfk$, which is a subgraph of $\FG(\surfp)$,
      by deleting L-flips where the corresponding enclosed puncture is in fact a vortex.
  \item For each sign $\epsilon\in\ZZv$, take a copy $\FG^\epsilon(\surfk)=\FG(\surfk)\times\{\epsilon\}$.
  \item We take the union of $\FG^\epsilon(\surfk)$, for all $\epsilon\in\ZZv$,
  and identify vertices/edges by the equivalent relation $\sim$ above.
\end{itemize}
So we have
\begin{equation}\label{eq:EG x}
  \FG^\pm(\surfx)=\bigcup_{\epsilon\in\ZZv}\FG^{\epsilon}(\surfk)
    = \big( \FG(\surfk)\times\ZZv \big) \big/ \sim.
\end{equation}

There is also an unoriented version of $\FG^\pm(\surfx)$,
which can be obtained from it by deleting all loop edges and replacing each 2-cycle with an unoriented edge.
Equivalently, one has
\begin{equation}\label{eq:uEG x}
  \uEG^\pm(\surfx)=\bigcup_{\epsilon\in\ZZv}\uEG^{\epsilon}(\surfp)
    = \big( \uEG(\surfp)\times\ZZv \big) \big/ \sim.
\end{equation}

For instance, the left picture of \Cref{fig:op-tri} shows the building process of $\FG(\surfv)$
for a once-puncture triangle with $\vortex=\sun$.
When $\vortex=\sun$, we have $\FG^\pm(\surfv)=\CEG(\surfp)$ and $\uEG^\pm(\surfv)=\uCEG(\surfp)$ are the
oriented and unoriented cluster exchange graph associated to $\surfp$ respectively.

Building on \Cref{thm:H},
we have the following result as a variation of \Cref{thm:H}, using the same proof of \cite[Thm.10.2]{FST}.

\begin{proposition}\label{pp:FST}
$\pi_1\uEG^\pm(\surfx)$ is generated by squares and pentagons,
which locally look like the (unoriented) squares/pentagons in \Cref{fig:456},
or the one in \Cref{fig:new SQ}.
\end{proposition}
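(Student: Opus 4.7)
The plan is to adapt the strategy of \cite[Thm.~10.2]{FST}, using Theorem~\ref{thm:H} as the input for the single-sheet case. First, I would exploit the presentation \eqref{eq:uEG x} which exhibits $\uEG(\surfx)$ as the quotient $(\uEG(\surfp)\times\ZZv)/\sim$, where the equivalence $\sim$ identifies vertices of neighbouring sheets $\uEG^\epsilon(\surfp)$ and $\uEG^{\epsilon'}(\surfp)$ precisely at triangulations whose self-folded triangles enclose a vortex on which $\epsilon$ and $\epsilon'$ differ. Each sheet is a copy of $\uEG(\surfp)$, and the gluing loci are combinatorially explicit.

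Next, given a loop $\gamma$ in $\uEG(\surfx)$ based at $\xT_0=(\RT_0,\epsilon_0)$, I would decompose it into maximal subpaths lying in a single sheet, joined by sheet-transitions occurring at gluing vertices. Each in-sheet subloop is rewritten as a product of the classical unoriented squares and pentagons of \Cref{fig:456} by direct application of Theorem~\ref{thm:H}. The remaining content of $\gamma$ then consists of an alternating sequence of in-sheet flip-moves and sign-change transitions between sheets.

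The core of the argument is to show that this alternating part is trivialized by local relations. I would enumerate the elementary cases: (i) two sign-changes at independent vortices commute, producing a square; (ii) a sign-change at one vortex commutes with a distant ordinary flip, again a square; (iii) moving a vortex into or out of a self-folded triangle while the surrounding arcs undergo the corresponding flip produces the new square/pentagon of \Cref{fig:new SQ}. An induction on the number of sheet-transitions, pushing transitions past flips one at a time, then reduces an arbitrary loop to a product of squares and pentagons of the listed types.

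The main obstacle will be verifying completeness of the local enumeration: one must exclude the possibility that, when several vortices and self-folded triangles participate simultaneously, a genuinely new higher-arity relation is required. The way to handle this is to restrict to a small neighbourhood of each transition locus and check that distant sign changes and flips always commute, so that any global relation reduces to the local ones already listed. This is the direct analogue of the final step in the cluster-theoretic proof of \cite[Thm.~10.2]{FST}, and the only substantive addition in our setting is the presence of loop edges (forward L-flips at non-vortex punctures), which are killed in passing to the unoriented graph and therefore do not contribute new relations.
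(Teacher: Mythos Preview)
Your proposal is correct and takes essentially the same approach as the paper: the paper's entire proof is the one-line remark that this is a variation of Theorem~\ref{thm:H} obtained ``using the same proof of \cite[Thm.~10.2]{FST}'', and you have correctly identified and fleshed out that strategy via the sheet decomposition \eqref{eq:uEG x}. Your elaboration of the sheet-transition analysis is more detailed than anything the paper provides.
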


\begin{figure}[bth]\centering
  \includegraphics[width=5cm]{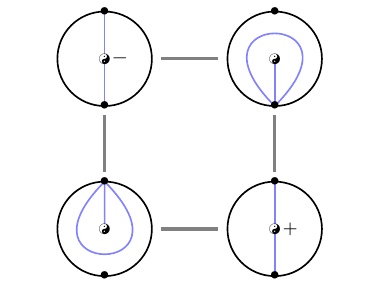}
\caption{The square involving vortex}\label{fig:new SQ}
\end{figure}

%=========================================================
\paragraph{\textbf{Mutation, reduction and right-equivalence}}\

There are the notions of \emph{right-equivalence}, \emph{reduction} and \emph{mutation} introduced by \cite{DWZ}, cf. \cite[\S~2.1]{KY} for more details.
\begin{itemize}
  \item Two quivers with potential are right-equivalent
    if there is an algebra isomorphism between the corresponding completed path algebras
    such that it is the identity when restricted to the vertices
    and induces cyclically equivalence between the potentials.
  \item The reduction is an operation on right-equivalent classes of quivers with potential.
    It reduces a quiver with potential by killing some 2-cycles (and change the potential according).
    A quiver with potential $(Q,W)$ is called \emph{reduced} if $\partial_a W$ is contained in $\mathfrak{m}^2$ for any $a\in Q_1$.
    By \cite[Thm.~4.6]{DWZ}, the reduction is well-defined on the right-equivalence classes of quivers with potential.
  \item The mutation $\mu_i$ is an operation on a quiver with potential $(Q,W)$ at a vertex $i$ of $Q$;
    it produces another quiver with potential.
     By \cite[Thm.~5.2 and Thm.~5.7]{DWZ}, the mutation is also well-defined up to right-equivalence,
     which is an involution.
\end{itemize}

%=========================================================
\paragraph{\textbf{Quivers with potential associated to triangulated MSx}}\

Let $\xT=(\RT,\epsilon)$ be a signed triangulation.
Define a quiver with potential $(\widehat{\Qx},\widehat{\Wx})$ as follows:
\begin{itemize}
  \item The vertices are open arcs in $\RT$.
  \item For each triangle $T$ of $\RT$, which is not a self-folded triangle enclosing a vortex,
  there is a clockwise 3-cycle between the edges of $T$ (so that the arrow corresponds to anticlockwise angles in $T$).
  \item For each vortex $V$ that is enclosed in a self-folded triangle $T$
  with self-folded edge $i$ and loop edge $l$, we call these two edges twins.
  Add an arrow between $i$ and (from/to) $j$ whenever there is an arrow between $l$ and (from/to) $j$, for $j\ne l$ in $\RT$.
  So besides the usual (type P-1) puzzle piece (i.e. a triangle without involving a vortex in a self-folded triangle--see any triangle in \Cref{fig:QP}),
  there are three types of other puzzle pieces in a triangulation $\RT$ shown in \Cref{fig:type IV}.
    \begin{figure}[ht]\centering
        \includegraphics[width=\textwidth]{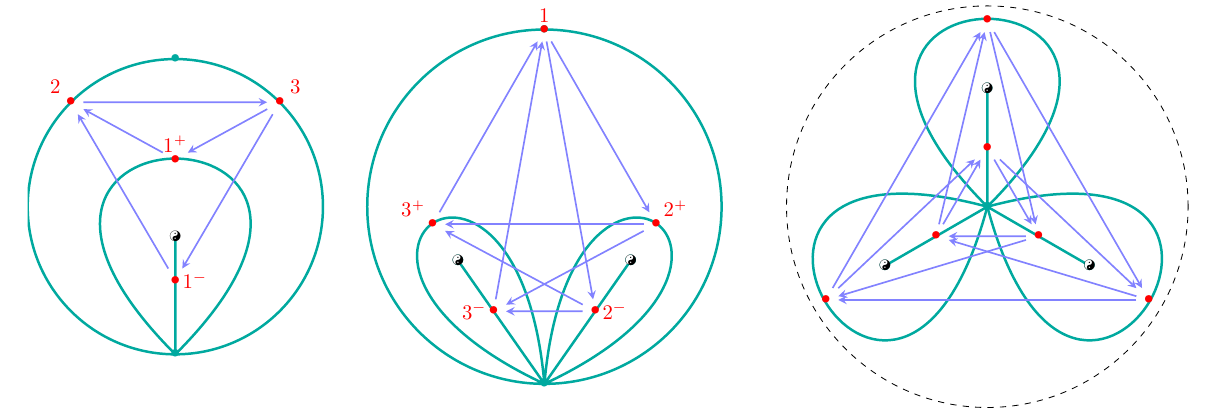}
    \caption{The quivers with potential associated to types P-2/P-3/P-4 puzzle pieces with vortices}
    \label{fig:type IV}
    \end{figure}
  \item The unreduced potential is
  \[
    \widehat{\Wx}=\sum_T W_T^\vortex- \sum_V W_V^\vortex,
  \]
  where
    \begin{itemize}
      \item the first sum is over triangles which is not a self-folded triangle enclosing a vortex,
        each team $W_T^\vortex$ is the sum of all corresponding 3-cycles (one can replacing an edge of $\RT$ with its twin).
        This means in type P-$k$, there are $2^{k-1}$ 3-cycle terms in $W_T^\vortex$ for the non self-folded triangle $T$.
      \item the second sum is over all vortices which are not enclosed in a self-folded triangle $T$,
        each term $W_V^\vortex$ is an anticlockwise cycle around $V$, which needs to skip some of the arrows.
        More precisely, if there are twins incident at $V$, which corresponding to another vortex $V_0$,
        then
        \begin{itemize}
          \item $W_V^\vortex$ skip the self-folded edge at $V_0$ and only count loop-edge once if $\epsilon(V_0)=1$;
          \item $W_V^\vortex$ skip the loop-edge and only count self-folded edge if $\epsilon(V_0)=-1$.
        \end{itemize}

    \end{itemize}
\end{itemize}

\begin{definition}
Let $(\Qx,\Wx)$ be the quiver with potential associated to a signed triangulation $\xT=(\RT,\epsilon)$ of $\surfx$, which is the reduced part of $(\widehat{\Qx},\widehat{\Wx})$.
\end{definition}

\begin{example}
For instance, the term $W_V^\vortex$ in the potential is
\begin{itemize}
  \item a 5-cycle for the left picture of \Cref{fig:W_V};
  \item a 10-cycle $7\to 8\to4\to1\to2\to2\to1\to3\to5\to6\to7$ for the middle picture of \Cref{fig:W_V};
  \item a 7-cycle $7\to 8\to3\to1^{\epsilon(V)}\to2\to5\to6\to7$ for the right picture of \Cref{fig:W_V},
  where $\epsilon(V)$ is the sign of the vortex $V_0$ (in a self-folded triangle) adjacent to $V$.
\end{itemize}
\begin{figure}[ht]\centering
    \includegraphics[width=\textwidth]{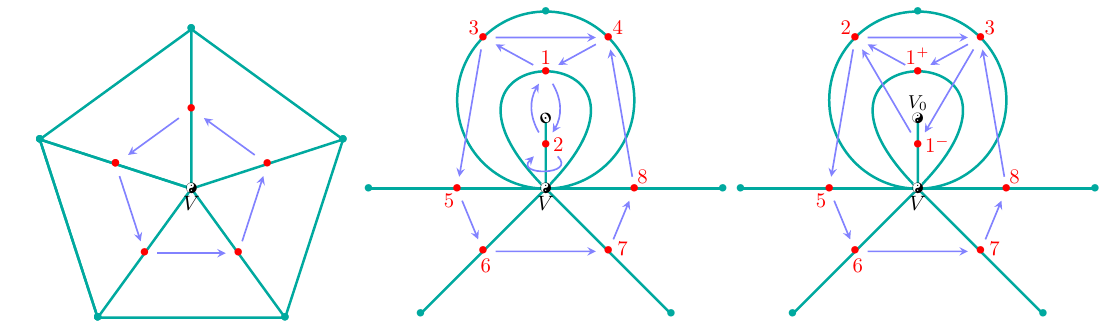}
\caption{The extra term in the potential for each vortex (not in a self-folded triangle)}
\label{fig:W_V}
\end{figure}

\Cref{fig:reduction} shows a reduction of quiver with potential,
where one needs to kill a 2-cycle around a vortex (and change the potential accordingly).
For more examples, see \cite{LF}.
\begin{figure}[ht]\centering
    \includegraphics[width=10cm]{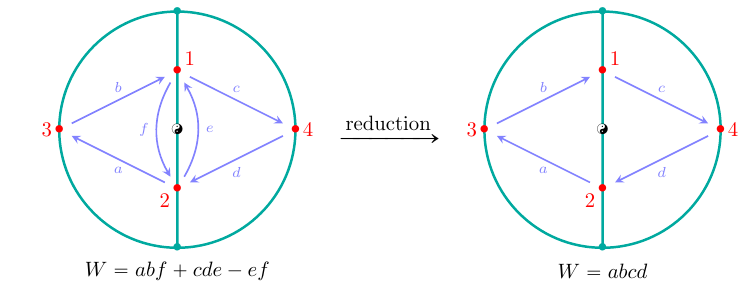}
\caption{Reduction of a quiver with potential}
\label{fig:reduction}
\end{figure}
\end{example}

Note that:
\begin{itemize}
  \item the quiver with potential is the one we considered in \Cref{def:QP} if $\vortex=\emptyset$;
  \item the quiver with potential is non-degenerate if and only if $\vortex=\sun$; in which case,
  it is the one constructed in \cite{LF} for classical cluster theory.
\end{itemize}

\begin{lemma}\label{lem:right-eq}
Any usual flip of signed triangulations induces mutations
between the associated quivers with potential (up to right-equivalence).
\end{lemma}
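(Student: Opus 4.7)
The plan is a local, case-by-case verification based on the puzzle-piece decomposition around the flipped arc. Because both the associated quiver with potential and the DWZ mutation are local operations --- the former reads arrows off the triangles incident to each arc and cycle terms $W_V^\vortex$ off the non-enclosed vortices, while the latter modifies only the arrows passing through the flipped vertex $\gamma$ --- it suffices to analyze the configuration in a neighbourhood of $\gamma$. The two $\RT$-triangles $T_1, T_2$ sharing $\gamma$ each belong to one of the types P-1 through P-4 depicted in \Cref{fig:type IV}, and since $\gamma$ is usual-flippable, $\gamma$ is not the self-folded edge of a self-folded triangle enclosing a vortex. This leaves only finitely many combinatorial configurations to inspect.

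First I would dispose of the classical configurations, in which no twin pair is incident to $\gamma$ and both $T_1, T_2$ are of type P-1. When $\vortex=\emptyset$ this is carried out in \cite{CHQ}, and when $\vortex=\sun$ it is Labardini-Fragoso's theorem in \cite{LF}; for the intermediate partitions $\sun=\yue\cup\kui$ the argument is identical, since the cycle terms $W_V^\vortex$ at vortices disjoint from the two flipped triangles are untouched by both the flip and the mutation. Next I would treat the configurations in which $\gamma$ is adjacent to at least one twin pair. For each such local picture, I would write down the unreduced quivers with potential $(\widehat{\Qx},\widehat{\Wx})$ before and after the flip, perform the DWZ mutation $\mu_\gamma$ on the first one explicitly (reversing arrows at $\gamma$ and inserting the composition arrows and the standard compensating cubic terms in the potential), and then exhibit an explicit right-equivalence, that is, an automorphism of the completed path algebra which is the identity on vertices, identifying $\mu_\gamma(\widehat{\Qx_1},\widehat{\Wx_1})$ with $(\widehat{\Qx_2},\widehat{\Wx_2})$. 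Passing to reductions then identifies $\mu_\gamma(\Qx_1,\Wx_1)$ with $(\Qx_2,\Wx_2)$ as required.

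The delicate bookkeeping concerns the sign function $\epsilon$. The equivalence in \Cref{fig:equiv} means that the value of $\epsilon$ at a vortex enclosed in a self-folded triangle may be chosen freely, and we align representatives so that the two signs agree on all vortices outside the flipping region. At a vortex $V$ that is newly enclosed or newly exposed by the flip, the anticlockwise cycle term $W_V^\vortex$ switches which of the two twin edges it uses; this is exactly the transformation that DWZ mutation performs on the corresponding 2-cycle, and it is here that one reads off the sign change encoded by $\epsilon$. The twin rule also ensures that arrows come in parallel pairs near any twin edge, and the right-equivalence built above acts diagonally on each such pair.

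The main obstacle I anticipate is the combinatorial growth of cases when several twin pairs are adjacent to $\gamma$ simultaneously, for instance when both $T_1$ and $T_2$ are of type P-3 or P-4, or when a single vortex $V$ is met by both triangles: the twin rule then produces many parallel arrows, the potential $W_T^\vortex$ contains several parallel 3-cycles, and the terms $W_V^\vortex$ must be tracked simultaneously with the 3-cycle terms. This is the same combinatorial core as the corresponding step of \cite{LF} in the non-degenerate case, and the construction of $(\widehat{\Qx},\widehat{\Wx})$ via the twin rule is designed so that Labardini-Fragoso's calculation carries over with only notational changes, interpolating uniformly between the degenerate endpoint $\vortex=\emptyset$ treated in \cite{CHQ} and the non-degenerate endpoint $\vortex=\sun$ of \cite{LF}.
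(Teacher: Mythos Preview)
Your proposal is correct in spirit but takes a more laborious route than the paper. You propose to handle every mixed configuration of puzzle pieces P-1 through P-4 around the flipped arc by writing down the unreduced quivers with potential explicitly, performing DWZ mutation by hand, and exhibiting explicit right-equivalences case by case. The paper instead observes a reduction that eliminates most of this work: whenever a self-folded triangle adjacent to the flip encloses a puncture in $\yue$ (not a vortex), one can \emph{cut} along that self-folded edge, which on the quiver side corresponds to deleting the loop-carrying vertex. After cutting all such edges, the remaining configuration is exactly one in which every nearby self-folded triangle encloses a vortex, i.e.\ the fully non-degenerate setting, and the lemma is then literally Labardini-Fragoso's theorem \cite{LF}. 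The basic quadrilateral case and the single cut step are each checked once, and everything else is deferred to \cite{LF}.

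What your approach buys is self-containment: you would not need to invoke \cite{LF} as a black box for the twin configurations, and the sign bookkeeping you describe makes the role of $\epsilon$ more transparent. What the paper's approach buys is economy: rather than redoing the combinatorial core of \cite{LF} (which, as you correctly anticipate, is where the case explosion lives), it isolates the new ingredient---the $\yue$-punctures with their loops---and shows by a single cutting argument that they do not interact with the mutation in any essential way.
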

\begin{proof}
This is a local issue. We only need to check the following.
\begin{itemize}
  \item For the usual mutation corresponding to flipping diagonals of a quadrilateral,
  we know that the corresponding quivers with 3-cycles potential are related by mutation
  up to right equivalent as below.
  \begin{equation}\label{eq:QP mut}
    \includegraphics[width=7.5cm]{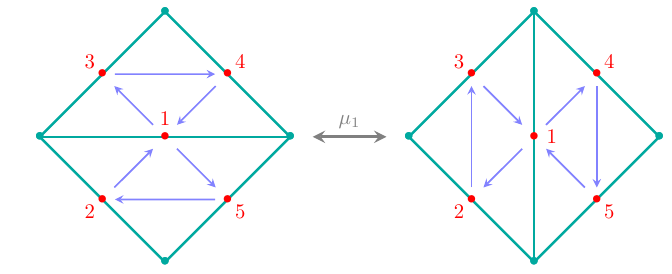}
  \end{equation}
  \item For the flip $\mu_1$ in \Cref{fig:QP}, we cut the self-folded edge
  (corresponding to vertex 2 in the two pictures in the middle)
  and the mutation becomes the case above.
  Conversely, by identifying vertex $2$ and $5$ in \eqref{eq:QP mut},
  we obtain the mutation/flip $\mu_1$ in \Cref{fig:QP} .
  \item For other cases, we can always cut all the self-folded edge (provided the corresponding enclosed puncture is a not a vortex) as above, and reduce to the classical cluster case.
      Then the lemma follows from \cite{LF}.
\qedhere
\end{itemize}
\end{proof}

%=========================================================
\paragraph{\textbf{The symmetry group}}\

Denote by
\begin{equation}\label{eq:MCGx}
    \MCG(\surfx)=\MCG(\surf^{\yue,\kui})\ltimes\ZZv
\end{equation}
the \emph{signed mapping class group} of $\surfx$,
where $\MCG(\surf^{\yue,\kui})$ is the mapping class group of $\surfp$
(that preserves $\M,\yue$ and $\kui$ setwise, respectively)
and $\ZZv$ acts on the signs of vortices.

Note that we have rule out all the non-amenable cases (in the sense of \cite[Def.~9.3]{BS})
by the assumptions in \Cref{rem:assumption}.
Then we have the analogue of \cite[Prop.~8.5 and Prop.~8.6]{BS}.

\begin{lemma}\label{lem:isoT}
Two signed triangulations $\xT_i$ are in the same $\MCG(\surfx)$-orbit if and only if
  the associated quivers $Q_{\xT_i}^{\mix}$ are isomorphic.
In particular, $\MCG(\surfx)$ acts freely on $\FG^\pm(\surfx)$.
\end{lemma}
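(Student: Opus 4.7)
The plan is to deduce both assertions from the analogous unsigned results of \cite{FST} and \cite[Prop.~8.5--8.6]{BS} by a careful analysis of how the sign data $\epsilon$ is encoded in the potential $\Wx$. Throughout I would work under the standing assumptions of \Cref{rem:assumption}, which rule out the degenerate cases of the classical FST/BS theory where the quiver alone fails to determine the triangulation.

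For the ``only if'' direction, given $\xT_2 = \psi \cdot \xT_1$ with $\psi = (\phi,\epsilon') \in \MCG(\surf^{\yue,\kui}) \ltimes \ZZv$, the homeomorphism $\phi$ induces a bijection between the arc sets of $\RT_1$ and $\RT_2$ which, because $\phi$ preserves $\M$, $\yue$ and $\kui$ setwise, sends each local puzzle piece of type P-1 through P-4 to the same type. This bijection intertwines the arrows (built from clockwise angles at triangles and from twinning across self-folded triangles enclosing a vortex) and the cycles $W_T^\vortex$ and $W_V^\vortex$ of $\widehat{\Wx}$, after possibly shifting the representative of $\epsilon$ at vortices enclosed in self-folded triangles via the equivalence in \Cref{fig:equiv} to absorb $\epsilon'$; reducing then yields the required isomorphism of quivers with potential.

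For the converse, I would reconstruct $\xT$ from $(\Qx,\Wx)$ by a case-by-case reading of \Cref{fig:type IV}: pairs of vertices with identical incidence with the rest of the quiver (via twin arrows) are detected as a self-folded edge and its enclosing arc around a vortex; the remaining vortices in $\kui$ are the centres of the anticlockwise cycles $W_V^\vortex$ appearing in $\Wx$ (cf.\ \Cref{fig:W_V}); and for each such non-enclosed vortex $V$ adjacent to a twin pair, the sign $\epsilon(V)$ is determined by whether $W_V^\vortex$ traverses the self-folded edge or the loop edge of that pair. Applying the unsigned reconstruction \cite[Prop.~8.5]{BS} to the underlying quivers of $\surfp$ yields $\phi \in \MCG(\surf^{\yue,\kui})$ realizing the induced bijection of underlying triangulations, and then choosing $\epsilon' \in \ZZv$ to match the reconstructed signs at non-enclosed vortices (the signs at enclosed vortices being absorbed by $\sim$) produces $\psi = (\phi,\epsilon')$ with $\psi \cdot \xT_1 = \xT_2$.

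Freeness then follows: if $\psi \cdot \xT = \xT$ in $\EG(\surfx)$, the underlying $\phi$ fixes $\RT$ up to isotopy, hence $\phi = \id$ by the classical unsigned freeness \cite[Prop.~8.6]{BS}, and consequently $\epsilon'$ is trivial at every vortex outside a self-folded triangle; the remaining components of $\epsilon'$ are killed by $\sim$, so $\psi$ acts trivially on the signed triangulation. The main obstacle will be the combinatorial bookkeeping in the puzzle-piece analysis when several vortices are mutually adjacent, where one must disentangle twin pairs from genuine double arrows arising in $\Qx$ and correctly match each extra term in $\Wx$ to its vortex; once this is handled the remainder reduces mechanically to the unsigned case.
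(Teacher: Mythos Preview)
Your approach is workable but differs from the paper's in two notable ways.

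First, you invest effort in reconstructing the sign $\epsilon$ from the potential $\Wx$, but this is unnecessary: since $\ZZv$ is a factor of $\MCG(\surfx)$, any two sign functions on the same underlying triangulation $\RT$ lie in the same $\MCG(\surfx)$-orbit. So once you have $\phi\in\MCG(\surf^{\yue,\kui})$ matching the underlying triangulations, the $\ZZv$ factor handles the signs for free without consulting $\Wx$ at all. Relatedly, the statement concerns isomorphism of the \emph{quivers} $\Qx$ only, so you should avoid using the potential in the reconstruction; your appeal to the cycles $W_V^\vortex$ goes beyond the hypothesis (this is the content of \Cref{rem:iff}, not of the lemma).

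Second, and more substantively, the paper's reduction to \cite[Prop.~8.5]{BS} goes in the opposite direction from yours. Rather than detecting \emph{vortex} data (twin pairs, $W_V^\vortex$-cycles), the paper observes that a vertex of $\Qx$ carrying a \emph{loop} is precisely a self-folded edge whose enclosed puncture lies in $\yue$. Deleting all such looped vertices from the quiver corresponds to cutting the surface along those self-folded edges; the remaining quiver is then of the type to which \cite[Prop.~8.5]{BS} applies directly, yielding the reconstruction of the cut triangulation. One then glues back the excised self-folded triangles. This is shorter because loops are read off immediately from the quiver without any analysis of arrow patterns or potential terms, and it sidesteps the ``disentangling twin pairs from genuine double arrows'' bookkeeping you flag as an obstacle.

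Your freeness argument is essentially the same as the paper's.
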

\begin{proof}
For the first claim, we only need prove the \emph{if} part.
In other words, we can reconstruct the triangulation from the associated quiver.
In our case, a vertex $i$ of $Q$ with loops precisely corresponds to
a self-folded edge of the triangulation, whose corresponding isolated puncture is in $\yue$.
Moreover, there is exactly one vertex $j\ne i$ in $Q$ that is incident with $i$
(more precisely there is a 2-cycle between them).
Deleting all such vertices (with loops) from the quiver corresponds to cut the surface along all those self-folded edges.
Then the reconstruction of the triangulation of cut surface can be done by \cite[Prop.~8.5]{BS}.
More, one can add back those vertices and gluing back the original surface accordingly.
Thus, the first claim follows.

The second follows as \cite[Prop.~8.6]{BS} similarly.
\end{proof}

\begin{remark}\label{rem:iff}
We also have the following:
two signed triangulations are in the same $\MCG(\surfx)$-orbit if and only if
  the associated quivers with potential are right-equivalent.
This is because the isomorphic class of quivers (associated to a fixed MSx)
and the right-equivalent class of quivers with potential (associated to the same fixed MSx)
determines each other.
\end{remark}

%=========================================================
\subsection{Deformed Calabi-Yau categories}\
%=========================================================

As in \Cref{sec:D3},
let $\Gx$ be the Ginzberg dg algebra of $(\Qx,\Wx)$ and
$\on{pvd}(\Gx)$ its the (Calabi-Yau-3) perfectly valued derived category with canonical heart $\hx$.

Let $\h$ be a finite heart.
Recall that the \emph{Ext quiver} $\hua{Q}(\h)$ is the (positively) graded quiver
whose vertices are the simples of $\h$ and
whose degree $k$ arrows $S_i \to S_j$ correspond to a basis of
$\Hom_{\D}(S_i, S_j[k])$.
Equivalently,
denote by $\hua{E}(\h)$ the dg-endomorphism algebra
$\on{\hua{E}nd}\left(\bigoplus_{S\in\Sim\h}S\right)$ for $\h$.
Then $\hua{Q}(\h)$ is the \emph{Gabriel quiver }of $\hua{E}(\h)$.
Further, for a graded quiver $Q$, define
\begin{itemize}
  \item the \emph{Calabi-Yau-$N$ double},
  denoted by $\double{Q}{N}$, to be the quiver obtained from $Q$
  by adding an arrow $j\to i$ of degree $N-k$ for each arrow $i\to j$ of degree $k$
  and adding a loop of degree $N$ at each vertex.
  \item the grading shift dual, denote by $Q^\vee$, to be the same quiver with
  grading changes $d\mapsto1-d$.
\end{itemize}

The lemma below follows from the standard simple-projective duality.

\begin{lemma}\label{lem:Ext-quiver}
The Ext-quiver of $\hx$ is the Calabi-Yau-3 double of $(\Qx)^\vee$,
i.e. $$\hua{Q}(\hx)=\double{(\Qx)^\vee}{3}.$$
\end{lemma}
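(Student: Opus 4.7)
The plan is to compute the graded $\Hom$-spaces between the simples of $\hx$ in $\on{pvd}(\Gx)$ and then read off that the resulting graded quiver is precisely $\double{(\Qx)^\vee}{3}$. By construction of the Ginzburg dg algebra, the simples of $\hx$ are $\Sim\hx = \{S_i\}$ indexed by the vertices of $\Qx$, so the vertex sets of $\hua{Q}(\hx)$ and $\double{(\Qx)^\vee}{3}$ already coincide; the remaining task is to compute $\Ext^k(S_i,S_j)$ for all $i,j,k$.

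Next I would invoke the standard minimal cofibrant resolution of each $S_i$ coming from the Ginzburg construction: since $\Gx$ is built from the dg-doubled quiver of $\Qx$ (original arrows in degree $0$, reversed arrows in degree $-1$, loops in degree $-2$, with differential determined by $\Wx$), applying $\Hom(-,S_j)$ and taking cohomology yields $\Hom(S_i,S_j)=\delta_{ij}\k$, $\Ext^1(S_i,S_j)$ with one basis vector per arrow $i\to j$ in $\Qx$, $\Ext^2(S_i,S_j)$ with one basis vector per arrow $j\to i$ in $\Qx$, and $\Ext^3(S_i,S_j)=\delta_{ij}\k$, with all other $\Ext$-groups vanishing. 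The $\Ext^2$- and $\Ext^3$-clauses may alternatively be obtained for free from the built-in Calabi-Yau-$3$ pairing $\Ext^k(S_i,S_j)\cong \Ext^{3-k}(S_j,S_i)^*$, so only the $\Ext^0$ and $\Ext^1$ calculations are genuinely substantive.

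Finally I would compare this with the recipe for $\double{(\Qx)^\vee}{3}$: the grading-shift dual $(\Qx)^\vee$ places every arrow of $\Qx$ in degree $1$, and the Calabi-Yau-$3$ double then adds, for each such degree-$1$ arrow $i\to j$, a reversed arrow $j\to i$ of degree $3-1=2$ together with a loop of degree $3$ at every vertex. This reproduces exactly the graded quiver just extracted, finishing the identification. The only non-routine input is the explicit minimal resolution from the Ginzburg construction, which is classical (see e.g.\ Keller's work on deformed Calabi-Yau completions); a minor sanity check, which I would flag but not belabour, is that passing from the unreduced $(\widehat{\Qx},\widehat{\Wx})$ to the reduced $(\Qx,\Wx)$ leaves $\on{pvd}$ unchanged, since right-equivalent quivers with potential produce quasi-isomorphic Ginzburg dg algebras and hence identical Ext-quivers.
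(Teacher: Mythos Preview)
Your proposal is correct and is essentially an unpacking of the paper's one-line proof (``follows from the standard simple-projective duality''): the minimal resolution of the simples over the Ginzburg dg algebra is exactly the mechanism behind that duality, and your $\Ext$-computation is its direct output. The paper gives no further detail, so your expansion (including the CY-3 pairing shortcut and the remark on reduction) is entirely in line with, and more explicit than, what the paper records.
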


\begin{remark}\label{rem:KVB}
In fact, we can also read of the information of the associated potential $\Wx$ from $\hua{E}(\h)$,
cf. \cite[App.A]{KVB}.
In other word, the right equivalent classes of QP determines and is determined
by the corresponding 3-Calabi-Yau category or its canonical heart, cf. proof of \cite[Thm.~9.9]{BS}.
\end{remark}

%=========================================================
\subsection{Triangle equivalences}\
%=========================================================

We proceed to show that flips of signed triangulations induce triangle equivalences,
which is a slightly upgraded of \cite{KY}.
Recall that an object $S$ is \emph{spherical} if
$\Hom^{\bullet}(S, S)=\k \oplus \k[-3]$
and that it then induces a \emph{spherical twist functor} $\twi_S$, defined by
\begin{equation*}%\label{eq:phi}
    \twi_S(X)=\Cone\left(S\otimes\Hom^\bullet(S,X)\to X\right).
\end{equation*}
%with inverse
%\[\twi_S^{-1}(X)=\Cone\left(X\to S\otimes\Hom^\bullet(X,S)^\vee \right)[-1].\]

\begin{proposition}\label{pp:KY}
For any flip of signed triangulation $\RT^\times_1\xrightarrow{\gamma_1}\RT^\times_2$,
there is a triangle equivalence
\begin{gather}\label{eq:KY}
    \Upsilon_{\gamma_1}\colon\on{pvd}(\Gamma_{\RT^\times_1}^{\mix})\cong\on{pvd}(\Gamma_{\RT^\times_2}^{\mix})
\end{gather}
satisfying
\begin{equation}\label{eq:tiltU}
    \Upsilon_{\gamma_1}^{-1}\left( \h_{\RT^\times_2}^\mix \right)
    =\left( \h_{\RT^\times_1}^\mix \right) ^{\sharp}_{S_{\gamma_1}}.
\end{equation}
Moreover, for a 2-cycle
$\begin{tikzcd}
    \RT^\times_1 \ar[r,"\gamma_1",shift left=1] & \RT^\times_2 %(\ne \RT^\times_1)
        \ar[l,"\gamma_2",shift left=1]
\end{tikzcd}$,
we have
$\Upsilon_{\gamma_2}\circ\Upsilon_{\gamma_1}=\twi^{-1}_{S_{\gamma_1}}$ and % \quad\text{and}\quad
$\Upsilon_{\gamma_1}\circ\Upsilon_{\gamma_2}=\twi^{-1}_{S_{\gamma_2}}.$
\end{proposition}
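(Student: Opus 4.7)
The plan is to reduce the statement to the Keller--Yang theory of derived equivalences from mutations of quivers with potential \cite{KY}, applied fibrewise to each flip, and then to extract the spherical twist identity from the standard comparison between the ``positive'' and ``negative'' KY equivalences.

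First, for a flip $\xT_1\xrightarrow{\gamma_1}\xT_2$ that is a usual flip (non-self-folded), Lemma~\ref{lem:right-eq} shows that $(\Qx_{\xT_2},\Wx_{\xT_2})$ is obtained from $(\Qx_{\xT_1},\Wx_{\xT_1})$ by the Derksen--Weyman--Zelevinsky mutation $\mu_{\gamma_1}$, up to right-equivalence. Invoking \cite[Thm.~3.2]{KY}, such a mutation produces a pair of triangle equivalences
\[
    \Phi^{\pm}_{\gamma_1}\colon \on{pvd}(\Gamma_{\xT_1}^{\mix})\xrightarrow{\cong} \on{pvd}(\Gamma_{\xT_2}^{\mix}),
\]
each intertwining the two canonical hearts by a simple tilt at the simple indexed by $\gamma_1$. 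Define $\Upsilon_{\gamma_1}\colon=(\Phi^{+}_{\gamma_1})^{-1}$, so that the tilting formula \eqref{eq:tiltU} follows directly from the KY description of $\Phi^{+}_{\gamma_1}(\h^{\mix}_{\xT_2})$ as the simple forward tilt of $\h^{\mix}_{\xT_1}$ at $S_{\gamma_1}$; the fact that $\Upsilon_{\gamma_1}$ only depends on the right-equivalence class (hence on the signed triangulation rather than on the representative QP, cf. Remark~\ref{rem:KVB}) ensures the construction is well-defined.

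Second, for the L-flip case (which only arises when the isolated puncture is in $\yue$, by definition of $\FG(\surfx)$), the flip $\gamma_1$ is a loop at $\xT_1=\xT_2$ in $\FG(\surfx)$, corresponding geometrically to pushing the decoration around the puncture. Here $(\Qx,\Wx)$ is unchanged, and one takes $\Upsilon_{\gamma_1}$ to be the (inverse) spherical twist along the L-simple $S_{\gamma_1}$, which is known (e.g.\ from the case $\vortex=\emptyset$ treated in \cite{CHQ}, together with the reduction argument cutting along self-folded edges inside $\yue$-triangles as in the proof of Lemma~\ref{lem:right-eq}) to induce the corresponding L-tilt of the canonical heart; this gives \eqref{eq:tiltU} in this case too.

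Third, for a 2-cycle $\xT_1\rightleftarrows\xT_2$ with $\xT_1\neq\xT_2$, the pair of equivalences $\Upsilon_{\gamma_1},\Upsilon_{\gamma_2}$ fits the comparison theorem between $\Phi^{+}$ and $\Phi^{-}$ in \cite[\S~3]{KY}: the two KY functors at the same mutation differ precisely by the spherical twist at the simple indexed by the mutated vertex. Translating this into our conventions (with our $\Upsilon=(\Phi^{+})^{-1}$ and the orientation of our 2-cycle), one gets $\Upsilon_{\gamma_2}\circ\Upsilon_{\gamma_1}=\twi^{-1}_{S_{\gamma_1}}$ and symmetrically $\Upsilon_{\gamma_1}\circ\Upsilon_{\gamma_2}=\twi^{-1}_{S_{\gamma_2}}$, because $S_{\gamma_1}$ and $S_{\gamma_2}$ are spherical in $\on{pvd}(\Gamma_{\xT_1}^{\mix})$ and $\on{pvd}(\Gamma_{\xT_2}^{\mix})$ respectively by Lemma~\ref{lem:Ext-quiver} together with \eqref{eq:Int} for simple closed arcs.

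The main obstacle I expect is the bookkeeping around vertices of $\Qx$ carrying loops (i.e.\ the $\yue$-self-folded vertices) and around type P-$2,3,4$ puzzle pieces: here the associated potential $\Wx$ contains the higher cycles from \Cref{fig:W_V}, so the reducedness hypothesis needed to apply KY verbatim must be checked after mutation. This is handled by passing through the unreduced QP $(\widehat{\Qx},\widehat{\Wx})$, mutating there, and then reducing --- using Remark~\ref{rem:KVB} to see that the right-equivalence class at each stage is determined by the associated 3-CY heart, so that the equivalences $\Upsilon$ are insensitive to the choice of reduction representative.
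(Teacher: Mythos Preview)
Your treatment of the usual (non self-folded) flip and of the 2-cycle identity is essentially the same as the paper's: both appeal to \cite[Thm.~3.2]{KY} via Lemma~\ref{lem:right-eq}, and the spherical twist formula for the composition is the standard KY comparison.

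The genuine gap is in the L-flip case. There you propose to take $\Upsilon_{\gamma_1}$ to be ``the (inverse) spherical twist along the L-simple $S_{\gamma_1}$''. But this simple is \emph{not} spherical: by Lemma~\ref{lem:Ext-quiver} the vertex carrying the loop (the self-folded edge at a puncture in $\yue$) contributes a degree~$1$ self-arrow in $\hua{Q}(\hx)$, i.e.\ $\Ext^1(S_{\gamma_1},S_{\gamma_1})\cong\k$ (cf.\ the remark after Theorem~\ref{thm:CHQ1}). So no spherical twist functor is available, and the CHQ reference you cite does not furnish one. The paper handles this case by a different mechanism: it passes to the Koszul dual realisation $\on{pvd}(\Gx)\simeq\per\hua{E}(\h)$ and shows directly that the dg endomorphism algebras $\hua{E}(\h)$ and $\hua{E}(\h')$ of the simples before and after the L-tilt are isomorphic, using \cite[Cor.~4.21]{CHQ} for the local two-simple picture and the observation that $\Hom^\bullet(S_1,S)=0$ for all other simples $S$ to patch. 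This is the missing idea; your ``reduction by cutting self-folded edges'' does not produce an autoequivalence of $\on{pvd}(\Gx)$, only an identification of sub-QPs, and the obstacle you flag about reducedness after mutation is not the relevant one here since in the L-flip case there is no QP mutation at all.
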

\begin{proof}
If the flip is non self-folded, then the proposition follows from \cite[Thm.~3.2]{KY}
(that mutation implies derived equivalence).
Thus, we only need to consider the L-flip case, where $\RT_1=\RT=\RT_2$.

By Kozsul duality (e.g. \cite[\S~5.1]{QQ}), $\on{pvd}(\Gx)$ can be realized as
the perfect derived category of the dg algebra $\per\hua{E}(\h)$.
Let $\h'$ the tilted heart in the right hand side of \eqref{eq:tiltU} and
we only need to show that $\hua{E}(\h)\cong\hua{E}(\h')$.
For the local case, i.e. $\mu_1$ for the QP in the rightmost picture of \Cref{fig:QP},
we have an isomorphism
\[  \hua{E}(\h)=\on{\hua{E}nd}(S_1\oplus S_2)\cong\on{\hua{E}nd}(S_1[1]\oplus S_2')=\hua{E}(\h')  \]
by \cite[Cor.~4.21]{CHQ}.
Moreover, $S_2'$ admits a filtration with three factors $S_1,S_1$ and $S_2$ (cf. \cite[Ex.~A.3]{CHQ}).
In general, $S_2\in\Sim\h$ is still the only simple that will change during
the forward tilting $\h\xrightarrow{S_1}\h'$ with the same formula (cf. \cite[App.~A.2]{CHQ})
and $S_2'$ is built by the same filtration as above.
As $\Hom(S_1,S)=0$ for any simple $S\in\Sim\h\setminus\{S_1,S_2\}$,
hence there is an isomorphism
\[
    \on{\hua{E}nd}(\bigoplus_{ \begin{smallmatrix}S\in\Sim\h\\S\neq S_1\end{smallmatrix}}S)\cong
    \on{\hua{E}nd}(\bigoplus_{ \begin{smallmatrix}S\in\Sim\h'\\S\neq S_1[1]\end{smallmatrix}}S).
\]
Patching together these isomorphisms, we obtain the required one $\hua{E}(\h)\cong\hua{E}(\h')$.
\end{proof}

\begin{remark}\label{rem:auto}
There is an autoequivalence of $\on{pvd}(\Gx)$
associated to each of the simple of $\hx$.
More precisely, if the simple corresponds to a vertex of $\Qx$ without loops,
then it is spherical and induces a spherical twist;
if it corresponds to a vertex with loops, then there is an autoequivalence $\Upsilon_{\gamma}$ in \eqref{eq:KY} constructed as above.
\end{remark}

By abuse of notations, we denote $\Dsox=\on{pvd}(\Gx)$, where $\RT$ is the initial triangulation.
So $\{\Dsox\}$, for various partition $\sun=\yue\cup\vortex$,
are a class of (Calabi-Yau) deformations of $\Dsop$.
The fully deformed one, denoted by $\Dsov$, is the one studied in \cite{BS}.

\begin{remark}
In DMSp case, we can identified $\on{pvd}(\Gamma_\T^\sun)$ consistently for any $\T\in\FGT(\sop)$ by \Cref{thm:CHQ1}.
In DMSx case, we are not sure if we can do the same at the moment.
Without decorations, one certainly can not identify $\on{pvd}(\Gx)$ consistently for all $\xT\in\FG^\pm(\surfx)$
(as there are monodromies).
For instance, the spherical twists in \Cref{pp:KY} are non-trivial monodromies.
\end{remark}

Denote by $\EGp(\Dsox)$ the principal component of the exchange graph of hearts of $\Dsox$.
Let $\hua{A}ut^\circ\Dsox$ be the group of autoequivalences of $\Dsox$,
consisting of those that preserve $\EGp\Dsox$
and $\on{Ng}\Dsox$ its subgroup consisting of \emph{negligible} ones, i.e. acting trivially on $\EGp\Dsox$.
So what we really need is the their quotient:
\begin{equation}\label{eq:Autp}
    \Autp\Dsox \colon= \hua{A}ut^\circ\Dsox/\on{Ng}\Dsox.
\end{equation}

%=========================================================
\subsection{Flip graphs as exchange graphs}\
%=========================================================

In this section, we prove a quotient/weak version of \eqref{eq:iso} for $\Dsox$.

Take any heart $\h$ in $\EGp(\Dsox)$,
then there is a simple tilting sequence
\[\begin{tikzcd}
    \hx=\h_1 \ar[r,-]& \h_2 \ar[r,-]& \cdots \ar[r,-]& \h_m=\h.
\end{tikzcd}\]
By \Cref{pp:KY}, it corresponds to flip sequence
\[\begin{tikzcd}
    \xT=\xT_1 \ar[r,-]& \xT_2 \ar[r,-]& \cdots \ar[r,-]& \xT_m
\end{tikzcd}\]
in $\FG(\surfp)$ so that $\h_i$ is equivalent to the standard heart $\h_{\RT^\times_i}^\mix$.
This implies that for each heart in $\EGp(\Dsox)$:
\begin{itemize}
  \item there is an associated quivers with potential (up to right-equivalence);
%  so that it is isomorphic to the (dg) module category of the corresponding Ginzburg dg algebra.
  \item there are $n$ associated autoequivalences of $\Dsox$ corresponding to its simples by \Cref{rem:auto}.
\end{itemize}
Denote by $\hua{M}T(\Dsox)$ the autoequivalence group generated by all such associated autoequivalences
and $\MT(\Dsox)$ its image in $\Autp\Dsox$.

\begin{proposition}\label{pp:tilting}
There is a natural isomorphism
\begin{gather}\label{eq:def-EG}
  \on{X}^\mix_*\colon  \FG^\pm(\surfx)/\MCG(\surfx) \xrightarrow{\cong} \EGp(\Dsox)/\Autp,
\end{gather}
sending the $\MCG(\surfx)$-orbit of a signed triangulation $\xT$
to the $\Autp\Dsox$-orbit of the corresponding heart $\hx$.
\end{proposition}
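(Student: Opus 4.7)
The plan is to establish the bijection on vertices first, then lift it to an isomorphism of oriented graphs compatible with the two quotients; the map itself is already defined on representatives by $[\xT]\mapsto [\h^\mix_\xT]$.

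\emph{Surjectivity.} Pick $\h\in\EGp(\Dsox)$ and a finite simple-tilting path $\hx=\h_1\to\cdots\to\h_m=\h$ connecting it to the canonical heart. Applying \Cref{pp:KY} inductively, each tilting lifts to a flip of signed triangulations, producing a flip sequence $\xT=\xT_1\to\cdots\to\xT_m$ together with a triangle equivalence identifying $\h^\mix_{\xT_m}$ with $\h$ up to a class in $\Autp$. Hence $\on{X}^\mix_*([\xT_m])=[\h]$, so every $\Autp$-orbit is hit.

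\emph{Well-definedness.} Suppose $\xT_1$ and $\xT_2$ lie in the same $\MCG(\surfx)$-orbit. By \Cref{lem:isoT} together with \Cref{rem:iff}, the quivers with potential $(\Qx_{\xT_1},\Wx_{\xT_1})$ and $(\Qx_{\xT_2},\Wx_{\xT_2})$ are right-equivalent. By \Cref{rem:KVB} this right-equivalence produces a triangle equivalence between the two Ginzburg categories sending canonical heart to canonical heart; transported along the embeddings inside $\EGp(\Dsox)$ provided by the reachability argument, it yields an autoequivalence $\Phi\in\hua{A}ut^\circ\Dsox$ with $\Phi(\h^\mix_{\xT_1})=\h^\mix_{\xT_2}$, whose class in $\Autp$ exhibits $\on{X}^\mix_*([\xT_1])=\on{X}^\mix_*([\xT_2])$.

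\emph{Injectivity.} Conversely, if $\Phi(\h^\mix_{\xT_1})=\h^\mix_{\xT_2}$ for some $\Phi\in\hua{A}ut^\circ\Dsox$, then the dg-endomorphism algebras $\hua{E}(\h^\mix_{\xT_i})$ are quasi-isomorphic, so their Gabriel quivers coincide. By \Cref{lem:Ext-quiver} and \Cref{rem:KVB} the underlying reduced QPs must be right-equivalent, and \Cref{rem:iff} then places $\xT_1$ and $\xT_2$ in the same $\MCG(\surfx)$-orbit. Combined with the previous step, $\on{X}^\mix_*$ is bijective on orbits.

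\emph{Edges and obstacle.} Edge compatibility is precisely \Cref{pp:KY}: each forward flip corresponds to a simple forward tilting via the equivalence $\Upsilon_\gamma$, L-flip loops correspond to the Koszul-duality equivalence in the self-folded case, and the 2-cycle identity $\Upsilon_{\gamma_2}\circ\Upsilon_{\gamma_1}=\twi^{-1}_{S_{\gamma_1}}$ matches the double-edge structure of $\EG(\surfx)$ modulo $\hua{M}T\Dsox\subset\hua{A}ut^\circ\Dsox$. The main obstacle I expect is verifying that the sign-equivalence $\sim$ (flipping $\epsilon(V)$ at a vortex enclosed in a self-folded triangle) is already absorbed on the categorical side---either because the two sign choices give right-equivalent QPs, or because the induced autoequivalence lies in $\on{Ng}\Dsox$---so that $\MCG(\surfx)$-orbits and $\Autp$-orbits align not only on vertices but also on oriented edges, rather than creating extra quotient identifications on one side but not the other.
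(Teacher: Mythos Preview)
Your argument is essentially correct and takes a genuinely different route from the paper. The paper's proof is constructive: it fixes an initial $\xT_0$, builds a spanning tree $\on{Tr}_0$ of $\EG(\surfx)$, and uses \Cref{pp:KY} along the unique tree-paths to assign a heart $X(\xT)$ to each vertex. To extend beyond the tree it invokes \Cref{pp:FST} (squares and pentagons generate $\pi_1\uEG(\surfx)$) together with the existence of squares/pentagons in $\EGp(\Dsox)$, and then uses $(n,n)$-regularity to promote the resulting injection to an isomorphism onto an intermediate quotient $\overline{X(\on{Tr}_0)}\cong\EGp(\Dsox)/\MT(\Dsox)$; only at the end does it pass to the full $\Autp$-quotient via \Cref{rem:iff} and \Cref{rem:KVB}. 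Your approach bypasses the spanning tree and \Cref{pp:FST} entirely by working directly with orbits and the QP characterisation, which is cleaner; the price is that your map is only ever defined up to $\Autp$, whereas the paper's construction produces something finer (an honest map from $\EG(\surfx)$ before taking the $\MCG$-quotient).

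Two small points. First, your edge paragraph only argues that flips yield simple tilts; you should also note the converse, or equivalently that both sides are $(n,n)$-regular at each vertex with the simples of $\h^\mix_{\xT}$ indexed by the arcs of $\RT$, so the local edge sets match bijectively. Second, your anticipated obstacle about the sign-equivalence $\sim$ dissolves: changing $\epsilon(V_0)$ at a self-folded vortex only affects which of the two twin arrows appears in the adjacent $W_V^\vortex$-cycle, and the evident automorphism of $\widehat{Q}^\mix_{\xT}$ swapping those twins gives a right-equivalence of the two potentials, so $\sim$-equivalent signed triangulations already land in the same $\Autp$-orbit via \Cref{rem:KVB}.
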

We expect that $\FG^\pm(\surfx) \xrightarrow{\cong} \EGp(\Dsox)/\MT$ as analogue of \cite[Thm.~2.10]{KQ2}
(=\cite[Thm.~5.6]{K2}), which requires further studies.
\begin{proof}
Fix a signed triangulation $\xT_0$ in $\FG^\pm(\surfx)$ and
make a spanning tree $\on{Tr}_0$ (which is a subgraph with the same vertex set and is also a tree) of $\FG^\pm(\surfx)$.

For each $\xT$ in $\FG^\pm(\surfx)$, let $p$ be the unique path in $\on{Tr}_0$ connecting $\xT$ and $\xT_0$.
By \Cref{pp:KY}, there is tilting sequence induced by $\on{Tr}_0$ so that we can associated a heart $X(\xT)=\hx$ to $\xT$.
Therefore, we obtain a graph $X(\on{Tr}_0)$ consisting of hearts in $\EGp(\Dsox)$ with simple tilting and
and a natural (graph) isomorphism $X\colon\on{Tr}_0\to X(\on{Tr}_0)$.

Next, we replace each heart $\h$ in $X(\on{Tr}_0)$ by its $\MT(\Dsox)$-orbit $[\h]$ and complete the graph
into $\overline{X(\on{Tr}_0)}$ by adding edges $[\h_1]\to[\h_2]$ whenever there is a simple forward tilting
from a representative of $[\h_1]$ to a representative of $[\h_2]$.
We claim that the isomorphism $X$ extend to $X\colon\FG^\pm(\surfx)\to \overline{X(\on{Tr}_0)}$.
By \Cref{pp:KY}, each edge in $X(\on{Tr}_0)$ will be doubled to be a 2-cycle that corresponds to the 2-cycle of $\FG^\pm(\surfx)$.
Moreover, there are (oriented) squares and pentagons in $\EGp(\Dsox)$ as in \cite[Lem.~2.12]{KQ2},
which correspond to squares and pentagons in $\FG^\pm(\surfx)$.
Hence, by \Cref{pp:FST}, we can partially complete $\on{Tr}_0$ to match all non-loops edges of $\FG^\pm(\surfx)$.
Furthermore, \Cref{pp:KY} implies that we also need to add the corresponding loops
(for those auto-equivalences induced by L-flips) in $\overline{X(\on{Tr}_0)}$.
Thus we have already arrived at an injection $X\colon\FG^\pm(\surfx)\to \overline{X(\on{Tr}_0)}$.
It is in fact an isomorphism since both graphs are $(n,n)$-regular, as claimed.

Finally, it is clear that $\MT(\Dsox)$ is a subgroup of $\Autp$ and hence $\overline{X(\on{Tr}_0)}$
is a covering of $\EGp(\Dsox)/\Autp$.
Namely, orbits of hearts in $\overline{X(\on{Tr}_0)}$ could be further identified under $\Autp$.
To show that $X$ induces the required isomorphism, we only need to show that
two hearts $\h_1,\h_2$ are related by $\Autp$ if and only if the corresponding triangulation $\RT_1$ and $\RT_2$ are related by an element in $\MCG(\surfx)$.
The former condition implies the associated QPs are right-equivalent (cf. \Cref{rem:KVB})
and hence implies the latter condition by \Cref{rem:iff}.
The inverse direction is straightforward.
\end{proof}

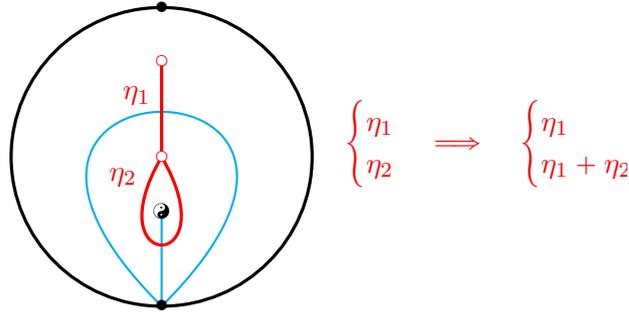
\begin{figure}[htb]\centering
\begin{tikzpicture}[scale=-1.8]\clip(1.2,1.3)rectangle(-3.4,-1.1);
\coordinate (p)at(0,0.5);
\draw[cyan,thick](p)to(0,1.2);
\draw[cyan,thick](0,1.2).. controls +(-45:2.7) and +(-135:2.7) .. (0,1.2);
\draw[red, very thick](0,-.6)to(0,.1).. controls +(120:1) and +(60:1) .. (0,.1) %closed arcs
    (0,-.35)node[left]{$\eta_1$} (.1,.25)node[left]{$\eta_2$};
\draw[very thick](0,0.1)circle(1.1)(0,1.2)\nn(0,-1)\nn
    (p)\Vpole(0,-.6)\ww(0,.1)\ww;
\draw[red](-2.5,0)node{$\begin{cases} \eta_1\\\eta_2 \end{cases} \Longrightarrow\quad
    \begin{cases} \eta_1\\ \eta_1+\eta_2 \end{cases}$};
\end{tikzpicture}
\caption{Modification of dual basis for self-folded triangle}\label{fig:last}
\end{figure}

%=========================================================
\subsection{Categorification}\
%=========================================================

As in \Cref{sec:cell}, $\FQuads(\surfx)$ admits the same style of stratification and cell structure.
Note that the cells will correspond to sign triangulations of $\surfx$.
Moreover, when there are self-folded with a vortex as isolated puncture,
one needs to change the local coordinate for the cell, cf. \cite[\S~10]{BS}.
More precisely, instead of using the periods $z_i$ of the actual saddle connections $\eta_i$ as shown in \Cref{fig:last}, one replace $z_2$ with $z_1+z_2$.
In this way, $z_2$ is allowed to be zero, which corresponds to the collision of the vortex with the simple zero in the self-folded triangle.

Then we have an analogue of \Cref{lem:f.d.1}, for the $\surfp$-framed version, i.e. \eqref{eq:Sembed2} and \eqref{eq:skel2}.
\begin{lemma}\label{lem:last}
There is a canonical embedding
\begin{equation}\label{eq:Sembed3}
    \skel_{\surfx}\colon\FG^\pm(\surfx)\to\FQuads(\surfx)
\end{equation}
whose image is dual to $B_2(\surfx)$ and which induces a surjective map
\begin{equation}\label{eq:skel3}
  \begin{tikzcd}
    \skel_*\colon \pi_1\FG^\pm(\surfx) \ar[r,twoheadrightarrow]& \pi_1\FQuads(\surfx).
  \end{tikzcd}
\end{equation}
\end{lemma}

Following the same proof of \cite[Thm.~11.2]{BS},
we can upgrade the graph isomorphism \eqref{eq:def-EG} into the following isomorphism between orbifolds.

\begin{theorem}\label{thm:app}
There is an isomorphism between complex orbifolds
\[
    \Quad(\sox)\cong\Stap(\Dsox)/\Autp
\]
\end{theorem}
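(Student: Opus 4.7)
The plan is to upgrade the combinatorial isomorphism of Proposition~\ref{pp:tilting} to an isomorphism of complex orbifolds, following the strategy of \cite[Thm.~11.2]{BS} and mirroring the proof of Theorem~\ref{thm:CHQ2} in \cite{CHQ}. The two sides come equipped with compatible cell decompositions indexed by the same graph $\EG(\surfx)/\MCG(\surfx) \cong \EGp(\Dsox)/\Autp$, and the task is to glue a locally defined period-to-central-charge map across walls.

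First I would set up matching cell structures. On the categorical side, for each $\Autp$-orbit $[\h]$, the cell $\cub(\h)\cong\UHP^n$ (parameterized by the central charges of the simples of $\h$) descends to a cell in $\Stap(\Dsox)/\Autp$. On the geometric side, for each $\MCG(\surfx)$-orbit of a signed triangulation $[\xT]$, the saddle-free signed differentials with WKB-triangulation $\xT$ form a cell in $\Quadh(\surfx)$ whose coordinates are the periods $z_\eta=\int_{\widetilde{\eta}}\sqrt{\phi}$ of the dual saddle connections, modified near self-folded $\RT$-triangles enclosing a vortex by the substitution $z_2\mapsto z_1+z_2$ illustrated in Figure~\ref{fig:last}. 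This modification is exactly what allows the coordinate to vanish when the simple zero collides with the double pole in the self-folded triangle, producing a simple pole in the partial compactification.

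Second, on the saddle-free/top-stratum locus I would define
\[
    \Xto_*\colon\Psi=[\rs,\phi,\epsilon]\;\longmapsto\;(Z_\phi,\h_{\xT_\Psi}^\mix),
    \qquad Z_\phi(X_\eta)=\int_{\widetilde{\eta}}\sqrt{\phi},
\]
where $\xT_\Psi$ is the WKB signed triangulation of $\Psi$ (the sign data on $\vortex$ being exactly the sign data of the residues at the collidable double poles). This agrees with Theorem~\ref{thm:CHQ2} on the complete part, is holomorphic on each cell, and by Proposition~\ref{pp:tilting} the cells on both sides are identified compatibly with the $\MCG(\surfx)$-action versus the $\Autp$-action, so the map is well-defined on orbifold cells.

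Third, I would check the gluing across codimension-one walls, which is the main obstacle. There are three types of walls to analyze: (i)~usual-flip walls, where the horizontal strip degenerates to a saddle trajectory between distinct zeros; (ii)~$\lozenge$-flip/L-flip walls, where a vortex-enclosed simple zero orbits around the double pole producing a loop edge in $\FG(\surfx)$; and (iii)~collision walls, where a simple zero meets a collidable double pole $V\in\vortex$ producing a simple pole. For (i), crossing the wall on the geometric side corresponds to a simple tilt at a spherical simple, and Proposition~\ref{pp:KY} produces the matching derived equivalence. For (ii), the triangle equivalence $\Upsilon_\gamma$ of Proposition~\ref{pp:KY} (which is \emph{not} a spherical twist, since the associated simple has $\mathrm{Ext}^1\neq 0$) supplies the autoequivalence matching the L-flip; the $\ZZ/2$-symmetry at the vortex (encoded in $\epsilon$) precisely mirrors the autoequivalence ambiguity mod $\on{Ng}$. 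For (iii), the modified coordinate $z_1+z_2$ extends holomorphically to the added stratum on both sides, and the orbifold $\ZZ/2$-isotropy of the collision locus matches the square relation around a vortex in $\EG(\surfx)$ (Figure~\ref{fig:new SQ}) via the thin-dumbbell/symmetric-hexagon calculus of Section~\ref{sec:fg}.

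Finally, with gluing established, $\Xto_*$ is a local biholomorphism between complex orbifolds of the same complex dimension $n$; since both sides are connected (having identified their cell complexes by Proposition~\ref{pp:tilting}) and Hausdorff, and since $B_0$ is dense in both (Lemma~\ref{le:B0dense} and its categorical counterpart via $\cub(\h)$'s), $\Xto_*$ is the required isomorphism. The main obstacle throughout is handling the L-flip/loop edges and collision strata uniformly: the non-spherical autoequivalences $\Upsilon_\gamma$ from Proposition~\ref{pp:KY} must be reconciled with the orbifold $\ZZ/2$-structure at vortices, and this is where the careful coordinate change of Figure~\ref{fig:last} combined with Remark~\ref{rem:iff} (right-equivalence of QPs $\Leftrightarrow$ $\MCG(\surfx)$-equivalence of signed triangulations) does the heavy lifting.
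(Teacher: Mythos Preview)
Your overall strategy---match cell structures via the period map and check wall-crossing, exactly as in \cite[Thm.~11.2]{BS}---is what the paper does; indeed the paper's proof is nothing more than the sentence ``follow \cite[Thm.~11.2]{BS} to upgrade Proposition~\ref{pp:tilting}''. So at the level of approach you are aligned.

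However, your wall-crossing taxonomy in step~(ii)--(iii) confuses the roles of $\yue$ and $\vortex$, and this matters for the argument to go through. Loop edges (L-flips) in $\FG(\surfx)$ occur only at self-folded triangles enclosing a puncture in $\yue$, \emph{never} at a vortex: by the definition of flips between signed triangulations, L-flips at vortex-enclosed self-folded edges are explicitly excluded, and by Lemma~\ref{lem:isoT} the vertices of $Q^{\mix}_{\xT}$ carrying a loop (hence the non-rigid simples with $\Ext^1\neq0$) correspond precisely to $\yue$-isolated punctures. The non-spherical autoequivalences $\Upsilon_\gamma$ of Proposition~\ref{pp:KY} therefore handle the $\yue$ side, not the $\vortex$ side, and the thin-dumbbell/symmetric-hexagon calculus you cite belongs to the undeformed picture, not to collision. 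What happens at a vortex $V\in\vortex$ is instead the sign-equivalence of Figure~\ref{fig:equiv}: the sheets $\FG^\epsilon(\surfk)$ are glued along self-folded triangles enclosing $V$, and the coordinate substitution $z_2\mapsto z_1+z_2$ of Figure~\ref{fig:last} absorbs the collision locus $z_2=0$ \emph{into the interior} of the modified cell $\uhp^n$ (it is not a wall between cells). The orbifold $\ZZ/2$-isotropy at collision comes from the arbitrary sign at simple poles in the definition of signed differentials, matched on the stability side exactly as in \cite{BS}. With the $\yue$/$\vortex$ roles disentangled this way, your outline is correct.
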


%=========================================================
%=========================================================
\section{Examples: Euclidean Artin braid groups}\label{sec:ex}
%=========================================================
%=========================================================
\subsection{Artin braid groups and Coxeter groups}\label{sec:ABG}\
%=========================================================

Let $\ELam$ a diagram (without loops) with an (edge-) weight function $\mathbf{v}\colon\ELam_1\to\ZZ_{\ge3}$.

\begin{definition}
The Artin braid group $\Br_{\ELam}$ associated to $\ELam$ is defined by the presentation
\begin{gather}\label{eq:ABG}
    \Br(\ELam)\colon= \< b_i\mid i\in\ELam_0 \>\Big/
        (\Br^{m_{ij}}(b_i,b_j)\mid \forall i,j\in\ELam_0),
\end{gather}
where
\[
  m_{ij}=
    \begin{cases}
        2, & \mbox{if there is no edge beteween $i$ and $j$};\\
        \mathbf{v}(\alpha), & \mbox{if there is exactly one edge $i \frac{\alpha}{\quad} j$}; \\

        \infty,& \mbox{if there are multi edges between $i$ and $j$}.
    \end{cases}
\]
\end{definition}
The associated Coxeter/Weyl group $W_{\ELam}$ is the quotient group of $\Br(\ELam)$
by adding the square to one relation (i.e. $b_i^2=1$ for any $i\in\ELam_1$) to the presentation.
It is well-known that there are four infinity families of Euclidean Artin braid groups,
i.e. affine types $\wh{A},\wh{B},\wh{C}$ and $\wh{D}$.
%For completeness, we will also consider the spherical case for finite Dynkin diagrams.
In this section, we show that all of them are arise as fundamental groups of
moduli spaces of framed quadratic differentials studied above.
%The explicit calculation uses \cite{All} or the results of the twin paper \cite{QZ4}.

%=========================================================
\subsection{Hyperplane arrangements}\label{sec:hyper}\
%=========================================================

We restrict $\ELam$ to ba a Euclidean Dynkin diagram of $n+1$ vertices
and $\RootS$ be the corresponding finite Dynkin diagram.
Denote by $\Phi_{\RootS}$ the root system associated to $\RootS$, living in $\RR^n$, and
$\Phi_{\ELam}=\Phi_{\RootS}\oplus\ZZ\<\delta\>$ the root system associated to $\ELam$ living in $\RR^n\oplus\RR_\delta$,
where $\delta$ is the basic imaginary root.
Denote by $\fkhd^*=\fkh^*_\RootS\oplus\CC_\delta$ the complexification of $\RR^n\oplus\RR_\delta$ and $\fkhd\cong\CC^{n+1}$ its dual complex space.
Each root $(\alpha,k\delta)$ in $\Phi_{\ELam}\subset\fkhd^*$ determines an affine hyperplane
\[
    H_{\alpha,k}=\{\varsigma\in\fkhd\mid \<\varsigma,\alpha\>=k\}
\]
and let $\hreg_{\ELam}\subset\fkhd$ be the complement of the these affine hyperplanes, i.e.
\begin{equation}\label{eq:hreg}
    \hreg_{\ELam}\colon= \fkhd\setminus
    \big(\bigcup_{(\alpha,k\delta)\in\Phi_{\ELam}}H_{\alpha,k}\big).
\end{equation}
This is the space of hyperplane arrangements and
it is well-known that $W_{\ELam}$ can be geometrically realized as the reflection group on $\hreg_{\ELam}$,
where the generators $r_{s_{i}}$ are the reflections with respect to simple roots of $\ELam$. Moreover, $W_{\ELam}$ acts freely on $\hreg_{\ELam}$ and one has
$$
    \pi_1(\hreg_{\ELam}/W_{\ELam})=\Br(\ELam).
$$
A recent breakthrough of \cite{PS} proves the $K(\pi,1)$-conjecture for Euclidean braid groups.

\begin{theorem}\cite{PS}\label{thm:PS}
The universal cover of $\hreg_{\ELam}$ is contractible.
\end{theorem}

\def\coox{\mathbf{x}}
For the later use, we recall some facts from \cite{All}.
Let
\begin{gather}\label{eq:V0}
    V_0=\{\coox=(x_1,\ldots,x_n)\in\CC^n \mid x_j\ne\pm x_k, \forall j\ne k\}
\end{gather}
and
\begin{equation}\label{eq:V0CB}
  \begin{cases}
    V_{\wh{C_n}}=\{\coox\in V_0\mid \forall x_j\notin\frac{1}{2}\ZZ\},\\
    V_{\wh{B_n}}=\{\coox\in V_0\mid \forall x_j\notin\ZZ\},\\
    V_{\wh{D_n}}=V_0.
  \end{cases}
\end{equation}
Then there are homotopy equivalences $V_{\ELam}/W_{\ELam}\simeq\hreg_{\ELam}/W_{\ELam}$
for $\ELam=\wh{C_n},\wh{B_n}$ or $\wh{D_n}$.

\begin{remark}\label{rem:ELam}
Consider the following map
\[\begin{array}{rcccl}
    \iota \colon\CC &\xrightarrow{e^{2\pi\mathbf{i}(?)}}& \CC^* &\xrightarrow{\quad\eta\quad}& \CC^{0,1},\\
     x& \mapsto & y &\mapsto& \frac{1}{4}(1-y)(1-\frac{1}{y}),
\end{array}\]
where $\CC^{0,1}=\CC\setminus\{0,1\}$, $\eta$ is a branched double cover, branching at $0,1$ (whose preimages are $\pm1$).
It induces a homotopy equivalence
\begin{equation}\label{eq:V=C}
    V_{\wh{C_n}}/W_{\wh{C_n}}\simeq\on{conf}_n(\CC^{0,1}),
\end{equation}
where $\mathfrak{S}_n$ is permutation group of $n$ elements.

When passing from $V_{\wh{C_n}}$ to $V_{\wh{B_n}}$,
one adds hyperplanes
\begin{equation}\label{eq:hyper-B}
    H^1_{j,m}=\{\ x_j=\frac{1}{2}+m \}\cap V_0,\qquad m\in\ZZ,
\end{equation}
that corresponds to partial compactification
$\overline{\on{conf}_n(\CC^{0,1})}^{1}$ of $\on{conf}_n(\CC^{0,1})$
by allowing one of the points in the configuration goes to $1$.

When passing from $V_{\wh{B_n}}$ to $V_{\wh{D_n}}$,
one adds more hyperplanes
\begin{equation}\label{eq:hyper-C}
    H^0_{k,m}=\{\ x_k=m \}\cap V_0,\qquad m\in\ZZ,
\end{equation}
that corresponds to further partial compactification $\overline{\on{conf}_n(\CC^{0,1})}^{0,1}$
by allowing one of the points in the configuration goes to $0$ as well.
\end{remark}

%=========================================================
\subsection{Affine type $\wh{A_n}$}\label{sec:A}\
%=========================================================

Take a partition $n=p+q$ for some positive integers $p,q\in\ZZ$.
The diagram $\ELam=\wh{A_{p,q}}=\wh{A_{n}}$ is an $(n+1)$-cycle.
Let $\surfp$ is an annulus (with $\P=\emptyset$) with
$p$ and $q$ marked points in two boundaries of $\surf$ respectively.
For simplicity, assume that $p\ne q$. %(to get rid the extra $\ZZ_2$ symmetry in $\MCG(\surf)$).

The standard generators of $\MCG(\surf)$ are the clockwise rotations along one of the boundaries, moving a marked point to the next adjacent one, denoted by $r_0$ and $r_\infty$ respectively.
They satisfy $r_0^p=D=r_\infty^q$, where $D$ is the unique Dehn twist along either boundary of $\surf$, and fit into a short exact sequence (with an asymmetric choice between $p$ and $q$)
\[
    1\to\ZZ\<r_0\>\to\MCG(\surf)\to\ZZ_q\<r_\infty\>\to1.
\]
Moreover, $\Quad(\surf)$ consists of differentials of the form
\[
    \phi(z)=\frac{ \prod_{i=1}^{n+1} (z-z_i) }{z^{p+2}} \dd z^{2},\quad
    z_i\in\CC^*,\quad z_i\neq z_j
\]
and considered modulo the action of $\CC$ rescaling $z$.
%Let $\on{conf}_{n+1}(\CC^*)=\on{conf}_\Tri(\surf)$ be the (unordered) configuration space of $(n+1)$ points in $\CC^*$.
Then there is a commutative diagram
\begin{equation}\label{eq:3spaces}
\begin{tikzpicture}[xscale=1.6,yscale=0.8,baseline=(bb.base)]
\path (0,1) node (bb) {}; % baseline
\draw (0,2) node (s0) {$\FQuad(\surf)$}
 (0,0) node (s1) {$\on{conf}_{n+1}(\CC^*)$}
 (2.5,1) node (s2) {$\Quad(\surf)$.  };
\draw [-stealth, font=\scriptsize]
 (s0) edge node [left] {$\ZZ$} (s1)
 (s0) edge node [above] {$\MCG(\surf)$} (s2)
 (s1) edge node [below] {$\ZZ_q$} (s2);
\end{tikzpicture}
\end{equation}
%Noting that $\hreg_{\wh{A_{n}}}$ is the ordered configuration space $\widetilde{\on{conf}}_{n+1}(\CC^*)$, then One can deduce taht there is a homotopy equivalence $\FQuad(\surf)\simeq\hreg_{\wh{A_{n}}}/W_{\wh{A_{n}}}$.

By \cite[\S~8.3]{QQ},
the the contractible space
$\Stap(\D_3(\surfo))\cong\FQuad^{\T}(\surfo)$ is the universal cover of
\[
    \FQuad(\surf)=\FQuad(0;1^{p+q},-p-2,-q-2)\simeq\hreg_{\wh{A_{n}}}/W_{\wh{A_{n}}}
\]
with covering group
\begin{gather*}
    \pi_1\FQuad(\surf)=\BT(\surfo)=\Br(\wh{A_{p,q}}).
\end{gather*}
%Finally note that t
The short exact sequence \eqref{eq:SES-b} in \Cref{thm:QZ+} becomes
\begin{equation}\label{eq:BrA}
  \begin{tikzcd}[column sep=17,row sep=10]
    1 \ar[r] & \BT(\surfo) \ar[r] \ar[equal]{d} &
        \SBr(\surfo) \ar[equal]{d} \ar[r] & \Ho{1}(\surf) \ar[r]\ar[equal]{d} & 1\\
    1 \ar[r] & \pi_1\FQuad(\surf) \ar[r]  & \pi_1\on{conf}_{n+1}(\CC^*)
        \ar[r] & \ZZ \ar[r] & 1.
  \end{tikzcd}
\end{equation}

%=========================================================
\subsection{Affine type $\wh{C_n}$}\label{sec:C}\
%=========================================================

The diagram $\ELam=\wh{C_{n}}$ is shown in the right diagram in \Cref{fig:Ctilde} (for $n=5$).
Consider a DMSp $\sop$, which is an $(n-2)$-gon with two punctures $\sun=\{P_0,P_1\}$ and $n$ decorations.
\begin{figure}[hbt]\centering
\makebox[\textwidth][c]{
 \includegraphics[width=12cm]{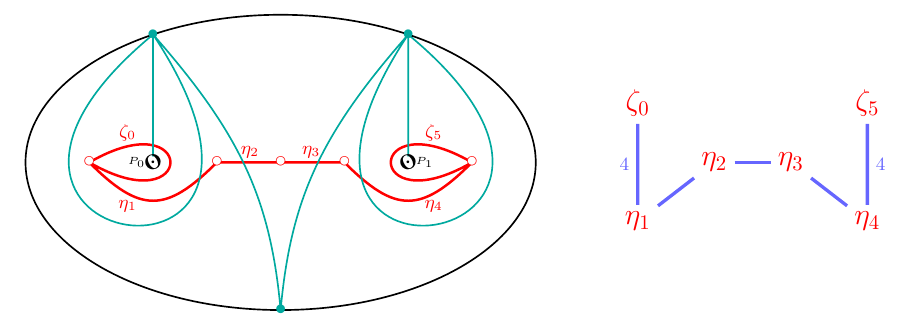}\;
}\caption{A (triangulated) DMSp $\sop$ for Artin braid group of type $\wh{C_{n=5}}$.}
\label{fig:Ctilde}
\end{figure}
Up to translation and scaling (that fixes two double poles to be 0 and 1),
$\FQuad(\surfp)$ consists of differentials of the form
\[
    \phi(z)={\lambda}\cdot\frac{ \prod_{i=1}^{n} (z-z_i)}{z^{2}(z-1)^{2}} \dd z^{2},
        \quad \lambda\in\CC^*, z_i\in\CC^{0,1},\quad z_i\neq z_j,
\]
with a $\ZZ$-framing (corresponding to the action of clockwise rotating the boundary of $\surfp$,
moving a marked point to the next adjacent one).
Hence, $\FQuad(\surfp)$ is a $\CC$-bundle over $\on{conf}_{n}(\CC^{0,1})$,
which is a homotopy equivalent to $\hreg_{\wh{C_n}}/W_{\wh{C_n}}$ by \eqref{eq:V=C}:
\begin{equation}\label{eq:homo}
    \FQuad(\surfp)\xrightarrow[/\CC]{\simeq}
        \on{conf}_{n}(\CC^{0,1})
        \simeq \hreg_{\wh{C_n}}/W_{\wh{C_n}}.
\end{equation}
The contractible space $\Stap(\D_3(\sop))\cong\FQuad^{\T}(\sop)$ is the universal cover of
$$
    \FQuad(\surfp)=\FQuad(0;1^{n},(-2)^2,-n),
$$
with covering group
\begin{gather*}
    \pi_1\FQuad(\surfp)=\MT(\sop)=\Br(\wh{C_{n}}).
\end{gather*}

%=========================================================
\subsection{Affine type $\wh{B_n}$}\label{sec:B}\
%=========================================================

The diagram $\ELam=\wh{B_{n}}$ is shown in the right diagram in \Cref{fig:Btilde} (for $n=5$).
Keep the DMSp $\sop$ as above in \Cref{sec:C}.
Consider the DMSx $\sox$ by taking the partition $\sun=\yue\cup\kui$ for $\yue=\{P_0\}$ and $\kui=\{P_1\}$.
\begin{figure}[hbt]\centering
\makebox[\textwidth][c]{
 \includegraphics[width=12cm]{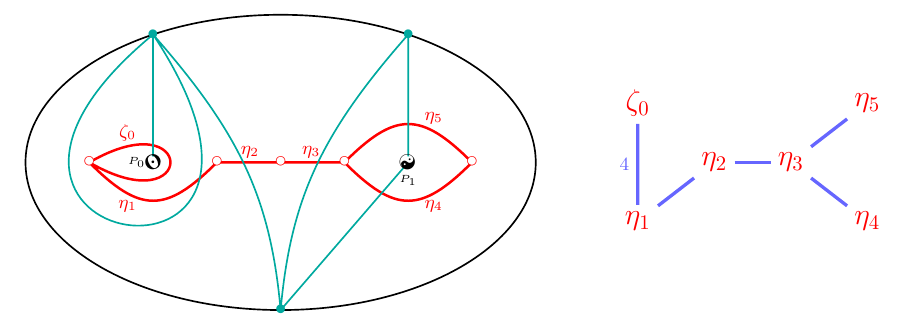}\;
}\caption{A (triangulated) DMSx $\sox$ for Artin braid group of type $\wh{B_{n=5}}$.}
\label{fig:Btilde}
\end{figure}
We have a moduli space
\begin{equation}\label{eq:Qaud-B}
\begin{array}{rl}
  \FQuad(\surfx)=&\FQuad(0;1^{n},(-2)^2,-n) \\
    \bigsqcup& \FQuad(0;1^{n-1},-1,-2,-n)^{\ZZ_2}
\end{array}
\end{equation}
with
\begin{gather}\label{eq:pi1=B+}
    \pi_1\FQuad(\surfx)=\MT(\sox)=\Br(\wh{C_{n}})/\LLk=\Br(\wh{B_{n}})\rtimes\ZZ_2.
\end{gather}
The notation $?^{\ZZ_2}$ means that there is a $\ZZ_2$-orbifolding for the extra strata.
Equivalently, \eqref{eq:pi1=B+} can be rewritten as
\begin{gather}\label{eq:pi1=B}
    \pi_1\FQuad^\pm(\surfx)=\MTsoyk=\Br(\wh{B_{n}}).
\end{gather}

By \Cref{rem:ELam}, $\FQuad(\surfx)$ is a $\CC$-bundle over
the partial compactification $\overline{\on{conf}_n(\CC^{0,1})}^{1}$,
that the extra strata corresponds to hyperplanes in \eqref{eq:hyper-B}.
These hyperplanes are fixed points under the corresponding reflections
and hence get $\ZZ_2$ orbifolding in the $\mathfrak{S}_n$-quotient,
i.e. we have
\begin{equation}\label{eq:homo-C}
    \FQuad(\surfx)\xrightarrow[/\CC]{\simeq}
        \overline{\on{conf}_n(\CC^{0,1})}^{1}
        \simeq \hreg_{\wh{B_n}}/W_{\wh{C_n}}.
\end{equation}

On the other hand, $\Br(\wh{C_{n}})=\MT(\sop)$ and
\[
    \Br(\wh{B_{n}})=\MTsoyk=\frac{\MTsoy}{ \LLk\cap\MTsoy }.
\]
Then the top right short exact sequence (in red) in \Cref{fig:3Octa} induces the following short exact sequence
\begin{equation}\label{eq:BinC}
  \begin{tikzcd}[column sep=17,row sep=20]
    & \MTsoyk \ar[twoheadrightarrow,"/?^2=1"]{d} &
        \MT(\sop) \ar[twoheadrightarrow,"/?^2=1"]{d} & \Ho{1}(\vortex) \ar[equal]{d} & \\
    1 \ar[r] & W_{\wh{B_n}} \ar[r]  & W_{\wh{C_n}}
        \ar[r] & \ZZ_2 \ar[r] & 1,
  \end{tikzcd}
\end{equation}
where $/?^2=1$ means taking quotient by (the normal subgroup generated by) squares of their standard generators.
Thus \eqref{eq:homo-C} is equivalent to
\[
    \FQuad(\surfx)\simeq
        \left(\hreg_{\wh{B_n}}/W_{\wh{B_n}}\right)\big/\ZZ_2
    \quad\Longleftrightarrow\quad
    \FQuad^\pm(\surfx)\simeq \hreg_{\wh{B_n}}/W_{\wh{B_n}}.
\]

%=========================================================
\subsection{Affine type $\wh{D_n}$}\label{sec:D}\
%=========================================================

The diagram $\ELam=\wh{D_{n}}$ is shown in the right diagram in \Cref{fig:Dtilde} (for $n=5$).
Keep the DMSp $\sop$ as above in \Cref{sec:C}.
Consider the DMSv $\sov$ by taking the partition $\sun=\yue\cup\kui$ for $\yue=\emptyset$ and $\kui=\sun$.
\begin{figure}[hbt]\centering
\makebox[\textwidth][c]{
 \includegraphics[width=12cm]{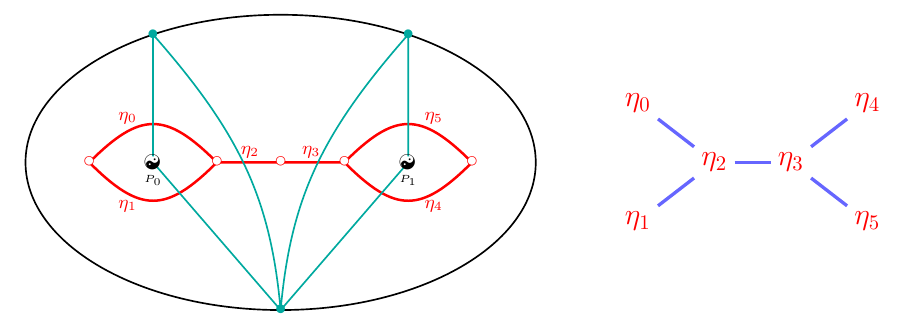}\;
}\caption{A (triangulated) DMSv $\sov$ for Artin braid group of type $\wh{D_{n=5}}$.}\label{fig:Dtilde}
\end{figure}
There is a moduli space
\begin{equation}\label{eq:Qaud-D}
\begin{array}{rl}
  \FQuad(\surfv)=&\FQuad(0;1^{n},(-2)^2,-n) \\
    \bigsqcup& \FQuad(0;1^{n-1},-1,-2,-n)^{\ZZ_2}\\
    \bigsqcup& \FQuad(0;1^{n-2},(-1)^2,-n)^{ \ZZ_2^{\oplus 2}}
\end{array}
\end{equation}
with
\begin{gather}\label{eq:pi1=D+}
    \pi_1\FQuad(\surfv)=\MT(\sov)=\Br(\wh{C_{n}})/\on{L}^2(\sun)
        =\Br(\wh{D_{n}})\rtimes{\ZZ_2 ^{\oplus 2}}.
\end{gather}
Equivalently, \eqref{eq:pi1=D+} can be rewritten as
\begin{gather}\label{eq:pi1=D}
    \pi_1\FQuad^\pm(\surfv)=\MT(\surf^{\emptyset,\sun})=\Br(\wh{D_{n}}).
\end{gather}
In this case, $\MT(\surf^{\emptyset,\sun})$ is also isomorphic to the cluster braid group of type $\wh{D_n}$,
in the sense of King-Qiu \cite{KQ2}.

Similar to type ${\wh{B_n}}$, we have
\begin{itemize}
  \item There is a homotopy equivalence
    \begin{equation}\label{eq:homo-D}
        \FQuad(\surfv)\xrightarrow[/\CC]{\simeq}
            \overline{\on{conf}_n(\CC^{0,1})}^{0,1}
            \simeq \hreg_{\wh{D_n}}/W_{\wh{C_n}}.
    \end{equation}
  \item The top right short exact sequence (in red) in \Cref{fig:3Octa} induces the following:
  \[\begin{tikzcd}[column sep=17,row sep=20]
    & 1 \ar[r] & W_{\wh{D_n}} \ar[r]  & W_{\wh{C_n}}
        \ar[r] & \ZZ_2^{\oplus2} \ar[r] & 1.
  \end{tikzcd}\]
  \item There are homotopy equivalences
\[
    \FQuad(\surfv)\simeq
        \left(\hreg_{\wh{D_n}}/W_{\wh{D_n}}\right)\big/\ZZ_2^{\oplus2}
    \quad\Longleftrightarrow\quad
    \FQuad^\pm(\surfv)\simeq \hreg_{\wh{D_n}}/W_{\wh{D_n}}.
\]
\end{itemize}

%\begin{remark}\label{rem:ABG}
%In summary, we see that the mixed twist groups of DMSp/DMSx/DMSv give a generalization of
%Euclidean Artin braid groups of type $\wh{C_n}$/$\wh{B_n}$/$\wh{D_n}$ (up to $\ZZv$).
%All of which can be realized as
%fundamental groups of moduli spaces of certain framed quadratic differentials.
%\end{remark}

%=========================================================
\subsection{Spherical Artin braid groups}\label{sec:sph}\
%=========================================================

We also list the surfaces for the four infinity families of spherical Artin braid groups $\RootS$.
Note that in this case the space of hyperplane arrangements is
\begin{equation}\label{eq:hreg0}
    \hreg_{\RootS}\colon= \fkh_\RootS\setminus
    \big(\bigcup_{\alpha\in\Phi_{\RootS}}H_{\alpha}\big)
\end{equation}
for $H_{\alpha}=\{\varsigma\in\fkh_{\RootS} \mid \<\varsigma,\alpha\>=0\}$.
\begin{itemize}
  \item For an $(n+3)$-gon $\surf$ with no punctures,
    one has that $\FQuad(\surf)\cong\hreg_{A_n}/W_{A_n}$ and
    $$\pi_1\FQuad(\surf)=\BT(\surfo)=\Br(A_n).$$
  \item For an $n$-gon $\surfp$ with one puncture,
    one has that $\FQuad(\surf)\cong\hreg_{B_n}/W_{B_n}$ and
    $$\pi_1\FQuad(\surfp)=\MT(\sop)=\Br(C_n).$$
  \item For an $n$-gon $\surfv$ with one vortex,
    one has that $\FQuad(\surf)\cong\hreg_{D_n}/W_{D_n}$ and
    $$\pi_1\FQuad^\pm(\surfv)=\MTsoyk=\Br(D_n).$$
\end{itemize}
All the statements in the affine cases also hold here.

%=========================================================
\subsection{Summary}\label{sec:sum}\
%=========================================================

Let $\DorE$ be a non-exceptional finite/affine Dynkin diagram
and $\sox$ be the corresponding DMSx mentioned above.
%(which maybe DMSp or DMSv when $\yue$ and/or $\kui$ is empty).
Combining all the discussion above,
we obtain the following geometric realization of hyperplane arrangements $\hreg_{\DorE}/W_{\DorE}$.
\begin{theorem}\label{thm:PS+}
There is a homotopy equivalence
\[
    \FQuad^\pm(\surfx)\simeq \hreg_{\DorE}/W_{\DorE}
\]
whose fundamental group fits into the following commutative diagram:
\begin{equation}\label{eq:finalSES}
  \begin{tikzcd}[column sep=17,row sep=10]
%    & \Br(\DorE) \ar[equal]{d}\\
    1 \ar[r] & \MTsoyk \ar[r] \ar[equal]{d} &
        \SBr(\surfx) \ar[equal]{d} \ar[r,"\AJ^\vortex"] & \Ho{1}(\surfv) \ar[r]\ar[equal]{d} & 1\\
    1 \ar[r] & \pi_1\FQuad^\pm(\surfx) \ar[r]  & \pi_1\on{conf}_{\Delta}(\surfx)
        \ar[r] & \Ho{1}(\surf)\oplus\ZZv \ar[r] & 1 ,
  \end{tikzcd}
\end{equation}
where $\AJ^\vortex$ is the vortex version of the AJ map.
In the `lattice' case (i.e. finite Dynkin, $\wh{A}_n$ or $\wh{C}_n$, cf. \cite{McC}),
a contractible universal cover of $\hreg_{\DorE}/W_{\DorE}$ is given by $\Stap(D_3(\sox))$.
\end{theorem}
\begin{proof}
As $$\Br(\DorE)=\pi_1(\hreg_{\DorE}/W_{\DorE})=\pi_1\FQuad(\surfx)=\MTsoyk,$$
the top short exact sequence is precisely \eqref{eq:ses H1-xx}.

Then we only need to clarify the last statement:
\begin{itemize}
  \item The finite Dynkin case follows from \cite[Thm.~B]{QW}.
  \item The $\wh{A}_n$ and $\wh{C}_n$ case follows from \Cref{thm:2} and
  the contractibility of \Cref{thm:PS} (or \cite{Oko} in these two cases).
\end{itemize}
\end{proof}

%=========================================================

%=========================================================
\end{document}